\documentclass[12pt]{article}
\usepackage{graphicx}
\usepackage{amsmath}
\usepackage{amsfonts}
\usepackage{enumerate}
\usepackage{latexsym}
\usepackage{epsfig}
\usepackage{float}
\usepackage{color}
\usepackage{amscd}
\usepackage{amssymb}
\usepackage{epstopdf}

\setlength{\topmargin}{-0.5in} \setlength{\oddsidemargin}{0.in}
\setlength{\evensidemargin}{0.2in} \setlength{\textheight}{8.9in}
\setlength{\textwidth}{6.6in}

\newtheorem{theorem}{Theorem}[section]
\newtheorem{lemma}[theorem]{Lemma}
\newtheorem{coroll}[theorem]{Corollary}

\def\proofbox{\begin{picture}(6.5,6.5)
\put(0,0){\framebox(6.5,6.5){}}\end{picture}}
\newenvironment{proof}{\noindent{\it Proof.\quad}}{\hfill\proofbox}

\setlength{\parindent}{0.2in}



\begin{document}

\title{Edge-preserving Maps of the Nonseparating Curve Graphs, Curve Graphs and Rectangle Preserving Maps of the Hatcher-Thurston Graphs}
\author{Elmas Irmak}

\maketitle

\renewcommand{\sectionmark}[1]{\markright{\thesection. #1}}

\thispagestyle{empty}
\maketitle
\begin{abstract} Let $R$ be a compact, connected, orientable surface of genus $g$ with $n$ boundary components 
with $g \geq 2$, $n \geq 0$. Let $\mathcal{N}(R)$ be the nonseparating curve graph, $\mathcal{C}(R)$ be the curve graph and $\mathcal{HT}(R)$ be the Hatcher-Thurston graph of $R$. We prove that 
if $\lambda : \mathcal{N}(R) \rightarrow\mathcal{N}(R)$ is an edge-preserving map, then $\lambda$ is induced by a homeomorphism of $R$. We prove that if $\theta : \mathcal{C}(R) \rightarrow \mathcal{C}(R)$ is an edge-preserving map, then $\theta$ is induced by a homeomorphism of $R$. 
We prove that if $R$ is closed and $\tau: \mathcal{HT}(R) \rightarrow\mathcal{HT}(R)$ is a rectangle preserving map, then $\tau$ is induced by a homeomorphism of $R$. We also prove that these homeomorphisms are unique up to isotopy when 
$(g, n) \neq (2, 0)$.\end{abstract}
 
\maketitle

{\small Key words: Mapping class groups, edge-preserving maps, orientable surfaces
	
MSC: 57N05}
 
\section{Introduction} 
Let $R$ be a compact, connected, orientable surface of genus $g$ with $n$ boundary components. The mapping class group of $R$, $Mod_R$, is defined to be the group of 
isotopy classes of orientation preserving self-homeomorphisms of $R$. The extended mapping class group, $Mod^*_R$, of $R$ is defined to be the group of 
isotopy classes of all self-homeomorphisms of $R$. Abstract simplicial complexes have been studied to get information about the algebraic structure of the mapping class groups. One of these complexes is the complex of curves of $R$. We will define its 1-skeleton called the curve graph $\mathcal{C}(R)$ of $R$ as follows: The vertex set of $\mathcal{C}(R)$  
is the set of isotopy classes of nontrivial simple closed curves on $R$, where nontrivial means 
the curve does not bound a disk and it is not isotopic to a boundary component of $R$. Two vertices in 
$\mathcal{C}(R)$ are connected by an edge if and only if they have pairwise disjoint representatives on $R$. The complex of curves is the flag abstract simplicial complex whose 1-skeleton is $\mathcal{C}(R)$. 
The nonseparating curve graph $\mathcal{N}(R)$ of $R$ is the induced 
subgraph of $\mathcal{C}(R)$ whose vertices are the isotopy classes of nonseparating simple closed curves on $R$. There is a natural action of $Mod^*_R$ on $\mathcal{C}(R)$ by automorphisms. If $[f] \in Mod^*_R$, then $[f]$ induces an automorphism 
$[f]_*: \mathcal{C}(R) \rightarrow \mathcal{C}(R)$ where 
$[f]_*([a]) = [f(a)]$ for every vertex $[a]$ of $\mathcal{C}(R)$. Similarly, each element of $Mod^*_R$ induces an automorphism of $\mathcal{N}(R)$. We recall that a map on a graph is edge-preserving if it sends any two vertices connected by an edge to two vertices connected by an edge. We will first prove that edge-preserving maps of $\mathcal{N}(R)$ are induced by homeomorphisms. 
This means that if $\lambda: \mathcal{N}(R) \rightarrow \mathcal{N}(R)$ is an edge preserving map, then there exists a self-homeomorphism $f$ of $R$ such that $[f]_*(\alpha) = \lambda(\alpha)$ for every vertex $\alpha \in \mathcal{N}(R)$. The main results are the following:

\begin{theorem} \label{A} Let $R$ be a compact, connected, orientable surface of genus $g$ with $n$ boundary components with $g \geq 2$,
$n \geq 0$. If $\lambda :\mathcal{N}(R) \rightarrow \mathcal{N}(R)$ is an edge-preserving map, 
then there exists a homeomorphism $h : R \rightarrow R$ such that $[h]_*(\alpha) = \lambda(\alpha)$
for every vertex $\alpha$ in $\mathcal{N}(R)$ (i.e. $\lambda$ is induced by $h$) and this homeomorphism is unique up to isotopy when $(g, n) \neq (2, 0)$.\end{theorem}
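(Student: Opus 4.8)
The plan is to show that the edge-preserving map $\lambda$ is in fact \emph{superinjective}, meaning that two vertices $\alpha, \beta$ of $\mathcal{N}(R)$ have disjoint representatives if and only if $\lambda(\alpha)$ and $\lambda(\beta)$ do, and then to invoke the rigidity principle that a superinjective self-map of $\mathcal{N}(R)$ is induced by a homeomorphism. An edge-preserving map by definition supplies only one implication (disjointness is carried to disjointness), so the substantive content is to recover the reverse implication: that curves which must intersect are sent to curves which intersect. Everything reduces to translating geometric relations between curves into purely combinatorial statements about $\mathcal{N}(R)$ that are manifestly preserved by any edge-preserving map.

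First I would establish that $\lambda$ is injective on vertices. If $\lambda(\alpha) = \lambda(\beta)$ with $\alpha \neq \beta$, then using $g \geq 2$ there is a nonseparating curve $\gamma$ disjoint from one of $\alpha, \beta$ but meeting the other; applying $\lambda$ and chasing adjacencies through a suitable configuration produces a contradiction. Next I would record combinatorial certificates for the basic configurations: (a) a complete subgraph of $\mathcal{N}(R)$ of the appropriate maximal cardinality corresponds to a pants decomposition of $R$ all of whose curves are nonseparating; (b) whether two disjoint nonseparating curves cobound an embedded subsurface of a prescribed topological type can be read off from the pattern of their common neighbors in $\mathcal{N}(R)$; and (c) a pair of nonseparating curves meeting exactly once is detected through the one-holed torus they fill together with their links. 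Because $\lambda$ is injective and edge-preserving, it carries complete subgraphs to complete subgraphs, and the cardinality count forces pants decompositions to pants decompositions; from there one argues that the topological type of each complementary piece is preserved.

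The main obstacle is upgrading this to full superinjectivity. The danger is that two intersecting curves $\alpha, \beta$ could a priori be sent to disjoint curves, with the images of the surrounding disjoint curves absorbed into a smaller configuration. To rule this out I would extend a pair with $i(\alpha,\beta) \neq 0$ to two pants decompositions $P, P'$ sharing all but one or two curves, so that $\alpha$ and $\beta$ occupy a small fillable subsurface; then the preservation of pants decompositions and of topological type from the previous step forces the images to fill that same piece, hence to intersect. Treating the cases $i(\alpha,\beta)=1$ and $i(\alpha,\beta)\geq 2$ separately, and handling the low-genus and small-$n$ configurations where there is little room to maneuver, is where the real work lies.

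Once superinjectivity is in hand, the theorem follows from the rigidity of $\mathcal{N}(R)$: a superinjective self-map is induced by a homeomorphism $h$ of $R$. For uniqueness, if $h_1$ and $h_2$ both induce $\lambda$, then $h_2^{-1} h_1$ fixes the isotopy class of every nonseparating simple closed curve, so it lies in the kernel of the action of $Mod^*_R$ on $\mathcal{N}(R)$. This kernel is trivial precisely when $(g,n) \neq (2,0)$; in the exceptional closed genus-two case it is generated by the hyperelliptic involution, which acts trivially on all curves, accounting for the failure of uniqueness there. This yields uniqueness up to isotopy under the stated hypothesis.
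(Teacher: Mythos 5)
There is a genuine gap, and it sits at the final step of your plan. You reduce the theorem to ``the rigidity principle that a superinjective self-map of $\mathcal{N}(R)$ is induced by a homeomorphism,'' but no such theorem is available in the generality required here: the prior result (Irmak, \emph{Complexes of nonseparating curves and mapping class groups}) establishes rigidity for superinjective maps of the nonseparating curve complex only when $n \leq g-1$, and the paper's introduction states explicitly that Theorem~\ref{A} is an \emph{improvement} of that result. For $n \geq g$ your reduction therefore invokes a statement that is essentially equivalent to what is being proved, so the argument does not close. The paper takes a different route entirely: it never proves (or needs) full superinjectivity. It proves only that geometric intersection number one is preserved exactly (Lemmas \ref{0}--\ref{int-one}), uses this together with the preservation of embedded pairs of pants and peripheral pairs to pin down $\lambda$ on an explicit finite configuration $\mathcal{C}_1 \cup \mathcal{C}_2 \cup \mathcal{C}_3$ and hence to produce a candidate homeomorphism $h$ (Lemmas \ref{curves}--\ref{curves-III}), verifies that suitable finite subsets have trivial pointwise stabilizer (Lemmas \ref{abcd}, \ref{abcde}), and then propagates the identity $h([x])=\lambda([x])$ to all of $\mathcal{N}(R)$ by an induction over images of a finite generating set of $Mod_R$ (Lemma \ref{prop1e} and the proof of Theorem \ref{A}), in the style of Irmak--Paris and Aramayona--Leininger. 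Your uniqueness argument via the kernel of the action on $\mathcal{N}(R)$ is fine and matches the paper.

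There is a second, smaller gap in the step where you claim superinjectivity itself. Your method is to ``extend a pair with $i(\alpha,\beta)\neq 0$ to two pants decompositions $P, P'$ sharing all but one or two curves, so that $\alpha$ and $\beta$ occupy a small fillable subsurface.'' This is only possible when $\alpha \cup \beta$ fills a one-holed torus or a four-holed sphere; if $\alpha$ and $\beta$ together fill a large subsurface (or all of $R$), there is no large multicurve disjoint from both and no such pair $P, P'$ exists. So even the superinjectivity half of your plan is established only for pairs of small intersection, which is essentially what the paper proves (for $i(\alpha,\beta)=1$) --- but the paper then compensates with the finite-rigid-set and propagation machinery rather than with a general superinjectivity statement. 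To repair your proposal you would either have to supply a genuinely new argument for arbitrary intersecting pairs, or abandon the reduction to superinjective rigidity and adopt something like the paper's bootstrapping scheme.
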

 
\begin{theorem} \label{A2} Let $R$ be a compact, connected, orientable surface of genus $g$ with $n$ boundary components with $g \geq 2$,
$n \geq 0$. If $\theta :\mathcal{C}(R) \rightarrow \mathcal{C}(R)$ is an edge-preserving map, then there exists a homeomorphism $h : R \rightarrow R$ such that $[h]_*(\alpha) = \theta(\alpha)$
for every vertex $\alpha$ in $\mathcal{C}(R)$ (i.e. $\theta$ is induced by $h$) and this homeomorphism is unique up to isotopy when $(g, n) \neq (2, 0)$.\end{theorem}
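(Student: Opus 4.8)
The plan is to deduce Theorem \ref{A2} from Theorem \ref{A} by restricting $\theta$ to the nonseparating curve graph and then extending the resulting homeomorphism over the separating curves. I would start with two elementary observations. First, any edge-preserving map is injective on adjacent vertices: if $a$ and $b$ span an edge and $\theta(a)=\theta(b)$, then the edge $\{a,b\}$ maps to a single vertex, which is not an edge, contradicting edge-preservation. Second, a pants decomposition of $R$ is a maximal complete subgraph of $\mathcal{C}(R)$ with $3g-3+n$ vertices; its image under $\theta$ consists of pairwise adjacent, hence pairwise disjoint, curves which are pairwise distinct by the first observation, so it is again a complete subgraph with $3g-3+n$ vertices, i.e. another pants decomposition. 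Thus $\theta$ carries pants decompositions to pants decompositions.

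The first and main step is to show that $\theta$ sends nonseparating curves to nonseparating curves, so that it restricts to an edge-preserving self-map of $\mathcal{N}(R)$. For this I would exhibit a combinatorial certificate for being nonseparating that survives an edge-preserving map: a finite configuration $\Gamma$ of curves, described purely by disjointness relations, that can be completed around a vertex $a$ exactly when $a$ is nonseparating (for instance, a configuration witnessing that a handle survives disjoint from $a$ after cutting along the other curves of $\Gamma$). One then argues that the $\theta$-image of such a configuration is a configuration of the same combinatorial type sitting inside a pants decomposition, and that only a nonseparating curve can occupy the distinguished slot. I expect this to be the hard part, precisely because an edge-preserving map need not preserve non-adjacency: the configuration $\Gamma$ must be engineered so that being nonseparating is forced by adjacency data alone, and its rigidity under $\theta$ must be verified using the fact, recorded above, that $\theta$ preserves pants decompositions.

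Granting Step 1, the restriction $\theta|_{\mathcal{N}(R)}:\mathcal{N}(R)\to\mathcal{N}(R)$ is edge-preserving, since $\mathcal{N}(R)$ is the induced subgraph on nonseparating vertices, so Theorem \ref{A} produces a homeomorphism $h:R\to R$ with $[h]_*(\alpha)=\theta(\alpha)$ for every nonseparating $\alpha$. It then remains to check $\theta=[h]_*$ on separating curves. Given a separating curve $s$, let $N(s)$ denote the set of nonseparating curves disjoint from $s$. Since $\theta=[h]_*$ on $N(s)$ and $[h]_*$ is an automorphism, $\theta(N(s))=N(h(s))$, and because $\theta$ is edge-preserving, $\theta(s)$ is disjoint from every curve of $N(h(s))$. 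The nonseparating curves lying on each positive-genus side of $h(s)$ fill that side, so any curve disjoint from all of $N(h(s))$ is boundary-parallel in those filled subsurfaces; the only nontrivial such curve is $h(s)$, whence $\theta(s)=h(s)=[h]_*(s)$. The cases where $s$ cuts off a planar piece (possible when $n\geq 2$) require a small refinement, recovering $s$ as the unique nontrivial common boundary curve of the filled positive-genus subsurface, but the mechanism is the same. This yields $\theta=[h]_*$ on all of $\mathcal{C}(R)$, so $\theta$ is induced by $h$.

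Finally, for the uniqueness clause I would invoke the standard fact that a homeomorphism inducing the identity on the set of isotopy classes of curves is isotopic to the identity, the sole exception among surfaces with $g\geq 2$ being the closed genus-two surface, where the hyperelliptic involution acts trivially on $\mathcal{C}(R)$; applying this to $h_1 h_2^{-1}$ for two homeomorphisms $h_1,h_2$ inducing $\theta$ shows they are isotopic precisely when $(g,n)\neq(2,0)$.
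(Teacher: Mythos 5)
Your overall architecture --- restrict $\theta$ to $\mathcal{N}(R)$, invoke Theorem \ref{A}, then recover each separating curve as the unique nontrivial class disjoint from a filling family of nonseparating curves on its complement, and settle uniqueness via the Alexander method and the hyperelliptic involution in genus two --- is exactly the paper's, and your treatment of the separating curves and of uniqueness is essentially correct (the paper uses finite configurations $\mathcal{B}$ rather than the full infinite set $N(s)$, and handles the planar-piece cases by first pinning down the separating curves that cut off a pair of pants, but the mechanism is the same). The problem is your Step 1, which you correctly identify as the hard part but do not actually carry out. You propose a ``combinatorial certificate for being nonseparating'' detected by adjacency data alone, but you never exhibit such a configuration $\Gamma$, and it is not clear that one exists that is rigid under a mere edge-preserving map: such a map can send intersecting curves to disjoint or even coincident ones, so the $\theta$-image of $\Gamma$ need not have the same combinatorial type, and the assertion that ``only a nonseparating curve can occupy the distinguished slot'' in the image is precisely what needs proof. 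As it stands, the restriction $\theta|_{\mathcal{N}(R)}$ is not justified, and everything downstream is conditional on it.

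The paper closes this gap by a different route. It first proves (Lemma \ref{22}) that $i(\alpha_1,\alpha_2)=1$ forces $i(\theta(\alpha_1),\theta(\alpha_2))\neq 0$, then upgrades this --- via the adjacency and embedded-pair-of-pants lemmas (Lemmas \ref{adj-2}, \ref{embedded-2}) and the characterization of geometric intersection number one from \cite{Ir3} (Lemma \ref{Irmak-lemma}) --- to the statement that $\theta$ preserves geometric intersection number exactly one (Lemma \ref{int-one-2}). The key observation is then a parity argument: every simple closed curve meets a separating curve an even number of times, so $i(\theta([a]),\theta([b]))=1$ immediately forces $\theta([a])$ to be nonseparating. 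This parity obstruction, together with the considerable technical work needed to establish Lemma \ref{int-one-2} (which parallels the core of Section 2), is the content missing from your proposal; you would need either to reproduce that chain of lemmas or to give an honest construction and verification of your certificate $\Gamma$ before the appeal to Theorem \ref{A} is legitimate.
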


For a simple closed curve $a$ on $R$, the cut surface $R_a$ (the surface obtained from $R$ by cutting along $a$) is defined as follows: The surface $R_a$ is a compact surface such that there is a homeomorphism $f$ between two of its boundary components, the quotient surface $R_a / (x \sim f(x))$ is homeomorphic to $R$, and the image of these two boundary components 
under the quotient map is $a$ (see section 1.3.1 in \cite{FM}). Cutting $R$ along pairwise disjoint nonseparating simple
closed curves on $R$ can be defined similarly. Let $C= \{c_1, c_2, \ldots, c_g\}$ be a set of pairwise disjoint nonseparating simple
closed curves on $R$ such that $R_C$ (the surface obtained from $R$ by cutting along $C$) is a sphere with
$2g+n$ boundary components. The set $\{ [c_1], [c_2], \cdots,
[c_g] \}$ is called a cut system and denoted by $\langle [c_1], [c_2],
\cdots, [c_g] \rangle$, where $[c_i]$ notation is used for the isotopy class of $c_i$. The Hatcher-Thurston graph, $\mathcal{HT}(R)$, has the cut systems as vertices. This is the 1-skeleton of the Hathcher-Thurston complex which was constructed in \cite{HT} by Hatcher and Thurston in order to find a presentation for the mapping class group. There is a natural action of $Mod^*_R$ on $\mathcal{HT}(R)$ by automorphisms. We will describe how two vertices span an edge in the Hatcher-Thurston graph and prove that rectangle preserving maps of $\mathcal{HT}(R)$ are induced by homeomorphisms  with the following theorem in Section 4. 
 
\begin{theorem} \label{B}  Let $R$ be a closed, connected, orientable surface of genus $g \geq 2$. If $\tau: \mathcal{HT}(R) \rightarrow\mathcal{HT}(R)$ is a rectangle preserving map, then $\tau$ is induced by a homeomorphism of $R$, i.e. there exists a homeomorphism $h : R \rightarrow R$ such that $\tau( \langle [a_1], [a_2], \cdots, [a_g] \rangle) = \langle [h(a_1)], [h(a_2)], \cdots, [h(a_g)] \rangle $ for every vertex $ \langle [a_1], [a_2], \cdots, [a_g] \rangle \in \mathcal{HT}(R)$, and this homeomorphism is unique 
up to isotopy when $g \geq 3$.\end{theorem}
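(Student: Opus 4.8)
The plan is to reduce Theorem \ref{B} to the nonseparating curve graph result already established in Theorem \ref{A}. Following the pattern of the earlier theorems, I would manufacture from the rectangle preserving map $\tau$ an edge preserving self-map $\lambda$ of $\mathcal{N}(R)$, invoke Theorem \ref{A} to realize $\lambda$ by a homeomorphism $h$, and then verify that this same $h$ induces $\tau$ on the level of cut systems.

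First I would fix the combinatorial vocabulary. Recall that two cut systems are joined by an edge precisely when they agree in all but one curve and the two differing curves meet once (an elementary move), and that a rectangle is the $4$-cycle obtained by performing two elementary moves on two distinct curves of a fixed cut system, the remaining $g-2$ curves staying fixed. The first observation I would record is a purely local one at a vertex $V = \langle [c_1], \dots, [c_g]\rangle$: two edges issuing from $V$ lie on a common rectangle exactly when they replace two \emph{different} curves of $V$, and they fail to lie on a common rectangle when they replace the \emph{same} curve. Since $g \geq 2$, every edge at $V$ does lie on some rectangle, so the edges at $V$ partition into exactly $g$ classes, one for each curve $[c_i]$, and this partition is expressed entirely through rectangles. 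Hence $\tau$ carries the class of $[c_i]$ at $V$ to the class of some curve $[c_i']$ at $\tau(V)$. This is the device that lets me detect individual curves inside cut systems through $\tau$, and I would use it to define $\lambda$ by setting $\lambda([c_i])$ to be the curve of $\tau(V)$ singled out by the image class.

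The main obstacle is to show that $\lambda$ is \emph{well defined} and \emph{edge preserving}. For well-definedness I would argue that if a nonseparating curve $[c]$ belongs to two cut systems $V$ and $W$, then $V$ and $W$ can be joined by a path of elementary moves none of which touches $[c]$; this is exactly the connectivity of the Hatcher-Thurston structure on the cut surface $R_c$ (a genus $g-1$ surface with two boundary components), and along such a path the selected curve $\lambda([c])$ cannot change, so the value is independent of the chosen cut system. For edge-preservation I would start from curves that co-occur in a single cut system: the image under $\tau$ of that cut system is again a set of pairwise disjoint curves, so $\lambda$ preserves disjointness of any pair that can be completed to a common cut system, and I would then bootstrap to arbitrary disjoint nonseparating pairs by inserting an auxiliary cut system, or a short chain of them, containing both. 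Verifying these two properties, and in particular checking that the rectangle hypothesis really forces the local move-classes to be respected, is where essentially all the work lies.

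With $\lambda$ in hand as an edge preserving self-map of $\mathcal{N}(R)$, Theorem \ref{A} produces a homeomorphism $h : R \to R$ inducing $\lambda$. It then remains to confirm that $h$ induces $\tau$ itself: since a cut system is determined by its set of curves and since $\tau(V)$ is built from the $\lambda$-images of the curves of $V$, we obtain $\tau(V) = \langle [h(c_1)], \dots, [h(c_g)]\rangle = [h]_*(V)$ for every vertex $V$. Finally, for the uniqueness clause I would note that when $g \geq 3$ the action of $Mod^*_R$ on curves, hence on cut systems, is faithful, so $h$ is determined up to isotopy; the genus $2$ case is excluded precisely because the hyperelliptic involution fixes the isotopy class of every simple closed curve and therefore acts trivially on $\mathcal{HT}(R)$, obstructing uniqueness.
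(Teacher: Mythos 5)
Your overall strategy---extract from $\tau$ an edge-preserving self-map of $\mathcal{N}(R)$, apply Theorem \ref{A}, and then reassemble cut systems---is exactly the paper's strategy (the paper defines $\tilde{\tau}([a])$ as the single curve in $\tau(v)\setminus\tau(w)$ for an elementary move $v\leftrightarrow w$ replacing $a$, citing the well-definedness argument from the automorphism case in \cite{IrK}). But your proof of edge-preservation has a genuine gap at precisely the point where the paper does most of its work. You propose to handle an arbitrary disjoint pair of nonseparating curves $a_1,a_2$ by ``inserting an auxiliary cut system, or a short chain of them, containing both.'' When $g\geq 3$ the union $a_1\cup a_2$ may separate $R$, and in that case \emph{no} cut system contains both curves: a cut system must have connected complement, so any two of its curves have connected complement of their union. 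This is Case (ii) of Lemma \ref{lemma-2} in the paper, and it cannot be dispatched by co-occurrence in a vertex. The paper first shows $\tilde{\tau}([a_1])\neq\tilde{\tau}([a_2])$ using auxiliary dual curves and two pants decompositions built from curves that pairwise have connected complement, and then pins down disjointness of the images by mapping two chains $C_1, C_2$ in the complement of $a_1\cup a_2$ forward (chains go to chains by intersection-one preservation), forcing the image curves into the two disjoint annuli complementary to regular neighborhoods of the image chains. Some argument of this kind is unavoidable; without it your map $\lambda$ is not known to be edge-preserving and Theorem \ref{A} cannot be invoked.

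A secondary problem is your opening combinatorial claim that two edges at a vertex $V$ lie on a common rectangle \emph{exactly} when they replace different curves of $V$. The ``only if'' direction is fine, but the ``if'' direction fails: if one edge replaces $c_i$ by $c_i'$ and another replaces $c_j$ by $c_j'$ ($i\neq j$), the fourth vertex of the would-be rectangle requires $c_i'$ and $c_j'$ to be disjoint, which need not hold. So the partition of the star into $g$ classes is not literally ``expressed entirely through rectangles'' and your definition of $\lambda$ needs repair---either by a transitive-closure argument showing the rectangle relation still generates the right partition, or, more simply, by adopting the paper's definition via a single elementary move and proving independence of the choices (connectivity of the Hatcher--Thurston graph of the cut surface $R_c$, as you sketch for well-definedness, is the right tool there). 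Your treatment of the final assembly and of uniqueness ($g\geq 3$ versus the hyperelliptic involution in genus $2$) matches the paper and is fine.
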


Some results about complex of curves on compact, connected, orientable surfaces and their applications on mapping class groups are as follows: In \cite{Iv1} Ivanov proved that every automorphism of the complex of curves is induced by a homeomorphism of $R$ if the genus is at least two, and he used this result to give a classification of isomorphisms between any two finite index subgroups of the extended mapping class group of $R$. These 
results were extended for surfaces of genus zero 
and one by Korkmaz in \cite{K1} and indepedently by Luo in \cite{L}. 
The author studied superinjective simplicial maps of the complex of curves which are defined as follows:
Let $\alpha$, $\beta$ be two vertices in the complex of curves. The geometric intersection number $i(\alpha, \beta)$ is defined to be the minimum number of points of $a \cap b$ where $a \in \alpha$ and $b \in \beta$. A simplicial map from the complex of curves to itself is called superinjective if it satisfies the following:
if $\alpha, \beta$ are two vertices and $i(\alpha,\beta) \neq 0$, then
$i(\lambda(\alpha),\lambda(\beta)) \neq 0$. A superinjective simplicial map is a simplicial map that maps any two vertices that do not span an edge to vertices that do not span an edge. So, superinjective maps are also edge-preserving maps of the complement graph of $\mathcal{C}(R)$. 
The author proved that the superinjective simplicial maps of the complex of curves on a compact, connected, orientable surface are induced by homeomorphisms if the genus is at least two, and using this result she gave a classification of injective homomorphisms from finite index subgroups to the whole extended 
mapping class group in \cite{Ir1}, \cite{Ir2}, \cite{Ir3}. Behrstock-Margalit and Bell-Margalit proved these results for lower genus cases in \cite{BM} and in \cite{BeM}. Shackleton proved that locally injective simplicial maps of the curve complex are induced by homeomorphisms in \cite{Sh}. Aramayona-Leininger proved the existence of
finite rigid sets (sets such that every locally injective simplicial map from this set to the curve complex is induced by a homeomorphism) in the curve complex in \cite{AL1}. By using this result, they also proved that there is an exhaustion of the curve complex by a sequence of finite rigid sets in \cite{AL2}. In \cite{IlK} Ilbira-Korkmaz proved the existence of finite rigid subcomplexes in the curve complex for nonorientable surfaces when $g+n \neq 4$.
The author proved that there is an exhaustion of the curve complex by a sequence of finite superrigid sets (sets such that every superinjective simplicial map from this set to the curve complex is induced by a homeomorphism) on compact, connected, nonorientable surfaces in \cite{Ir10}. She also proved recently that there is an exhaustion of the curve complex by a sequence of finite rigid sets on compact, connected, nonorientable surfaces in \cite{Ir11}.  
In \cite{H1} Hern\'andez proved that there is a finite set of curves whose union of iterated rigid expansions gives the whole curve complex. In \cite{H2} Hern\'andez proved that if $S_1$ and $S_2$ are orientable surfaces of finite topological type such that $S_1$ has genus at least 3 and the complexity of $S_1$ is an upper bound of the complexity of $S_2$, and $\theta : \mathcal{C}(S_1) \rightarrow \mathcal{C}(S_2)$ is an edge-preserving map, 
then $S_1$ is homeomorphic to $S_2$ and $\theta$ is induced by a homeomorphism. 

Our new results improve/extend the following results on compact, connected, orientable surfaces: In \cite{Ir3} the author proved that superinjective simplicial maps of the nonseparating curve complexes are induced by homeomorphisms when the genus is at least two and the number of boundary 
components is at most $g-1$. Since superinjective simplicial maps of the nonseparating curve complexes are edge-preserving on the nonseparating curve graphs, Theorem 1.1 improves this result when 
$g=2$, $n \geq 0$. Our Theorem 1.2 gives a new proof of Hern\'andez's result about edge preserving maps of $\mathcal{C}(R)$ in our setting and covers $g=2$, $n \geq 0$ case as well. Since superinjective simplicial maps and automorphisms of the complex of curves are both edge-preserving on the curve graphs, Theorem 1.2 improves the results of the author given in \cite{Ir1}, \cite{Ir2}, \cite{Ir3} and also the results of Ivanov, Korkmaz and Luo about the automorphisms of the curve complex given in \cite{Iv1}, \cite{K1}, \cite{L} respectively.
Since automorphisms of the Hatcher-Thurston complex are rectangle preserving on the Hatcher-Thurston graph, Theorem 1.3 improves the result given by Irmak-Korkmaz in \cite{IrK} that automorphisms of the Hatcher-Thurston complex are induced by homeomorphisms when $g \geq 2$ and $n = 0$. In \cite{H3} Hern\'andez proved that an edge-preserving and alternating map 
from the Hatcher-Thurston graph to itself is induced by a homeomorphism of $R$ when $R$ is closed and $g \geq 3$. With our Theorem 1.3 we prove this result when $R$ is closed and $g \geq 2$ since an edge-preserving and alternating map is equivalent to a rectangle preserving map. 
  
Irmak-Paris proved that superinjective simplicial maps of the two-sided curve complex are induced by homeomorphisms on compact, connected, nonorientable surfaces when the genus is at least 5 in \cite{IrP1}. They used this result to classify injective homomorphisms from finite index subgroups to the whole mapping class group on compact, connected, nonorientable surfaces when the genus is at least 5 in \cite{IrP2}. In this paper we use some techniques given by Irmak-Paris in \cite{IrP1}  
and some techniques given by Aramayona-Leininger in \cite{AL2}. 

We note that the author also proved that if $g=0, n \geq 5$ or $g=1, n \geq 3$, and $\lambda : \mathcal{C}(R) \rightarrow\mathcal{C}(R)$ is an edge-preserving map, then $\lambda$ is induced by a homeomorphism of $R$, and this homeomorphism is unique up to isotopy in \cite{Ir4}.

\section{Edge-preserving Maps of the Nonseparating Curve Graphs}

In this section we will always assume that $g \geq 2$, $n \geq 0$ and $\lambda :\mathcal{N}(R) \rightarrow \mathcal{N}(R)$ is an edge-preserving map. Throughout the paper we will use that if $\alpha_1, \alpha_2, \cdots, \alpha_m$ are isotopy classes of essential simple closed curves on $R$, then there exist $a_1 \in \alpha_1, a_2 \in \alpha_2, \cdots, a_m \in \alpha_m$ such that the number of intersection of the curves $a_i$ and $a_j$ equals the geometric intersection number of $\alpha_i$ and $\alpha_j$ for all $i, j= 1, 2, \cdots, m$. This works by giving the surface $R$ a hyperbolic metric (since $R$ has negative Euler characteristic) and choosing the unique closed geodesic in each isotopy class. We will call $a_1, a_2, \cdots, a_m$ as the representatives in minimal position of $\alpha_1, \alpha_2, \cdots, \alpha_m$ respectively.

Let $P$ be a set of pairwise disjoint nontrivial simple closed curves on $R$. The set $P$ is
called a pair of pants decomposition of $R$, if $R_P$ (the surface obtained from $R$ by cutting along $P$) is disjoint union of genus zero surfaces with three boundary components, pairs of pants. A pair of pants of a pants
decomposition is the image of one of these connected components under the quotient map $q:R_P \rightarrow R$. 
We note that if a set $P$ is a pair of pants decomposition consisting of nonseparating simple closed curves on $R$, then $[P] = \{ [x]: x \in P\}$ forms a maximal complete subgraph in the nonseparating curve graph on $R$. The set $[P]$ has exactly $3g-3+n$ elements.
 
\begin{lemma}
\label{inj} If $\mathcal{A}$ is the set of vertices of a complete subgraph of $\mathcal{N}(R)$, then $\lambda$ is injective on $\mathcal{A}$.
\end{lemma}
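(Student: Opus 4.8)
The plan is to invoke only the definition of an edge-preserving map together with the fact that edges of $\mathcal{N}(R)$ join \emph{distinct} vertices. First I would take two arbitrary distinct vertices $\alpha,\beta$ of $\mathcal{A}$. Since $\mathcal{A}$ is the vertex set of a complete subgraph, $\alpha$ and $\beta$ are joined by an edge of $\mathcal{N}(R)$; concretely they admit disjoint representatives, so $i(\alpha,\beta)=0$ even though $\alpha\neq\beta$.

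Next I would apply the hypothesis directly: because $\lambda$ is edge-preserving, the images $\lambda(\alpha)$ and $\lambda(\beta)$ are again joined by an edge of $\mathcal{N}(R)$. The decisive observation is that an edge connects two distinct vertices, since $\mathcal{N}(R)$ has no loops (an isotopy class is not declared adjacent to itself). Hence the mere fact that $\lambda(\alpha)$ and $\lambda(\beta)$ span an edge already forces $\lambda(\alpha)\neq\lambda(\beta)$.

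Finally, as $\alpha$ and $\beta$ were an arbitrary pair of distinct vertices of $\mathcal{A}$, this shows $\lambda$ separates the points of $\mathcal{A}$, so $\lambda$ is injective on $\mathcal{A}$. I do not expect any genuine obstacle here: the whole content is the remark that the edge-preserving hypothesis applies to \emph{every} pair of distinct vertices of a complete subgraph, and that ``connected by an edge'' already encodes distinctness. The only point requiring care is to invoke the hypothesis precisely on such pairs — which are exactly the edges internal to $\mathcal{A}$ — rather than on an arbitrary pair of vertices of $\mathcal{N}(R)$, for which nothing can be concluded.
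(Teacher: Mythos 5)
Your argument is correct and is exactly the content of the paper's own (one-line) proof: distinct vertices of $\mathcal{A}$ span an edge, the edge-preserving hypothesis sends them to vertices spanning an edge, and an edge joins distinct vertices. Nothing further is needed.
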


\begin{proof} The result follows since $\lambda$ is edge-preserving.\end{proof}

\begin{coroll} \label{pd-inj} Let $P$ be a pants decomposition consisting of nonseparating simple closed curves on $R$. 
A set of pairwise disjoint representatives of $\lambda([P])$ is a pants decomposition on $R$.\end{coroll}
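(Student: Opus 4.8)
The plan is to reduce the statement to two standard facts: the flag property of the curve complex (which lets us realize a complete subgraph by simultaneously disjoint curves) and the numerical characterization of pants decompositions. First I would record what the excerpt already gives us: for a pants decomposition $P$ consisting of nonseparating curves, the set $[P]$ has exactly $3g-3+n$ elements and forms a complete subgraph of $\mathcal{N}(R)$. Since $\lambda$ is edge-preserving, any two vertices of $[P]$ are joined by an edge, hence so are their images, and therefore $\lambda([P])$ is again a complete subgraph of $\mathcal{N}(R)$. Because the complex of curves is a flag complex, the vertices of a complete subgraph admit simultaneously pairwise disjoint representatives; this is precisely what justifies speaking of ``a set of pairwise disjoint representatives of $\lambda([P])$.''

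Next I would pin down the cardinality. By Lemma \ref{inj}, $\lambda$ is injective on the vertex set $[P]$ of the complete subgraph, so $\lambda([P])$ consists of exactly $3g-3+n$ distinct isotopy classes. Every vertex of $\mathcal{N}(R)$ is the class of a nonseparating, hence essential, simple closed curve, so a set of pairwise disjoint representatives of $\lambda([P])$ is a family of $3g-3+n$ pairwise disjoint, pairwise non-isotopic essential simple closed curves on $R$.

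Finally I would invoke the well-known fact that the maximal cardinality of a collection of pairwise disjoint, pairwise non-isotopic essential simple closed curves on a surface of genus $g$ with $n$ boundary components is exactly $3g-3+n$, and that any collection attaining this bound is a pants decomposition (equivalently, cutting along it yields only pairs of pants). Since the representatives of $\lambda([P])$ realize this maximum, they form a pants decomposition, which is the desired conclusion.

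I do not expect a serious obstacle here; the only points requiring care are the two cited background facts. The flag-complex property is what lets pairwise disjointness upgrade to simultaneous disjointness, and the maximality count is what converts ``$3g-3+n$ disjoint non-isotopic essential curves'' into ``pants decomposition.'' Both are standard, so the substance of the proof is really the combination of edge-preservation (to keep completeness) with the injectivity supplied by Lemma \ref{inj} (to keep the count at $3g-3+n$); neither the separating/nonseparating distinction nor the presence of boundary components affects the argument.
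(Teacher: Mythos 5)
Your proof is correct and follows essentially the same route as the paper, which simply cites Lemma \ref{inj}: edge-preservation keeps $\lambda([P])$ a complete subgraph, injectivity on complete subgraphs preserves the count of $3g-3+n$ distinct classes, and a maximal family of $3g-3+n$ pairwise disjoint, pairwise non-isotopic essential simple closed curves is a pants decomposition. You have merely made explicit the standard facts the paper leaves implicit.
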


\begin{proof} The proof follows from Lemma \ref{inj}.\end{proof}
 
\begin{lemma}
\label{0} Suppose $g \geq 2, n \geq 0$. Let $\alpha, \beta$ be two vertices of $\mathcal{N}(R)$. If $i(\alpha, \beta) = 1$, then $\lambda(\alpha) \neq \lambda(\beta)$.\end{lemma}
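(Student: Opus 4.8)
The plan is to argue by contradiction. Suppose $\lambda(\alpha) = \lambda(\beta) = \gamma$ for some vertex $\gamma$ of $\mathcal{N}(R)$. I would repeatedly use two consequences of the hypotheses. First, since $\lambda$ is edge-preserving, any nonseparating curve disjoint from $\alpha$ has image a curve that is distinct from $\gamma$ and disjoint from it, and likewise for any curve disjoint from $\beta$; in particular, no curve lying in the link of $\alpha$ or in the link of $\beta$ can map to $\gamma$. Second, by Corollary~\ref{pd-inj} together with Lemma~\ref{inj}, whenever $P$ is a pants decomposition of $R$ consisting of nonseparating curves, $\lambda(P)$ is again a pants decomposition with the full number $3g-3+n$ of pairwise disjoint, pairwise distinct curves, one of which is $\gamma$.

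For the geometry, I would first record that because $i(\alpha,\beta)=1$ a regular neighborhood of $\alpha\cup\beta$ is a one-holed torus, and since $g\geq 2$ its complement still contains nonseparating curves, so there is room to manoeuvre. The natural approach is then to choose two pants decompositions $P_1$ and $P_2$ of nonseparating curves with $\alpha\in P_1$ and $\beta\in P_2$ such that the multicurve $M=(P_1\setminus\{\alpha\})\cup(P_2\setminus\{\beta\})$ fills $R$. Every curve of $M$ is disjoint from $\alpha$ or from $\beta$, so by the first observation each component of $\lambda(M)$ is distinct from $\gamma$ and disjoint from $\gamma$; hence $\gamma$ is an essential curve disjoint from the whole multicurve $\lambda(M)$ and is not one of its components. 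If one knew that $\lambda(M)$ still filled $R$, this would be absurd and the proof would be finished.

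The main obstacle is exactly that filling need not be preserved: edge-preservation controls only disjointness and says nothing about intersections, and indeed the assumed collapse $\lambda(\alpha)=\lambda(\beta)=\gamma$ is precisely what manufactures a curve ($\gamma$ itself) disjoint from $\lambda(M)$, so $\lambda(M)$ provably fails to fill. The real work, which I expect to be the heart of the argument, is therefore to convert the configuration into a statement internal to a single complete subgraph: exploiting the extra handle available when $g\geq 2$, I would arrange the auxiliary curves so that some single pants decomposition is forced to acquire two disjoint members with the common image $\gamma$, contradicting the injectivity of $\lambda$ on complete subgraphs (Lemma~\ref{inj}) and the exact count in Corollary~\ref{pd-inj}. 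I would treat the tightest case $(g,n)=(2,0)$, where there is the least room to place the auxiliary curves, separately, by exhibiting the required finite configuration of curves explicitly.
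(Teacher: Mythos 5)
Your proposal correctly sets up the contradiction hypothesis and correctly diagnoses why the naive ``filling'' argument is circular, but it stops exactly where the proof has to begin: no explicit curve configuration is produced, and the sentence ``I would arrange the auxiliary curves so that some single pants decomposition is forced to acquire two disjoint members with the common image $\gamma$'' is a statement of intent, not an argument. That is the entire content of the lemma, and it is missing. Moreover, the specific mechanism you gesture at is not obviously realizable: since $i(\alpha,\beta)=1$, the classes $\alpha$ and $\beta$ never lie in a common pants decomposition, so to violate Lemma~\ref{inj} you would need to manufacture two \emph{other} disjoint classes both mapping to $\gamma$, and nothing in your setup produces them.

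The paper's actual mechanism when $(g,n)\neq(2,0)$ is different and worth comparing against. One completes $b\in\beta$ to a pants decomposition $P$ of nonseparating curves and chooses an auxiliary nonseparating curve $w$ disjoint from $a$ and from every curve of $P\setminus\{b\}$ (so $w$ necessarily meets $b$). Since $\lambda([P])$ is a maximal complete subgraph of $\mathcal{N}(R)$ by Corollary~\ref{pd-inj}, and $\lambda([w])$ is adjacent to $\lambda([x])$ for every $x\in P\setminus\{b\}$, maximality forces either $\lambda([w])=\lambda(\beta)$ or $i(\lambda([w]),\lambda(\beta))\neq 0$; on the other hand, $w$ being disjoint from $a$ forces $\lambda([w])\neq\lambda(\alpha)$ and $i(\lambda([w]),\lambda(\alpha))=0$, which is incompatible with $\lambda(\alpha)=\lambda(\beta)$. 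The contradiction thus comes from a curve \emph{outside} the pants decomposition playing against the maximality of the image clique, not from two members of one decomposition colliding. The case $(g,n)=(2,0)$, which you also defer, occupies most of the paper's proof and requires a long chain of explicit configurations (Claims 1--4 followed by a careful analysis of how the image curves can sit relative to the image pants decomposition); ``exhibiting the required finite configuration explicitly'' is precisely the hard part, and it is not done here.
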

 
\begin{figure}[t]
	\begin{center}  \epsfxsize=2.7in \epsfbox{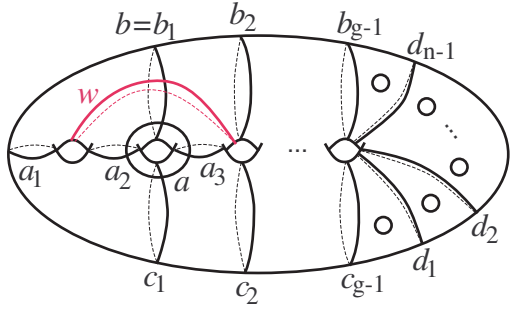}
		
		\caption {Curves $a, b$ intersecting once} \label{fig0-2}
	\end{center}
\end{figure}
 
\begin{proof} {\bf Case (i):} Suppose that $(g, n) \neq (2, 0)$. Let $a$ and $b$ be representatives in minimal position of $\alpha$ and  $\beta$ respectively. Let $b_1=b$ and complete $a, b$ to a curve configuration as shown in Figure \ref{fig0-2}. The set $P = \{a_1, a_2, \cdots, a_{g}, b_1, b_2, \cdots, b_{g-1}, 
c_1, c_2, \cdots, c_{g-1}, d_1, d_2, \cdots, d_{n-1}\}$ is a pants decomposition on $R$. By Corollary \ref{pd-inj}, we know that a set of pairwise disjoint representatives of $\lambda([P])$ is a pants decomposition on $R$. 
Since $[w]$ is connected by an edge to $[x]$ for each curve $x \in P \setminus \{b\}$ and $\lambda$ is edge-preserving, $\lambda([w])$ is connected by an edge to 
$\lambda([x]$ for each $x \in P \setminus \{b\}$. These imply that either (i) $\lambda([w]) = \lambda(\beta)$ or (ii) $i( \lambda([w]), \lambda(\beta)) \neq 0$, and $i( \lambda([w]), \lambda([x])) = 0$ for each $x \in P \setminus \{b\}$. Since $[w]$ is connected by an edge to   
$[a]$, we must have $\lambda([w]) \neq \lambda(\alpha)$ and $i( \lambda([w]), \lambda(\alpha)) = 0$. Now it is easy to see that if $\lambda(\alpha) = \lambda(\beta)$ we get a contradiction with the above information. Hence, $\lambda(\alpha) \neq \lambda(\beta)$.  

{\bf Case (ii):} Suppose that $(g, n) = (2, 0)$.  Let $a, x$ be representatives in minimal position of $\alpha, \beta$ respectively. We complete $a, x$ to a curve configuration $\{a, b, c, x, y, l\}$ as shown in Figure \ref{fig0-1} (i). 
The set $P = \{a, b, c\}$ is a pants decomposition on $R$. By using Corollary \ref{pd-inj}, we see that $\lambda([P])$ forms a maximal complete subgraph in the nonseparating curve graph. Suppose that $\lambda([a]) = \lambda([x])$. We will first prove some implications: 

Claim 1: $\lambda([y]) \neq \lambda([c])$.

Proof of Claim 1: The vertex $[l]$ is connected by an edge to each of
$[x], [b], [y]$. Since $\lambda$ is edge-preserving, $\lambda([l])$ is connected by an edge to each of
$\lambda([x]), \lambda([b]), \lambda([y])$. If $\lambda([y]) = \lambda([c])$, then since 
$\lambda([a]) = \lambda([x])$, we would have that $\lambda([l])$ is connected by an edge to each of
$\lambda([a]), \lambda([b]), \lambda([c])$, but $\{\lambda([a]), \lambda([b]), \lambda([c])\}$ forms a maximal complete subgraph in the nonseparating curve graph, so we get a contradiction. Hence, $\lambda([y]) \neq \lambda([c])$.

Claim 2: $\lambda([y]) \neq \lambda([b])$.
 
Proof of Claim 2: We consider the curve $w$ shown in Figure \ref{fig0-1} (ii). Since $[w]$ is connected by an edge to each of
$[x], [b]$ and $\lambda$ is edge-preserving, we have $\lambda([w])$ is connected by an edge to each of
$\lambda([x]), \lambda([b])$. Since 
$\lambda([a]) = \lambda([x])$, we see that $\lambda([w])$ is connected by an edge to each of
$\lambda([a]), \lambda([b])$, and $\{\lambda([a]), \lambda([b]), \lambda([w])\}$ forms a maximal complete subgraph in the nonseparating curve graph of $R$. Since $[t]$ is connected by an edge to each of
$[a], [y], [w]$ and $\lambda$ is edge-preserving, $\lambda([t])$ is connected by an edge to each of
$\lambda([a]), \lambda([y]), \lambda([w])$. If $\lambda([y]) = \lambda([b])$, then $\lambda([t])$ is connected by an edge to each of $\lambda([a]), \lambda([b]), \lambda([w])$. This gives a contradiction since $\{\lambda([a]), \lambda([b]), \lambda([w])\}$ forms a maximal complete subgraph in the nonseparating curve graph. Hence, $\lambda([y]) \neq \lambda([b])$.

\begin{figure}
	\begin{center}
		\hspace{0cm} \epsfxsize=1.82in \epsfbox{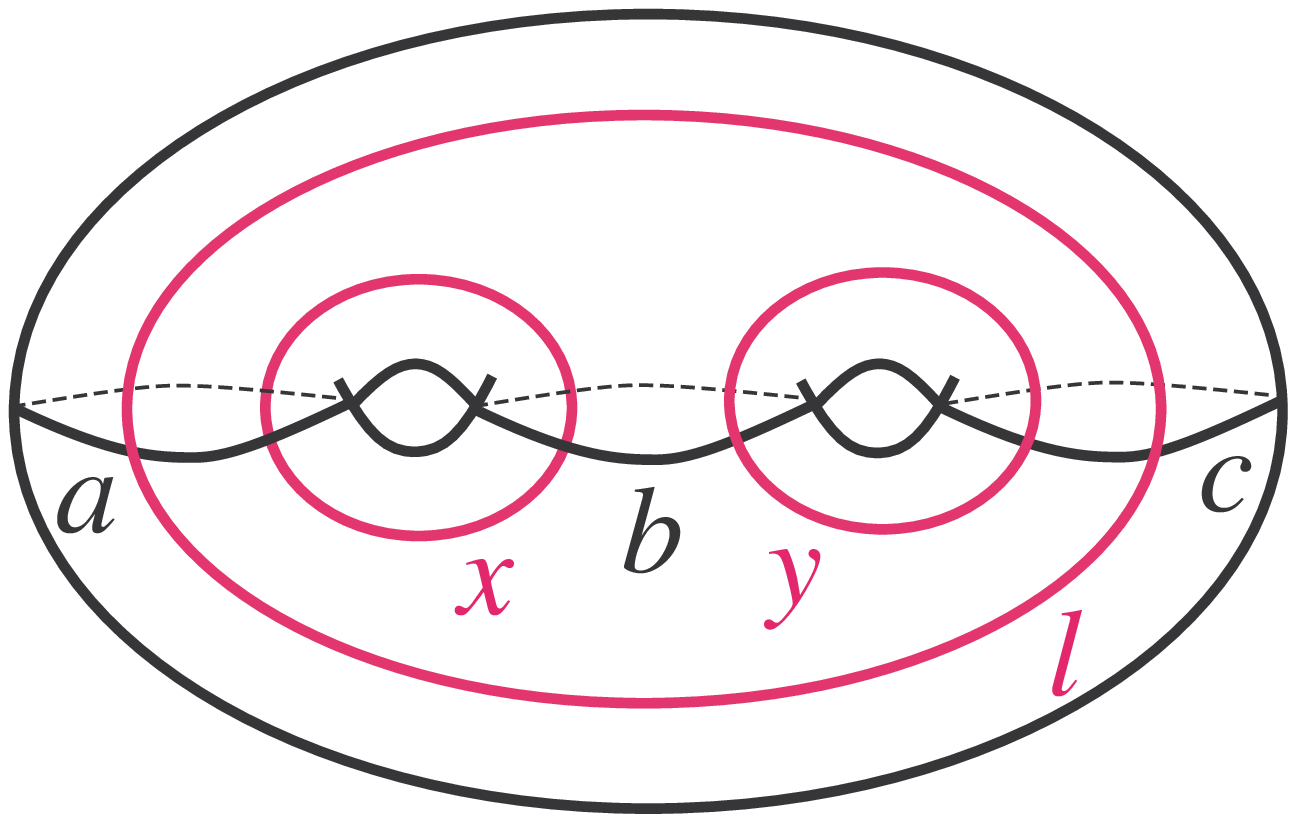} \hspace{0.5cm}  \epsfxsize=1.82in \epsfbox{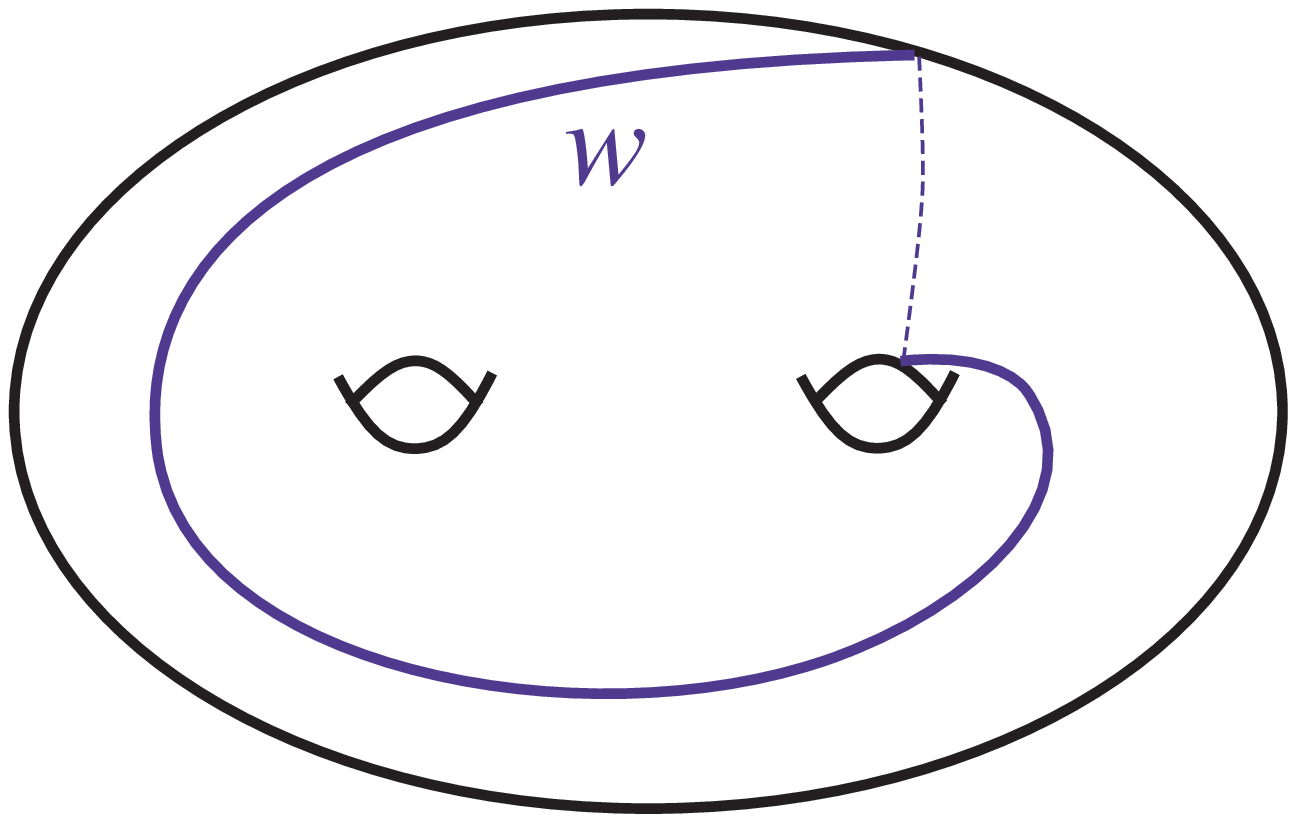} \hspace{0.5cm}  \epsfxsize=1.82in \epsfbox{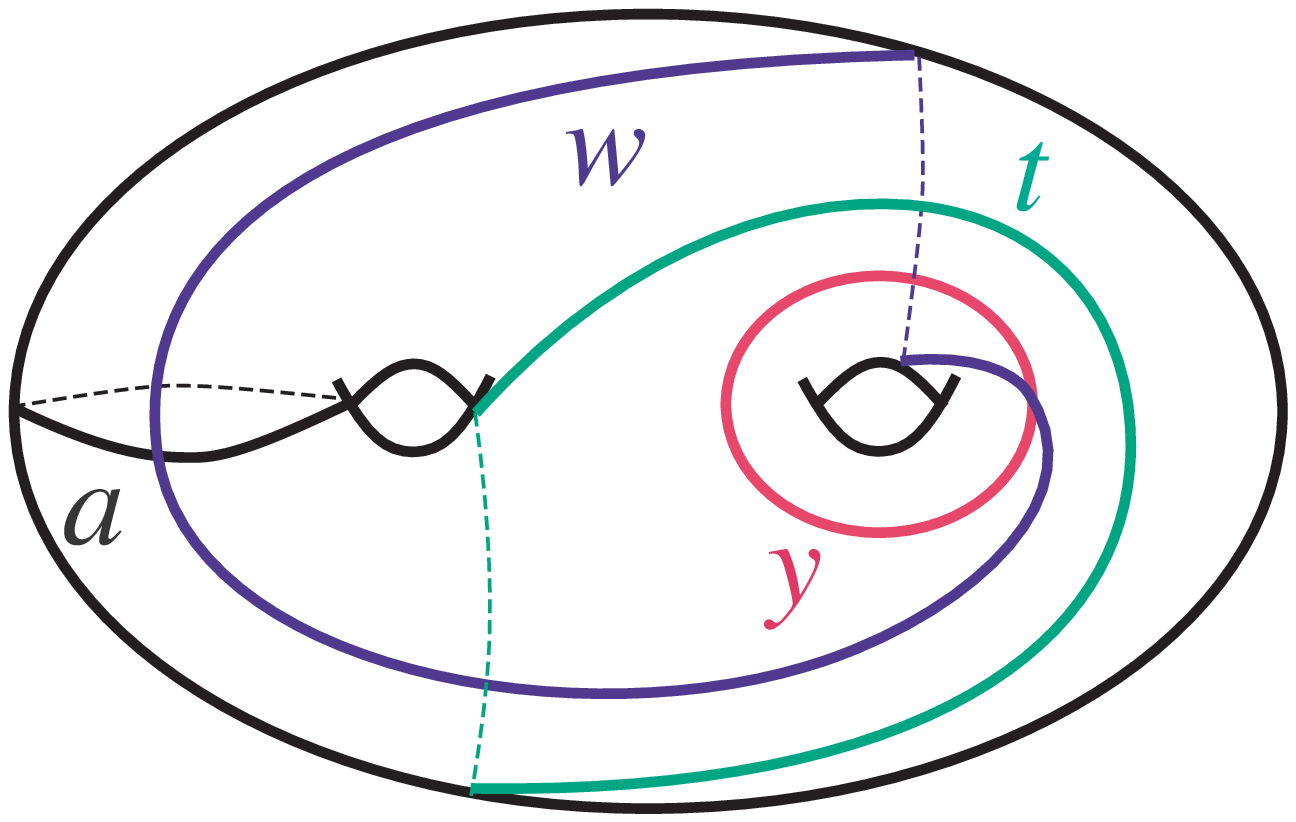}  
		
		\hspace{0cm} (i) \hspace{4.5cm} (ii) \hspace{4.5cm} (iii)
		
		\epsfxsize=1.82in \epsfbox{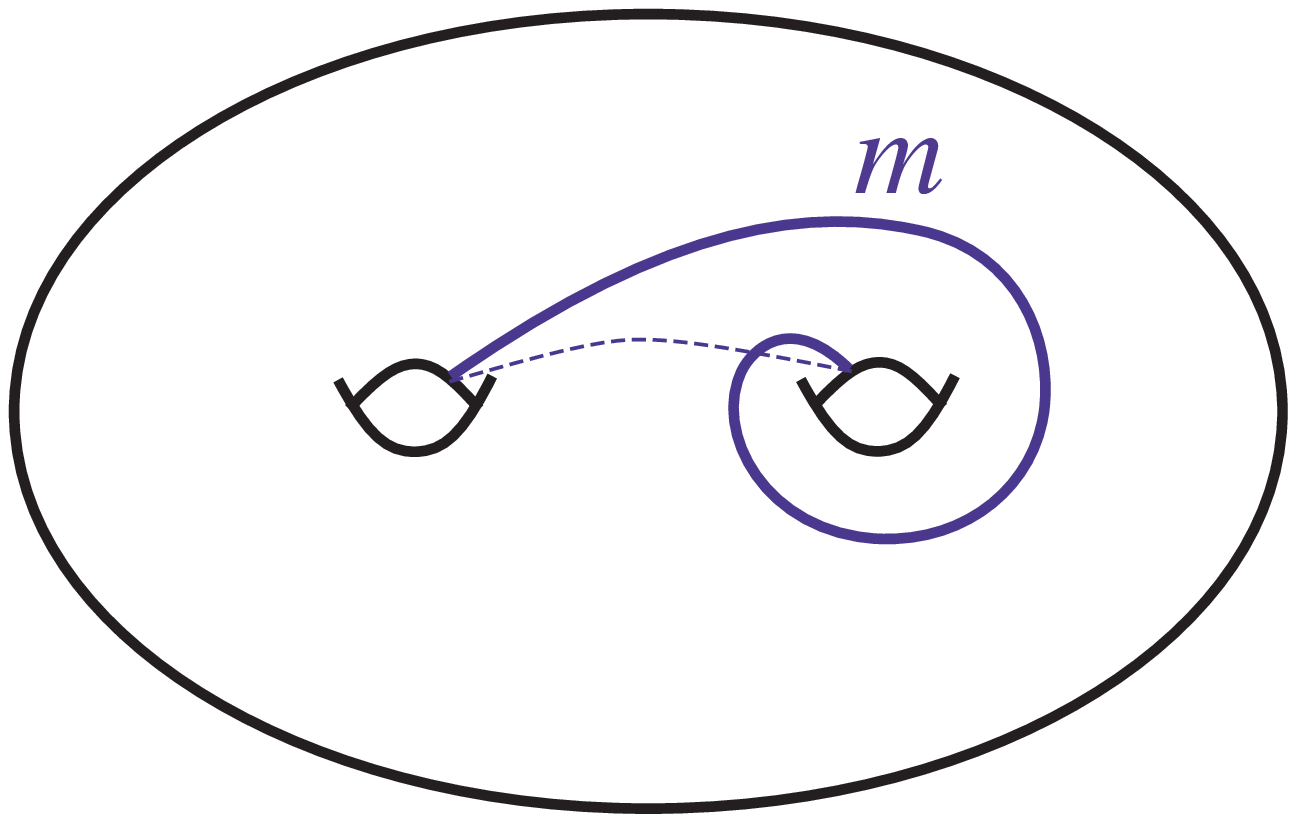}  \hspace{0.5cm} \epsfxsize=1.82in \epsfbox{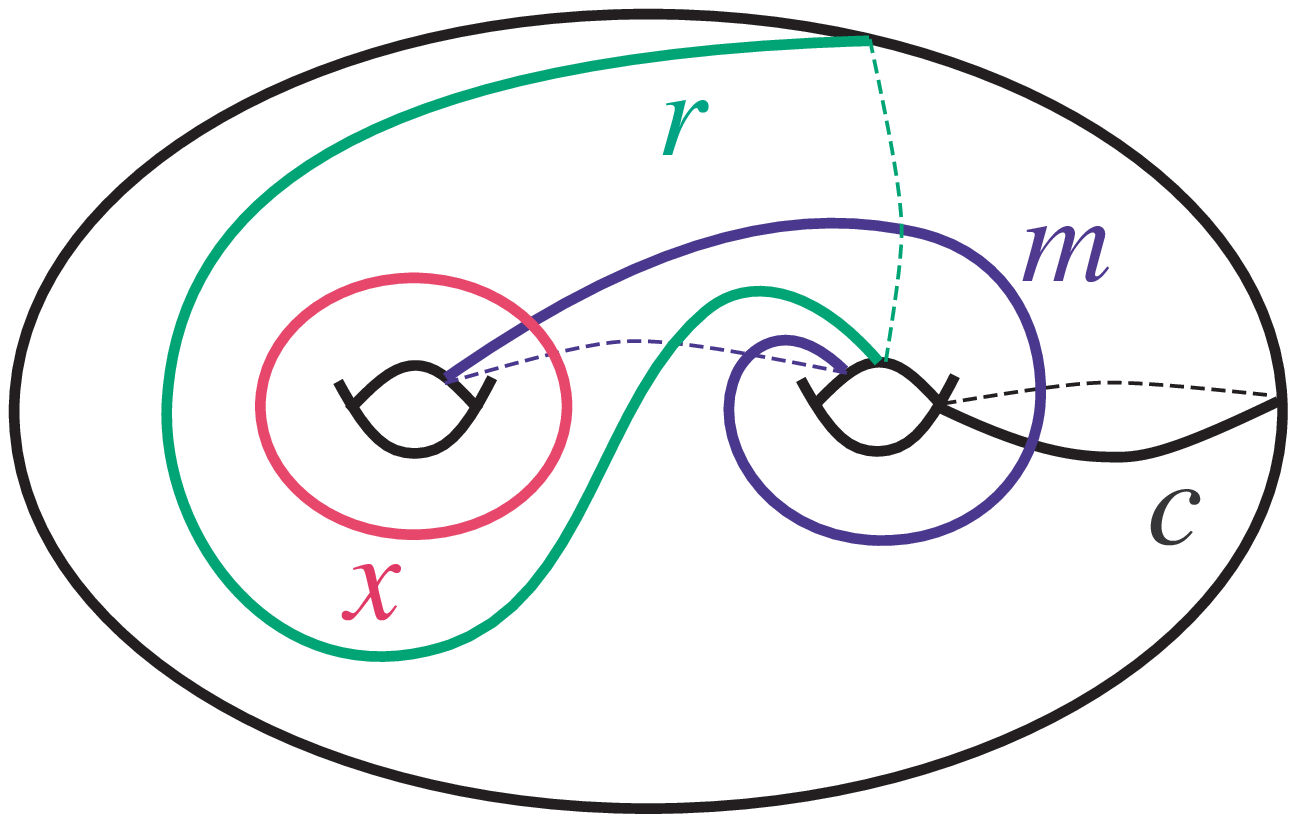}  \hspace{0.5cm}   \epsfxsize=1.82in \epsfbox{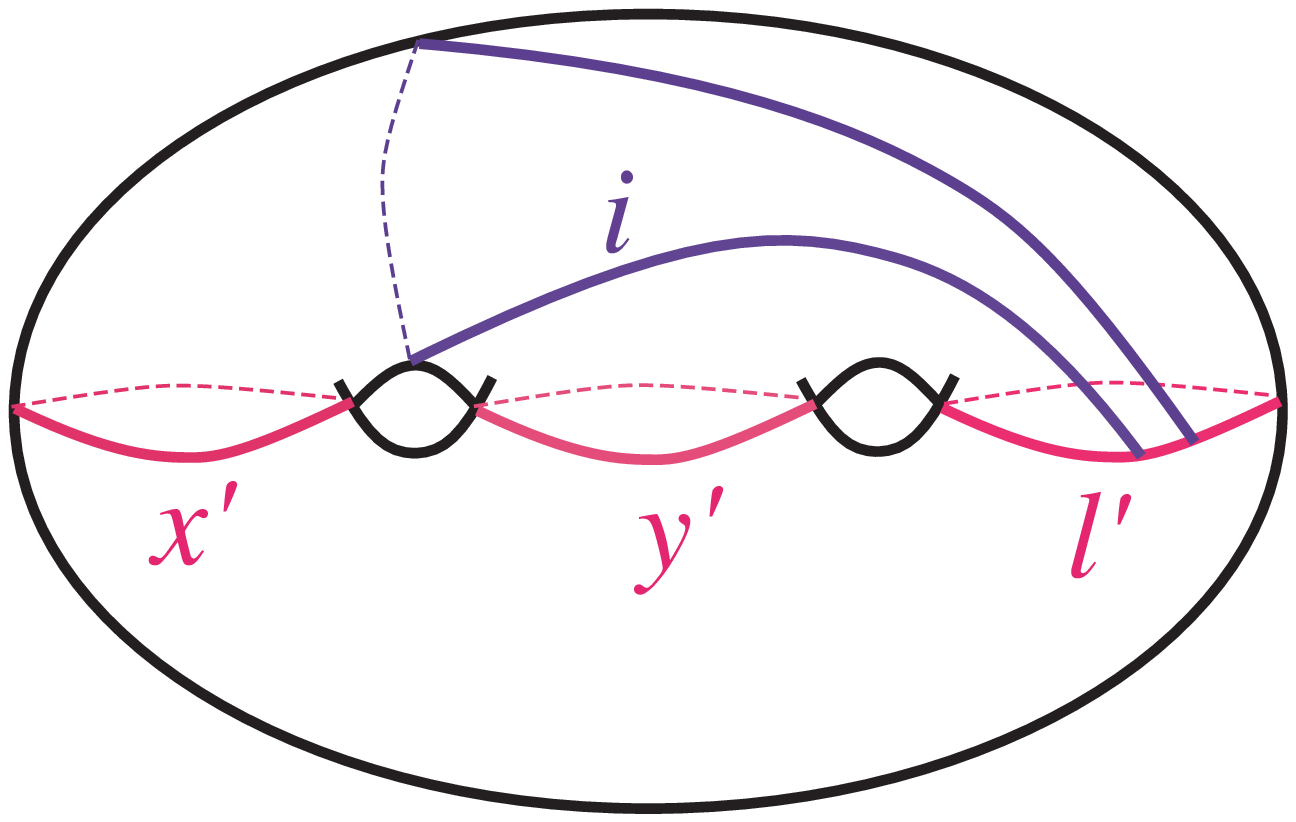}  
		
		\hspace{0cm} (iv) \hspace{4.5cm} (v) \hspace{4.5cm} (vi)
		
		\caption {Curves $a, x$ intersecting once} \label{fig0-1}
	\end{center}
\end{figure}

Claim 3: $i(\lambda([y]), \lambda([b])) \neq 0$ and $i(\lambda([y]), \lambda([c])) \neq 0$. 

Proof of Claim 3: We know that $\{\lambda([a]), \lambda([b]), \lambda([c])\}$ forms a maximal complete subgraph in the nonseparating curve graph and $i(\lambda([y]), \lambda([a])) =0$. Using these information and Claim 1 and Claim 2 we see that either 
$i(\lambda([y]), \lambda([b])) \neq 0$ or $i(\lambda([y]), \lambda([c])) \neq 0$. 

(i) Suppose $i(\lambda([y]), \lambda([c])) \neq 0$ and $i(\lambda([y]), \lambda([b])) = 0$. Since we know that $\lambda([y]) \neq \lambda([b])$ by Claim 2, we can see that $\{\lambda([a]), \lambda([b]), \lambda([y])\}$ forms a maximal complete subgraph in the nonseparating curve graph of $R$. 
Since $[l]$ is connected by an edge to each of
$[x], [b], [y]$ and $\lambda$ is edge-preserving, $\lambda([l])$ is connected by an edge to each of
$\lambda([x]), \lambda([b]), \lambda([y])$. Since $\lambda([x]) = \lambda([a])$, we see that $\lambda([l])$ is connected by an edge to each of
$\lambda([a]), \lambda([b]), \lambda([y])$. That gives a contradiction since $\{\lambda([a]), \lambda([b]), \lambda([y])\}$ forms a maximal complete subgraph in the nonseparating curve graph of $R$. Hence we cannot have $i(\lambda([y]), \lambda([c])) \neq 0$ and $i(\lambda([y]), \lambda([b])) = 0$.

(ii) Suppose $i(\lambda([y]), \lambda([b])) \neq 0$ and $i(\lambda([y]), \lambda([c])) = 0$. Let $a', b', c', l', y'$ be representatives in minimal position of $\lambda([a]), \lambda([b]), \lambda([c]), \lambda([l]), \lambda([y])$ respectively. We know that 
$a', b', c'$ are all nonseparating curves that form a pants decomposition on $R$. Since 
$y'$ intersects $b'$, $y'$ is disjoint from $a'$ and $c'$, and $l'$ is disjoint from each of $y'$ and $b'$, we see that $l'$ is isotopic to either $a'$ or $c'$ (since $y'$ and $b'$ fill $R_{a' \cup c'}$). So, either $\lambda([l]) = \lambda([a])$ or $\lambda([l]) = \lambda([c])$. Since $\lambda([a]) = \lambda([x])$, and $\lambda([l])$ is connected by an edge to $\lambda([x])$, we cannot have $\lambda([l]) = \lambda([a])$. Hence, we must have $\lambda([l]) = \lambda([c])$, but this gives a contradiction as follows: Consider the curves $m, r$ given in Figure \ref{fig0-1} (v). Since $[m]$ is connected by an edge to each of $[a], [l]$ and $\lambda$ is edge-preserving, we see that $\lambda([m])$ is connected by an edge to each of
$\lambda([a]), \lambda([l])$. Since $\lambda([x]) = \lambda([a])$ and $\lambda([l]) = \lambda([c])$, we see $\lambda([m])$ is connected by an edge to each of
$\lambda([x]), \lambda([c])$, and $\{\lambda([m]), \lambda([x]), \lambda([c])\}$ forms a maximal complete subgraph in the nonseparating curve graph of $R$. But 
since $[r]$ is connected by an edge to each of $[m], [x], [c]$ we see that $\lambda([r])$ is connected by an edge to each of
$\lambda([m]), \lambda([x]), \lambda([c])$. This gives a contradiction since $\{\lambda([m]), \lambda([x]), \lambda([c])\}$ forms a maximal complete subgraph in the nonseparating curve graph of $R$. Hence, we cannot have $i(\lambda([y]), \lambda([b])) \neq 0$ and $i(\lambda([y]), \lambda([c])) = 0$.

We conclude that $i(\lambda([y]), \lambda([b])) \neq 0$ and $i(\lambda([y]), \lambda([c])) \neq 0$. 

Claim 4: $i(\lambda([l]), \lambda([c])) \neq 0$, $i(\lambda([l]), \lambda([a])) = 0$, $i(\lambda([l]), \lambda([b])) = 0$.  

Proof of Claim 4: Since $[l]$ is connected by an edge to $[x]$ we know that $\lambda([l])$ is connected by an edge to
$\lambda([x])$. Then since $\lambda([a]) = \lambda([x])$, we have $\lambda([l])$ is connected by an edge to
$\lambda([a])$. So, $i(\lambda([l]), \lambda([a])) = 0$. Since $[l]$ is connected by an edge to $[b]$ we know that $\lambda([l])$ is connected by an edge to
$\lambda([b])$. So, $i(\lambda([l]), \lambda([b])) = 0$. Since $\{\lambda([a]), \lambda([b]), \lambda([c])\}$ forms a maximal complete subgraph in the nonseparating curve graph of $R$ and $i(\lambda([l]), \lambda([a])) = 0$, $i(\lambda([l]), \lambda([b])) = 0$, we see that either $\lambda([l]) = \lambda([c])$ or $i(\lambda([l]), \lambda([c])) \neq 0$. Since $\lambda([l])$ is connected by an edge to
$\lambda([y])$, we have $i(\lambda([l]), \lambda([y])) = 0$. By Claim 3, we know $i(\lambda([y]), \lambda([c])) \neq 0$. Hence, we cannot have $\lambda([l]) = \lambda([c])$. So, 
$i(\lambda([l]), \lambda([c])) \neq 0$. This completes the proof of Claim 4. 

Now to finish the proof for Case (ii) we consider the set $\{\lambda([x]), \lambda([y]), \lambda([l])\}$. Let $x'$ be a representative of $\lambda([x])$ which minimally intersects each of $a', b', c', y', l'$. The curves $x', y', l'$ form a pants decomposition on $R$. It is easy to see that $c'$ is disjoint from $x'$, and $b'$ is disjoint from $l'$. Since $\lambda([b])$ is connected by an edge to $\lambda([a])$ and $\lambda([a]) = \lambda([x])$, we see that $b'$ is disjoint from $x'$.
By Claim 3 and Claim 4, $c'$ intersects each of $l', y'$, and $b'$ intersects $y'$. So, we have (i) $b'$ is disjoint from each of $l', x'$, (ii) $b'$ intersects $y'$, (iii) $c'$ is disjoint from $x'$, and (iv) $c'$ intersects each of $l', y'$. 
All this information will give a contradiction as follows: If $c'$ and $l'$ intersect only once then $b'$ would have to be a separating curve since $b'$ is disjoint from $c' \cup l' \cup x'$ and $x'$ is disjoint from $c' \cup l'$. That gives a contradiction. Suppose $c'$ and $l'$ intersect more than once. If there exists an arc $i$ of $c'$ starting and ending on $l'$ in a pair of pants bounded by $x', y', l'$ (see Figure \ref{fig0-1} (vi)) then we would get a contradiction as $b'$ is disjoint from $c' \cup l' \cup x'$ and $b'$ intersects $y'$. So, all the arcs of $c'$ in any of the pair of pants bounded by $x', y', l'$ either 
must connect $y'$ to $l'$ or must start and end on $y'$ in that pair of pants. All this information would imply that either $b'$ is a separating curve or $b'$ cannot exists as a nonseparating curve which is disjoint from $c' \cup l' \cup x'$ while intersecting $y'$, as there would not be any nontrivial simple closed curve disjoint from $c' \cup l' \cup x'$ in $R$. 
So, the intersection information we have between the curves $a', b', c', x', y', l'$ gives us a contradiction when we assume that 
$\lambda([a]) = \lambda([x])$. So, $\lambda([a]) \neq \lambda([x])$. Hence, $\lambda(\alpha) \neq \lambda(\beta)$.\end{proof}\\

\begin{figure}[t]
	\begin{center}
		\hspace{0.cm}  \epsfxsize=2.7in \epsfbox{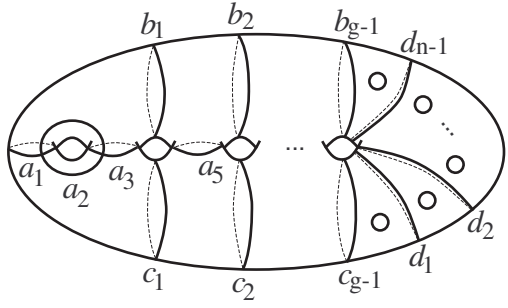} 
		\caption {Curves $a_1, a_2, a_3$ forming a chain} \label{figure-0}
	\end{center}
\end{figure}

A set of simple closed nonseparating curves $\{a_1, a_2, \cdots, a_k\}$ is called a chain if $i([a_i]), [a_{i+1}])=1$ and $i([a_i]), [a_{j}])=0$ for $|i-j|>1$.    

\begin{lemma}
\label{1} Suppose $g \geq 2, n \geq 0$. Let $\alpha_1, \alpha_2, \alpha_3$ be three vertices in 
$\mathcal{N}(R)$ such that $i(\alpha_1, \alpha_2) = 1$,  $i(\alpha_2, \alpha_3) = 1$,  $i(\alpha_1, \alpha_3) = 0$. 
Then $i(\lambda(\alpha_2), \lambda(\alpha_1)) \neq 0$ or $i(\lambda(\alpha_2), \lambda(\alpha_3)) \neq 0$.\end{lemma}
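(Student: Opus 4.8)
The plan is to argue by contradiction: assume that $i(\lambda(\alpha_2), \lambda(\alpha_1)) = 0$ and $i(\lambda(\alpha_2), \lambda(\alpha_3)) = 0$ both hold, and derive a contradiction with Lemma~\ref{inj}. The guiding idea is that, under this assumption, $\lambda(\alpha_2)$ becomes disjoint from an entire pants decomposition and is therefore forced to coincide with the $\lambda$-image of one of its curves; but that curve will be disjoint from $\alpha_2$, so it spans an edge with $\alpha_2$, and injectivity on complete subgraphs forbids the coincidence.

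First I would fix representatives $a_1, a_2, a_3$ in minimal position, as in Figure~\ref{figure-0}, and complete the pair of disjoint nonseparating curves $a_1, a_3$ to a pants decomposition $P$ of $R$ consisting of nonseparating simple closed curves, arranged so that $a_1, a_3 \in P$, so that $a_2 \notin P$, and so that $a_2$ is disjoint from every curve of $P$ except $a_1$ and $a_3$ (which it meets once each). The existence of such a $P$ comes from the subsurface $N$ that is a regular neighborhood of $a_1 \cup a_2 \cup a_3$: this $N$ has genus one with two boundary components, the disjoint curves $a_1, a_3$ already form a pants decomposition of $N$, and $a_2$ lies in $N$ meeting only $a_1, a_3$, so it remains to fill in the complement of $N$ by nonseparating curves disjoint from everything inside $N$.

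With $P$ in hand, Corollary~\ref{pd-inj} gives a pants decomposition $P'$ of $R$ whose curves are disjoint representatives of $\lambda([P])$. Since $\lambda$ is edge-preserving and $a_2$ is disjoint from every $x \in P \setminus \{a_1, a_3\}$, the curve $\lambda(\alpha_2)$ is disjoint from $\lambda([x])$ for all such $x$; combined with the contradiction hypothesis, $\lambda(\alpha_2)$ is disjoint from every curve of $P'$. By maximality of a pants decomposition, $\lambda(\alpha_2)$ is isotopic to some curve of $P'$, that is, $\lambda(\alpha_2) = \lambda(\gamma)$ for a unique $\gamma \in P$ (uniqueness via Lemma~\ref{inj}). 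Because $i(\alpha_1, \alpha_2) = i(\alpha_2, \alpha_3) = 1$, Lemma~\ref{0} gives $\lambda(\alpha_2) \neq \lambda(\alpha_1)$ and $\lambda(\alpha_2) \neq \lambda(\alpha_3)$, so $\gamma \neq \alpha_1, \alpha_3$, whence $i(\gamma, \alpha_2) = 0$ by the construction of $P$. Then $\{\gamma, \alpha_2\}$ is the vertex set of a complete subgraph of $\mathcal{N}(R)$ with $\gamma \neq \alpha_2$, so Lemma~\ref{inj} forces $\lambda(\gamma) \neq \lambda(\alpha_2)$, contradicting $\lambda(\alpha_2) = \lambda(\gamma)$. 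This contradiction establishes the lemma.

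I expect the one genuinely technical step to be the construction of the pants decomposition $P$: one must complete $\{a_1, a_3\}$ to a full decomposition keeping every curve nonseparating while preventing any new curve from meeting $a_2$, and in particular one must avoid using the two boundary curves of $N$ as members of $P$ if the embedding of the chain makes them separating in $R$. For $g \geq 2$ there is enough room to do this, and it is cleanest to exhibit the required configuration explicitly, extending Figure~\ref{figure-0} in the same spirit as the completions used in the proof of Lemma~\ref{0}.
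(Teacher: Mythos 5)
Your proof is correct and follows essentially the same route as the paper: complete $\{a_1,a_3\}$ to a pants decomposition $P$ of nonseparating curves meeting $a_2$ only in $a_1, a_3$ (the paper does this explicitly via Figure~\ref{figure-0}), note that $\lambda([P])$ is a maximal complete subgraph, and rule out $\lambda(\alpha_2)$ coinciding with any of its vertices using Lemma~\ref{0} for $a_1, a_3$ and edge-preservation for the rest. Your write-up just makes explicit the maximality and injectivity steps that the paper leaves implicit.
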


\begin{proof} Let $a_1, a_2, a_3$ be representatives in minimal position of $\alpha_1, \alpha_2, \alpha_3$ respectively. We see that $\{a_1, a_2, a_3\}$ is a chain. We complete $a_1, a_2, a_3$ to a curve configuration as shown in Figure \ref{figure-0}. The set $P = \{a_1, a_3, \cdots, a_{2g-1},$ $b_1, b_2, \cdots, b_{g-1}, 
c_1, c_2, \cdots, c_{g-1}, d_1, d_2, \cdots, d_{n-1}\}$ is a pants decomposition on $R$. By using that $\lambda([P])$ forms a maximal complete subgraph in the nonseparating curve graph of $R$ we can see the following:
Since $a_2$ is disjoint from and nonisotopic to each curve in $P \setminus \{a_1, a_3\}$, $\lambda$ is edge-preserving and $\lambda(\alpha_1) \neq \lambda(\alpha_2)$, $\lambda(\alpha_2) \neq \lambda(\alpha_3)$ by Lemma \ref{0}, we see that either $i(\lambda(\alpha_2), \lambda(\alpha_1)) \neq 0$ or $i(\lambda(\alpha_2), \lambda(\alpha_3)) \neq 0$.\end{proof}\\
  
\begin{figure}
\begin{center}
\hspace{0.cm} \epsfxsize=2.7in \epsfbox{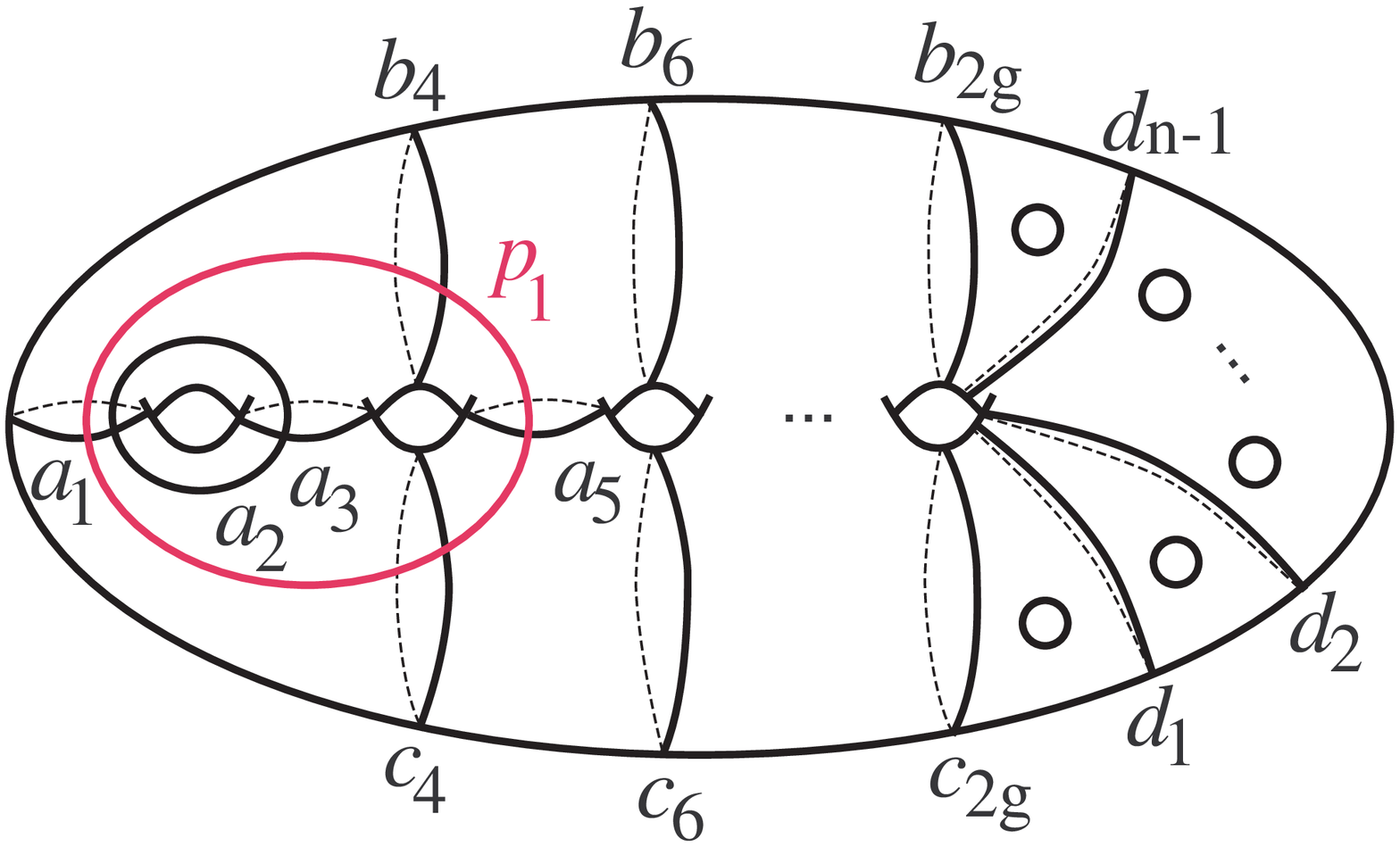}  \hspace{1.2cm}  \epsfxsize=2.2in \epsfbox{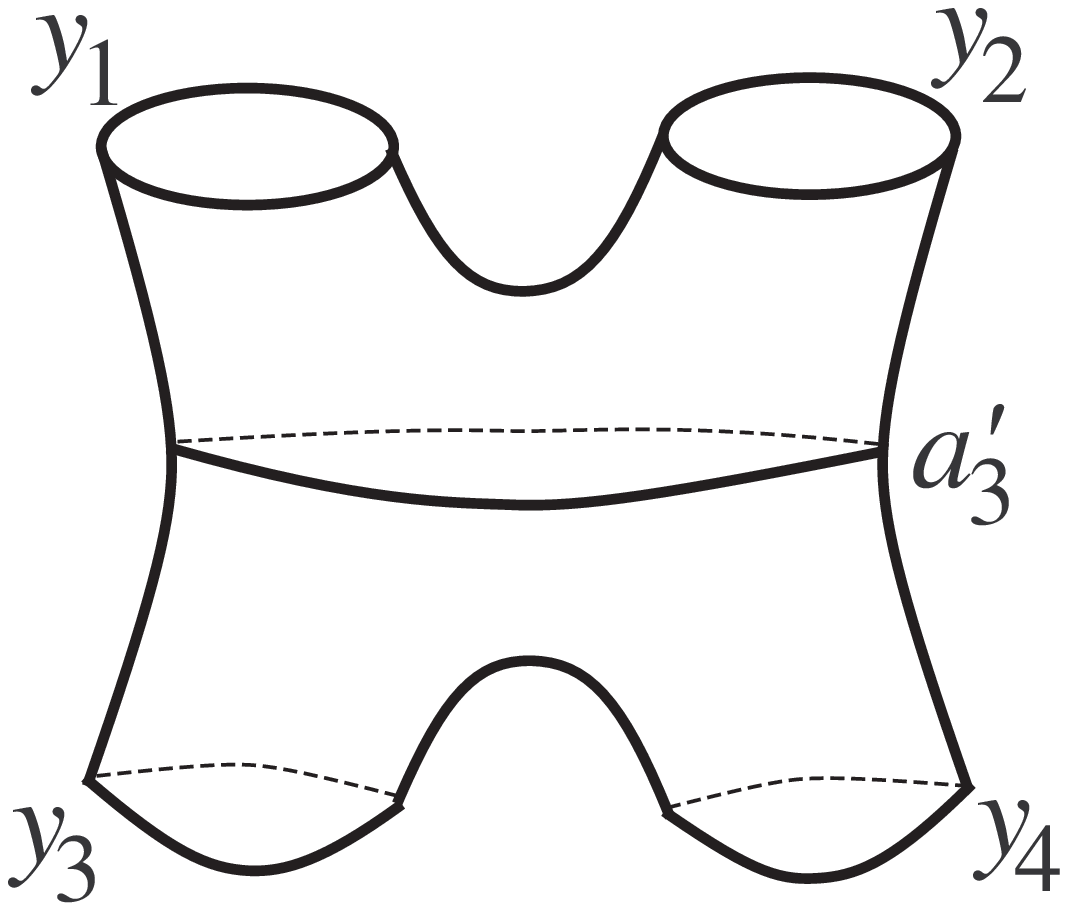} 

\hspace{-0.2cm} (i) \hspace{6.1cm} (ii)
 
\caption {Curves $a_1, a_2$ intersecting once} \label{figure-0-2}
\end{center}
\end{figure}
 
We will say that two simple closed curves have essential (geometric) intersection if their isotopy classes have nonzero geometric intersection.   
 
\begin{lemma}
\label{2} Suppose $g \geq 2, n \geq 0$. Let $\alpha_1, \alpha_2$ be two vertices of 
$\mathcal{N}(R)$ such that $i(\alpha_1, \alpha_2) = 1$. Then $i(\lambda(\alpha_1), \lambda(\alpha_2)) \neq 0$.\end{lemma}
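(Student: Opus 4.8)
The plan is to argue by contradiction. Suppose $i(\lambda(\alpha_1),\lambda(\alpha_2)) = 0$. By Lemma \ref{0} we have $\lambda(\alpha_1) \neq \lambda(\alpha_2)$, so these are two distinct vertices joined by an edge; they can therefore be completed to a pants decomposition of $R$, and by Corollary \ref{pd-inj} the relevant $\lambda$-images again bound disjoint representatives forming pants decompositions. The standing assumption $i(\lambda(\alpha_1),\lambda(\alpha_2))=0$ will be fed into Lemma \ref{1} to force essential intersections elsewhere, and the conflict with these disjointnesses will produce the contradiction.

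First I would fix representatives $a_1, a_2$ in minimal position and complete them to the curve configuration of Figure \ref{figure-0-2}, choosing the auxiliary nonseparating curves so that $\alpha_1,\alpha_2$ appear as consecutive members of two overlapping $3$-chains. Concretely I want a curve $\alpha_3$ with $i(\alpha_2,\alpha_3)=1$ and $i(\alpha_1,\alpha_3)=0$, and a curve $\alpha_0$ with $i(\alpha_0,\alpha_1)=1$ and $i(\alpha_0,\alpha_2)=0$, together with enough further pairwise disjoint curves to pin the images down inside pants decompositions. Applying Lemma \ref{1} to the chain $\alpha_1,\alpha_2,\alpha_3$ (middle $\alpha_2$) and using $i(\lambda(\alpha_1),\lambda(\alpha_2))=0$ forces $i(\lambda(\alpha_2),\lambda(\alpha_3)) \neq 0$; applying Lemma \ref{1} to the chain $\alpha_0,\alpha_1,\alpha_2$ (middle $\alpha_1$) forces $i(\lambda(\alpha_0),\lambda(\alpha_1)) \neq 0$. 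Simultaneously, edge-preservation gives the disjointnesses $i(\lambda(\alpha_1),\lambda(\alpha_3))=0$, $i(\lambda(\alpha_0),\lambda(\alpha_2))=0$, and the disjointness of the images of all the auxiliary pants curves.

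Next I would pass to minimal-position representatives of all these images at once on $R$ and read off the resulting constraints: $\lambda(\alpha_1)$ and $\lambda(\alpha_2)$ are disjoint and each lies in the image pants decomposition, while $\lambda(\alpha_3)$ (respectively $\lambda(\alpha_0)$) must be disjoint from $\lambda(\alpha_1)$ (respectively $\lambda(\alpha_2)$) yet essentially cross $\lambda(\alpha_2)$ (respectively $\lambda(\alpha_1)$). Exactly as in Case (ii) of Lemma \ref{0}, I would analyze the arcs of these crossing curves inside the pairs of pants cut out by the image pants decomposition and show that the only way to satisfy all the disjointness requirements forces one of the image curves either to be separating or to fail to exist as a nonseparating curve disjoint from the prescribed subsystem, contradicting that it is a vertex of $\mathcal{N}(R)$. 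This contradiction yields $i(\lambda(\alpha_1),\lambda(\alpha_2)) \neq 0$.

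The hard part will be this last topological-realizability step: arranging the auxiliary configuration so that the combined conditions ``disjoint from a fixed pants decomposition but essentially crossing one fixed curve of it'' are genuinely incompatible on $R$, and carrying out the arc bookkeeping cleanly. I also expect the small case $(g,n)=(2,0)$ to require separate treatment, in parallel with the two-case structure already used in Lemma \ref{0}, since there the ambient surface is too small to house all the auxiliary curves at once and a more hands-on impossibility argument on the genus-two surface will be needed.
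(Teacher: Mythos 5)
Your skeleton matches the paper's: argue by contradiction, use Lemma \ref{0} to get $\lambda(\alpha_1)\neq\lambda(\alpha_2)$, complete $a_1,a_2$ to the configuration of Figure \ref{figure-0-2} with a pants decomposition $P$ containing $a_1$ and $a_3$, invoke Corollary \ref{pd-inj} for the image decomposition $P'$, and apply Lemma \ref{1} to the chain $\alpha_1,\alpha_2,\alpha_3$ to force $i(\lambda(\alpha_2),\lambda(\alpha_3))\neq 0$. Up to that point you are on the paper's track. But the decisive step is missing, and the mechanism you propose in its place does not work: the symmetric chain $\alpha_0,\alpha_1,\alpha_2$ gives only $i(\lambda(\alpha_0),\lambda(\alpha_1))\neq 0$, and four curves with the resulting pattern (two disjoint curves, each crossed by one auxiliary curve that misses the other) coexist without difficulty on any surface of genus at least two, so no contradiction follows from that data alone, even after adding disjointness from a pants decomposition. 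Note also that $\lambda(\alpha_2)$ does not ``lie in the image pants decomposition'': $a_2$ crosses $a_1\in P$, so $a_2\notin P$, and the whole point is that its image is \emph{not} a curve of $P'$.

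What the paper actually does at this juncture is the piece you flagged as ``the hard part.'' Since a minimal representative $a_2'$ of $\lambda(\alpha_2)$ is disjoint from every curve of $P'$ except the representative $a_3'$ of $\lambda(\alpha_3)$, which it crosses, $a_2'$ is confined to the union $A\cup B$ of the two pairs of pants of $P'$ adjacent to $a_3'$ and misses their remaining boundary curves $y_1,\dots,y_4$; consequently $a_2'\cap A$ and $a_2'\cap B$ consist of essential arcs based on $a_3'$, and $a_2'\cup a_3'$ meets every essential arc of $A$ and of $B$. One then takes a connector curve $z$ (such as $p_1$) that intersects two curves of $P\setminus\{a_3\}$ once each while staying disjoint from $a_2\cup a_3$: Lemma \ref{1} applied to that chain forces $\lambda([z])$ to cross one of the corresponding $y_i$ essentially, which would require an essential arc of a representative $z'$ inside $A$ or $B$ disjoint from $a_2'\cup a_3'$ --- impossible. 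Without this (or an equivalent) blocking argument your proof does not close. A final small point: the paper runs this lemma uniformly for all $g\geq 2$, $n\geq 0$; no separate $(g,n)=(2,0)$ analysis is needed here (that case split occurs only in Lemma \ref{0}).
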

 
\begin{proof} Let $a_1, a_2$ be representatives in minimal position of $\alpha_1, \alpha_2$ respectively. We complete $a_1, a_2$ to a curve configuration as shown in Figure \ref{figure-0-2} (i). The set $P = \{a_1, a_3, \cdots,$ $ a_{2g-1}, 
b_4, b_6, \cdots, b_{2g}, 
c_4, c_6, \cdots, c_{2g}, d_1, d_2, \cdots, d_{n-1}\}$ is a pants decomposition on $R$. Let $P'$ be a set of pairwise disjoint representatives of $\lambda([P])$. The set $P'$ is a pants decomposition on $R$. 
We know that $i(\lambda(\alpha_2), \lambda(\alpha_1)) \neq 0$ or $i(\lambda(\alpha_2), \lambda(\alpha_3)) \neq 0$ by Lemma \ref{1}. 
Suppose $i(\lambda(\alpha_1), \lambda(\alpha_2)) = 0$. Then $\lambda(\alpha_1)$ and 
$\lambda(\alpha_2)$ must have nonisotopic, pairwise disjoint representatives since $\lambda(\alpha_2) \neq \lambda(\alpha_1)$ by Lemma \ref{0} and 
we also know that $i(\lambda(\alpha_2), \lambda(\alpha_3)) \neq 0$ by Lemma \ref{1}.
Let $a_3'$ be a representative of $\lambda(\alpha_3)$ that is in $P'$. Let $a_2'$ be a representative of $\lambda(\alpha_2)$ that has minimal intersection with $P'$. 
The curve $a_3'$ belongs to two pair of pants $A, B$ in $P'$. Let $y_1, y_2, y_3, y_4$ be the other boundary components of these two pair of pants as shown in Figure \ref{figure-0-2} (ii). We note that some of these boundary components may be equal to each other or may be nonessential.   
Since $a_2$ is disjoint from and nonisotopic to all the curves in $P \setminus \{a_1, a_3\}$, and $\lambda$ is edge-preserving, we have $i(\lambda(\alpha_2), \lambda([x])) = 0$ for all $x \in P \setminus \{a_1, a_3\}$ and $\lambda(\alpha_2) \neq \lambda([x])$
for all $x \in P \setminus \{a_1, a_3\}$. Since we also know that $i(\lambda(\alpha_2), \lambda(\alpha_3)) \neq 0$, we see that
$a'_2$ is in $A \cup B$, $a_2'$ is disjoint from $y_1, y_2, y_3, y_4$ and $a_2'$ intersects $a'_3$ essentially (that means $i([a'_2], [a'_3]) \neq 0$). 
We will get a contradiction as follows: For every pair of curves $x, y$ in $\{P \setminus \{a_3\}\}$ we can find a curve $z$ that intersects $x$ and $y$ once essentially and disjoint from $a_2 \cup a_3$ (for example the curve $p_1$ for the pair $a_1, b_4$).  
So, for every pair $x, y$ in $\{y_1, y_2, y_3, y_4\}$ we can find a curve $z$ that intersects both of them once essentially and disjoint from $a_2 \cup a_3$. 
By Lemma \ref{1} we must have $i(\lambda([z]), \lambda([x])) \neq 0$ or $i(\lambda([z]),  \lambda([y])) \neq 0$. Let $z'$ be a representative of $\lambda([z])$ that has minimal intersection with $P' \cup \{a'_2\}$. Then, $z'$ must intersect some $y_i$ essentially but this is not possible since $i(\lambda([z]), \lambda([a_2])) = 0$ and $i(\lambda([z]),  \lambda([a_3])) = 0$, and so $z'$ is disjoint from $a'_2 \cup a'_3$. This gives a contradiction. Hence, $i(\lambda(\alpha_1), \lambda(\alpha_2)) \neq 0$.\end{proof}\\

\begin{figure} \begin{center}
\hspace{-0.4cm} \epsfxsize=2.99in \epsfbox{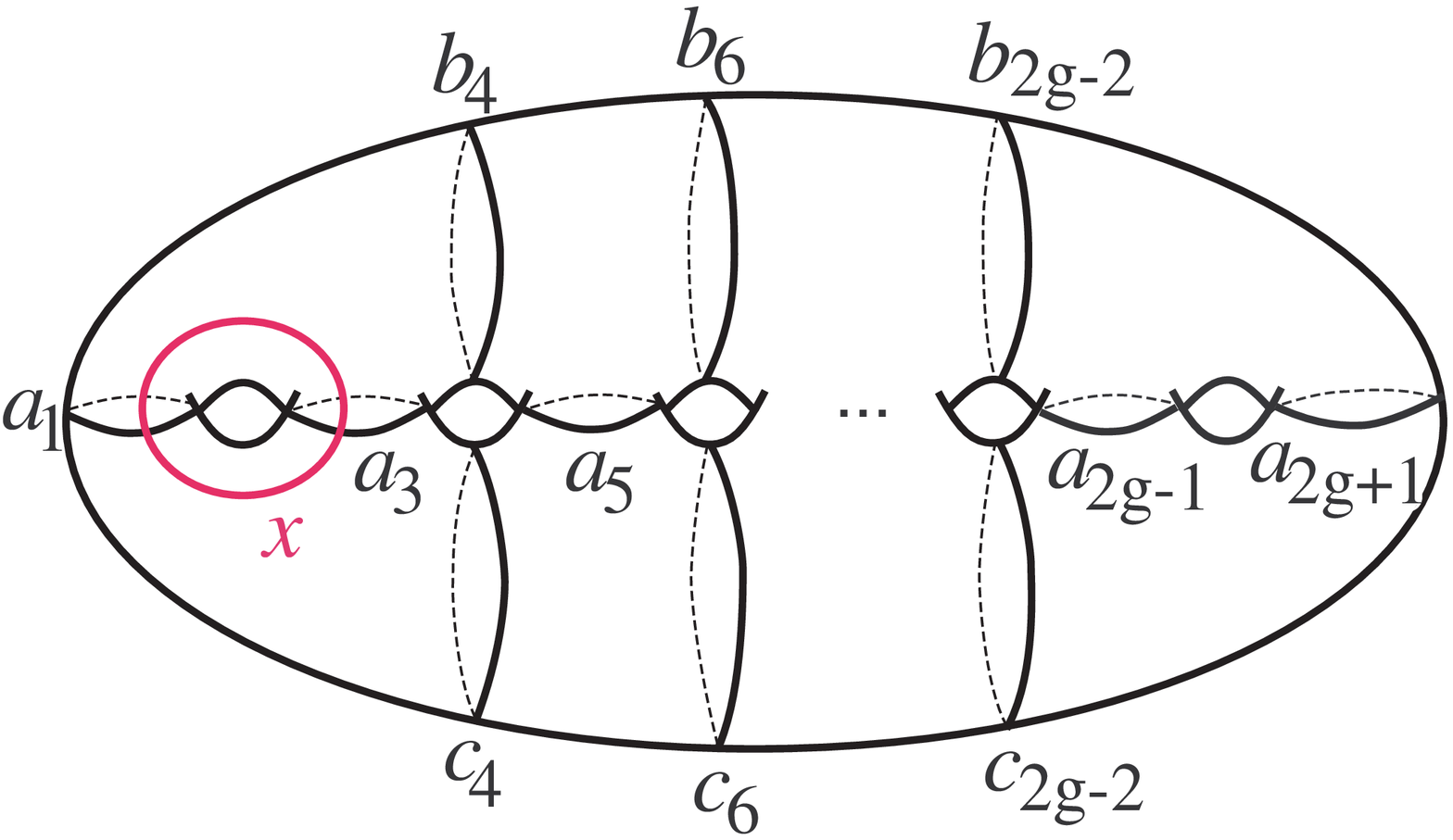} \hspace{-1.2cm} \epsfxsize=2.99in \epsfbox{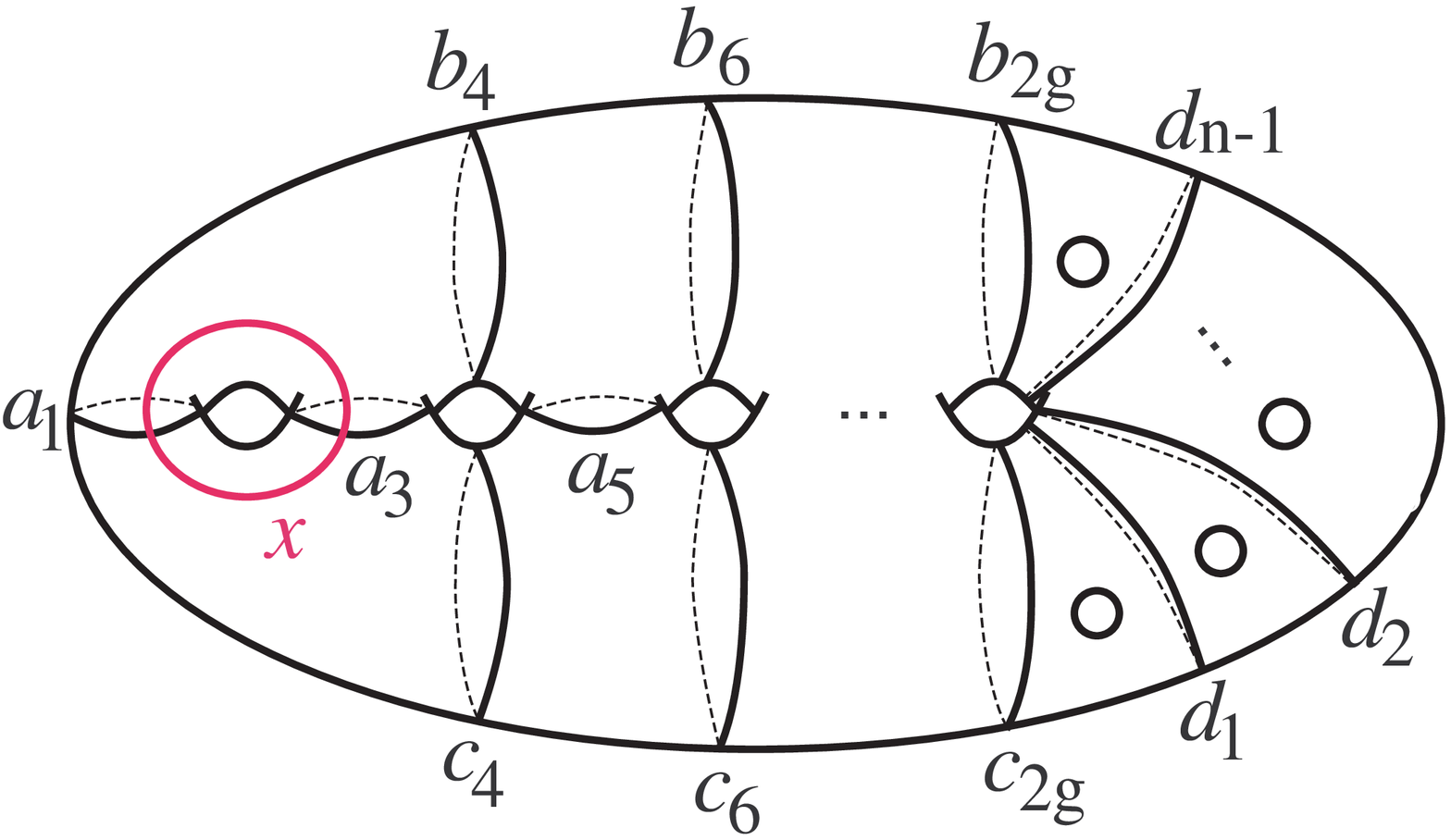}
 
\hspace{-0.9cm} (i) \hspace{6.3cm} (ii)
 
\hspace{-0.4cm} \epsfxsize=2.99in \epsfbox{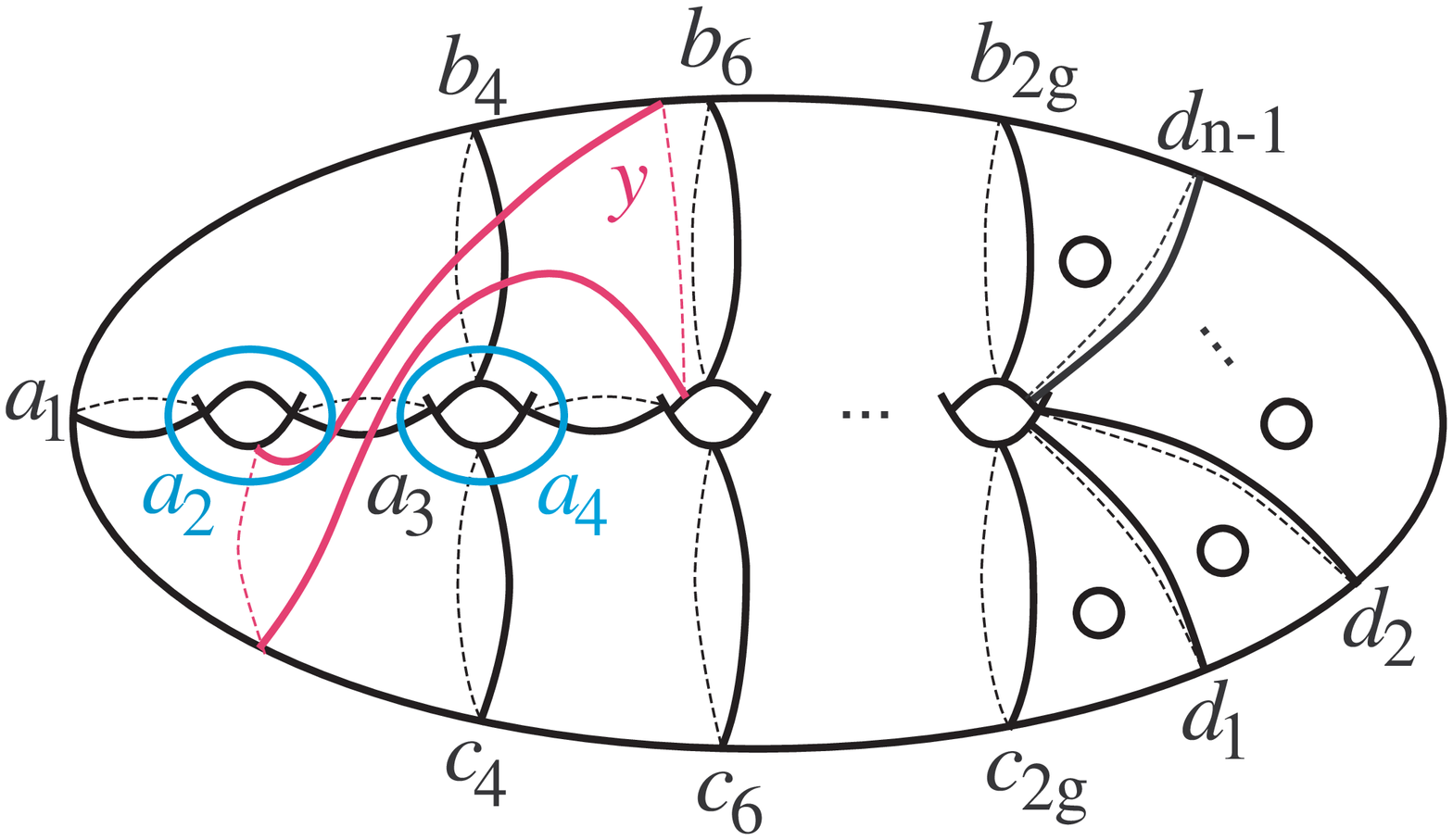} \hspace{-1.2cm} \epsfxsize=2.99in \epsfbox{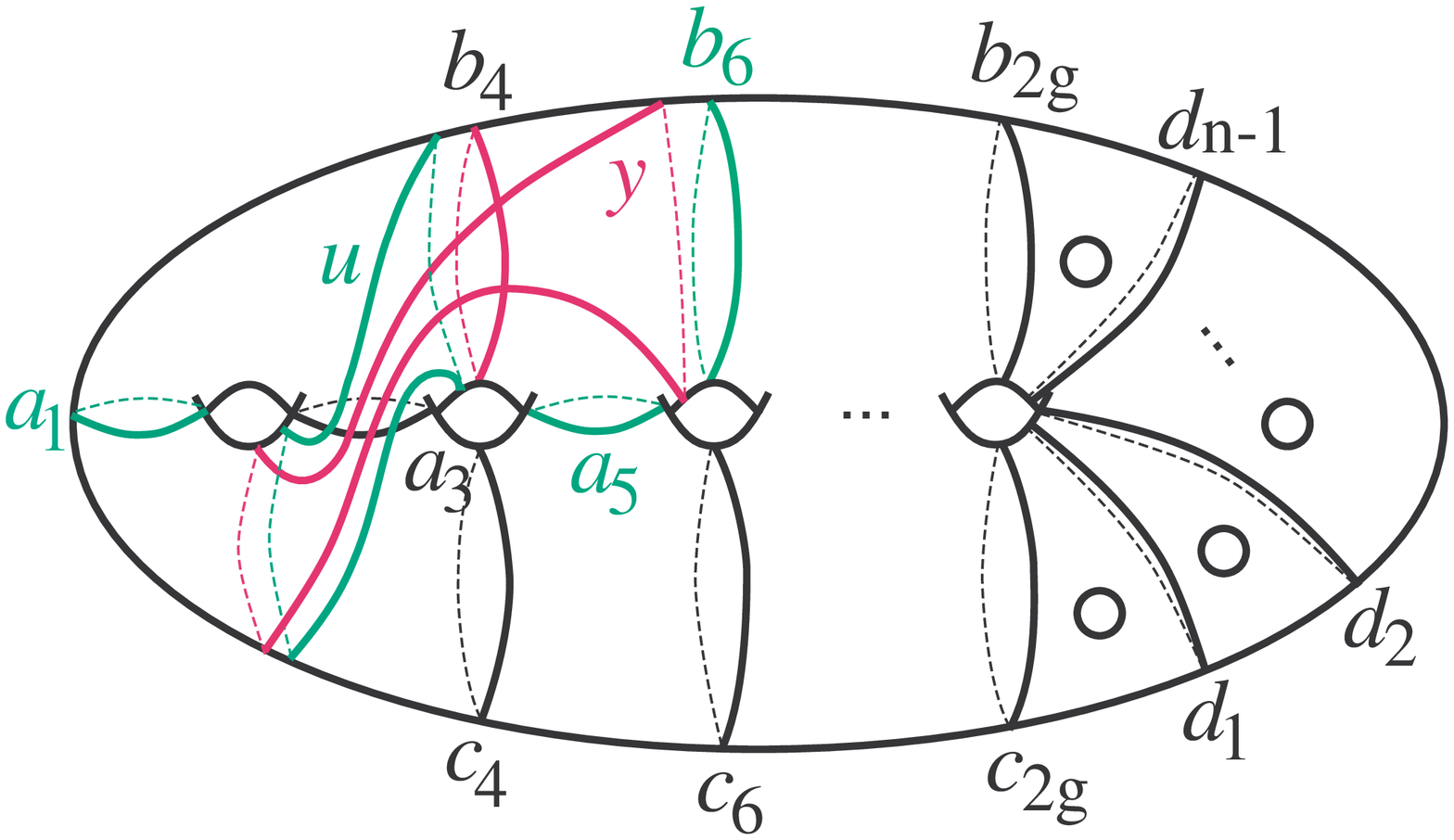}
  
\hspace{-1cm} (iii) \hspace{6.2cm} (iv)
  
\hspace{-0.4cm} \epsfxsize=2.99in \epsfbox{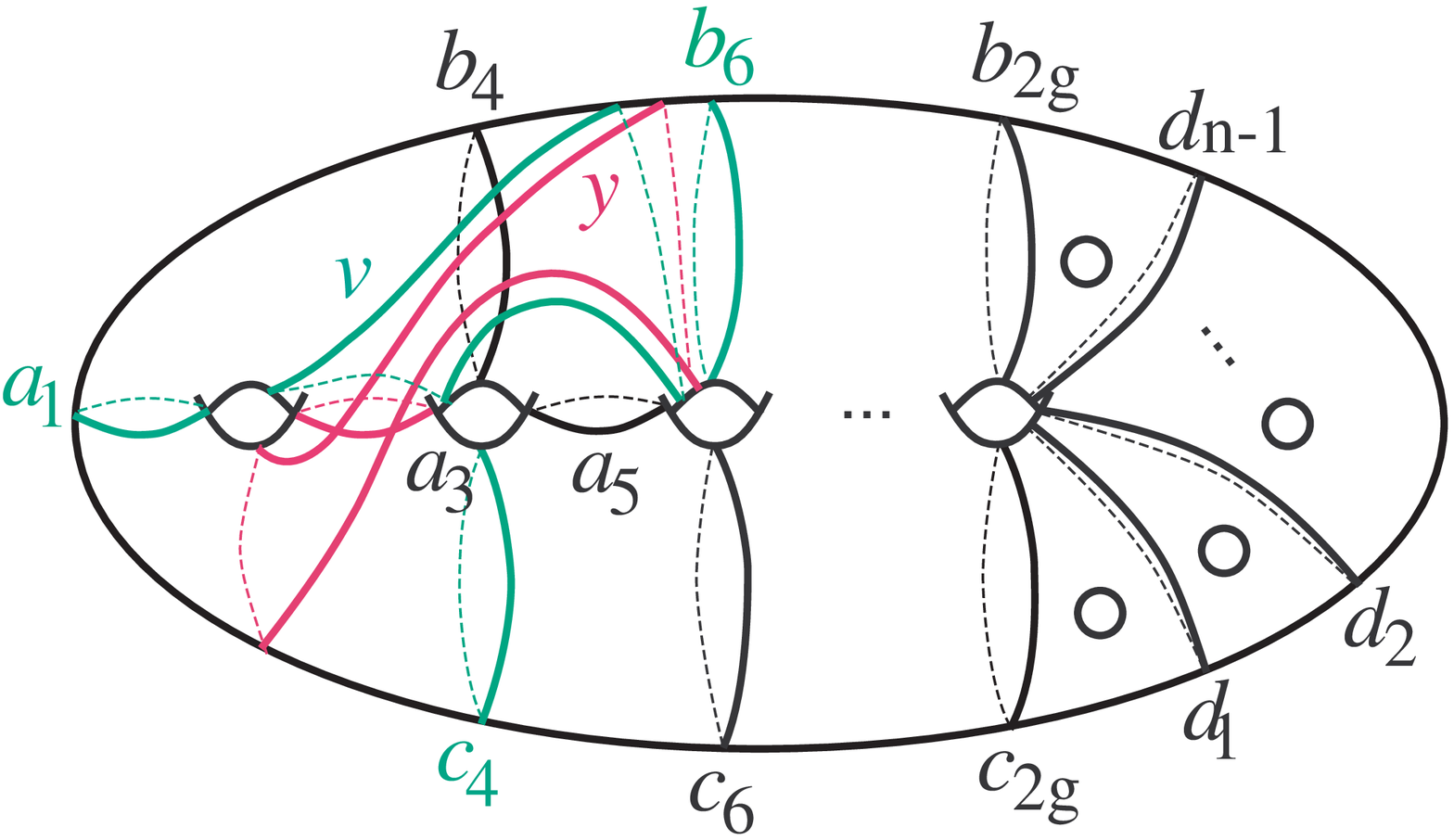} \hspace{-1.2cm} \epsfxsize=2.99in \epsfbox{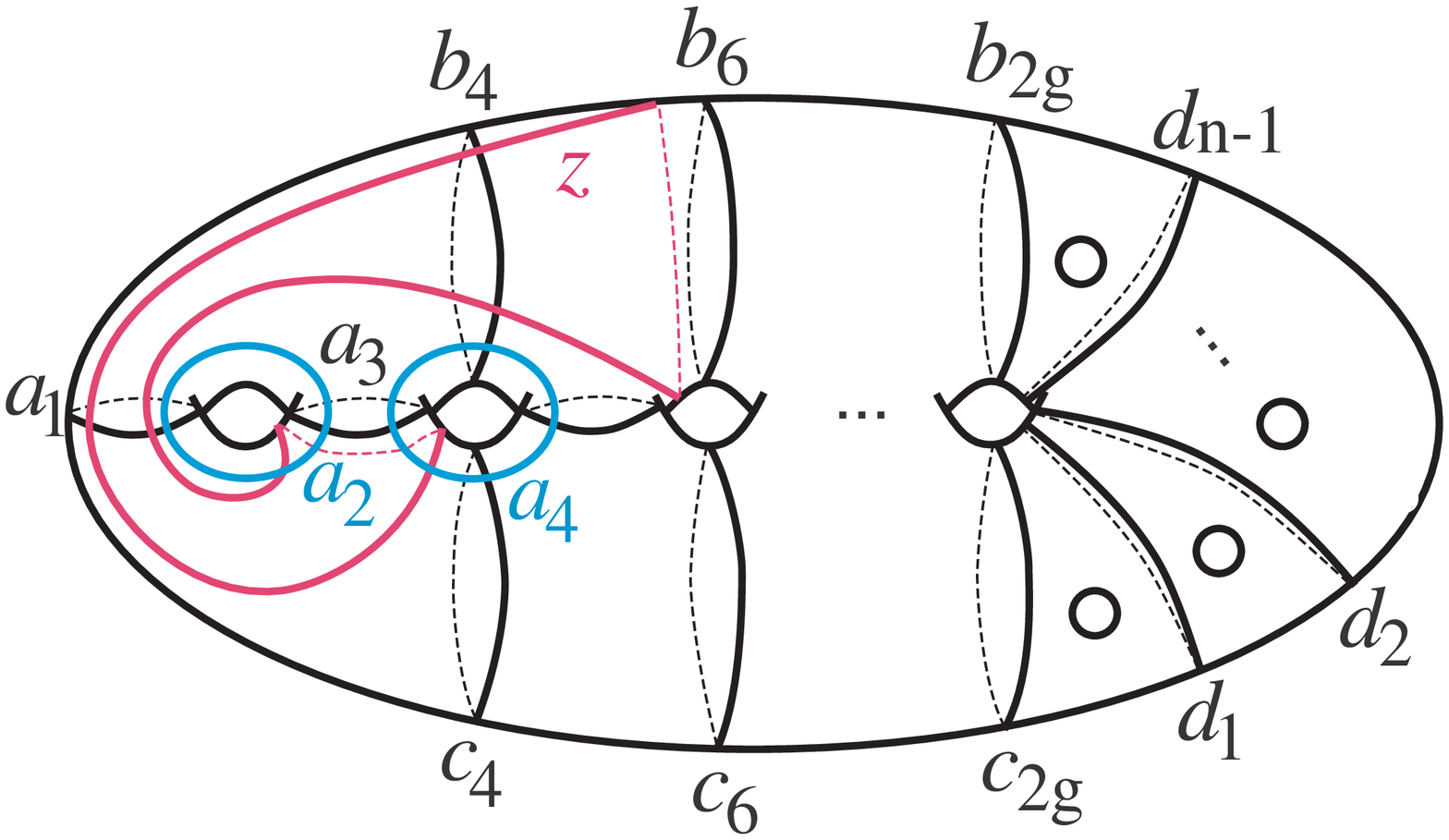}
  
\hspace{-1cm} (v) \hspace{6.3cm} (vi)
  
\hspace{-0.4cm} \epsfxsize=2.99in \epsfbox{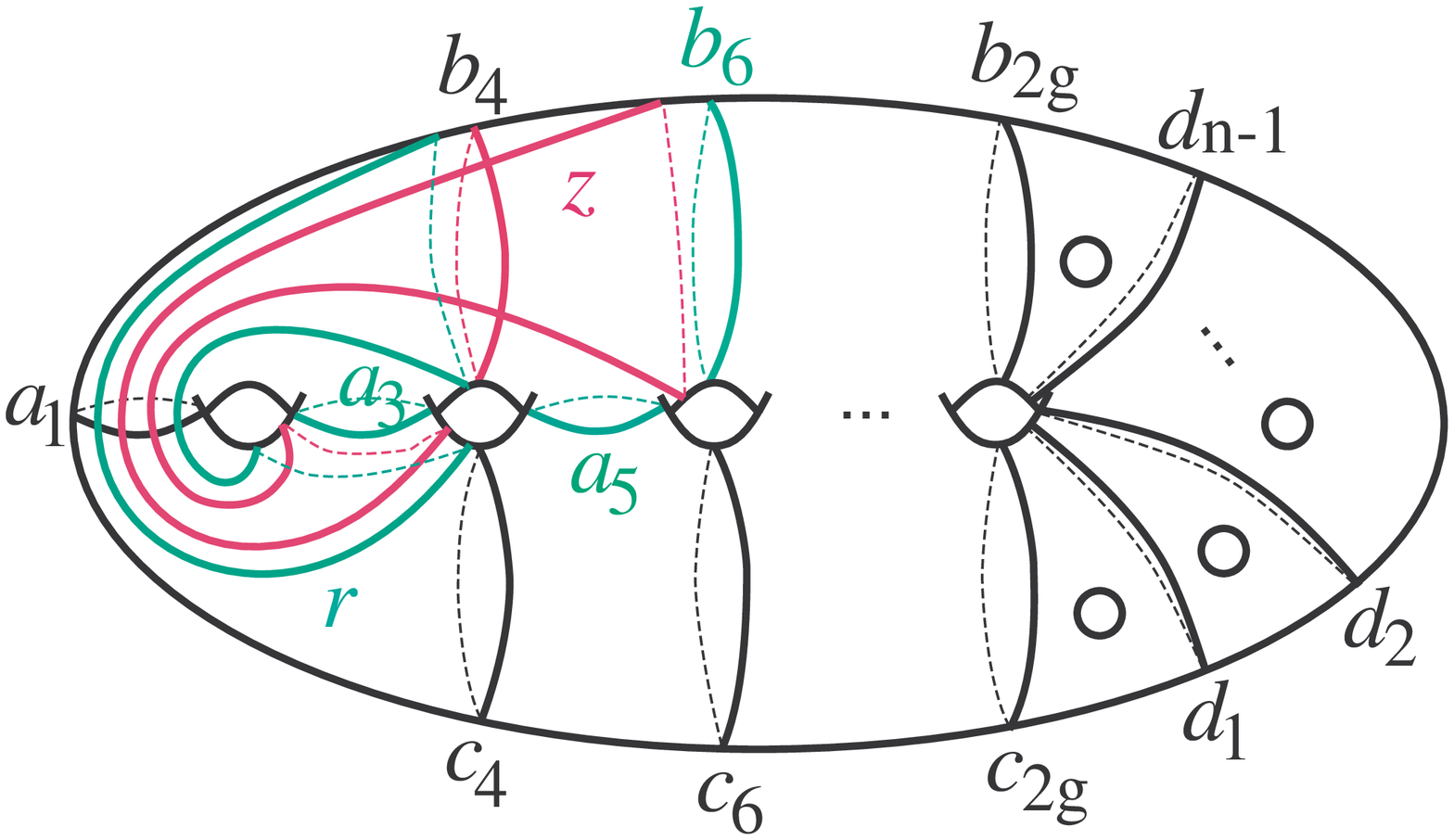}  \hspace{-1.2cm} \epsfxsize=2.99in \epsfbox{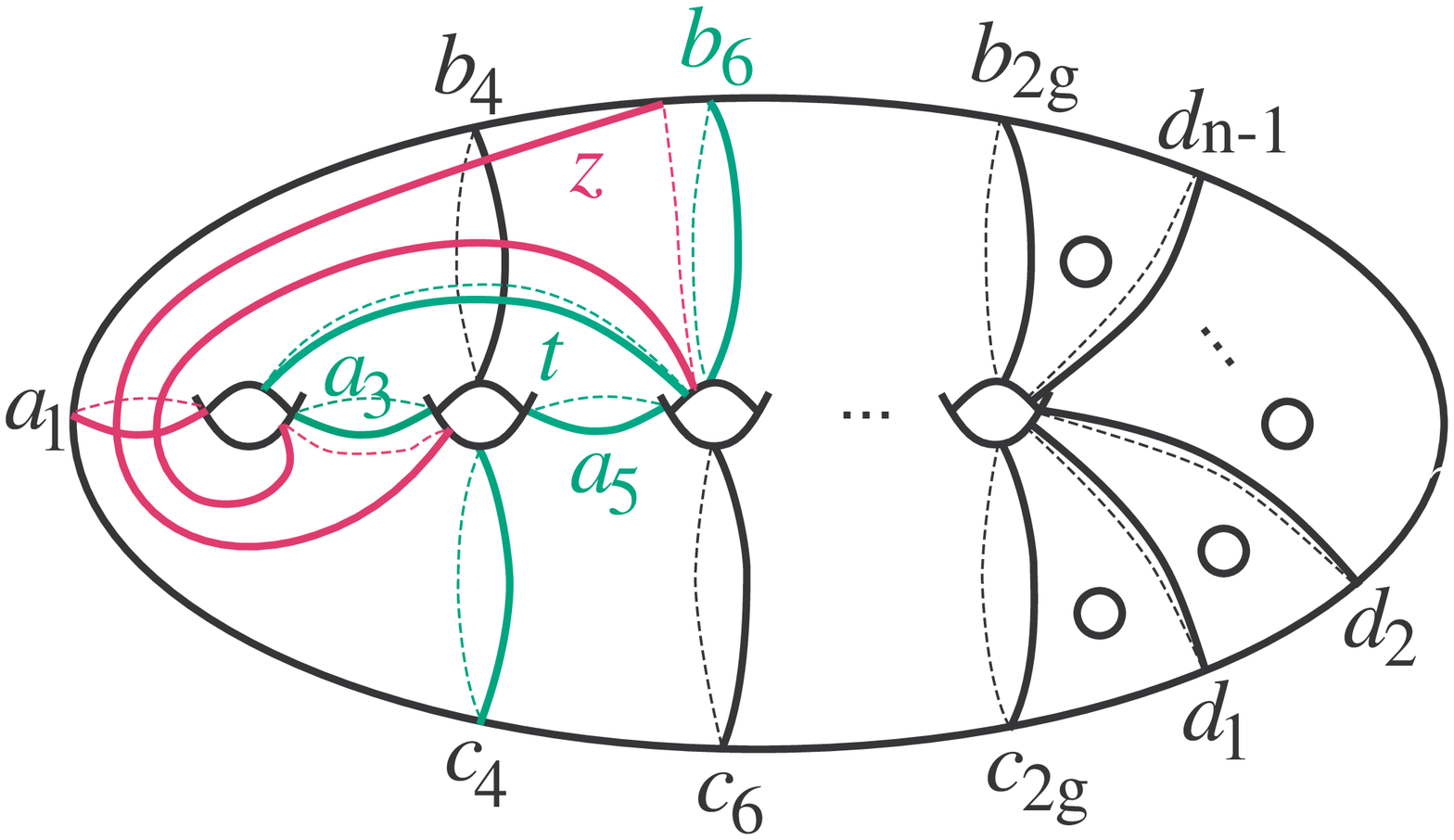}
  
\hspace{-0.8cm} (vii) \hspace{6.1cm} (viii)
 
\caption{Control of adjacency relation} \label{fig1}
\end{center}
\end{figure}

Let $a$ and $b$ be two distinct elements in a pair of pants decomposition $P$ on $R$. Then $a$ is called {\it adjacent} to $b$ with respect to $P$ if and only if there exists a pair of pants in $P$ which has $a$ and $b$ on its boundary.
 
\begin{lemma}
\label{adj} Suppose $g \geq 2$ and $n \geq 0$. Let $P = \{a_1, a_3, \cdots, a_{2g+1}, b_4, b_6, \cdots, b_{2g-2}, c_4,$ 
$c_6, \cdots, c_{2g-2}\}$ when $R$ is closed, and 
$P = \{a_1, a_3, \cdots, a_{2g-1}, b_4, b_6, \cdots, b_{2g}, c_4, c_6,$ 
$\cdots, c_{2g},$ $d_1, d_2,$ $ \cdots, d_{n-1}\}$ when $R$ has boundary
where the curves are as shown in Figure \ref{fig1} (i)-(ii). Let $P'$ be a pair of pants
decomposition of $R$ such that $\lambda([P]) = [P']$. Let $a'_1, a'_3, b'_4$ be the representatives of $\lambda([a_1]), \lambda([a_3]),
\lambda([b_4])$ in $P'$ respectively. Then any two of $a'_1, a'_3, b'_4$ are adjacent to each other with respect to $P'$.\end{lemma}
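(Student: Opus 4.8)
The plan is to reduce each desired adjacency to the existence of a suitable \emph{connecting curve} and then to transport that curve through $\lambda$ using Lemma \ref{2}. The conceptual core is the following adjacency criterion, which I would isolate first: if $Q$ is a pants decomposition of $R$, if $u,v\in Q$ are distinct, and if there is a nonseparating simple closed curve $z$ with $i([z],[u])\neq 0$, $i([z],[v])\neq 0$, and $i([z],[w])=0$ for every $w\in Q\setminus\{u,v\}$, then $u$ and $v$ are adjacent with respect to $Q$.

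To prove the criterion I would argue by contraposition. Put $z$ in minimal position with respect to $Q$ and cut $R$ along $Q$, so that $z$ decomposes into essential arcs inside the pairs of pants, all of whose endpoints lie on copies of $u$ or of $v$ (since $z$ meets no other curve of $Q$). Let $F_u$ be the union of the one or two pairs of pants incident to $u$; since $u$ is nonseparating, both sides of $u$ belong to $F_u$. If $u$ and $v$ were not adjacent, then $v$ would lie on the boundary of no pair of pants of $F_u$, so every arc of $z$ inside $F_u$ would have both endpoints on $u$. Starting from any crossing of $z$ with $u$, the curve would then remain trapped inside $F_u$, crossing only $u$, until it closed up; this forces $z\subseteq F_u$ and hence $i([z],[v])=0$, a contradiction.

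With the criterion available I would treat the three pairs in $\{a'_1,a'_3,b'_4\}$ in turn. For each pair I would exhibit, in the configuration of Figure \ref{fig1}, a nonseparating simple closed curve $z$ meeting each of the two chosen curves of $P$ exactly once and disjoint from every other curve of $P$; for the pair $(a_1,a_3)$ this is the chain curve $a_2$, with $i([a_2],[a_1])=i([a_2],[a_3])=1$, and the other two pairs use the analogous connecting curves drawn in the figure. Applying Lemma \ref{2} to the two intersections of value one yields $i(\lambda([z]),\lambda([a_1]))\neq 0$ and $i(\lambda([z]),\lambda([a_3]))\neq 0$, while the edge-preserving hypothesis yields $i(\lambda([z]),\lambda([w]))=0$ for every $w\in P\setminus\{a_1,a_3\}$. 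Since $\lambda([z])$ crosses a curve of $[P']$ it cannot itself lie in $[P']$; so a representative $z'$ of $\lambda([z])$ in minimal position with respect to $P'$ is a nonseparating curve meeting exactly $a'_1$ and $a'_3$ among $P'$, and the criterion forces $a'_1$ and $a'_3$ to be adjacent with respect to $P'$. The pairs $(a'_1,b'_4)$ and $(a'_3,b'_4)$ are handled identically.

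I expect the main obstacle to be the figure-dependent verification in the last paragraph: for each of the three pairs one must check that the chosen connecting curve really is nonseparating, really meets its two target curves exactly once, and really avoids all remaining curves of $P$, and --- depending on how the images $\lambda([a_1]),\lambda([a_3]),\lambda([b_4])$ and the remaining curves of $P'$ happen to sit inside $R$ --- several sub-cases (the panels (i)--(viii) of Figure \ref{fig1}) may be needed before a valid connecting curve is available. Once these topological checks are in place the criterion applies uniformly, so the real difficulty lies in the construction and case analysis of the connecting curves rather than in the logical skeleton of the argument.
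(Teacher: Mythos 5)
Your adjacency criterion is correct, and it is exactly the (implicit) final step of the paper's argument; your treatment of the pair $(a'_1,a'_3)$ via a chain curve meeting $a_1$ and $a_3$ once each (the curve $x$ of Figure \ref{fig1}(ii)) coincides with the paper's. The gap is in the claim that for the pairs $(a_1,b_4)$ and $(a_3,b_4)$ there also exists a connecting curve meeting each of the two target curves \emph{exactly once} and disjoint from the rest of $P$. No such curve exists, for a parity reason: $a_3$ lies on the boundary of exactly two pairs of pants of $P$, namely $\Pi_1$ with boundary $a_1\cup a_3\cup b_4$ and $\Pi_2$ with boundary $a_1\cup a_3\cup c_4$. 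If $z$ met $a_3$ once, $b_4$ once, and no other curve of $P$, then the arcs of $z\cap\Pi_2$ would have all their endpoints on $a_3$, and the number of such endpoints equals the number of crossings of $z$ with $a_3$ seen from the $\Pi_2$ side, namely $1$ --- an odd number of endpoints for a disjoint union of arcs, which is impossible. The same count rules out a once--once connecting curve for $(a_1,b_4)$. Consequently Lemma \ref{2} cannot be applied on both sides for these two pairs, and your argument as written only establishes the adjacency of $a'_1$ and $a'_3$.

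This is precisely where the paper's proof becomes long. It takes a connecting curve $y$ that is disjoint from $P\setminus\{a_3,b_4\}$ but meets $a_3$ and $b_4$ with geometric intersection number larger than one, so edge-preservation alone gives no information about $i(\lambda([y]),\lambda([b_4]))$. To recover it, the paper places $y$ inside an auxiliary pants decomposition $Q_1=(P\setminus\{a_3\})\cup\{u\}$, all of whose members except $b_4$ are disjoint from and non-isotopic to $y$; maximality of $\lambda([Q_1])$ then forces either $\lambda([y])=\lambda([b_4])$ or $i(\lambda([y]),\lambda([b_4]))\neq 0$, and the equality is excluded using the auxiliary curve $a_4$, which meets $b_4$ once and misses $y$, so that Lemma \ref{2} separates $\lambda([y])$ from $\lambda([b_4])$. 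A symmetric argument with $Q_2=(P\setminus\{b_4\})\cup\{v\}$ yields $i(\lambda([y]),\lambda([a_3]))\neq 0$, and only then does your criterion apply; the pair $(a_1,b_4)$ is handled the same way with $z$, $Q_3$, $Q_4$. If you want to keep your streamlined scheme, you must supply this (or an equivalent) substitute step for the two pairs involving $b_4$.
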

  
\begin{proof} We will first give the proof when $g \geq 3, n > 0$. Any two of $a_1, a_3, b_4$ are adjacent to each other with respect to $P$.
Let $x$ be the curve shown in Figure \ref{fig1} (ii). 
Assume that $a'_1$ and $a'_3$ are not adjacent with respect to $P'$. Since 
$i([x], [a_1]) = 1$ and $i([x], [a_3]) =1$, we have $i(\lambda([x]), \lambda([a_1])) \neq 0$ and
$i(\lambda([x]), \lambda([a_3])) \neq 0$ by Lemma \ref{2}. Since $i([x], [e]) = 0$ for all $e$ in $P \setminus \{a_1, a_3\}$,
we have $i(\lambda([x]), \lambda([e]))=0$ for all $e$ in $P \setminus \{a_1, a_3\}$. But this is not possible because
$\lambda([x])$ has to intersect geometrically essentially with some isotopy class other than $\lambda([a_1])$ and
$\lambda([a_3])$ in the image pair of pants decomposition to be able to make essential intersections with $\lambda([a_1])$ and
$\lambda([a_3])$. So, $a_1'$ is adjacent to $a_3'$ with respect to $P'$.

To see that $a_3'$ is adjacent to $b_4'$ with respect to $P'$ we consider the curves $y$ and $a_4$ shown in Figure \ref{fig1} (iii). Let 
$Q_1 = (P \setminus \{a_3\}) \cup \{u\}$ where the curve $u$ is as shown in Figure \ref{fig1} (iv). We see that $Q_1$ is a pants decomposition on $R$. Since $i([y], [e]) = 0$ for all $e$ in $Q_1 \setminus \{b_4\}$, we have 
$i(\lambda([y]), \lambda([e]))=0$ for all $e$ in $Q_1 \setminus \{b_4\}$. The map $\lambda$ is edge-preserving, so 
$\lambda([y]) \neq \lambda([e])$ for all $e$ in $Q_1 \setminus \{b_4\}$. Since $i([a_4], [b_4]) = 1$ and 
$i([a_4], [y]) = 0$, we have  $i(\lambda([a_4]), \lambda([b_4])) \neq 0$ by Lemma \ref{2} and 
$i(\lambda([a_4]), \lambda([y])) = 0$. So, $\lambda([y]) \neq \lambda([b_4])$. These imply that  
$i(\lambda([y]), \lambda([b_4])) \neq 0$. Let 
$Q_2 = (P \setminus \{b_4\}) \cup \{v\}$ where the curve $v$ is as shown in Figure \ref{fig1} (v). We see that $Q_2$ is a pants 
decomposition on $R$. 
Since $i([y], [e]) = 0$ for all $e$ in $Q_2 \setminus \{a_3\}$,
we have $i(\lambda([y]), \lambda([e]))=0$ for all $e$ in $Q_2 \setminus \{a_3\}$. The map $\lambda$ is edge-preserving, so 
$\lambda([y]) \neq \lambda([e])$ for all $e$ in $Q_2 \setminus \{a_3\}$. Since $i([a_4], [a_3]) = 1$ and 
$i([a_4], [y]) = 0$, we have  $i(\lambda([a_4]), \lambda([a_3])) \neq 0$ by Lemma \ref{2} and 
$i(\lambda([a_4]), \lambda([y])) = 0$. So, $\lambda([y]) \neq \lambda([a_3])$. These imply that  
$i(\lambda([y]), \lambda([a_3])) \neq 0$. Since $i(\lambda([y]), \lambda([b_4])) \neq 0$, 
$i(\lambda([y]), \lambda([a_3])) \neq 0$ and $i(\lambda([y]), \lambda([e]))=0$ for all $e$ in $P \setminus \{a_3, b_4\}$, we see that $a_3'$ is adjacent to $b_4'$ with respect to $P'$. 
 
\begin{figure} \begin{center}
		\hspace{-1.5cm} \epsfxsize=1.7in \epsfbox{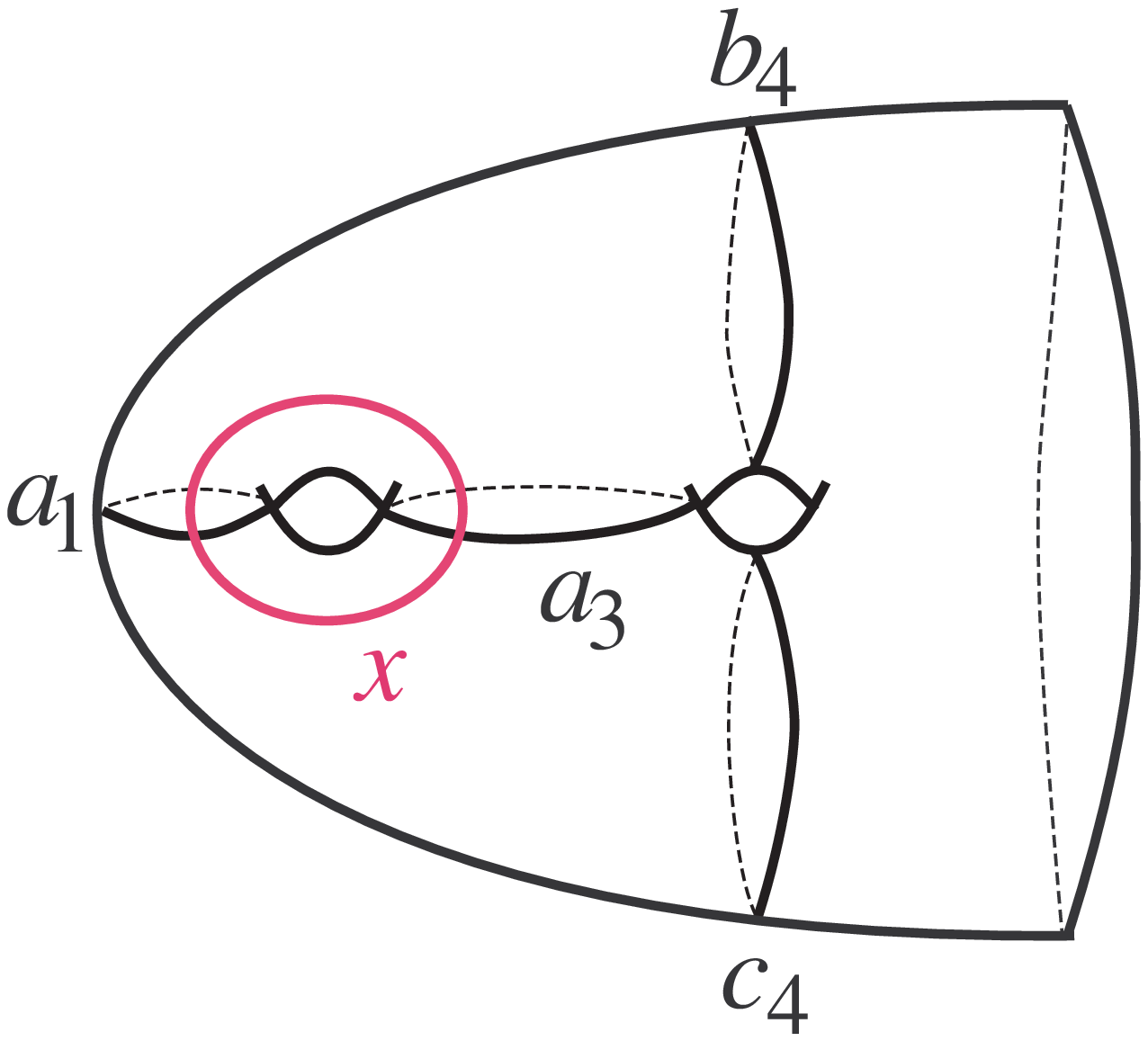} \hspace{0.4cm} \epsfxsize=1.7in \epsfbox{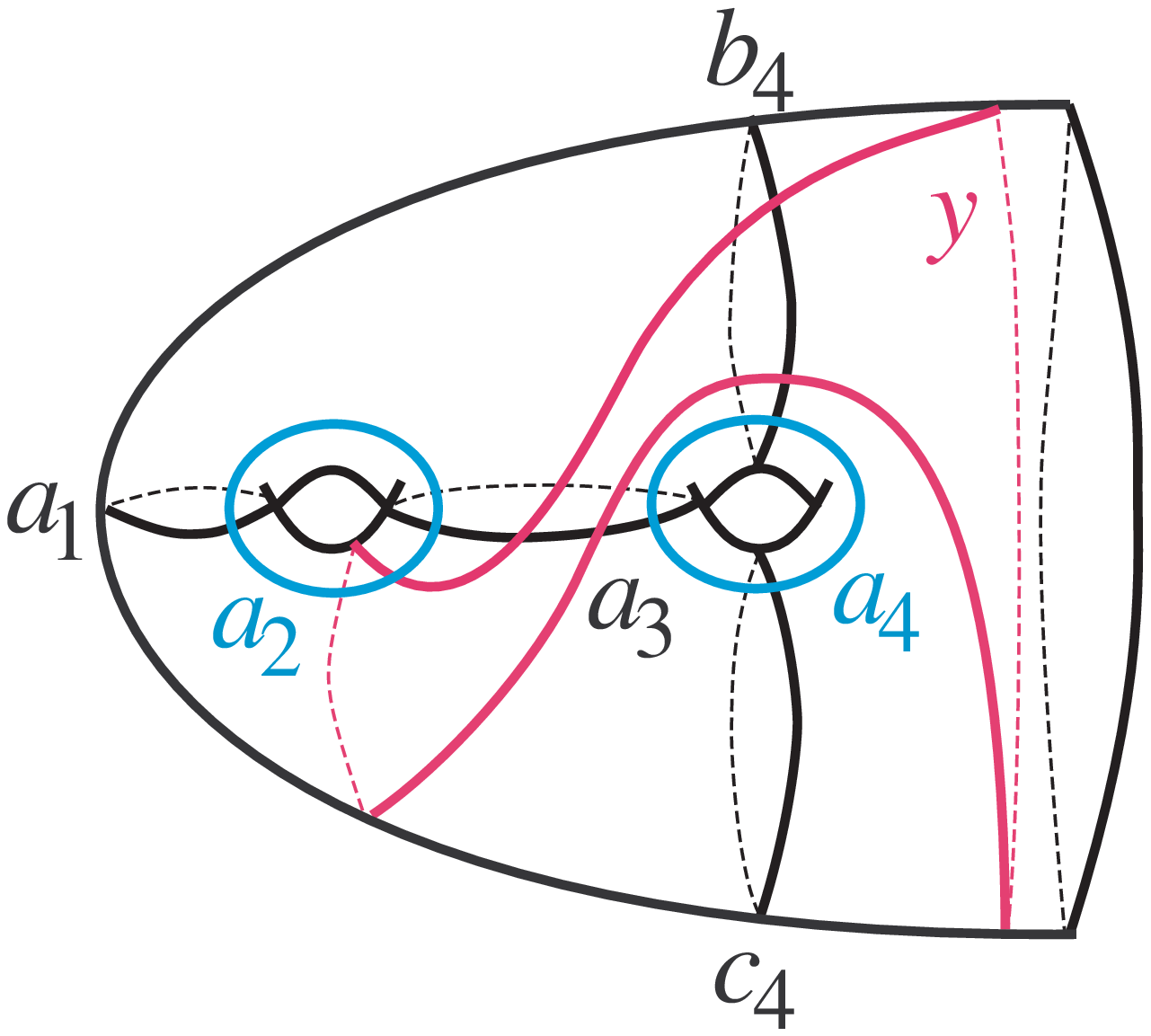} \hspace{0.4cm} \epsfxsize=1.7in \epsfbox{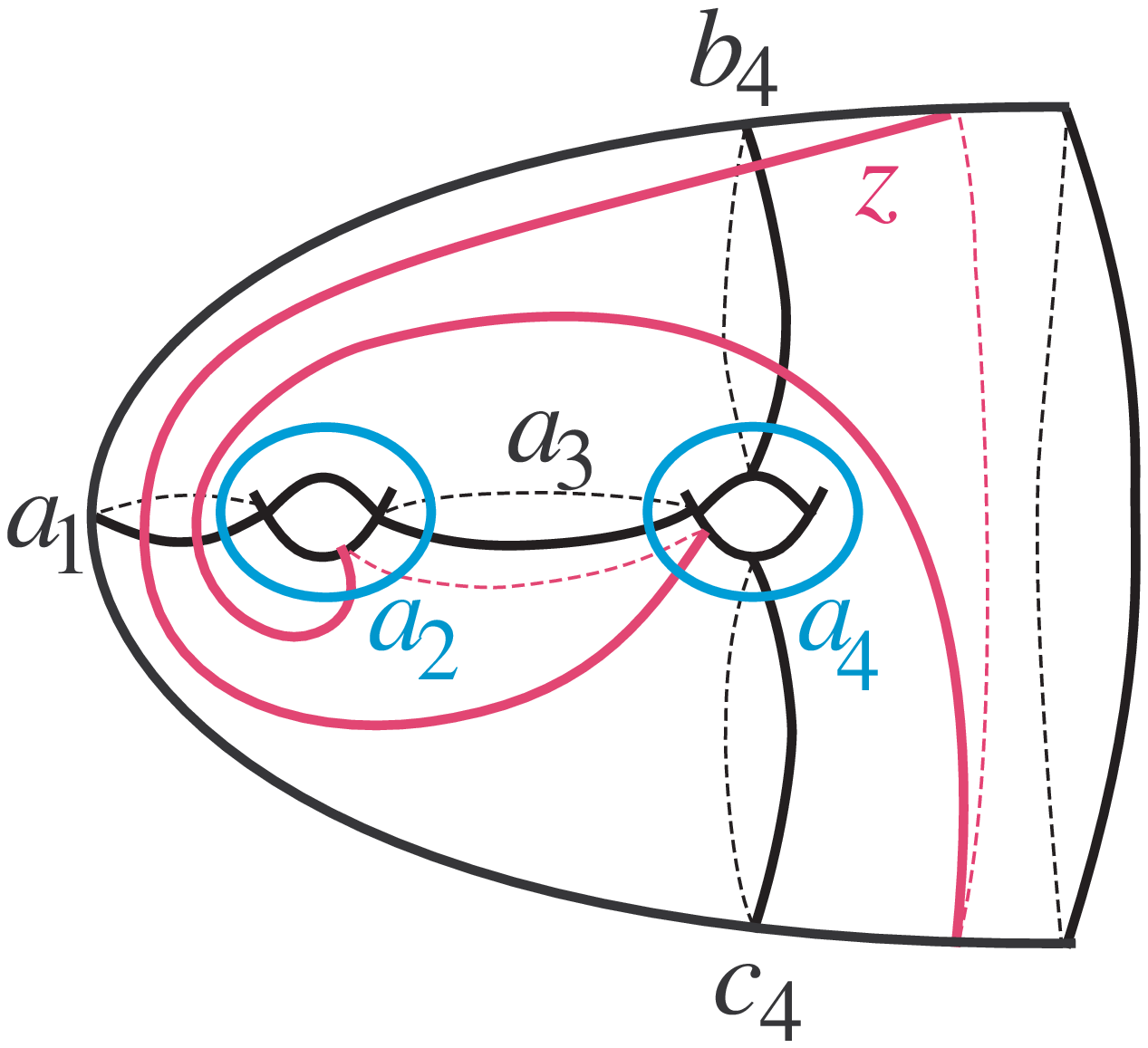}
		
		\hspace{-1cm} (i) \hspace{3.7cm} (ii) \hspace{3.7cm} (iii)
		
		\hspace{-1.5cm} \epsfxsize=1.7in \epsfbox{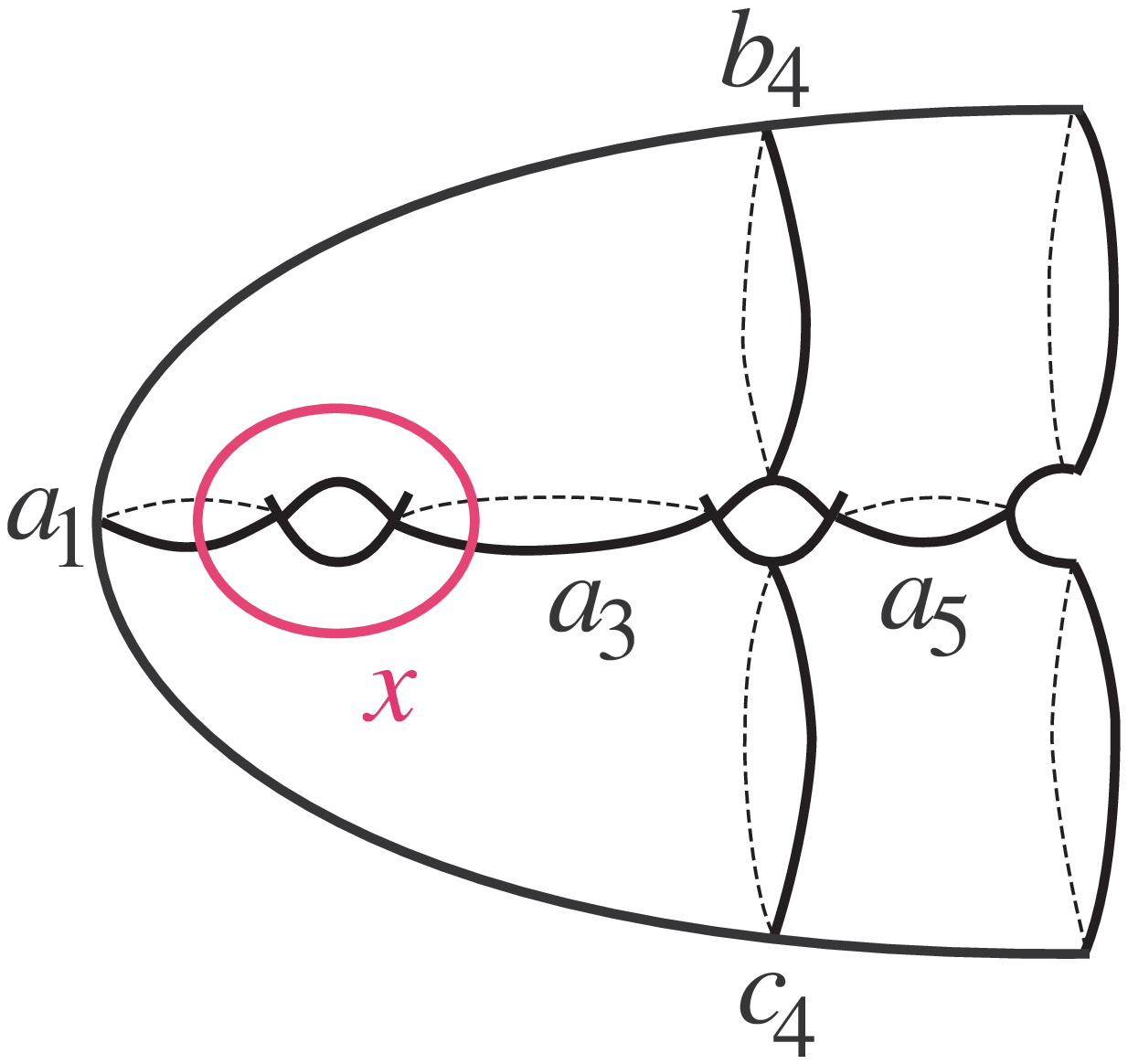} \hspace{0.4cm} \epsfxsize=1.7in \epsfbox{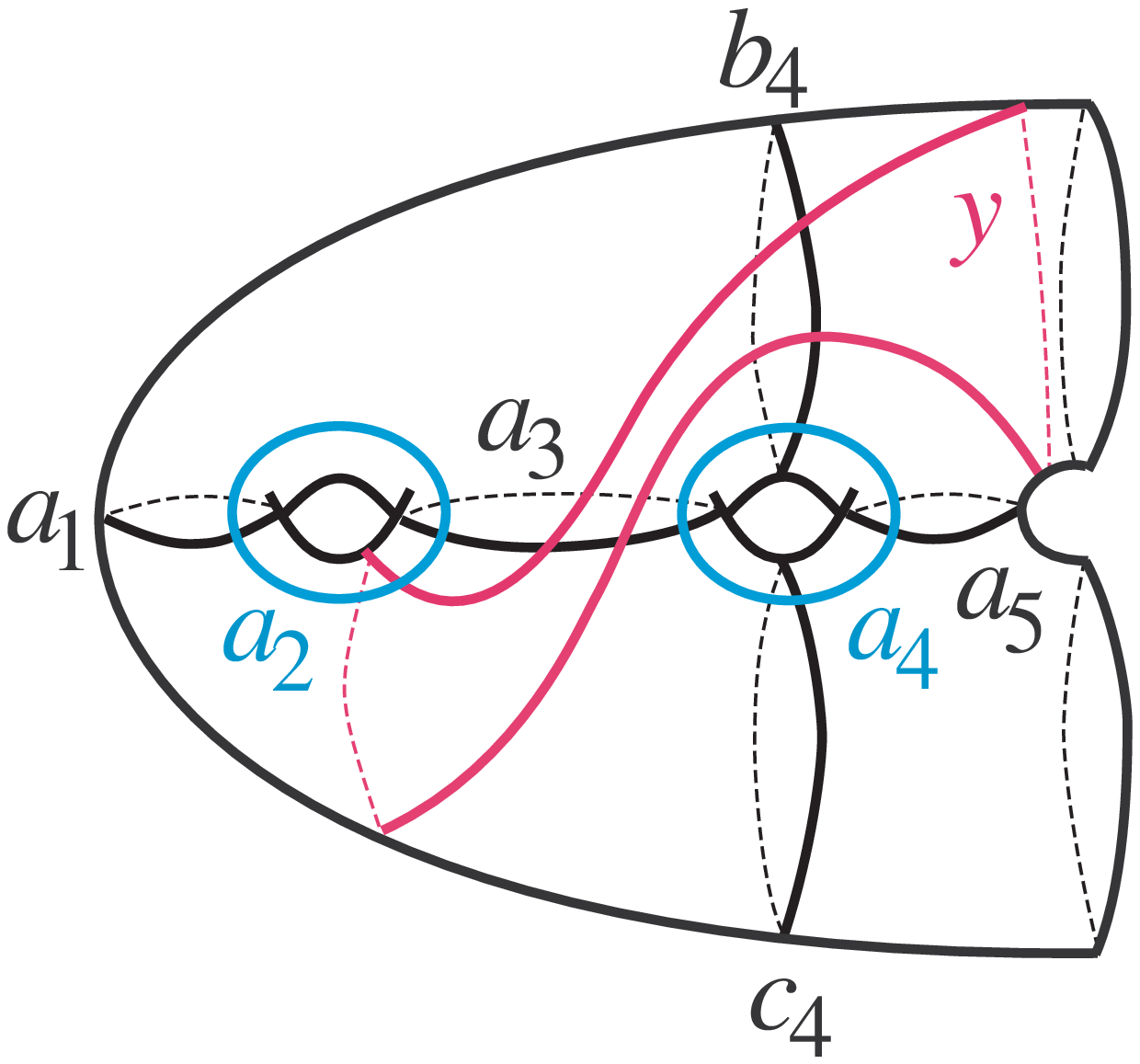} \hspace{0.4cm} \epsfxsize=1.7in \epsfbox{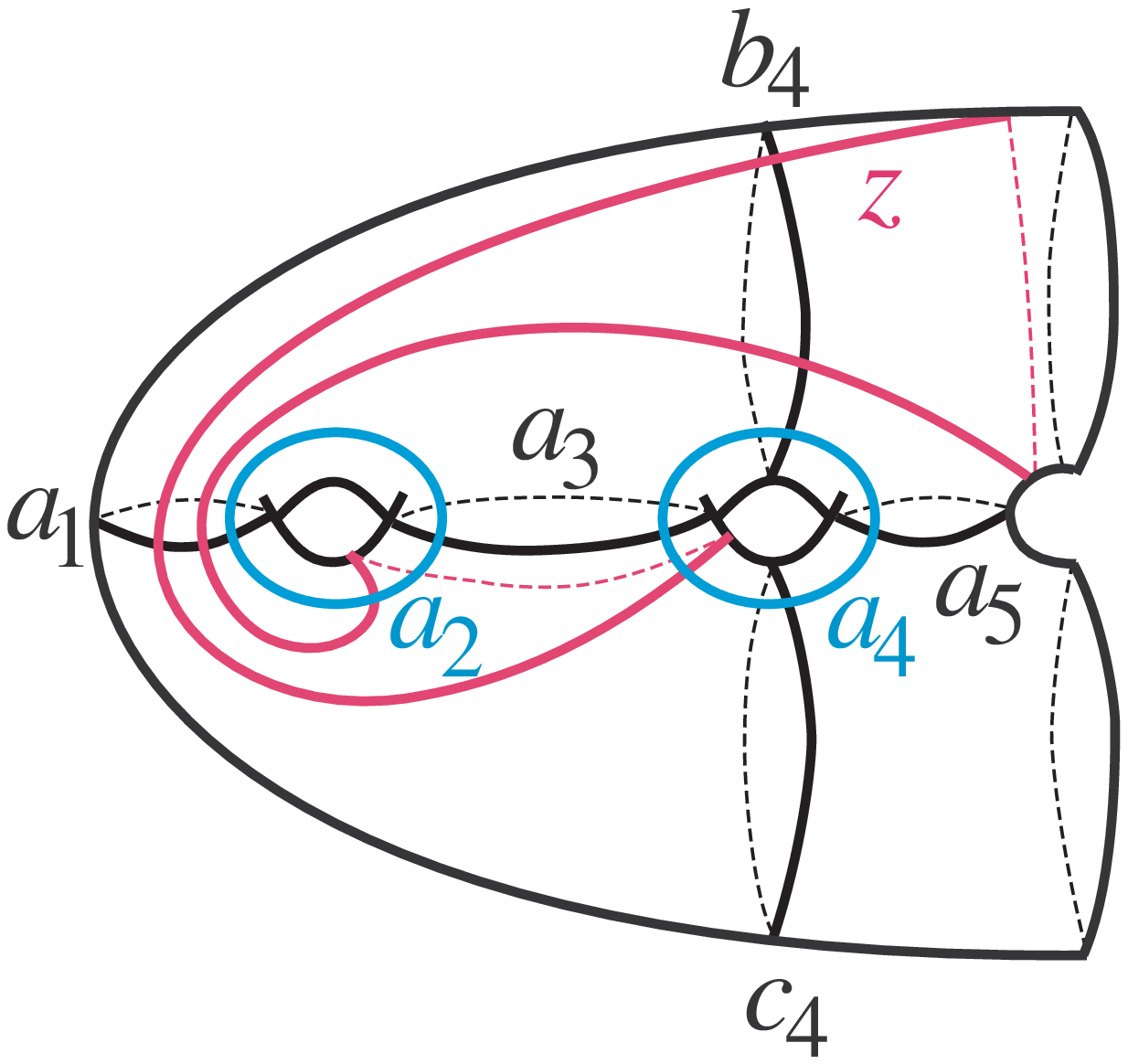}
		
		\hspace{-1cm} (iv) \hspace{3.7cm} (v) \hspace{3.7cm} (vi)
		
		\caption{Control of adjacency relation} \label{fig2}
\end{center} \end{figure}
 
Similarly, we can see that $a_1'$ is adjacent to $b_4'$ with respect to $P'$ as follows: Consider the curve $z$ shown in Figure \ref{fig1} (vi). Let 
$Q_3 = (P \setminus \{a_1\}) \cup \{r\}$ where the curve $r$ is as shown in Figure \ref{fig1} (vii). The set $Q_3$ is a pants decomposition on $R$. Since $i([z], [e]) = 0$ for all $e$ in $Q_3 \setminus \{b_4\}$,
we have $i(\lambda([z]), \lambda([e]))=0$ for all $e$ in $Q_3 \setminus \{b_4\}$. Since $\lambda$ is edge-preserving 
$\lambda([z]) \neq \lambda([e])$ for all $e$ in $Q_3 \setminus \{b_4\}$. Since $i([a_4], [b_4]) = 1$ and 
$i([a_4], [z]) = 0$, we have  $i(\lambda([a_4]), \lambda([b_4])) \neq 0$ by Lemma \ref{2} and 
$i(\lambda([a_4]), \lambda([z])) = 0$. So, $\lambda([z]) \neq \lambda([b_4])$. These imply that  
$i(\lambda([z]), \lambda([b_4])) \neq 0$. Let 
$Q_4 = (P \setminus \{b_4\}) \cup \{t\}$ where the curve $t$ is as shown in Figure \ref{fig1} (viii). The set $Q_4$ is a pants 
decomposition on $R$. Since $i([z], [e]) = 0$ for all $e$ in $Q_4 \setminus \{a_1\}$,
we have $i(\lambda([z]), \lambda([e]))=0$ for all $e$ in $Q_4 \setminus \{a_1\}$. Since $\lambda$ is edge-preserving 
$\lambda([z]) \neq \lambda([e])$ for all $e$ in $Q_4 \setminus \{a_1\}$. Since $i([a_4], [z]) = 1$ and 
$i([a_4], [a_1]) = 0$, we have  $i(\lambda([a_4]), \lambda([z])) \neq 0$ by Lemma \ref{2} and 
$i(\lambda([a_4]), \lambda([a_1])) = 0$. So, $\lambda([z]) \neq \lambda([a_1])$. These imply that  
$i(\lambda([z]), \lambda([a_1])) \neq 0$. Since $i(\lambda([z]), \lambda([b_4])) \neq 0$, 
$i(\lambda([z]), \lambda([a_1])) \neq 0$ and $i(\lambda([z]), \lambda([e]))=0$ for all $e$ in $P \setminus \{a_1, b_4\}$, 
we see that $a_1'$ is adjacent to $b_4'$ with respect to $P'$. Hence, any two of $a'_1, a'_3, b'_4$ are adjacent to each other with respect to $P'$. 

The proof follows similarly when $g \geq 3, n = 0$.
The statement of the lemma is obvious when $g=2, n=0$. For the remaining cases for $g=2$, the proof is similar (see Figure
\ref{fig2}).\end{proof} 

\begin{lemma}
\label{embedded} Suppose $g \geq 2$ and $n \geq 0$. If $\alpha, \beta, \gamma$ are distinct vertices in $\mathcal{N}(R)$ having pairwise disjoint representatives 
which bound a pair of pants on $R$, then $\lambda(\alpha), \lambda(\beta), \lambda(\gamma)$ are distinct vertices in $\mathcal{N}(R)$ having pairwise disjoint representatives which bound a pair of pants on $R$.\end{lemma}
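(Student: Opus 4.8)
The plan is to first dispatch the easy consequences and isolate the real content. Since $\alpha, \beta, \gamma$ bound a pair of pants they are, in particular, pairwise disjoint and pairwise non-isotopic, so $\{\alpha, \beta, \gamma\}$ is the vertex set of a triangle in $\mathcal{N}(R)$. By Lemma \ref{inj} the map $\lambda$ is injective on it, so $\lambda(\alpha), \lambda(\beta), \lambda(\gamma)$ are distinct; and since $\lambda$ is edge-preserving they admit pairwise disjoint representatives $a', b', c'$. Thus the entire content of the lemma is that $a', b', c'$ bound a \emph{common} pair of pants, rather than merely being disjoint and distinct.

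Next I would reduce to a standard configuration. A short Euler characteristic computation shows that if three nonseparating curves bound a pair of pants then, each being individually nonseparating, the complement of that pair of pants is connected, of genus $g-2$ with $3+n$ boundary components; hence any two such triples are carried onto one another by a homeomorphism of $R$ (change of coordinates). I may therefore assume $\alpha, \beta, \gamma$ are the curves $a_1, a_3, b_4$ of Figure \ref{fig1} and complete them to the standard pair of pants decomposition $P$ of Lemma \ref{adj}. By Corollary \ref{pd-inj} a set of pairwise disjoint representatives of $\lambda([P])$ is a pants decomposition $P'$, and by Lemma \ref{adj} the three images $a', b', c'$ are pairwise adjacent with respect to $P'$.

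Now the crux. In a pants decomposition each nonseparating curve lies on the boundary of exactly two pants, so $a'$ bounds two pants, one containing $b'$ and one containing $c'$; if these two pants coincide, that pants is bounded by $a', b', c'$ and we are done, and the same conclusion follows if any two of the three adjacency pants agree. The only remaining possibility is the \emph{triangle configuration}, in which the three adjacencies are realized by three distinct pants $\Pi_{ab}, \Pi_{bc}, \Pi_{ca}$; gluing these along $a', b', c'$ produces a genus-one subsurface $S'$ with three boundary components $x', y', z' \in P'$ inside which $a', b', c'$ lie. Excluding this configuration is the whole force of the lemma.

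I would attack the exclusion by transporting auxiliary curves from the standard picture. For a well-chosen nonseparating curve $w$ with prescribed intersections with the triple (for instance $i(w,a_1)=i(w,b_4)=1$, $i(w,a_3)=0$, with $w$ disjoint from as much of $P$ as possible), Lemma \ref{2} forces $\lambda(w)$ to meet the two corresponding images of $a',b',c'$ essentially, while edge-preservation keeps $\lambda(w)$ disjoint from the images of every curve $w$ avoids, in particular from $x', y', z'$. This confines $\lambda(w)$ to $S'$, and combining several such constraints, including Lemma \ref{1} applied to chains running through the configuration, should be incompatible with the genus-one structure of $S'$. I expect this to be the main obstacle and the place requiring the most care: a single dual curve will not suffice, since the curves $a', c'$ are separating inside $S'$ and can be crossed an even number of times by a curve that is nonetheless nonseparating in $R$, so no contradiction comes from parity alone. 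The contradiction must instead be extracted by constraining simultaneously the images of the third boundary curves $x', y', z'$ (through their preimages in $P$) together with these dual curves, thereby pinning down that $a', b', c'$ cannot co-bound three distinct pairs of pants and must bound a single one.
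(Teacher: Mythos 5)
Your setup coincides with the paper's: the distinctness and disjointness of the images follow from Lemma \ref{inj} and edge-preservation; the triple is moved to the standard position $a_1, a_3, b_4$ of Figure \ref{fig1} and completed to the pants decomposition $P$ of Lemma \ref{adj} (your change-of-coordinates justification, via the connectedness and Euler characteristic of the complement of the pair of pants, is correct and makes explicit what the paper leaves implicit); Corollary \ref{pd-inj} and Lemma \ref{adj} then give pairwise adjacency of $a_1', a_3', b_4'$ in $P'$; and you correctly isolate the only remaining obstruction, namely the configuration in which the three adjacencies are realized by three distinct pairs of pants. Up to that point the proposal is sound and follows the paper's route.

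The gap is that the exclusion of that triangle configuration --- which you yourself identify as ``the whole force of the lemma'' --- is never actually carried out: you describe a class of auxiliary curves and assert that combining ``several such constraints \ldots should be incompatible with the genus-one structure of $S'$,'' but no contradiction is derived, and the specific curves you suggest (e.g.\ $w$ dual to $a_1$ and $b_4$ but not to $a_3$) are not the ones that make the argument close. The paper's contradiction uses exactly two auxiliary curves from Figure \ref{fig3-a}: the chain curve $a_2$, dual to \emph{both} $a_1$ and $a_3$ and disjoint from every other curve of $P$, and the curve $a_4$, dual to $b_4$ and disjoint from $a_1$ and $a_2$. By Lemma \ref{2} and edge-preservation, $a_2'$ meets $a_1'$ and $a_3'$ essentially and nothing else in $P'$; since in the triangle configuration the pair of pants $Q_2$ containing $a_1'$ and $b_4'$ does not have $a_3'$ on its boundary, every arc of $a_2'$ in $Q_2$ must begin and end on $a_1'$, and there is an essential such arc. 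That arc separates $b_4'$ from $a_1'$ inside $Q_2$ in a way that no curve disjoint from $a_1' \cup a_2'$ can essentially reach $b_4'$ there --- yet $a_4'$ must do so. It is this interplay between a curve dual to two members of the triple and a second curve dual to the third (but disjoint from the first) that produces the contradiction; your proposal gestures at the right toolbox but does not supply this argument, so the central step of the lemma remains unproved.
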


\begin{figure}
\begin{center}
\hspace{1cm} \epsfxsize=3.2in \epsfbox{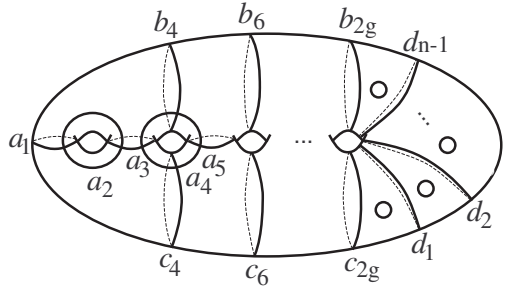} 

\caption {Curves $a_1, a_3, b_4$ bounding a pair of pants} \label{fig3-a}
\end{center}
\end{figure}
 
\begin{proof} Let $a, b, c$ be pairwise disjoint representatives of $\alpha, \beta, \gamma$ respectively. If $R$ is a closed surface
of genus two then the statement is obvious. In the other cases we complete $\{a, b, c\}$ to a pair of pants decomposition $P$ as in Lemma \ref{adj} 
where we let $a=a_1, b=a_3, c=b_4$. Let $a_2, a_4$ be as shown in  Figure \ref{fig3-a}. Let $P'$ be a pair of pants decomposition of $R$ such that $\lambda([P]) = [P']$. Let $a_1', a_3', b_4', c_4'$ be the representatives of $\lambda([a_1]), \lambda([a_3]), \lambda([b_4]), \lambda([c_4])$ in $P'$ respectively. Let $a'_2, a'_4$ be disjoint representatives of $\lambda([a_2]), \lambda([a_4])$ which intersects elements of $P'$ minimally. Using Lemma \ref{adj}, we see that since any two curves in $\{a_1, a_3, b_4\}$ are adjacent with respect to $P$, any two curves in $\{a'_1, a'_3, b'_4\}$ are adjacent with respect to $P'$. Similarly, since any two curves in $\{a_1, a_3, c_4\}$ are adjacent with respect to $P$, any two curves in $\{a'_1, a'_3, c'_4\}$ are adjacent with respect to $P'$. Then there is a pair of pants $Q_1$ in $P'$ having $a_1'$ and $a_3'$
on its boundary. Let $x$ be the other boundary component of $Q_1$. If $x = b_4'$, then we are done. Suppose $x \neq b_4'$. Since $a_1'$ is adjacent to $b_4'$ with respect to $P'$ there is another pair of pants $Q_2$ in $P'$ having $a_1'$ and $b_4'$
on its boundary. Let $y$ be the other boundary component of $Q_2$. If $y= a_3'$ we are done, if that is not the case then since $a_3'$ is adjacent to $b_4'$ with respect to $P'$ there is a pair of pants $Q_3$ in $P'$ having $a_3'$ and $b_4'$
on its boundary. Since $i([a_1], [a_2])=1$, $i([a_2], [a_3])=1$ and $a_2$ is disjoint from all the other curves in $P$, we know $a_2'$ intersects $a_1'$ and $a_3'$ nontrivially, and $a_2'$ is disjoint from all the other curves in $P'$. 
Then there exists an essential arc $w$ of $a_2'$ in $Q_2$ which starts and ends on $a_1'$ and does not intersect $b_4'$ and the other boundary component of $Q_2$. But this gives a contradiction since $a_4$ is disjoint from $a_1$ and $a_2$ 
and $i([a_4], [b_4]) =1$, so $a_4'$ is disjoint from $a'_1$ and $a'_2$ and $i([a'_4], [b'_4]) \neq 0$. Hence, there is a pair of pants which has $a'_1, a'_3, b'_4$ on its boundary.\end{proof}\\
 
The author gave a characterization of geometric intersection one property in the nonseparating curve complex on $R$ with the following lemma in $\cite{Ir3}$.

\begin{figure}
	\begin{center}
		\hspace{0.1cm} \epsfxsize=2.5in \epsfbox{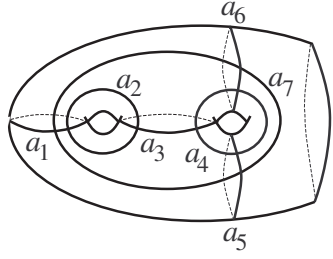} \caption{Curves $a_1, a_2$ intersecting once} \label{fig-int-one}
	\end{center}
\end{figure}

\begin{lemma} \label{Irmak-lemma} (Irmak \cite{Ir3}) Suppose $g \geq 2$ and $n \geq 0$.
Let $\alpha_{1}$ and $\alpha_{2}$ be two vertices in
$\mathcal{N}(R)$. Then $i( \alpha_{1}, \alpha_{2})=1$ if and only
if there exist isotopy classes $\alpha_{3}, \alpha_{4},
\alpha_{5}, \alpha_{6}, \alpha_{7}$ such that

\indent (i) $i(\alpha_{i}, \alpha_{j})=0$ if and only if the
the circles $a_i$ and $a_j$ on Figure \ref{fig-int-one} are disjoint.

\indent (ii) $\alpha_{1}, \alpha_{3}, \alpha_{5}, \alpha_{6}$ have
pairwise disjoint representatives $a_1, a_3, a_5, a_6$
respectively such that $a_5 \cup a_6$ divides $R$ into two pieces,
one of these is a torus with two holes, $T$, containing some
representatives of the isotopy classes $\alpha_1, \alpha_2$ and
$a_1, a_3, a_5$ bound a pair of pants on $T$ and $a_1, a_3, a_6$
bound a pair of pants on $T$.\end{lemma}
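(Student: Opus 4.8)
The statement is an ``if and only if,'' so the plan is to prove the two directions separately; the forward direction is a direct construction, while the reverse direction carries the real content.

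For the ``only if'' direction, suppose $i(\alpha_1,\alpha_2)=1$ and take representatives $a_1,a_2$ in minimal position, so that they meet transversely in a single point. A closed regular neighborhood $N$ of $a_1\cup a_2$ is then a one-holed torus. Since $g\ge 2$, there is enough room in the complement of $N$ to enlarge $N$ to a two-holed torus $T$ whose boundary $\partial T=a_5\cup a_6$ consists of nonseparating curves whose union separates $R$ (the two holes are placed as in Figure \ref{fig-int-one}). Inside $T$ I would then draw the auxiliary curves $a_3,a_4,a_7$ exactly as in the figure: $a_3$ a parallel copy of $a_1$ chosen so that $a_1\cup a_3$ cuts $T$ into the two pairs of pants with boundaries $\{a_1,a_3,a_5\}$ and $\{a_1,a_3,a_6\}$, with $a_4,a_7$ realizing the remaining arcs. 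With this explicit model in hand, condition (i) is verified by inspecting which pairs of the drawn curves are disjoint, and condition (ii) holds by construction; the required classes are the isotopy classes $\alpha_3,\dots,\alpha_7$ of these curves.

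For the ``if'' direction, assume classes $\alpha_3,\dots,\alpha_7$ satisfying (i) and (ii) are given, the goal being to recover $i(\alpha_1,\alpha_2)=1$. First I would use (ii) to fix the ambient picture: $a_5\cup a_6$ bounds a two-holed torus $T$, the curves $a_1,a_3$ lie in $T$ and cut it into two pairs of pants (with third boundaries $a_5$ and $a_6$ respectively), and $\alpha_2$ has a representative $a_2$ inside $T$. Thus the whole question reduces to a problem in the two-holed torus: I must show that a nonseparating simple closed curve $a_2\subset T$, subject to the disjointness constraints of (i) relative to the fixed curves $a_1,a_3,a_4,a_7$, necessarily meets $a_1$ exactly once. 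I would place $a_2$ in minimal position with respect to $a_1\cup a_3$, cut $T$ along $a_1\cup a_3$ into the two pants, and classify the arcs of $a_2\cap T$ by their endpoints on $a_1,a_3,a_5,a_6$. The constraints coming from $\alpha_4$ and $\alpha_7$ (each disjoint from a prescribed subcollection and meeting the rest) should then eliminate every arc pattern except the one producing a single transverse intersection with $a_1$.

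The main obstacle is precisely this last step. Condition (i) records only which geometric intersection numbers vanish, not their exact values, so a priori $a_2$ could wind several times around the handle of $T$ and still satisfy every disjointness relation in the figure; these higher-intersection impostors are exactly what must be excluded. I expect the crux to be an arc-counting argument: if $a_2$ met $a_1$ more than once, the extra arcs would be forced either to cross one of the ``detector'' curves $a_4,a_7$ essentially (contradicting the disjointness demanded in (i)), or to render $a_2$ separating or inessential (contradicting $\alpha_2\in\mathcal{N}(R)$). Organizing this exclusion cleanly, rather than as a long case analysis over all arc types, is the delicate part, and it is where the specific placement of $a_4$ and $a_7$ in Figure \ref{fig-int-one} is designed to do the work.
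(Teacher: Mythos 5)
First, a remark on the comparison itself: the paper does not prove this lemma at all --- it is imported verbatim from \cite{Ir3} --- so there is no in-paper argument to measure your proposal against. Your outline does follow the same overall strategy as the original proof in \cite{Ir3}: regular neighborhood of $a_1\cup a_2$ for the forward direction, and an arc analysis inside the two-holed torus $T$ for the converse. The forward direction as you describe it is essentially complete (modulo the slight imprecision that $a_3$ is not a parallel copy of $a_1$ in $T$ but only in the torus obtained by filling the two holes; what matters, and what you do state, is that $a_1\cup a_3$ cuts $T$ into the two required pairs of pants).

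The genuine gap is that the converse direction, which you yourself identify as carrying all the content, is never actually carried out. Your text says the constraints from $\alpha_4$ and $\alpha_7$ ``should then eliminate every arc pattern'' and that you ``expect the crux to be an arc-counting argument''; that is a statement of what must be proved, not a proof. The danger is concrete: realize $T$ as a square torus with two small disks removed, with $a_1$ and $a_3$ two vertical $(0,1)$-curves separating the holes and $a_2$ a $(1,0)$-curve. Then for any coprime $(p,q)$ with $p\geq 2$ there is a simple closed curve in $T$, disjoint from $a_5\cup a_6=\partial T$, nonseparating in $R$, meeting each of $a_1,a_3$ essentially, with $i(\cdot,[a_1])=p>1$. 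Such a curve satisfies every constraint of condition (i) that involves only $\alpha_1,\alpha_3,\alpha_5,\alpha_6$, so it can only be excluded by its intersection pattern with $\alpha_4$ and $\alpha_7$; the entire lemma rests on writing down that exclusion, i.e.\ on classifying the essential arcs of $a_2$ in the two pairs of pants and showing that any configuration with more than one $a_1$-to-$a_3$ arc forces an essential intersection with $a_4$ or $a_7$ that condition (i) forbids (or forces $a_2$ to be separating or peripheral). Until that case analysis, tied to the exact placement of $a_4$ and $a_7$ in Figure \ref{fig-int-one}, is supplied, the ``if'' direction is unproved. Since the lemma is cited from \cite{Ir3}, the clean options are to cite it as the paper does, or to reproduce that analysis in full; the proposal currently does neither.
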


\begin{lemma} \label{int-one} Suppose $g \geq 2$ and $n \geq 0$.  
Let $\alpha_1, \alpha_2$ be two vertices in $\mathcal{N}(R)$. If $i(\alpha_1, \alpha_2) = 1$, then $i(\lambda(\alpha_1), \lambda(\alpha_2) )= 1$.\end{lemma}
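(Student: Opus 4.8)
The plan is to use the characterization of geometric intersection one in Lemma \ref{Irmak-lemma} in both directions: first to extract a model configuration for the pair $\alpha_1, \alpha_2$, and then to verify that its $\lambda$-image is again a configuration of the kind required by Lemma \ref{Irmak-lemma}, this time witnessing $i(\lambda(\alpha_1), \lambda(\alpha_2)) = 1$. Since Lemma \ref{2} already gives $i(\lambda(\alpha_1), \lambda(\alpha_2)) \neq 0$, the whole content of the statement is to forbid higher intersection, and the most direct way to do this is to produce the witnessing classes $\lambda(\alpha_3), \ldots, \lambda(\alpha_7)$ and feed them back into Lemma \ref{Irmak-lemma}.

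First I would apply Lemma \ref{Irmak-lemma} to $\alpha_1, \alpha_2$ to obtain isotopy classes $\alpha_3, \alpha_4, \alpha_5, \alpha_6, \alpha_7$ and representatives $a_1, \ldots, a_7$ realizing the configuration of Figure \ref{fig-int-one}: the classes $\alpha_1, \alpha_3, \alpha_5, \alpha_6$ have pairwise disjoint representatives, $a_5 \cup a_6$ cuts off a torus with two holes $T$ containing representatives of $\alpha_1, \alpha_2$, and both $\{a_1, a_3, a_5\}$ and $\{a_1, a_3, a_6\}$ bound pairs of pants in $T$. The key observation, read off from Figure \ref{fig-int-one}, is that every essential intersection in this configuration is a single transverse point, i.e.\ each intersecting pair has geometric intersection exactly one. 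I then transport condition (i) of Lemma \ref{Irmak-lemma} to the images: for a pair $a_i, a_j$ that is disjoint in the figure we have $i(\alpha_i, \alpha_j) = 0$, so $i(\lambda(\alpha_i), \lambda(\alpha_j)) = 0$ because $\lambda$ is edge-preserving; for a pair that meets in the figure we have $i(\alpha_i, \alpha_j) = 1$, so Lemma \ref{2} gives $i(\lambda(\alpha_i), \lambda(\alpha_j)) \neq 0$. Hence $\lambda(\alpha_1), \ldots, \lambda(\alpha_7)$ reproduce exactly the vanishing/nonvanishing pattern of Figure \ref{fig-int-one}, which is condition (i) for the pair $\lambda(\alpha_1), \lambda(\alpha_2)$.

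For condition (ii) I would invoke Lemma \ref{embedded}. Applied to the triples $\{\alpha_1, \alpha_3, \alpha_5\}$ and $\{\alpha_1, \alpha_3, \alpha_6\}$, it yields distinct classes with pairwise disjoint representatives $a_1', a_3', a_5'$ bounding a pair of pants $Q_1$, and $a_1', a_3', a_6'$ bounding a pair of pants $Q_2$. Using an auxiliary class of the figure that meets one of $\alpha_5, \alpha_6$ essentially while being disjoint from the other (so that by edge-preservation and Lemma \ref{2} its image separates the two) I would check $\lambda(\alpha_5) \neq \lambda(\alpha_6)$, whence $Q_1 \neq Q_2$. Two distinct pairs of pants sharing the boundary curves $a_1'$ and $a_3'$ must lie on opposite sides of each, so $T' = Q_1 \cup Q_2$ is an embedded torus with two holes with boundary $a_5' \cup a_6'$ (an Euler characteristic count gives genus one and two boundary circles). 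Finally, since $\alpha_2$ is disjoint from $\alpha_5, \alpha_6$ in the figure, $\lambda(\alpha_2)$ has a representative disjoint from $a_5' \cup a_6'$, hence lying in one complementary component; as it must meet $a_1' \subset T'$ essentially by Lemma \ref{2}, that component is $T'$. Thus $\lambda(\alpha_1), \ldots, \lambda(\alpha_7)$ satisfy (i) and (ii) for the pair $\lambda(\alpha_1), \lambda(\alpha_2)$, and the reverse direction of Lemma \ref{Irmak-lemma} gives $i(\lambda(\alpha_1), \lambda(\alpha_2)) = 1$.

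I expect the main obstacle to be condition (ii): verifying that the two image pairs of pants $Q_1, Q_2$ genuinely glue into an \emph{embedded, separating} torus with two holes rather than overlapping or forming a different surface. This rests on the distinctness $\lambda(\alpha_5) \neq \lambda(\alpha_6)$ and on confirming that $Q_1$ and $Q_2$ meet exactly along $a_1' \cup a_3'$ and sit on opposite sides there; the intersection data transported in the earlier steps, together with the adjacency control of Lemma \ref{adj}, are what force this gluing. The rest of the argument is bookkeeping against Figure \ref{fig-int-one}.
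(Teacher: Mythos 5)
Your proposal is correct and follows essentially the same route as the paper: both transport the intersection pattern of the Figure \ref{fig-int-one} configuration via edge-preservation and Lemma \ref{2}, use Lemma \ref{embedded} to recover the two pairs of pants forming the twice-holed torus, and then invoke the reverse direction of Lemma \ref{Irmak-lemma}. You simply spell out in more detail the gluing/embeddedness step that the paper states tersely (and note that $\lambda(\alpha_5)\neq\lambda(\alpha_6)$ also follows directly from injectivity on complete subgraphs, Lemma \ref{inj}).
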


\begin{proof} Let $a_1, a_2$ be disjoint representatives of $\alpha_1, \alpha_2$ respectively. We complete $\{a_1, a_2\}$ to a curve configuration 
$\{a_1, a_2, \cdots, a_7\}$ as shown in Figure \ref{fig-int-one}. Let $\alpha_i=[a_i]$ for $i = 3, 4, 5, 6, 7$. We have $i(\alpha_{i}, \alpha_{j})=0$ if
and only if the circles $a_i, a_j$ on Figure \ref{fig-int-one} are disjoint. By Lemma \ref{2} we know that if $i(\alpha, \beta) = 1$ then $i(\lambda(\alpha), \lambda(\beta)) \neq 0$. Using this and knowing that $\lambda$ is edge-preserving, 
we see that $i(\lambda(\alpha_{i}), \lambda(\alpha_{j}))=0$ if
and only if the circles $a_i, a_j$ on Figure \ref{fig-int-one} are disjoint. Using Lemma \ref{embedded}, that $\lambda$ is edge-preserving, and that it sends curves with geometric 
intersection one to curves that have nontrivial intersection, we can see that there are pairwise disjoint representatives $a_1', a_3', a_5', a_6'$ of
$\lambda(\alpha_{1}), \lambda(\alpha_{3}), \lambda(\alpha_{5}), \lambda(\alpha_{6})$ respectively, such that $a_5' \cup a_6'$
divides $R$ into two pieces, one of these is a torus with two holes, $T$, containing some representatives of the isotopy classes
$\lambda(\alpha_1), \lambda(\alpha_2)$ and $a_1', a_3', a_5'$ bound a pair of pants on $T$ and $a_1', a_3', a_6'$ bound a pair
of pants on $T$. Then, by Lemma \ref{Irmak-lemma}, we get $i(\lambda(\alpha_1), \lambda(\alpha_2))=1$.\end{proof}\\

Let $\alpha$, $\beta$ be two distinct vertices in
$\mathcal{N}(R)$. We call $(\alpha, \beta)$ to be a
\textit{peripheral pair} in $\mathcal{N}(R)$ if they have disjoint
representatives $x, y$ respectively such that $x, y$ and a
boundary component of $R$ bound a pair of pants on $R$.

\begin{lemma}
\label{peripheral} Suppose $g \geq 2$ and $n \geq 1$. Let $(\alpha, \beta)$ be a peripheral pair in
$\mathcal{N}(R)$. Then $(\lambda(\alpha), \lambda(\beta))$ is a
peripheral pair in $\mathcal{N}(R)$.
\end{lemma}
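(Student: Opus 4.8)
The plan is to reduce the statement to two facts about the image pair and then to isolate the one genuinely global issue. Choose representatives $x\in\alpha$, $y\in\beta$ that, together with a boundary component $\partial_1$ of $R$, bound a pair of pants $P_0$. Since $(\alpha,\beta)$ is an edge of $\mathcal{N}(R)$, edge-preservation makes $\lambda(\alpha),\lambda(\beta)$ distinct with disjoint representatives $x',y'$, so the task is to arrange $x',y'$ to cobound a pair of pants whose third side is a boundary component of $R$. First I would complete $\{x,y\}$ to a pair of pants decomposition $P$ of $R$ in which $P_0$ is the unique pants carrying $\partial_1$, so that $x,y$ are the only two curves of $P$ on $\partial P_0$. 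By Corollary~\ref{pd-inj} a set $P'$ of disjoint representatives of $\lambda([P])$ is again a pants decomposition, and every curve of $P'=\lambda([P])$ is a vertex of $\mathcal{N}(R)$, hence \emph{nonseparating}.

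Two things then suffice: \emph{(a)} $x'$ and $y'$ lie on the boundary of a common pair of pants $P_0'$ of $P'$, and \emph{(b)} $x'\cup y'$ separates $R$. Granting these, the argument closes cleanly: the third side $s'$ of $P_0'$ is, by definition of a pants decomposition, either a curve of $P'$ or a boundary component of $R$. Here I would record the elementary topological fact that if $x'\cup y'$ separates $R$, then the third side of any pair of pants having $x',y'$ on its boundary is either a boundary component of $R$ or a separating curve, and is \emph{never} nonseparating (the pants glues across $x'$ and $y'$ to one complementary side, leaving $s'$ as a genuine separating circle unless that whole side is the pants, in which case $s'\in\partial R$). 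Since every curve of $P'$ is nonseparating, (b) forces $s'\in\partial R$, so $x',y'$ cobound a pair of pants with a boundary component, i.e. $(\lambda(\alpha),\lambda(\beta))$ is a peripheral pair. For (a), I would try to produce $P_0'$ from the preserved pants relations of Lemma~\ref{embedded} and the intersection-one data of Lemma~\ref{int-one}, noting that the \emph{peripheral} case is delicate: the usual once-crossing witness (a nonseparating curve meeting both $x$ and $y$ once and disjoint from $P\setminus\{x,y\}$) does not exist precisely because $x,y$ share no pants of nonseparating curves, so adjacency must be extracted by a dedicated argument rather than directly from Lemma~\ref{adj}.

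For (b) the plan is to encode the separation finitely. Recall that $x\cup y$ separates $R$ if and only if $[x]=[y]$ in $H_1(R;\mathbb{Z}/2)$, equivalently $i(x,c)\equiv i(y,c)\pmod 2$ for every vertex $c$, equivalently there is no curve meeting $x$ once and $y$ not at all. For the original peripheral pair $x\cup y$ manifestly separates (one side is $P_0$). I would certify this coincidence by a finite configuration that $\lambda$ controls in the forward direction: choose nonseparating curves $c_1,\dots,c_{2g}$ representing a basis of $H_1(R;\mathbb{Z}/2)$ and arranged, using the torus-with-two-holes pictures behind Lemma~\ref{Irmak-lemma} and Lemma~\ref{int-one}, so that each $c_k$ meets each of $x,y$ in at most one point. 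Then $i(x,c_k)=i(y,c_k)\in\{0,1\}$ for all $k$, and edge-preservation (in the disjoint cases) together with Lemma~\ref{int-one} (in the intersection-one cases) transports these equalities to $i(x',\lambda(c_k))=i(y',\lambda(c_k))$ for every $k$. After checking that $\{\lambda(c_1),\dots,\lambda(c_{2g})\}$ still detects $H_1(R;\mathbb{Z}/2)$, these equalities give $[x']=[y']$ mod $2$, hence $x'\cup y'$ separates $R$.

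The step I expect to be the main obstacle is exactly (b): transferring the \emph{separating}, and hence global and homological, nature of $x\cup y$ to $x'\cup y'$. Because $\lambda$ is only edge-preserving and is neither assumed injective nor surjective, I cannot pull back a hypothetical curve crossing $x'$ once and $y'$ zero times, so "separating" cannot be verified by a direct negative argument; it has to be pushed forward through a finite, intersection-one/disjointness pattern that $\lambda$ respects, and one must verify that the images of the chosen homology-detecting curves remain a homology-detecting family. Making this finite certificate \emph{rigid}, so that no non-peripheral (and in particular no nonseparating) configuration of $x',y'$ can realize it, is where the real work lies, and together with the delicate adjacency claim (a) for peripheral pairs it constitutes the heart of the proof; this is the analogue here of the finite rigid sets of Aramayona--Leininger.
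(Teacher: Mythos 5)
Your reduction is clean and the closing step is correct: if $x'$ and $y'$ lie on the boundary of a common pair of pants of $P'$ and $x'\cup y'$ separates $R$, then the third boundary circle of that pair of pants is trivial in $H_1(R,\partial R;\mathbb{Z}/2)$, hence separating or boundary-parallel, hence (being either a curve of $P'$, all of which are nonseparating, or a component of $\partial R$) a boundary component, so the image pair is peripheral. The problem is that neither of your two inputs (a) and (b) is actually established, and they are exactly where the content of the lemma lives. For (a) you explicitly defer to ``a dedicated argument'' without supplying one. The paper does supply it: for $n\geq 2$ it exhibits a witness curve $w$ meeting only $b_{2g}$ and $d_{n-1}$ among the curves of $P$ and runs the two-auxiliary-decomposition argument of Lemma~\ref{adj} (with $Q_1=(P\setminus\{b_{2g}\})\cup\{m_1\}$ and $Q_2=(P\setminus\{d_{n-1}\})\cup\{m_2\}$) to force $i(\lambda([w]),\lambda([b_{2g}]))\neq 0$ and $i(\lambda([w]),\lambda([d_{n-1}]))\neq 0$ while $\lambda([w])$ is disjoint from the rest of $\lambda([P])$; for $n=1$ it argues instead that every pair of pants of $P$ with three essential boundary curves has an image pair of pants by Lemma~\ref{embedded}, and that regluing these image pants reassembles a genus $g-1$ subsurface whose complement in $R$ is forced to be the peripheral pair of pants on $b_{2g}'$, $c_{2g}'$ and $\partial R$.

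For (b), the single unproved clause ``after checking that $\{\lambda(c_1),\dots,\lambda(c_{2g})\}$ still detects $H_1(R;\mathbb{Z}/2)$'' is the whole difficulty, and I do not see how to discharge it by the means you allow yourself. An edge-preserving map is not assumed injective or surjective and is not yet known to be induced by a homeomorphism (that is the theorem under construction), so there is no a priori reason the image of a homology-detecting family of curves remains homology-detecting. Establishing that fact amounts to reconstructing the image configuration up to homeomorphism, i.e.\ to knowing that a regular neighborhood of the union of the image curves is again a genus $g$ subsurface carrying the same pants relations; that is precisely the global argument the paper runs (Lemmas~\ref{embedded} and~\ref{int-one} applied to $P\cup\{a_2,a_4,\dots,a_{2g}\}$, feeding into Lemma~\ref{curves}), and once it is available the peripheral conclusion follows directly, with no homological detour. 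Two smaller slips compound this: $H_1(R;\mathbb{Z}/2)$ has dimension $2g+n-1$, so for $n\geq 2$ your $2g$ curves cannot certify separation even in the source surface; and ``$x\cup y$ separates $R$ iff $[x]=[y]$ in $H_1(R;\mathbb{Z}/2)$'' is false when $n\geq 2$ (a peripheral pair differs by the class of a boundary component, which is nonzero there) --- the correct statement lives in $H_1(R,\partial R;\mathbb{Z}/2)$, equivalently your intersection-parity formulation. As written, the proposal is a valid reduction with both reduced statements left open.
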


\begin{proof} Let $x, y$ be disjoint representatives of $\alpha, \beta$
respectively such that $x, y$ and a boundary component of $R$
bound a pair of pants, say $Q$, on $R$. If $g \geq 2, n=1$, then we complete $x, y$ to a pair of pants decomposition $P$ as in Lemma \ref{embedded} 
where we let $x=b_{2g}, y=c_{2g}$. We note that all the pairs of pants of $P$ except for $Q$ have three essential boundary components. Let $P'$ be a pair of pants decomposition of $R$ such that $\lambda([P]) = [P']$. Let $b_{2g}', c_{2g}'$ be the representatives of $\lambda([b_{2g}]), \lambda([c_{2g}])$ in $P'$ respectively.
By Lemma \ref{embedded} we know that if three curves in $P$ bound an embedded pair of pants, then the corresponding curves in $P'$ bound an embedded pair of pants on $R$. So, if a pair of pants in $P$ has three essential curves on its boundary, then the corresponding pair of pants in $P'$ has three essential curves on its boundary. 
Because how $P$ was constructed, by gluing all such pairs of pants of $P'$ along their common boundary components we get a genus $g-1$ surface with two boundary components $b_{2g}'$ and $c_{2g}'$. Hence, $b_{2g}', c_{2g}'$ and the boundary component of $R$ have to bound a pair of pants on $R$ so that gluing all the pair of pants in $P'$ gives us genus $g$ surface with one boundary component, $R$.  

\begin{figure} \begin{center}
\hspace{-1.5cm} \epsfxsize=1.8in \epsfbox{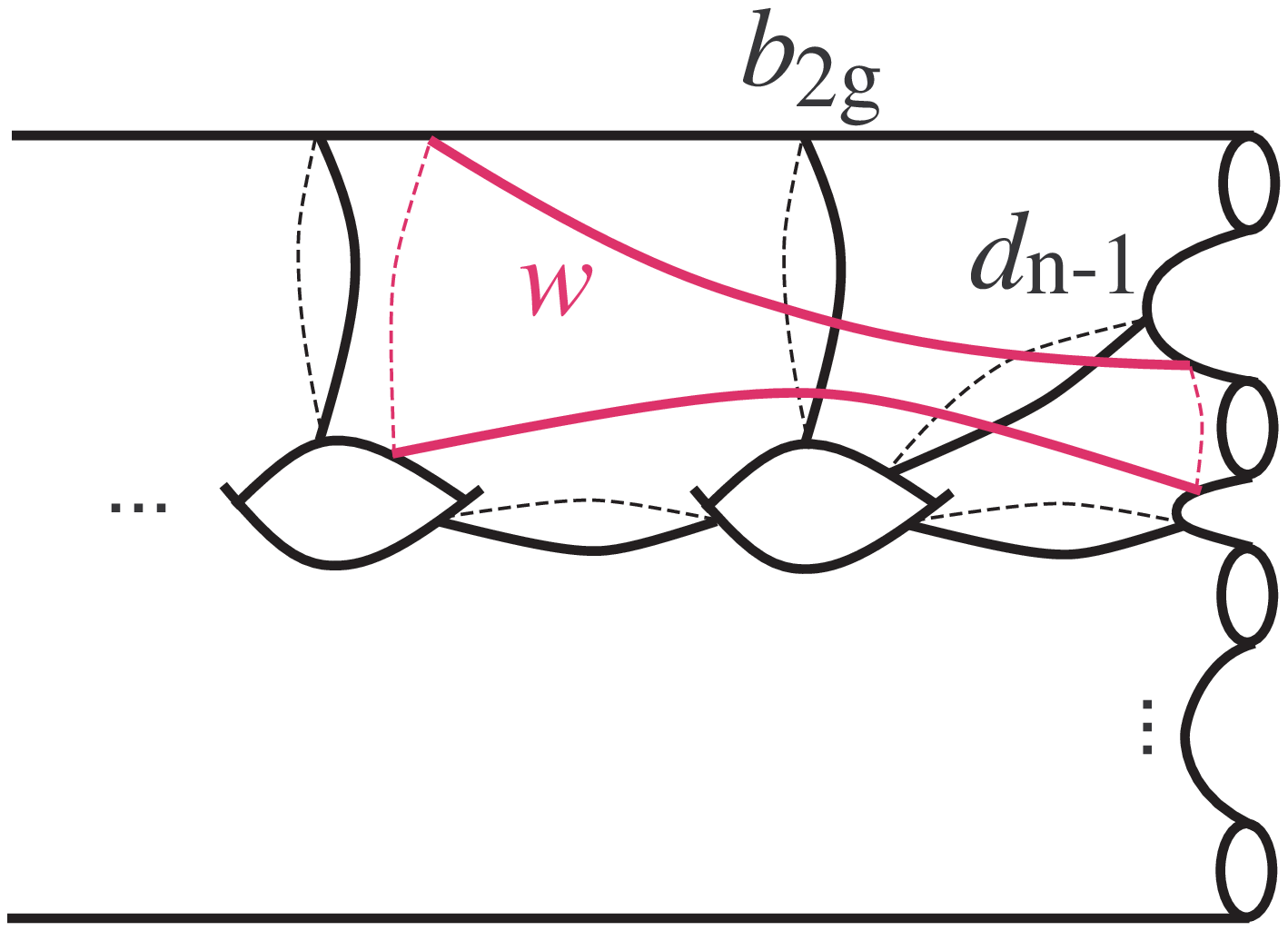} \hspace{0.4cm} \epsfxsize=1.8in \epsfbox{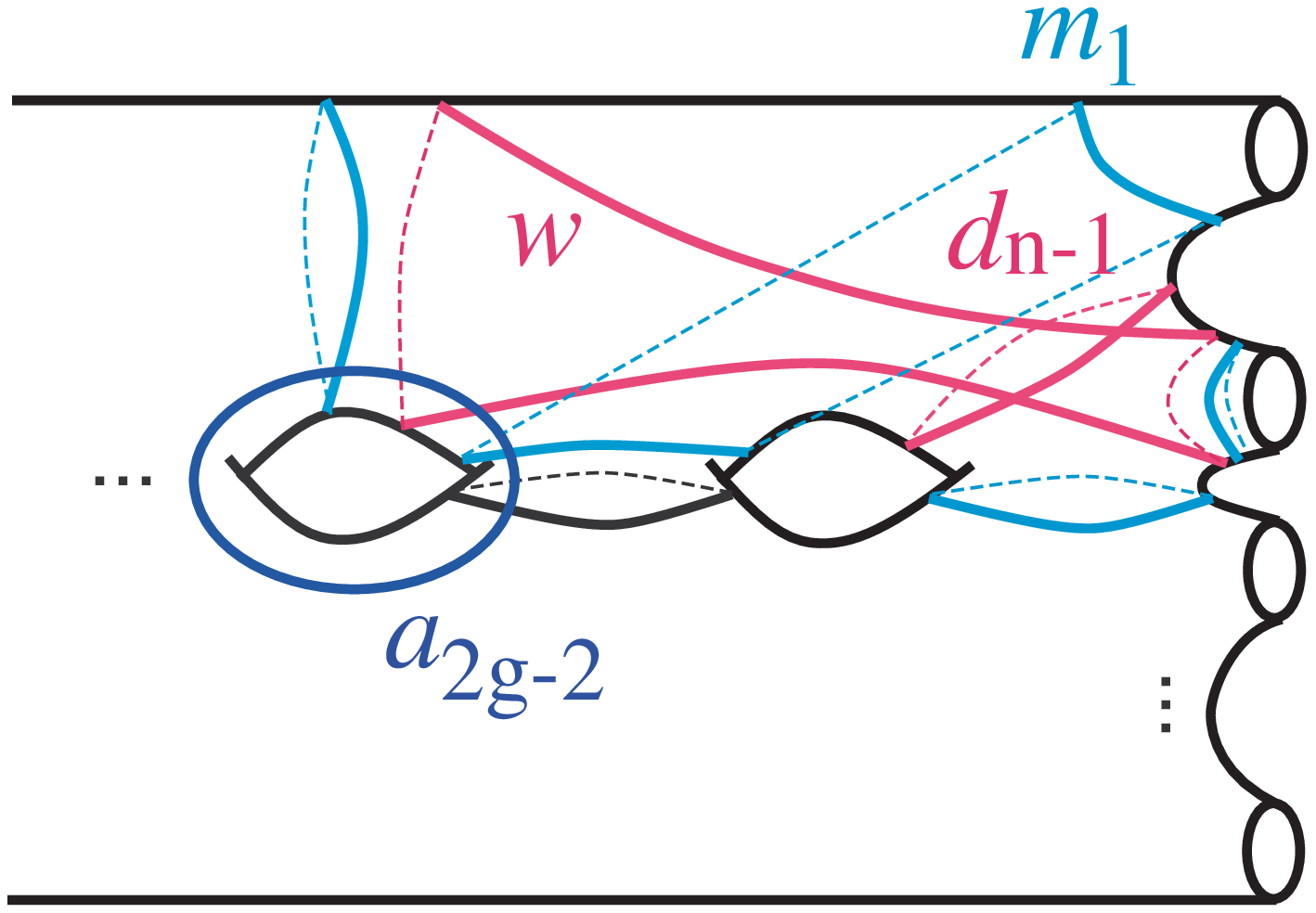}  \hspace{0.4cm} \epsfxsize=1.8in \epsfbox{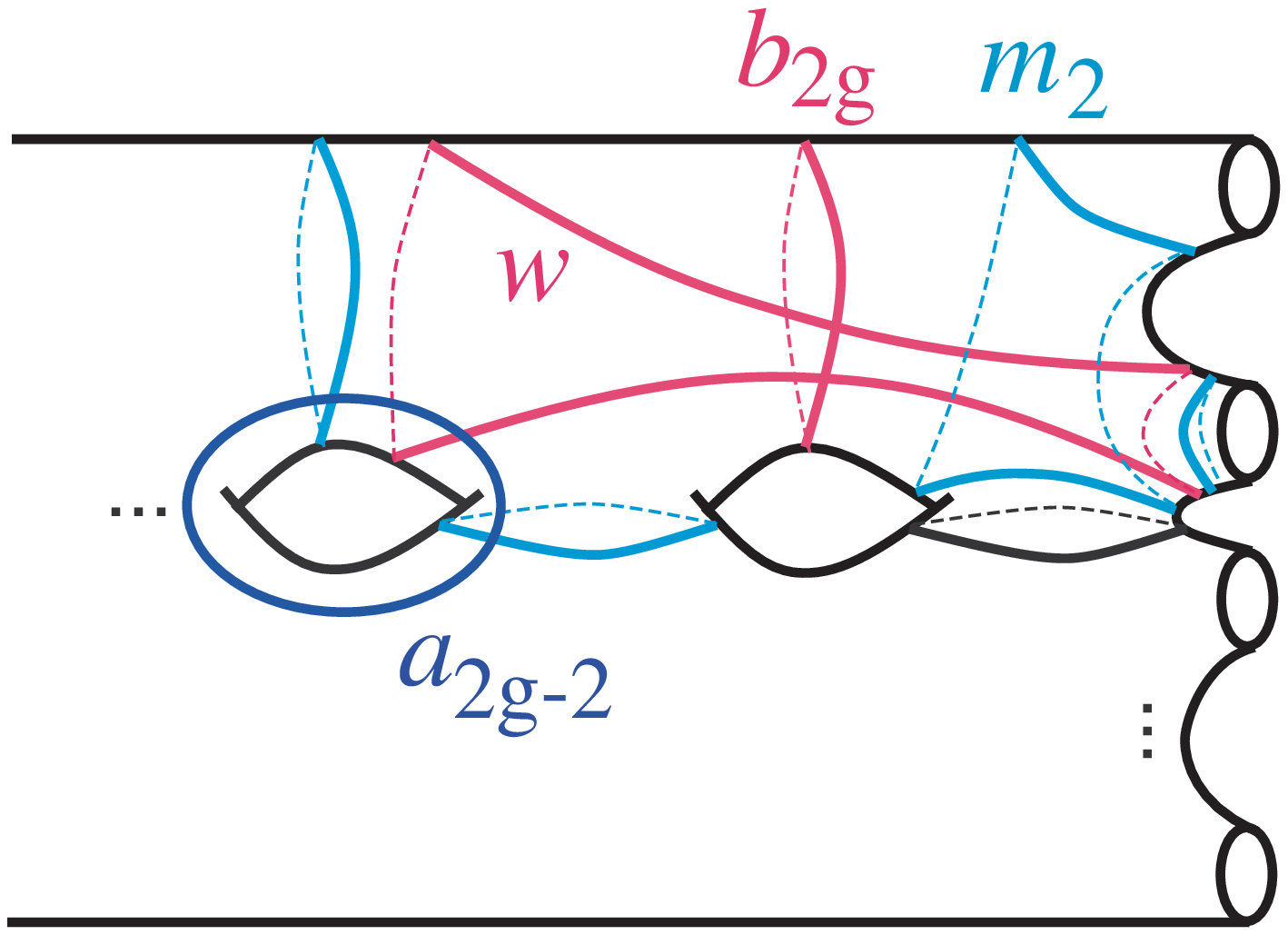}  

\hspace{-1.5cm} (i) \hspace{3.9cm} (ii) \hspace{3.9cm} (iii)
 
\caption{Peripheral pairs} \label{fig35}
\end{center} \end{figure} 

Suppose that $g \geq 2, n \geq 2$. We complete $x, y$ to a pair of pants decomposition $P$ as in Lemma \ref{embedded} 
where we let $x=b_{2g}, y=d_{n-1}$. Let $P'$ be a pair of pants decomposition of $R$ such that $\lambda([P]) = [P']$. Let $b_{2g}', d_{n-1}'$ be the representatives of $\lambda([b_{2g}]), \lambda([d_{n-1}])$ in $P'$ respectively. To see that $b_{2g}'$ is adjacent to $d_{n-1}'$ with respect to $P'$ we consider the curve $w$ given in Figure \ref{fig35} (i). Let 
$Q_1 = (P \setminus \{b_{2g}\}) \cup \{m_1\}$ where the curve $m_1$ is as shown in Figure \ref{fig35} (ii). The set $Q_1$ is a pants decomposition on $R$. Since $i([w], [e]) = 0$ for all $e$ in $Q_1 \setminus \{d_{n-1}\}$,
we have $i(\lambda([w]), \lambda([e]))=0$ for all $e$ in $Q_1 \setminus \{d_{n-1}\}$. Since $\lambda$ is edge-preserving 
$\lambda([w]) \neq \lambda([e])$ for all $e$ in $Q_1 \setminus \{d_{n-1}\}$. Since $i([a_{2g-2}], [w]) = 1$ and 
$i([a_{2g-2}], [d_{n-1}]) = 0$, we have  $i(\lambda([a_{2g-2}]), \lambda([w])) \neq 0$ and 
$i(\lambda([a_{2g-2}]), \lambda([d_{n-1}])) = 0$. So, $\lambda([w]) \neq \lambda([d_{n-1}]))$. These imply that  
$i(\lambda([w]), \lambda([d_{n-1}])) \neq 0$. Let $Q_2 = (P \setminus \{d_{n-1}\}) \cup \{m_2\}$ where the curve $m_2$ is as shown in Figure \ref{fig35} (iii). The set $Q_2$ is a pants 
decomposition on $R$. 
Since $i([w], [e]) = 0$ for all $e$ in $Q_2 \setminus \{b_{2g}\}$,
we have $i(\lambda([w]), \lambda([e]))=0$ for all $e$ in $Q_2 \setminus \{b_{2g}\}$. Since $\lambda$ is edge-preserving 
$\lambda([w]) \neq \lambda([e])$ for all $e$ in $Q_2 \setminus \{b_{2g}\}$. Since $i([a_{2g-2}], [w]) = 1$ and 
$i([a_{2g-2}], [b_{2g}]) = 0$, we have $i(\lambda([a_{2g-2}]), \lambda([w])) \neq 0$ and 
$i(\lambda([a_{2g-2}]), \lambda([b_{2g}])) = 0$. So, $\lambda([w]) \neq \lambda([b_{2g}])$. These imply that  
$i(\lambda([w]), \lambda([b_{2g}])) \neq 0$. Since $i(\lambda([w]), \lambda([b_{2g}])) \neq 0$, 
$i(\lambda([w]), \lambda([d_{n-1}])) \neq 0$ and $i(\lambda([w]), \lambda([e]))=0$ for all $e$ in $P \setminus \{b_{2g}, d_{n-1}\}$, we see that $b_{2g}'$ is adjacent to $d_{n-1}'$ with respect to $P'$. 

\begin{figure}[htb]
\begin{center}
\hspace{1.3cm} \epsfxsize=3.0in \epsfbox{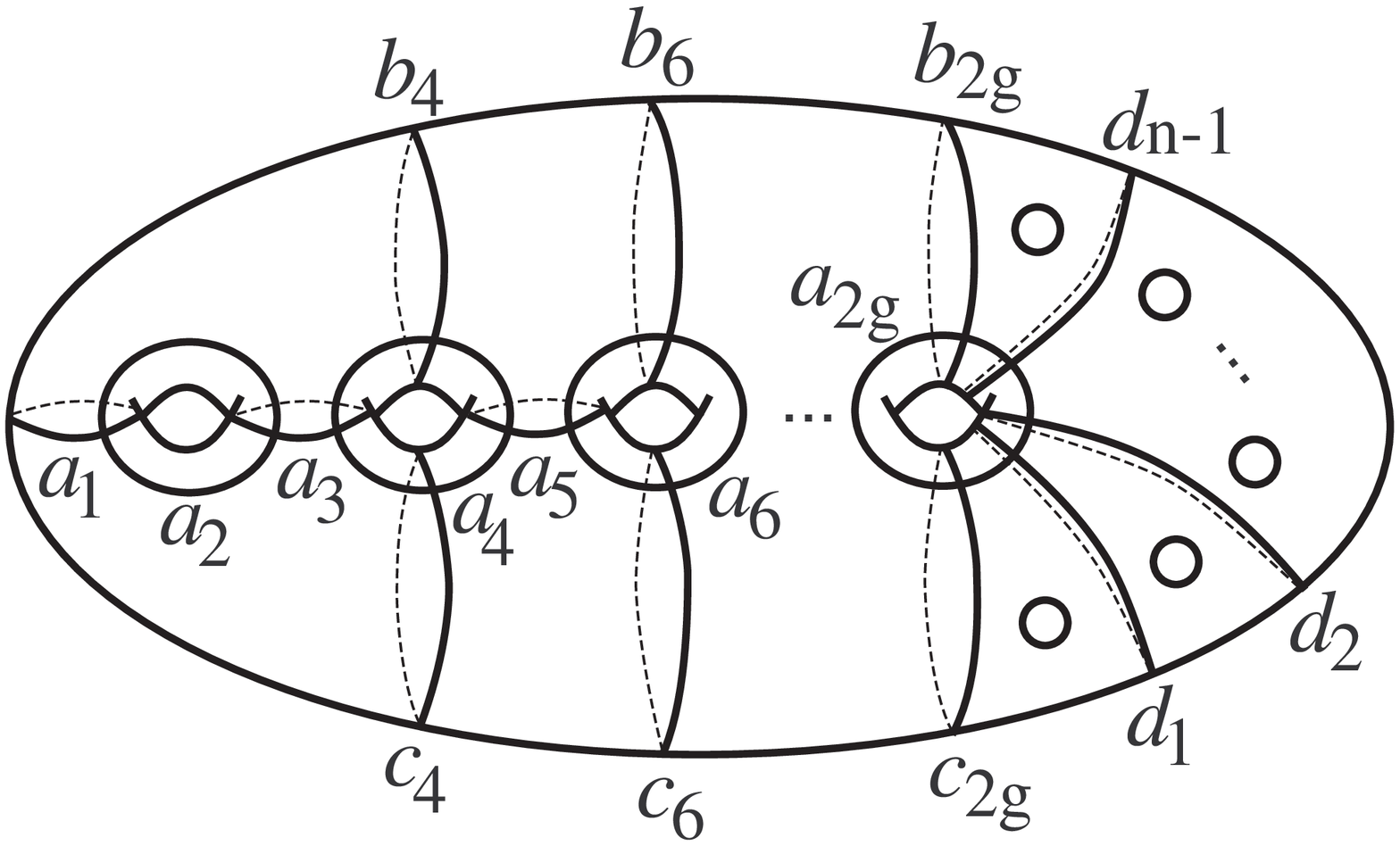} \epsfxsize=1.8in \hspace{-1.4cm} \epsfxsize=3.0in \epsfbox{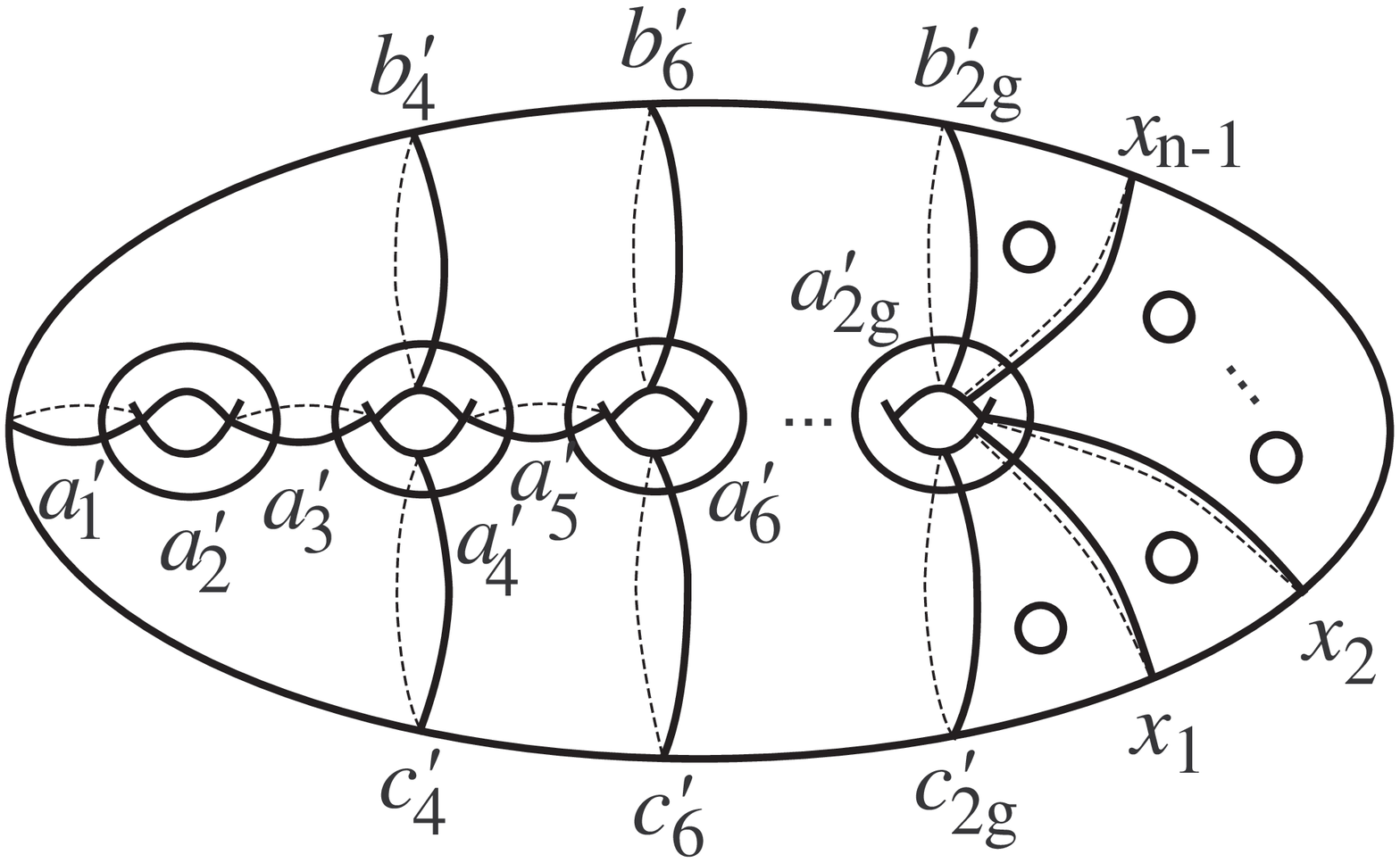} \epsfxsize=1.8in

\hspace{-0.1cm}   (i) \hspace{5.6cm} (ii)
\caption {Peripheral pairs}
\label{fig4-c}
\end{center}
\end{figure}
 
Now consider the curve configuration $P \cup \{a_2, a_4, \cdots, a_{2g}\}$ shown in Figure \ref{fig4-c} (i). Let $a'_2, a'_4, \cdots, a'_{2g}$ be pairwise disjoint representatives of $\lambda([a_{2}]), \lambda([a_{4}]), \cdots,  \lambda([a_{2g}])$ respectively such that $a'_2, a'_4, \cdots, a'_{2g}$ have minimal intersection with the elements of $P'$. By Lemma \ref{int-one} geometric intersection one is preserved. So, a regular neighborhood of union of
all the elements in $\{a'_1, a'_2, \cdots, a_{2g}', b'_4, b'_6, \cdots, b_{2g}, c'_4, c'_6, \cdots, c_{2g}, d'_1, d'_2, \cdots, d'_{n-1}\}$ is an orientable surface of genus $g$ with several boundary components. By using Lemma \ref{embedded} we can see that if three curves in $P$ bound a pair of pants then the corresponding curves bound a pair of pants. This shows that the curves are as shown in Figure \ref{fig4-c} (ii) where 
$\{x_1, x_2, \cdots, x_{n-1}\} = \{d'_1, d'_2, \cdots, d'_{n-1}\}$. Since we know that $b_{2g}'$ is adjacent to $d_{n-1}'$ with respect to $P'$, we see that 
$x_{n-1}= d_{n-1}'$ and $\{x_1, x_2, \cdots, x_{n-2}\} = \{d'_1, d'_2, \cdots, d'_{n-2}\}$.\end{proof}\\

\begin{figure}[htb]
\begin{center}
\hspace{1.3cm} \epsfxsize=3.0in \epsfbox{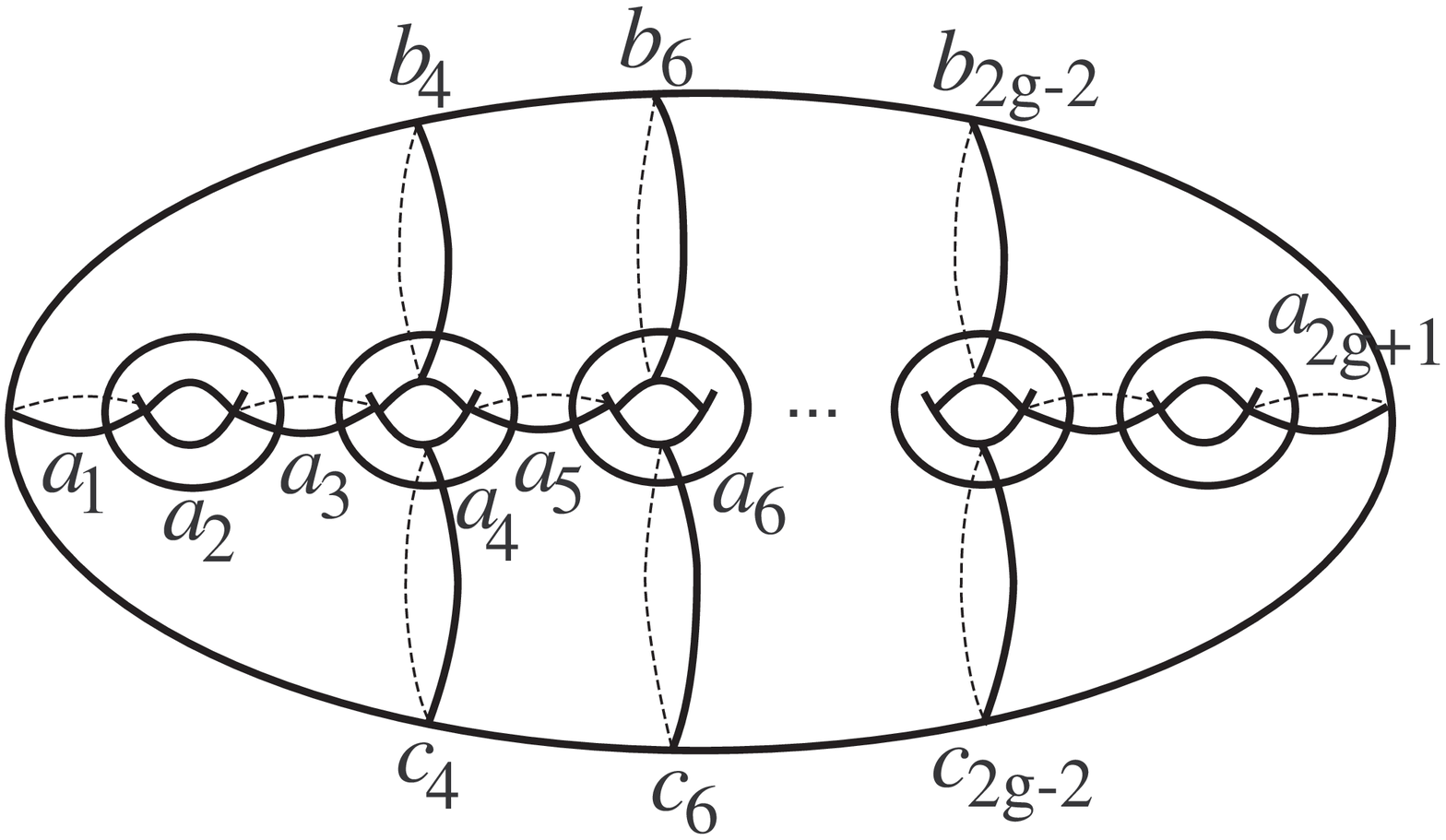}  \hspace{-1.4cm} \epsfxsize=3.0in \epsfbox{fig1-aa.eps} 

\hspace{-0.1cm}   (i) \hspace{5.6cm} (ii)

\hspace{1.5cm} \epsfxsize=3.0in \epsfbox{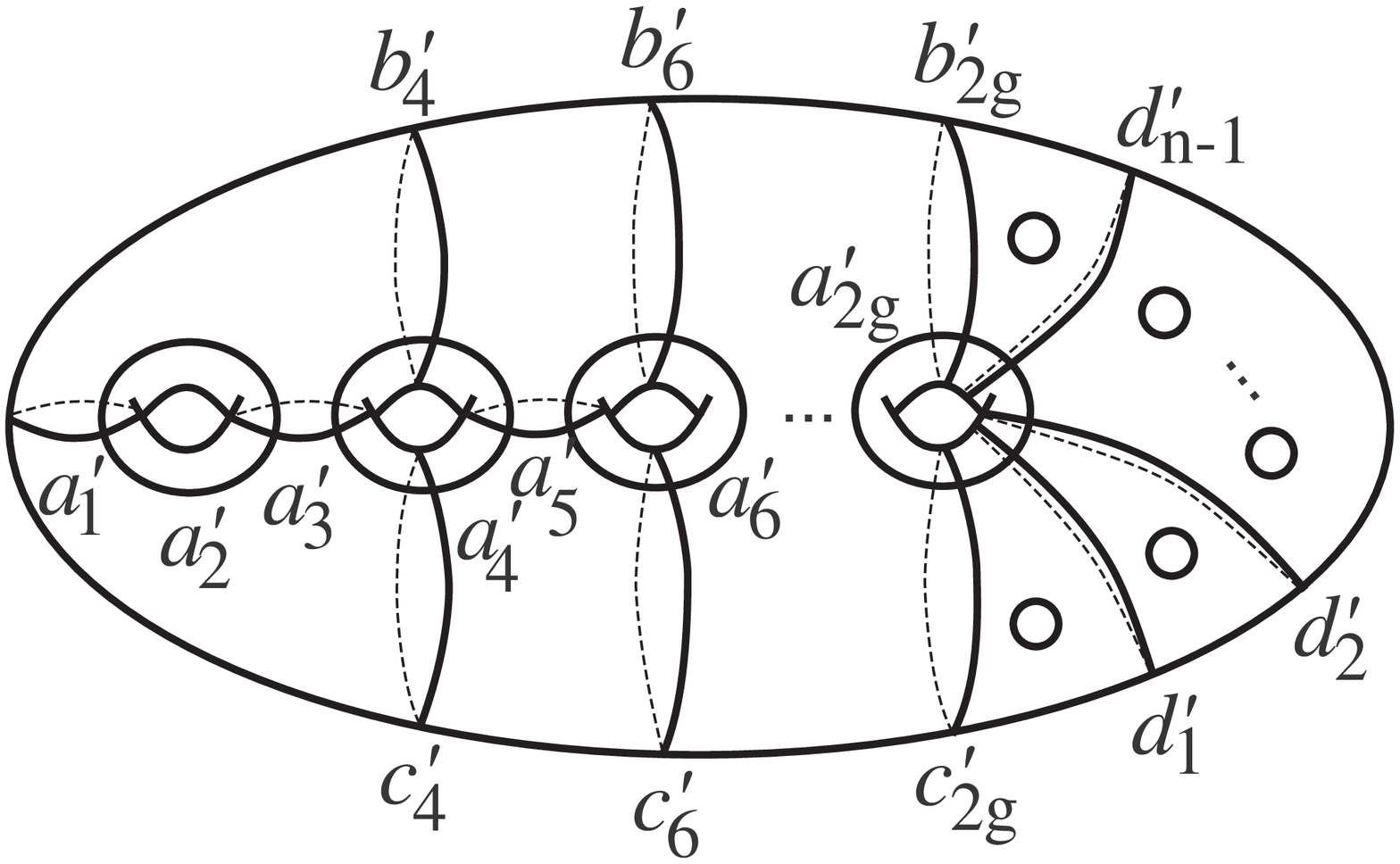} 
 
\hspace{-0.1cm}  (iii)

\caption {Curves in $\mathcal{C}_1$}
\label{fig10}
\end{center}
\end{figure}
 
If $f: R \rightarrow R$ is a homeomorphism, then we will use the same notation for $f$ and $[f]_*$.
Let $\mathcal{C}_1 = \{a_1, a_2, \cdots, a_{2g+1}, b_4, b_6, \cdots, b_{2g-2}, c_4, c_6,$ $ \cdots,$ $ c_{2g-2}\}$ when $R$ is closed, and
$\mathcal{C}_1 = \{a_1, a_2, \cdots, a_{2g}, b_4, b_6, \cdots, b_{2g}, c_4, c_6, \cdots,$ $ c_{2g}, d_1, d_2,$ $ \cdots, d_{n-1}\}$ when $R$ has boundary 
where the curves are as shown in Figure \ref{fig10} (i), (ii).  

\begin{lemma} \label{curves} Suppose $g \geq 2$ and $n \geq 0$. There exists a 
homeomorphism $h: R \rightarrow R$ such that $h([x]) = \lambda([x])$ $\forall \ x \in \mathcal{C}_1$.\end{lemma}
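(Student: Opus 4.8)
The plan is to show that, once each image class $\lambda([x])$, $x \in \mathcal{C}_1$, is realized by a representative $x'$ in minimal position, the primed family forms a curve configuration on $R$ that is topologically indistinguishable from the original configuration $\mathcal{C}_1$ drawn in Figure \ref{fig10}, and then to invoke the change of coordinates principle to produce the homeomorphism $h$. Concretely, I would first record the full combinatorial and topological data of $\mathcal{C}_1$: which pairs of curves are disjoint, which pairs meet exactly once, which triples bound an embedded pair of pants, the adjacency relations inside the natural pants subdecomposition built from the odd-indexed chain curves together with the $b$'s, $c$'s and $d$'s, and (when $n \geq 1$) which pairs are peripheral. The objective is to transfer every one of these relations to the primed family.

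First I would establish that the image family reproduces the pairwise intersection pattern exactly. Since $\lambda$ is edge-preserving, disjoint curves in $\mathcal{C}_1$ map to disjoint classes, so $x'$ and $y'$ are disjoint whenever $x$ and $y$ are; by Lemma \ref{int-one}, any pair meeting once maps to a pair meeting once, so the chain $a_1, a_2, \ldots$ maps to a chain in which each consecutive primed pair meets exactly once; and distinctness of all the images follows from Lemma \ref{0} and Lemma \ref{inj}. At this stage the primed family has precisely the same pairwise intersection numbers as $\mathcal{C}_1$.

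Next I would upgrade this pairwise data to the global arrangement. Using Lemma \ref{embedded}, every triple in $\mathcal{C}_1$ bounding an embedded pair of pants maps to a triple whose images again bound an embedded pair of pants; using Lemma \ref{adj}, the adjacency relations of the relevant pants subdecomposition are preserved; and when $R$ has boundary, Lemma \ref{peripheral} guarantees that the peripheral pairs, hence the placement of the boundary structure encoded by the $d_j$, are preserved. Assembling these facts, a regular neighborhood of the primed family is an orientable genus-$g$ subsurface of $R$ cut into pairs of pants with the same dual pattern as $\mathcal{C}_1$, whose complementary regions match those of $\mathcal{C}_1$ and whose boundary components are distributed identically. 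Since any two systems of simple closed curves on $R$ of the same topological type are carried one to the other by a self-homeomorphism, the change of coordinates principle then yields a homeomorphism $h : R \rightarrow R$ with $h([x]) = \lambda([x])$ for every $x \in \mathcal{C}_1$.

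The main obstacle is this third step: pairwise intersection numbers alone do not pin down the homeomorphism type of a curve configuration, so the argument must genuinely use the pair-of-pants, adjacency, and peripheral lemmas to control the complementary regions and confirm that the image configuration fills $R$ with the correct genus and the correct boundary distribution. I expect the bookkeeping to split into the closed case and the bounded case, with separate attention to the exceptional $g = 2$ configurations, and I expect the delicate point to be verifying that the regular neighborhood of the primed family has genus exactly $g$ rather than sitting inside $R$ in a degenerate or lower-genus way, which is precisely what the preserved pants-and-adjacency structure, together with the chain of intersection-one curves $a_1, a_2, \ldots$, is designed to rule out.
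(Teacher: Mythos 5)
Your proposal is correct and follows essentially the same route as the paper: realize the images by minimal-position representatives, use Lemma \ref{int-one} to see that the chain structure (hence the genus of a regular neighborhood of the primed family) is preserved, use Lemma \ref{embedded} for the pair-of-pants triples and Lemma \ref{peripheral} for the boundary placement, and conclude by the change of coordinates principle. The only cosmetic difference is that you cite Lemma \ref{adj} explicitly, whereas the paper absorbs that adjacency control into Lemmas \ref{embedded} and \ref{peripheral}.
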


\begin{proof} We will give the proof when $n \geq 1$. The proof for the closed case is similar. We will consider all the curves in  $\mathcal{C}_1$ as 
shown in Figure \ref{fig10} (ii). Let the curves $a'_1, a'_2, \cdots,
a_{2g}', b'_4, b'_6, \cdots, b_{2g}', c'_4, c'_6,$ $ \cdots, c_{2g}',
d'_1, d'_2, \cdots, d_{n-1}'$ be representatives in minimal position of the elements $\lambda([a_1]), \lambda([a_2]), \cdots,  \lambda([a_{2g}]),$ $ \lambda([b_4]), \lambda([b_6]), \cdots, \lambda([b_{2g}]), \lambda([c_4]), \lambda([c_6]),$ $ \cdots, \lambda([c_{2g}]),$ $ \lambda([d_1]), \lambda([d_2]), \cdots, \lambda([d_{n-1}])$ respectively.
By Lemma \ref{int-one} geometric intersection one is preserved. So, a regular neighborhood of union of all the elements in
$\mathcal{C}_1'= \{a'_1, a'_2, \cdots,$ $ a_{2g}', b'_4, b'_6, \cdots,$ $ b_{2g}', c'_4, c'_6, \cdots, c_{2g}', d'_1, d'_2, \cdots,$
$d'_{n-1}\}$ is an orientable surface of genus $g$ with several boundary components. By Lemma \ref{embedded}, if three nonseparating curves in $\mathcal{C}_1$ 
bound a pair of pants on $R$, then the corresponding curves in $\mathcal{C}_1'$ bound a pair of pants on $R$. By Lemma \ref{peripheral} if two curves in $\mathcal{C}_1$ are peripheral pairs, then the corresponding curves
in $\mathcal{C}_1'$ are peripheral pairs. All these imply that the curves in $\mathcal{C}_1'$ are as shown in 
Figure \ref{fig10} (iii). Hence, there exists a homeomorphism $h: R \rightarrow R$ such that $h([x]) = \lambda([x])$ $\forall \ x \in \mathcal{C}_1$.\end{proof}\\

\begin{figure} \begin{center}
\hspace{-0.2cm} \epsfxsize=2.99in \epsfbox{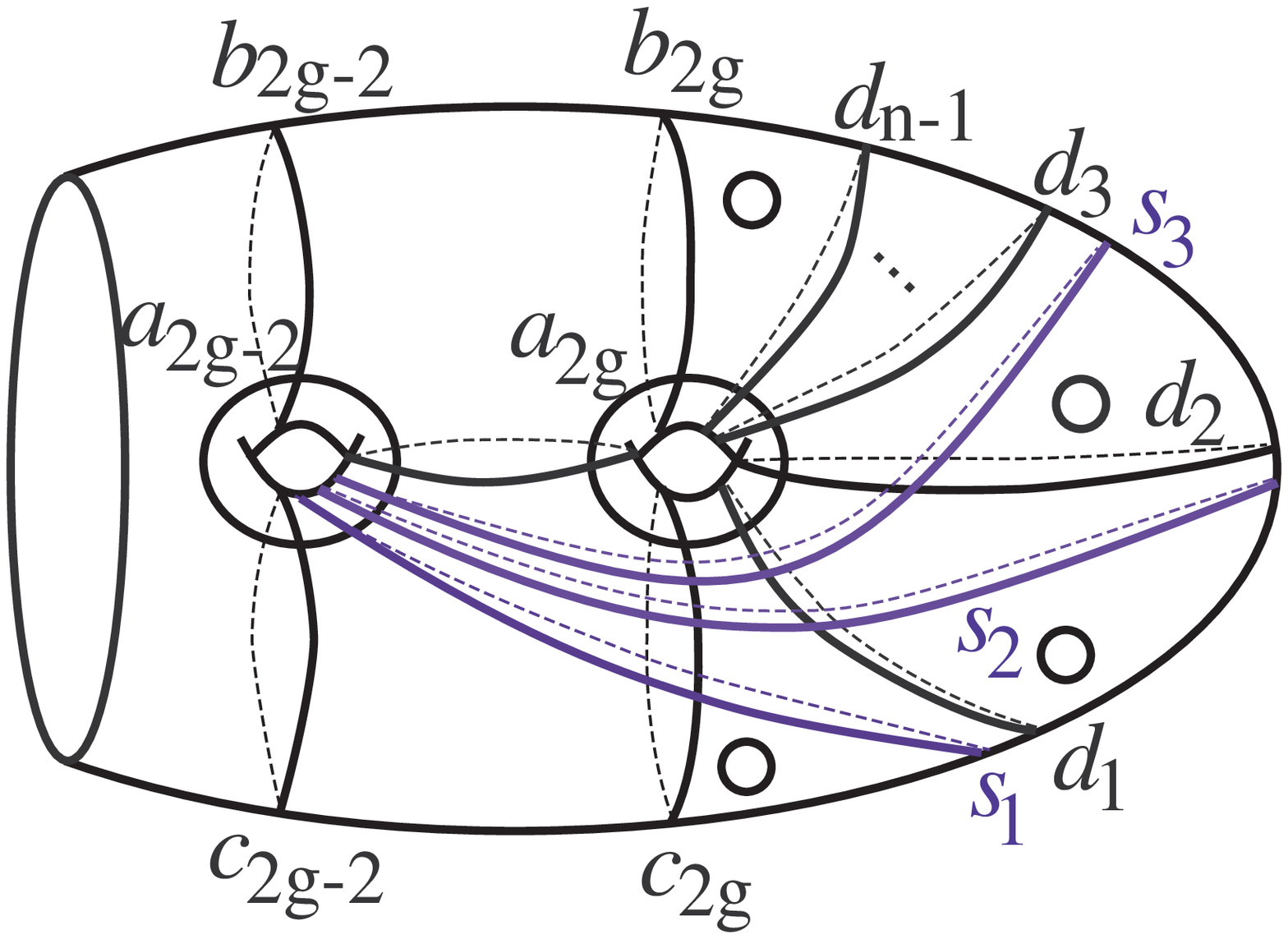} \hspace{-1.2cm} \epsfxsize=2.99in \epsfbox{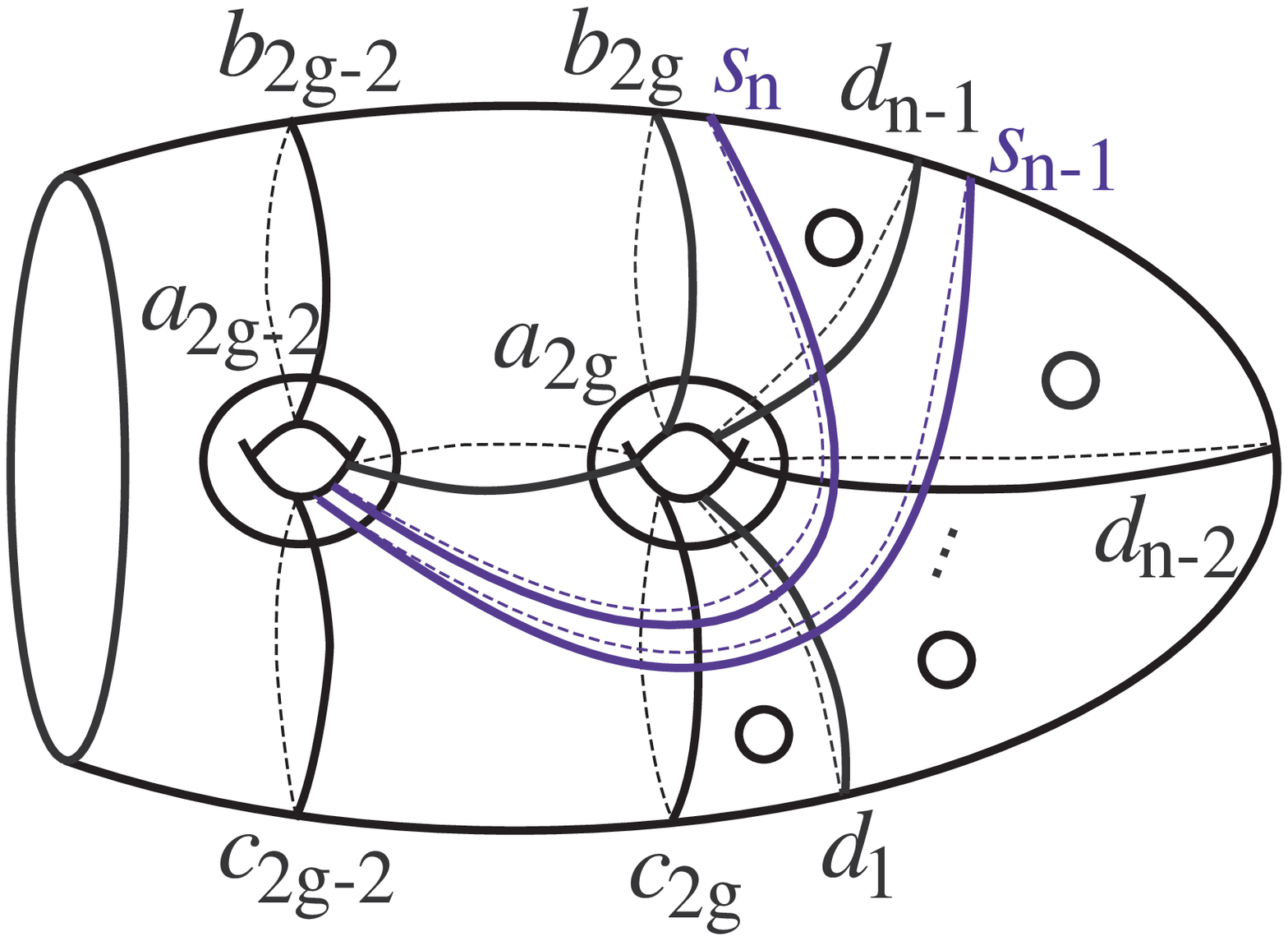}
 
\hspace{-0.6cm} (i) \hspace{6.3cm} (ii)
 
\hspace{-0.2cm} \epsfxsize=2.99in \epsfbox{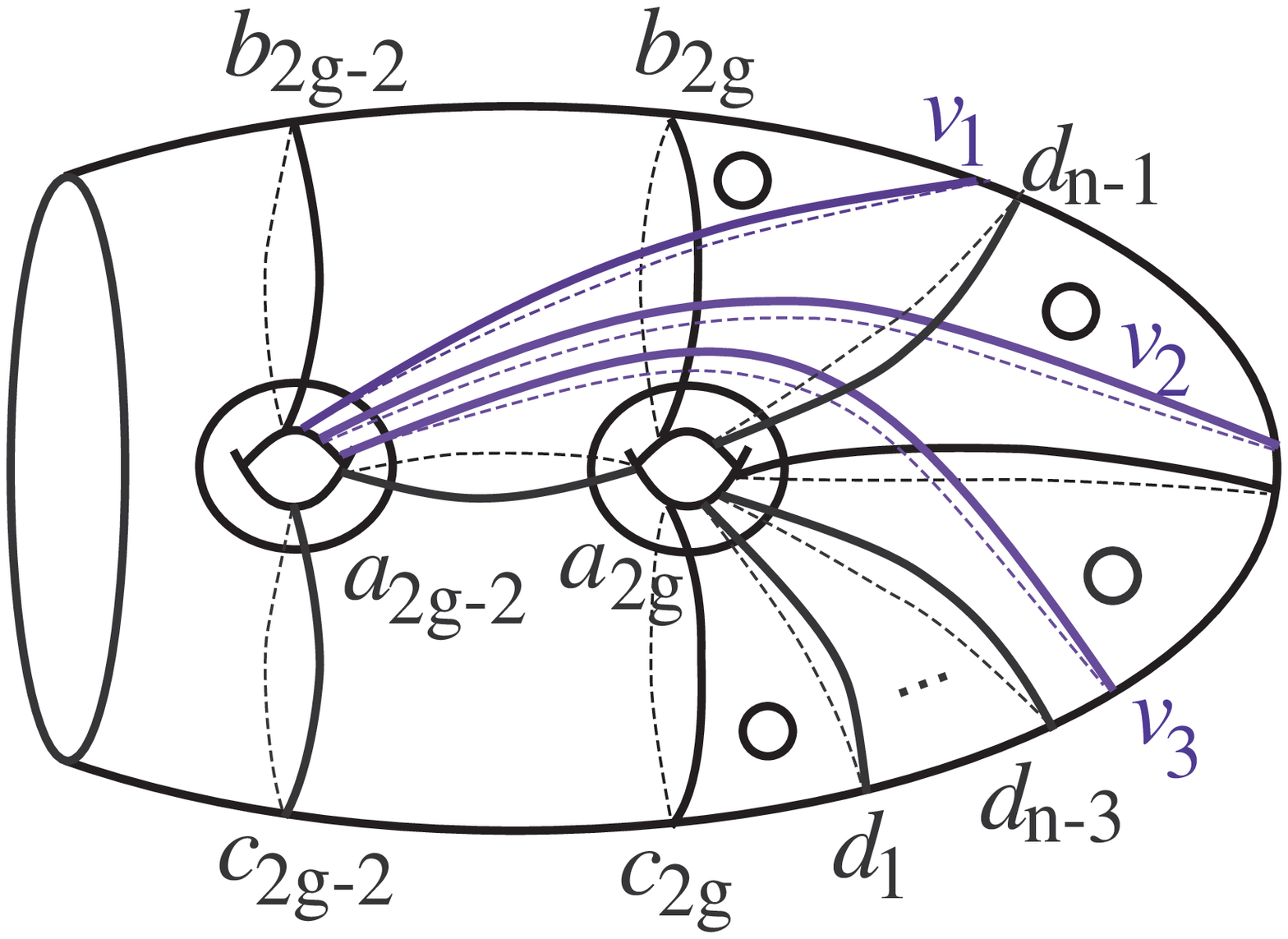} \hspace{-1.2cm} \epsfxsize=2.99in \epsfbox{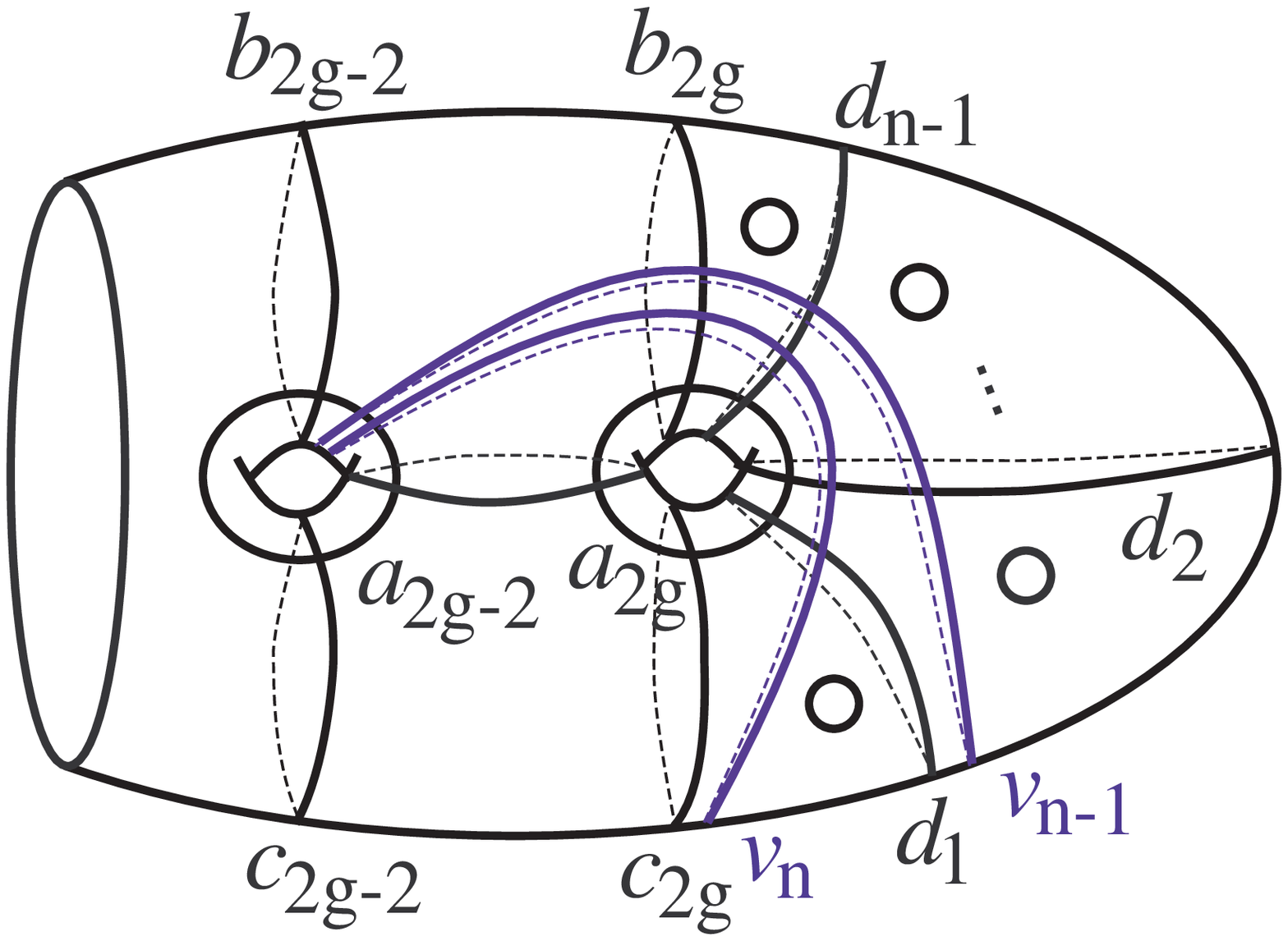}
  
\hspace{-0.6cm} (iii) \hspace{6.1cm} (iv)
  
\hspace{-0.2cm} \epsfxsize=2.99in \epsfbox{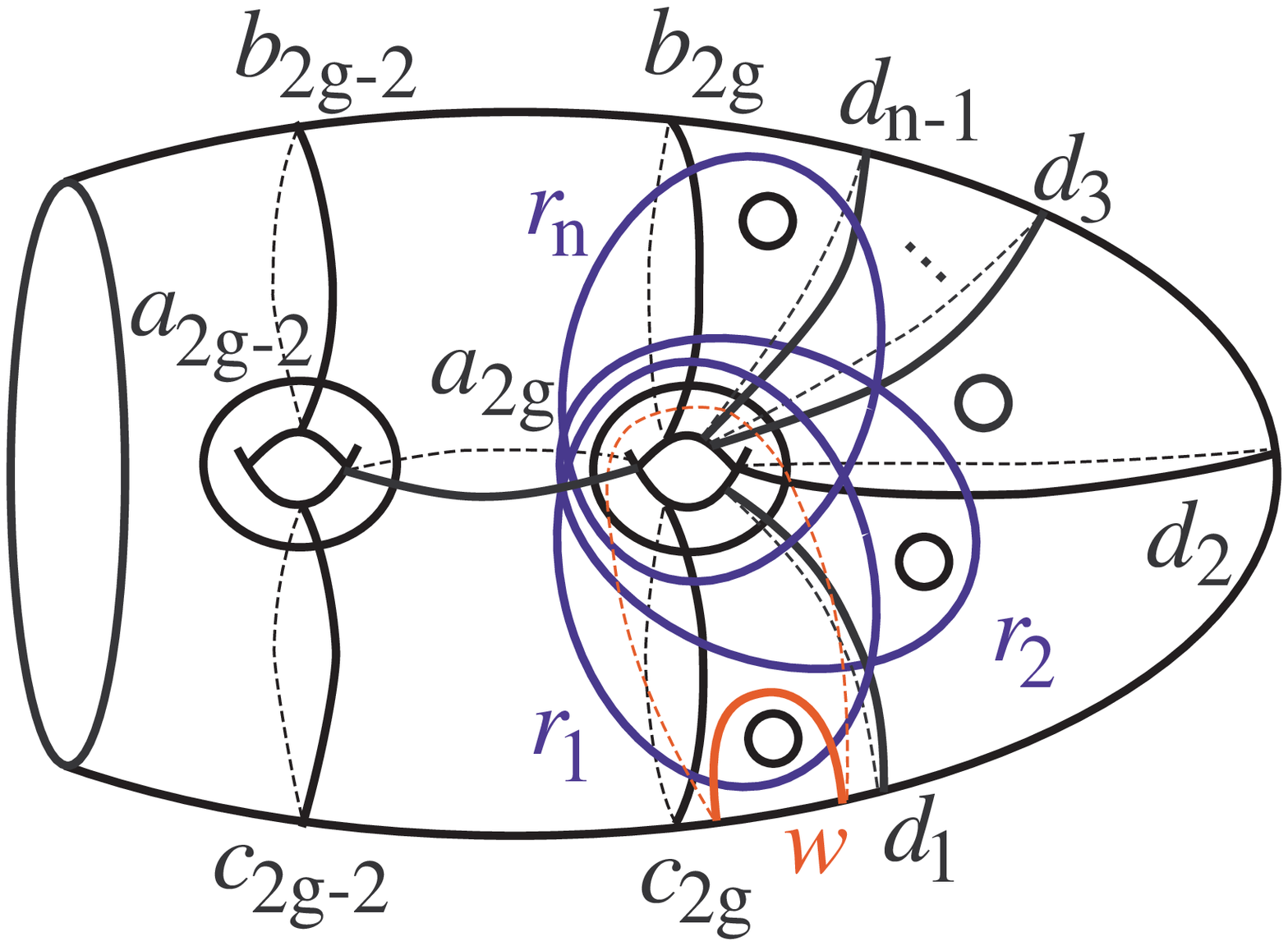} \hspace{-1.2cm} \epsfxsize=2.99in \epsfbox{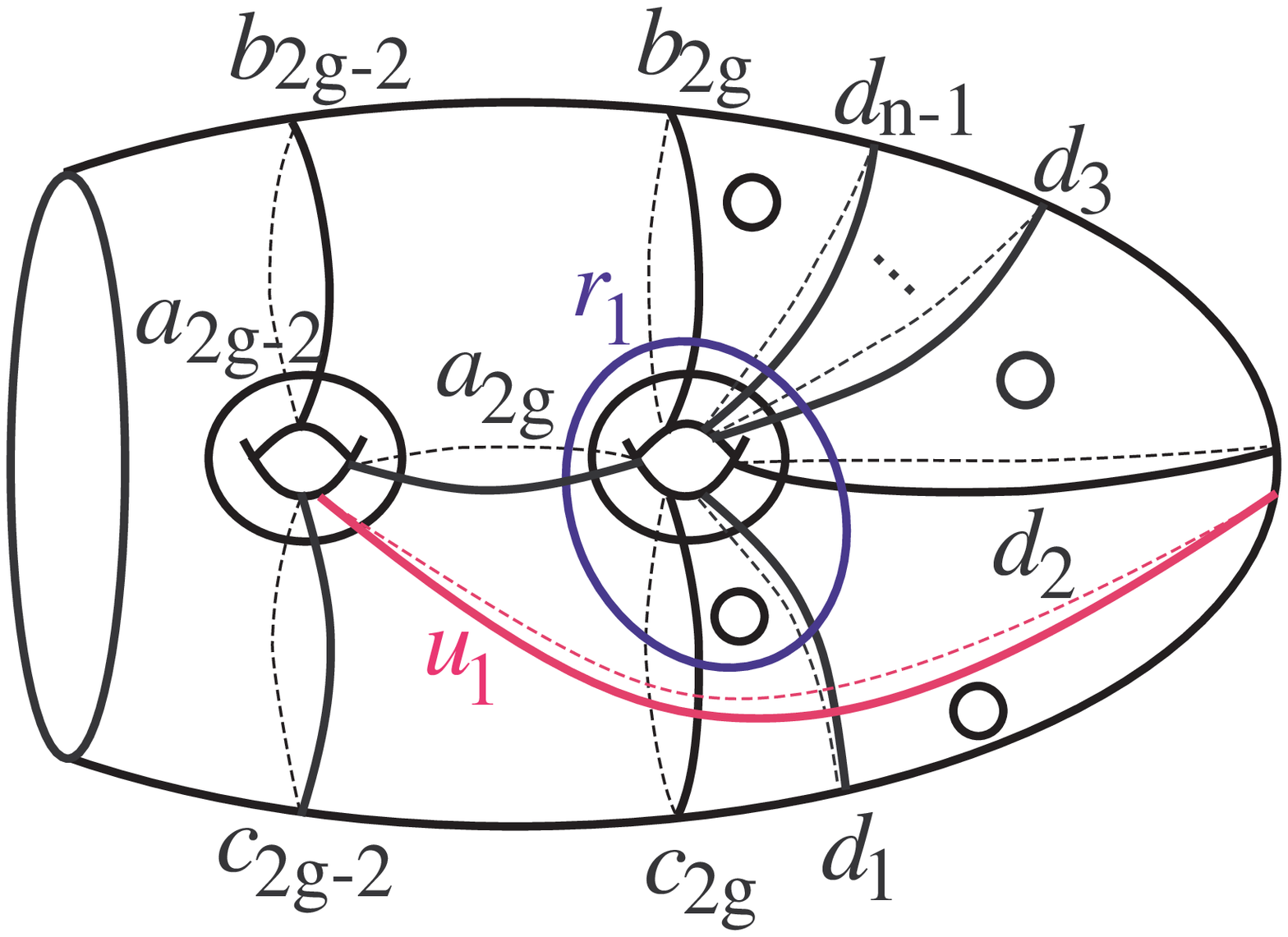}
  
\hspace{-0.6cm} (v) \hspace{6.1cm} (vi)
 
\hspace{-0.2cm} \epsfxsize=2.99in \epsfbox{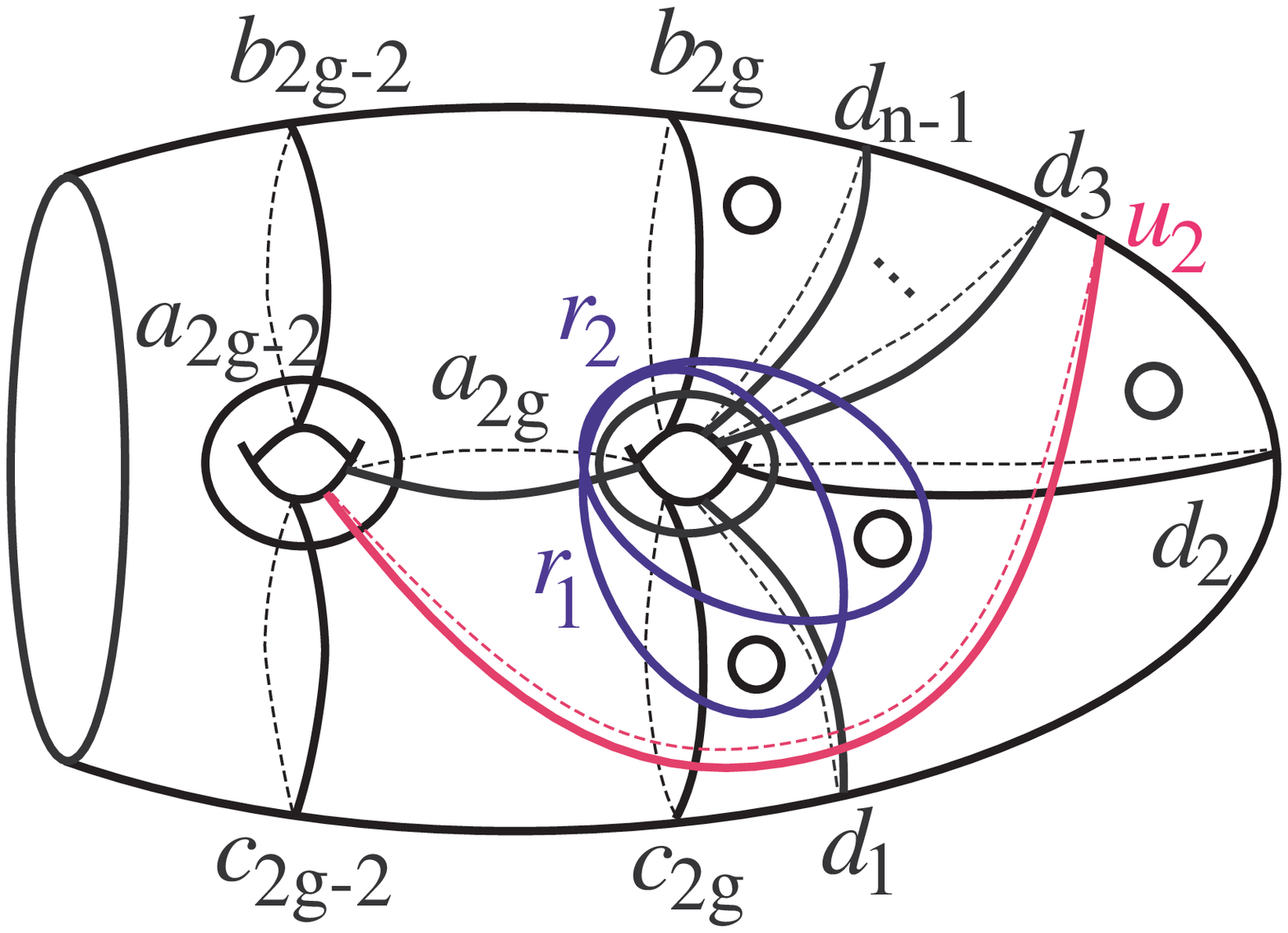} \hspace{-1.2cm} \epsfxsize=2.99in \epsfbox{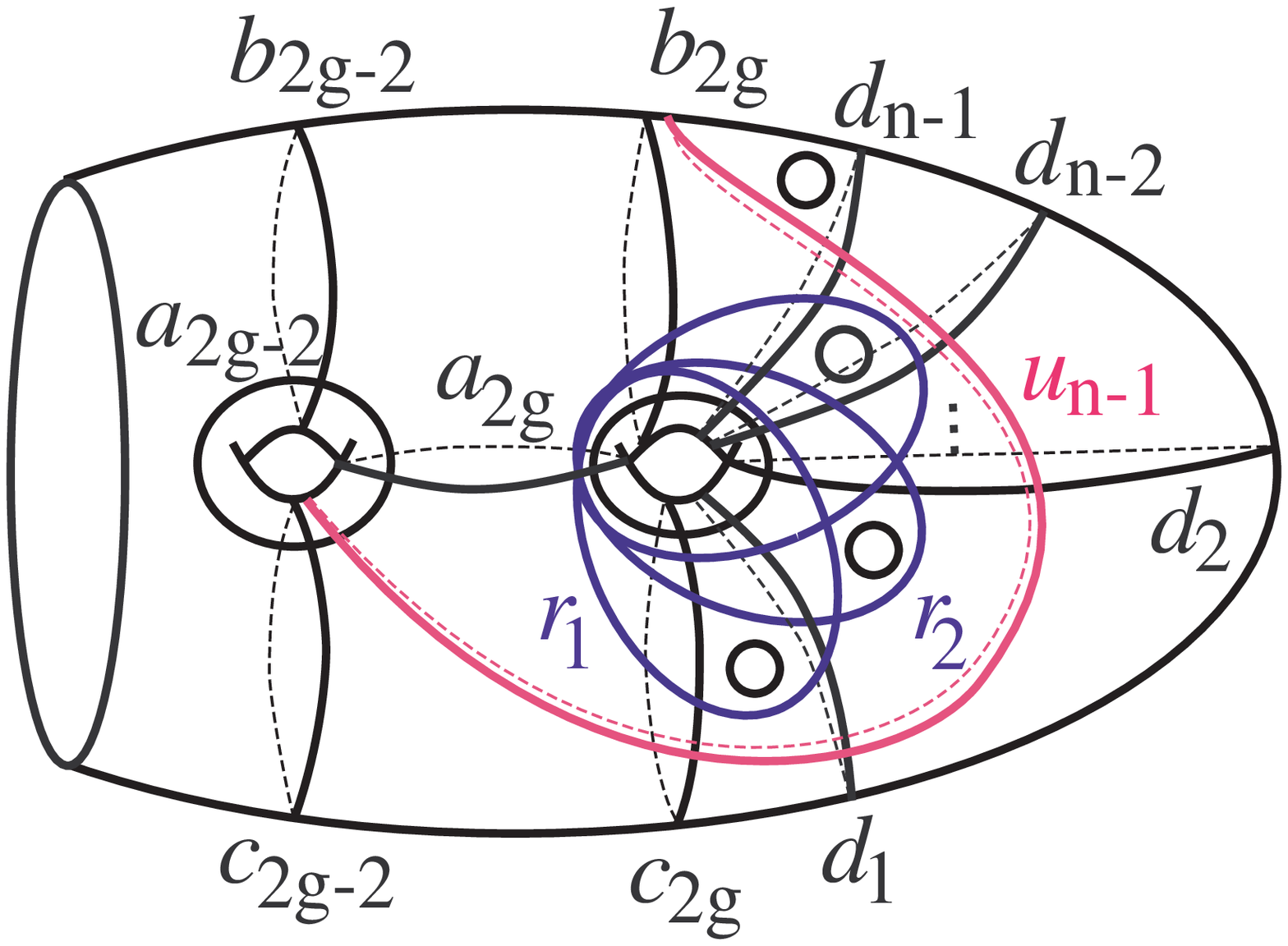} 

\hspace{-0.3cm} (vii) \hspace{5.8cm} (viii)
\caption{Curves in $\mathcal{C}_2$} \label{fig4}
\end{center}
\end{figure}

When $n \geq 1$, let $\mathcal{C}_2 = \{r_1, r_2, \cdots, r_{n}, s_1, s_2, \cdots, s_{n}, u_1, u_2, \cdots, u_{n-1}, v_1, v_2, \cdots, v_{n}, w\}$ 
where the curves are as shown in Figure \ref{fig4}.

\begin{lemma} \label{curves-II} Suppose $g \geq 2$ and $n \geq 1$. There exists a 
homeomorphism $h: R \rightarrow R$ such that $h([x]) = \lambda([x])$ $\forall \ x \in \mathcal{C}_1 \cup \mathcal{C}_2$.\end{lemma}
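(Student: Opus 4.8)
The plan is to bootstrap from Lemma \ref{curves}. Let $h_0 : R \to R$ be the homeomorphism provided by Lemma \ref{curves}, so that $h_0([x]) = \lambda([x])$ for every $x \in \mathcal{C}_1$. Consider the composition $\mu := [h_0]_*^{-1} \circ \lambda$. Since $[h_0]_*^{-1}$ is an automorphism of $\mathcal{N}(R)$ and $\lambda$ is edge-preserving, $\mu$ is again an edge-preserving map of $\mathcal{N}(R)$; hence every property established above for edge-preserving maps---in particular Lemmas \ref{2}, \ref{int-one}, \ref{embedded} and \ref{peripheral}---holds for $\mu$. By construction $\mu([x]) = [x]$ for every $x \in \mathcal{C}_1$. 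If I can further show that $\mu([z]) = [z]$ for every $z \in \mathcal{C}_2$, then $[h_0]_*^{-1}(\lambda([z])) = [z]$, i.e. $\lambda([z]) = h_0([z])$, and combined with the case $x \in \mathcal{C}_1$ the single homeomorphism $h := h_0$ satisfies $h([x]) = \lambda([x])$ for all $x \in \mathcal{C}_1 \cup \mathcal{C}_2$. Thus it suffices to prove that $\mu$ fixes $\mathcal{C}_2$ pointwise.

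To do this I would reconstruct each curve of $\mathcal{C}_2$ from its pattern of controlled relations with the curves already known to be fixed, processing the curves $r_1, \dots, r_n, s_1, \dots, s_n, u_1, \dots, u_{n-1}, v_1, \dots, v_n, w$ of Figure \ref{fig4} in an order in which each new curve $z$ is pinned down by curves in $\mathcal{C}_1$ together with previously treated curves of $\mathcal{C}_2$. The relevant relations are exactly the ones $\mu$ is now known to preserve: disjointness (because $\mu$ is edge-preserving), geometric intersection one (by Lemma \ref{int-one}, via Lemma \ref{2}), the property of three curves bounding an embedded pair of pants (by Lemma \ref{embedded}), and the property of being a peripheral pair (by Lemma \ref{peripheral}). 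For a fixed $z$, the curves it meets in these controlled ways record a list of ``disjoint from / meets once / bounds a pair of pants with / is peripheral with'' conditions; since $\mu$ fixes all of those curves and preserves each condition, $\mu([z])$ satisfies the identical list. The configuration in Figures \ref{fig10} and \ref{fig4} is arranged so that this list determines $z$ uniquely among isotopy classes of nonseparating curves, which forces $\mu([z]) = [z]$; each such $z$, once shown fixed, is then added to the pool of fixed curves used for the later ones.

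The main obstacle is precisely this uniqueness claim: one must verify, for each $z \in \mathcal{C}_2$, that no other nonseparating isotopy class shares its full list of controlled relations with the already-fixed curves. The danger is a Dehn twist (or a half-twist about a boundary component) of $z$ along one of the fixed curves, which can preserve many disjointness relations. This is ruled out by the richness of the surrounding configuration: by Lemma \ref{int-one} a regular neighborhood of the fixed curves fills the relevant subsurface containing $z$ (as in the neighborhood arguments of Lemmas \ref{curves} and \ref{peripheral}), so the intersection-one and pants conditions leave no room to twist, while the peripheral-pair data of Lemma \ref{peripheral} records which boundary component each of $r_i, s_i, v_i$ encircles and prevents $\mu$ from permuting the boundary components inconsistently with the fixed curves $d_1, \dots, d_{n-1} \subset \mathcal{C}_1$. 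Carrying out this verification curve-by-curve, using the positions shown in Figure \ref{fig4}, shows that $\mu$ fixes all of $\mathcal{C}_2$ and completes the proof.
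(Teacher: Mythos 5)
Your reduction to showing that $\mu := [h_0]_*^{-1}\circ\lambda$ fixes $\mathcal{C}_2$ pointwise is in substance the same bookkeeping the paper does directly with $h$, and for the curves $s_1,\dots,s_n$, $v_1,\dots,v_n$, $u_1,\dots,u_{n-1}$ and $r_2,\dots,r_n$ the ``unique curve with this list of controlled relations'' argument does go through exactly as you describe. But there is a genuine gap at the pair $\{r_1, w\}$: your blanket claim that the list of preserved relations ``determines $z$ uniquely among isotopy classes of nonseparating curves'' is false for these two curves. There is an order-two homeomorphism $\phi$ of $R$ that fixes the isotopy class of every curve in $\mathcal{C}_1\cup\{s_1,\dots,s_n,v_1,\dots,v_n\}$ and swaps $[r_1]$ and $[w]$; consequently $r_1$ and $w$ satisfy \emph{identical} lists of disjointness, intersection-one, pair-of-pants and peripheral-pair relations with all curves fixed up to that point, and no argument that only invokes preservation of those relations can rule out $\mu([r_1])=[w]$, $\mu([w])=[r_1]$. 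In fact the desired conclusion ``$\mu$ fixes $\mathcal{C}_2$ pointwise'' need not hold for $\mu = [h_0]_*^{-1}\circ\lambda$ at all: the correct statement is only that it holds after possibly composing with $\phi$, i.e.\ the homeomorphism realizing the lemma may have to be $h_0\circ\phi$ rather than $h_0$.

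The paper closes this gap with two extra steps that your proposal is missing. First it proves $\lambda([r_1])\neq\lambda([w])$ by a separate intersection argument: it shows $\lambda([p_{n-1}])$ and $\lambda([r_1])$ intersect nontrivially (transporting the already-established fact that $\lambda([v_1])$ and $\lambda([b_{2g}])$ intersect, via a homeomorphism carrying $(v_1,b_{2g})$ to $(p_{n-1},r_1)$), while $\lambda([p_{n-1}])$ and $\lambda([w])$ are disjoint since $[p_{n-1}]$ and $[w]$ span an edge. Second, knowing that $r_1$ and $w$ are the \emph{only two} candidates for their common list of relations and that $\lambda$ does not collapse them, it replaces $h$ by $h\circ\phi$ if necessary to arrange $h([r_1])=\lambda([r_1])$ and $h([w])=\lambda([w])$, and only then proceeds to $r_2,\dots,r_n$ and $u_1,\dots,u_{n-1}$ (whose unique characterizations use $r_1$ and $w$ as reference curves). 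Without the distinctness argument and the adjustment by $\phi$, your induction cannot get past $w$, so the proof as proposed is incomplete.
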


\begin{proof} We will consider all the curves in  Figure \ref{fig4}. By Lemma \ref{curves} there exists a homeomorphism $h: R \rightarrow R$ such tha $h([x]) = \lambda([x])$ $\forall \ x \in \mathcal{C}_1$. 

The curve $s_1$ is the unique nontrivial simple closed curve up to isotopy that is disjoint from each curve in 
$\mathcal{C}_1 \setminus \{a_{2g-2}, c_{2g}\}$ that intersects $a_{2g-2}$ once and it is not isotopic to $c_{2g-2}$. Since $h([x]) = \lambda([x])$ for all these curves, $\lambda$ is edge-preserving and it preserves intersection one, we have $h([s_1]) = \lambda([s_1])$. The curve $s_2$ is the unique nontrivial simple closed curve up to 
isotopy that is disjoint from each curve in $(\mathcal{C}_1 \cup \{s_1\}) \setminus \{a_{2g-2}, c_{2g}, d_{1}\}$ that intersects $a_{2g-2}$ once and it is not isotopic to $s_{1}$. Since $h([x]) = \lambda([x])$ for all these curves and these properties are preserved by $\lambda$, we have $h([s_2]) = \lambda([s_2])$. Similarly, we get  $h([s_i]) = \lambda([s_i])$ $ \forall i= 3, 4, \cdots, n$.

The curve $v_1$ is the unique nontrivial simple closed curve up to isotopy that is disjoint from each curve in 
$\mathcal{C}_1 \setminus \{a_{2g-2}, b_{2g}\}$ that intersects $a_{2g-2}$ once and it is not isotopic to $b_{2g-2}$. Since $h([x]) = \lambda([x])$ for all these curves and these properties are preserved by $\lambda$, we have $h([v_1]) = \lambda([v_1])$. The curve $v_2$ is the unique nontrivial simple closed curve  up to 
isotopy that is disjoint from each curve in $(\mathcal{C}_1 \cup \{v_1\}) \setminus \{a_{2g-2}, b_{2g}, d_{n-1}\}$ that intersects $a_{2g-2}$ once, and it is not isotopic to $v_{1}$. Since $h([x]) = \lambda([x])$ for all these curves and these properties are preserved by $\lambda$, we have $h([v_2]) = \lambda([v_2])$. Similarly, we get $h([v_i]) = \lambda([v_i])$ $ \forall i= 3, 4, \cdots, n$.
 
Consider the curve $w$ as shown in the Figure \ref{fig4} (v). There exists a
homeomorphism $\phi : R \rightarrow R$ of order two such that the map $\phi_{*}$ induced by $\phi$ on $\mathcal{N}(R)$ sends the isotopy class of each curve in $\mathcal{C}_1 \cup \{s_1, s_2, \cdots, s_{n}, v_1, v_2, \cdots, v_{n}\}$ to itself and switches 
$[r_1]$ and $[w]$. We can see that $\lambda([r_1]) \neq \lambda([w])$ as follows: Since $h([x]) = \lambda([x])$ when $x = v_1$ and $x = b_{2g}$, 
$\lambda([v_1]), \lambda([b_{2g}])$ intersect nontrivially. Consider the curve $p_{n-1}$ given in Figure \ref{fig5} (vii). Since there is a homeomorphism sending the pair 
$(p_{n-1}, r_1)$ to the pair $(v_1, b_{2g})$, we can choose similar curve configurations to see that  $\lambda([p_{n-1}]), \lambda([r_{1}])$ intersect nontrivially. 
Since there is an edge between $[p_{n-1}]$ and $[w]$, there is an edge between $\lambda([p_{n-1}]) $ and $\lambda([w])$, so 
$\lambda([p_{n-1}]), \lambda([w])$ have geometric intersection zero. Hence, $\lambda([r_1]) \neq \lambda([w])$. 
There are only two nontrivial simple closed curves, namely $r_1$ and $w$, up to isotopy that are disjoint from each of $c_{2g-2}, a_{2g-2}, v_{n-1}$, bounds a pair of pants with $a_{2g}$ and a boundary component of $R$
and intersects each of $a_{2g-1}, b_{2g}, c_{2g}, d_1, d_2, \cdots, d_{n-1}$ once. Since we know that $h([x]) = \lambda([x])$ for all these curves, $\lambda$ preserves these properties by Lemma \ref{embedded} and Lemma \ref{int-one}, and $\lambda([r_1]) \neq \lambda([w])$, by replacing $h$ with $h \circ \phi$ if necessary, we can assume that we have $h([r_1]) = \lambda([r_1])$ and 
$h([w]) = \lambda([w])$. We note that to get the proof of the lemma, it is enough to prove the result for
this $h$. The curve $r_2$ is the unique nontrivial simple closed curve  up to isotopy that is disjoint from each of $a_{2g-2}, b_{2g-2}, s_{1}, a_{2g}, v_{n-2}, w$, bound a pair of pants with $a_{2g}$ and a boundary component of $N$,  
and intersects each of $a_{2g-1}, b_{2g}, c_{2g}, d_1, d_2, \cdots, d_{n-1}$ once. 
Since we know that $h([x]) = \lambda([x])$ for all these curves and $\lambda$ preserves these properties, we see that $h([r_2]) = \lambda([r_2])$. Similarly, we get  $h([r_i]) = \lambda([r_i])$ $ \forall i= 3, 4, \cdots, n$.
The curve $u_1$ is the unique nontrivial simple closed curve  up to isotopy that is disjoint from each curve in $(\mathcal{C}_1 \cup \{r_1\}) \setminus \{a_{2g-2}, c_{2g}, d_1\}$ that intersects $a_{2g-2}$ once and it is not isotopic to $c_{2g-2}$. 
Since $h([x]) = \lambda([x])$ for all these curves and these properties are preserved by $\lambda$, we see that $h([u_1]) = \lambda([u_1])$. The curve $u_2$ is the unique nontrivial simple closed curve  up to isotopy that is disjoint from each of $(\mathcal{C}_1 \cup \{r_1, r_2\}) \setminus \{a_{2g-2}, c_{2g}, d_1, d_2\}$ that intersects $a_{2g-2}$ once and it is not isotopic to $c_{2g-2}$. 
Since $h([x]) = \lambda([x])$ for all these curves and these properties are preserved by $\lambda$, we have $h([u_2]) = \lambda([u_2])$. Similarly, we get  $h([u_i]) = \lambda([u_i])$ $ \forall i= 3, 4, \cdots, n-1$.
Hence, there exists a homeomorphism $h: R \rightarrow R$ such that $h([x]) = \lambda([x])$ $\forall \ x \in  \mathcal{C}_1 \cup \mathcal{C}_2$.\end{proof}\\

\begin{figure} \begin{center} 
\hspace{-0.4cm} \epsfxsize=2.9in \epsfbox{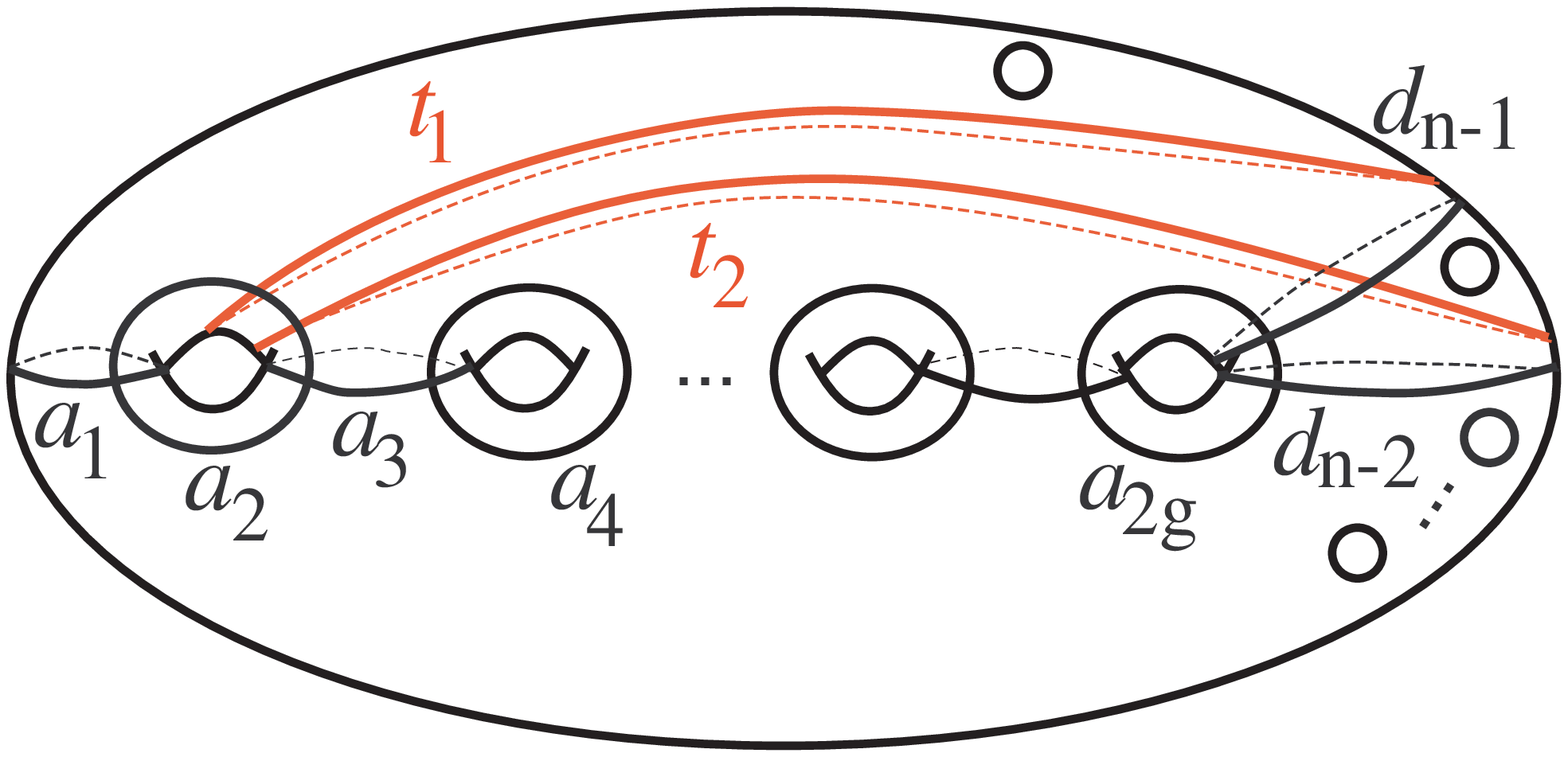} \hspace{-0.9cm} \epsfxsize=2.69in \epsfbox{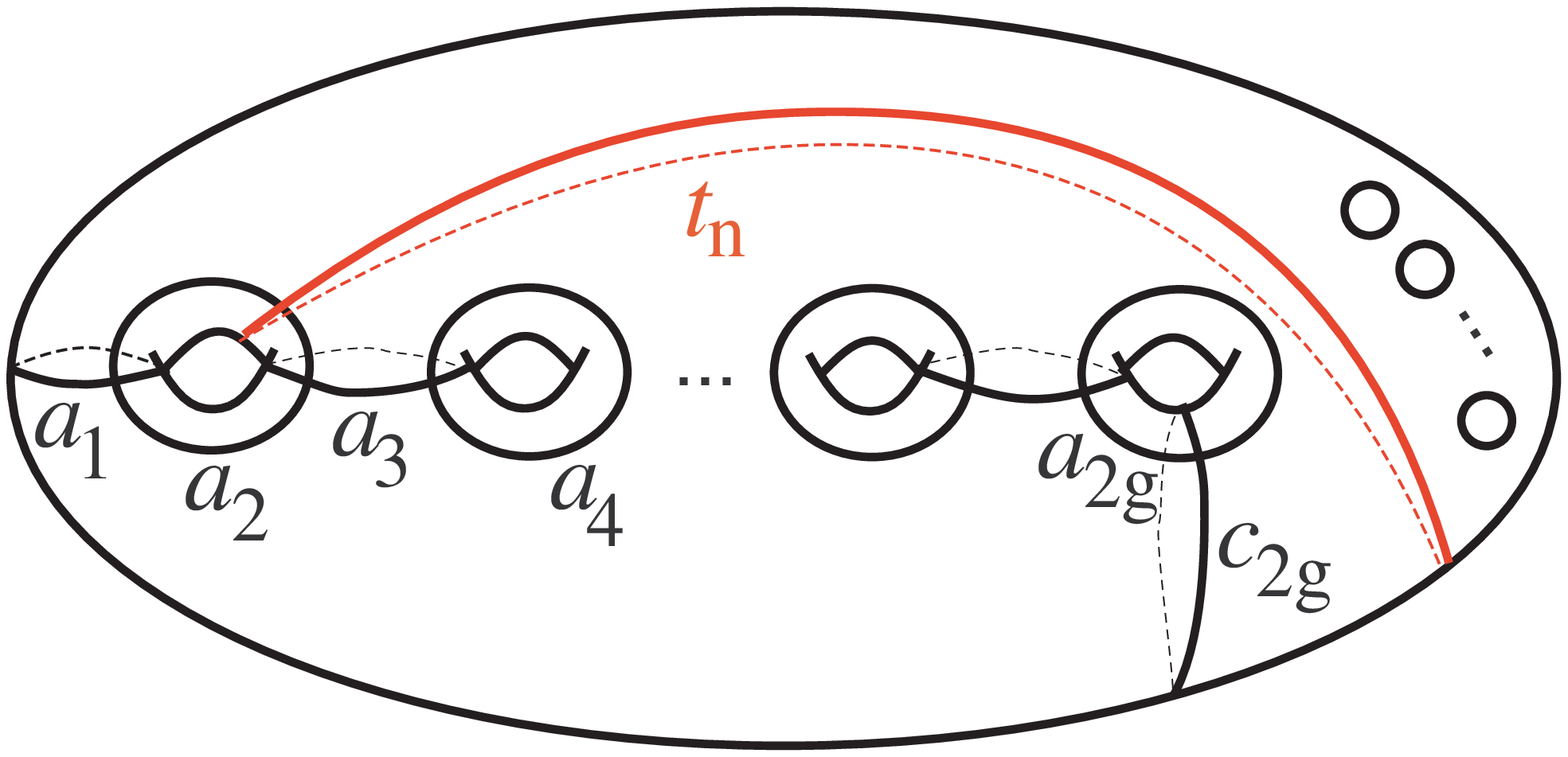} 

\hspace{-0.5cm} (i) \hspace{6.2cm} (ii)

\hspace{0.8cm} \epsfxsize=3.2in \epsfbox{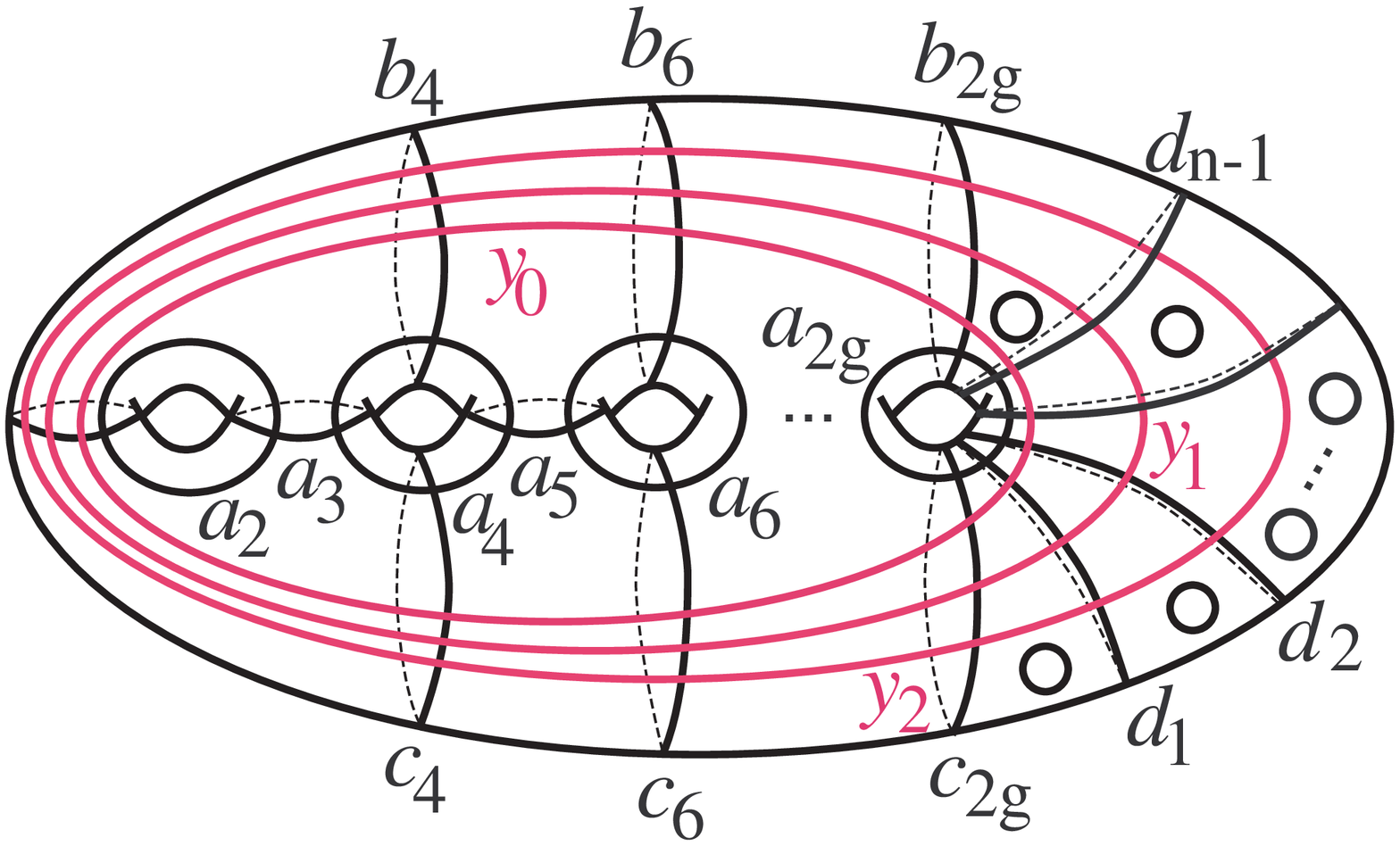} \hspace{-1.7cm} \epsfxsize=3.2in \epsfbox{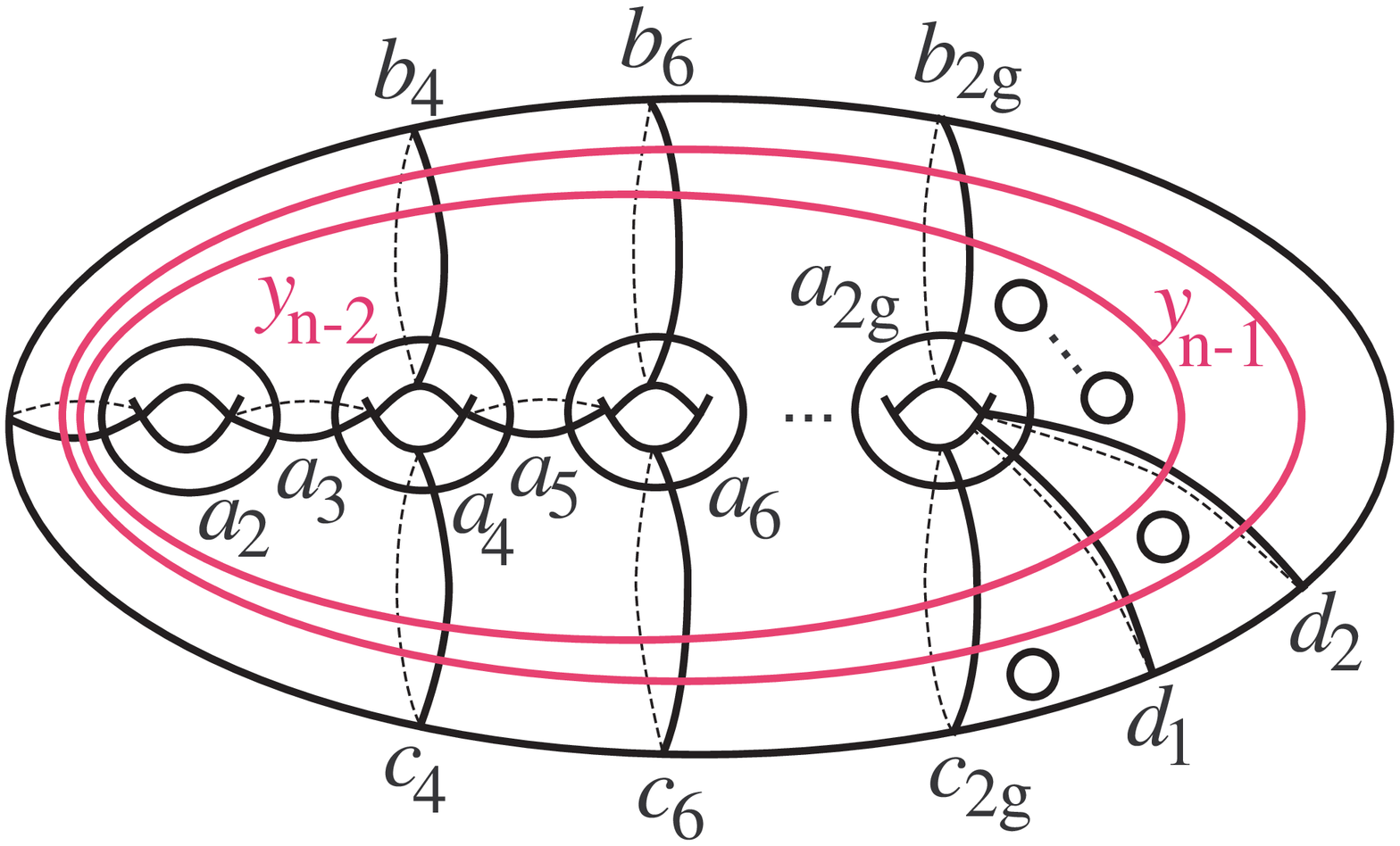} 
 
\hspace{-0.5cm} (iii) \hspace{6.1cm} (iv)

 \hspace{-0.5cm} \epsfxsize=3.2in \epsfbox{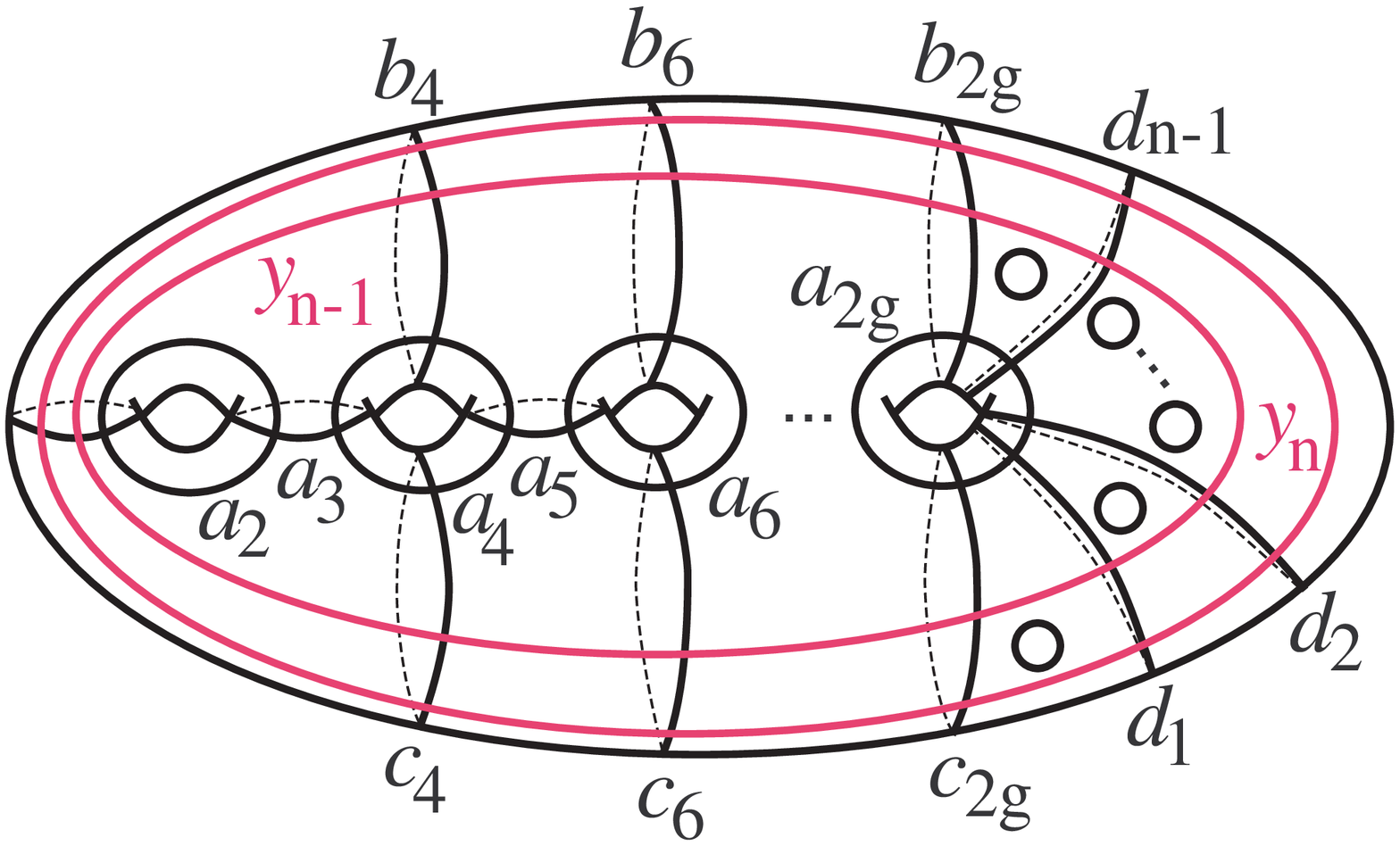} \hspace{-1.7cm}  \epsfxsize=2.69in \epsfbox{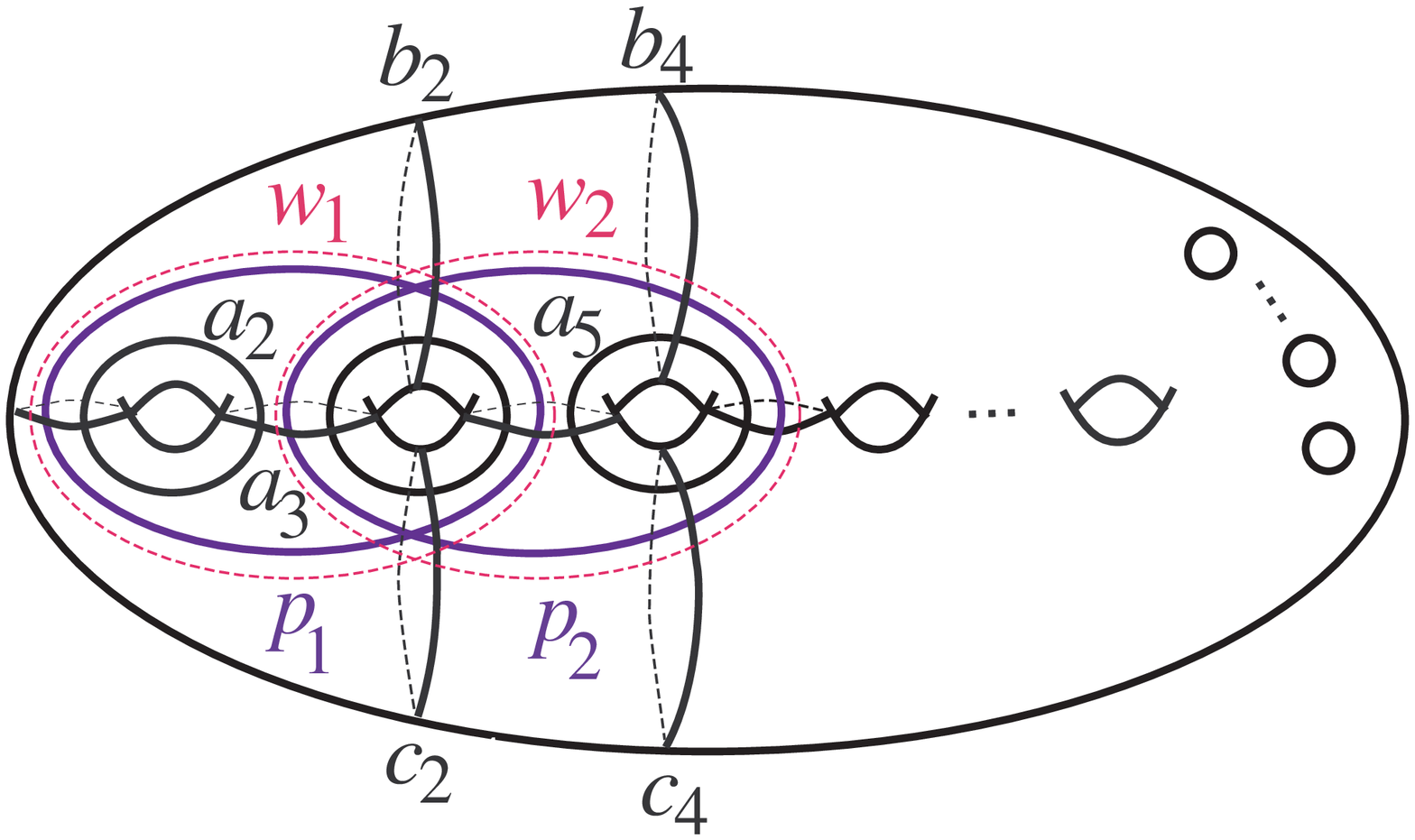} 

\hspace{-0.5cm} (v) \hspace{6.1cm} (vi)

 \hspace{-0.4cm}  \epsfxsize=2.69in \epsfbox{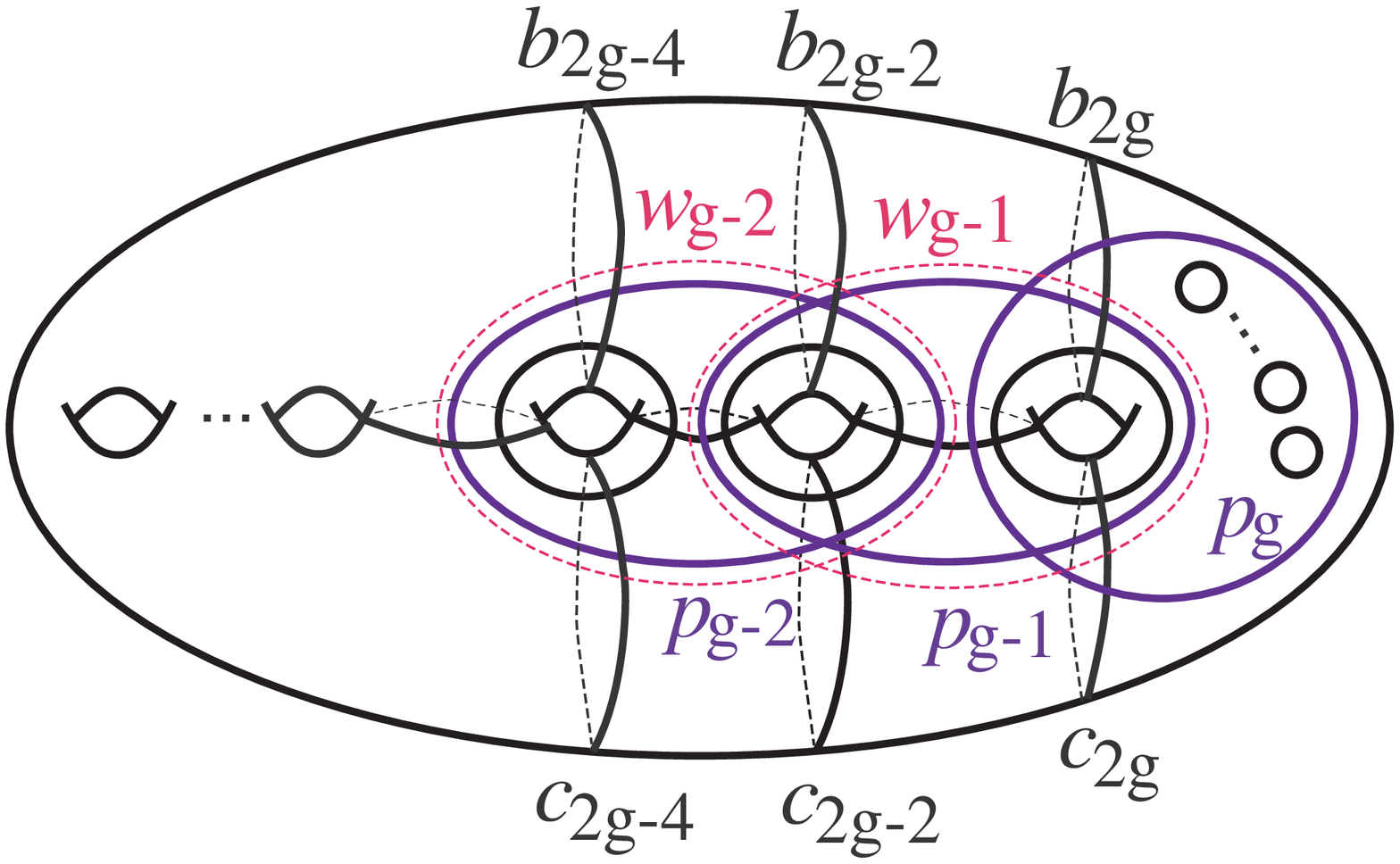}  \hspace{-0.3cm}  \epsfxsize=2.69in \epsfbox{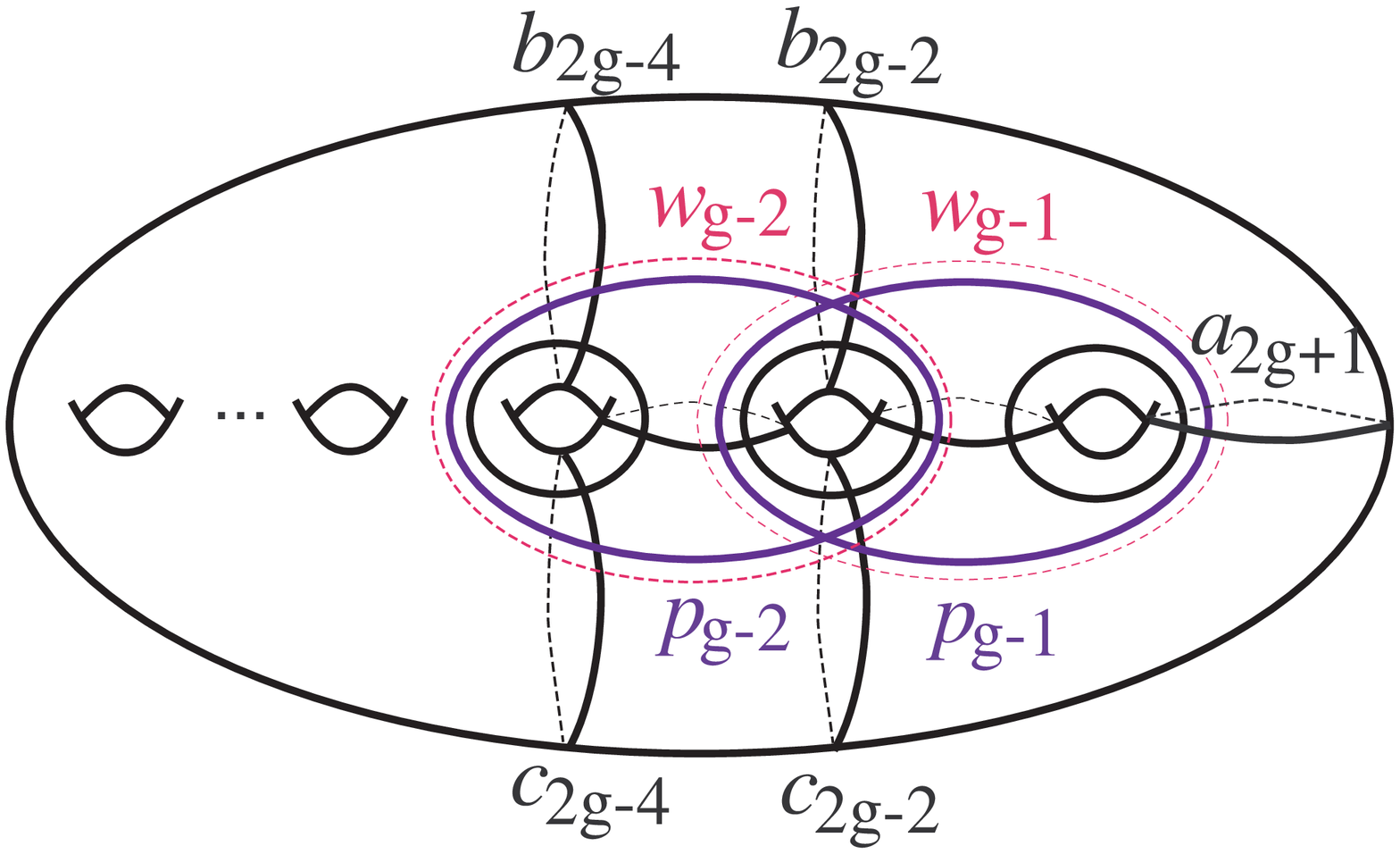} 
 
\hspace{-0.36cm} (vii) \hspace{5.8cm} (viii)
\caption{Curves in $\mathcal{C}_3$} \label{fig5}
\end{center}
\end{figure}

\begin{figure}[htb]
\begin{center}
\hspace{0cm} \epsfxsize=2.5in \epsfbox{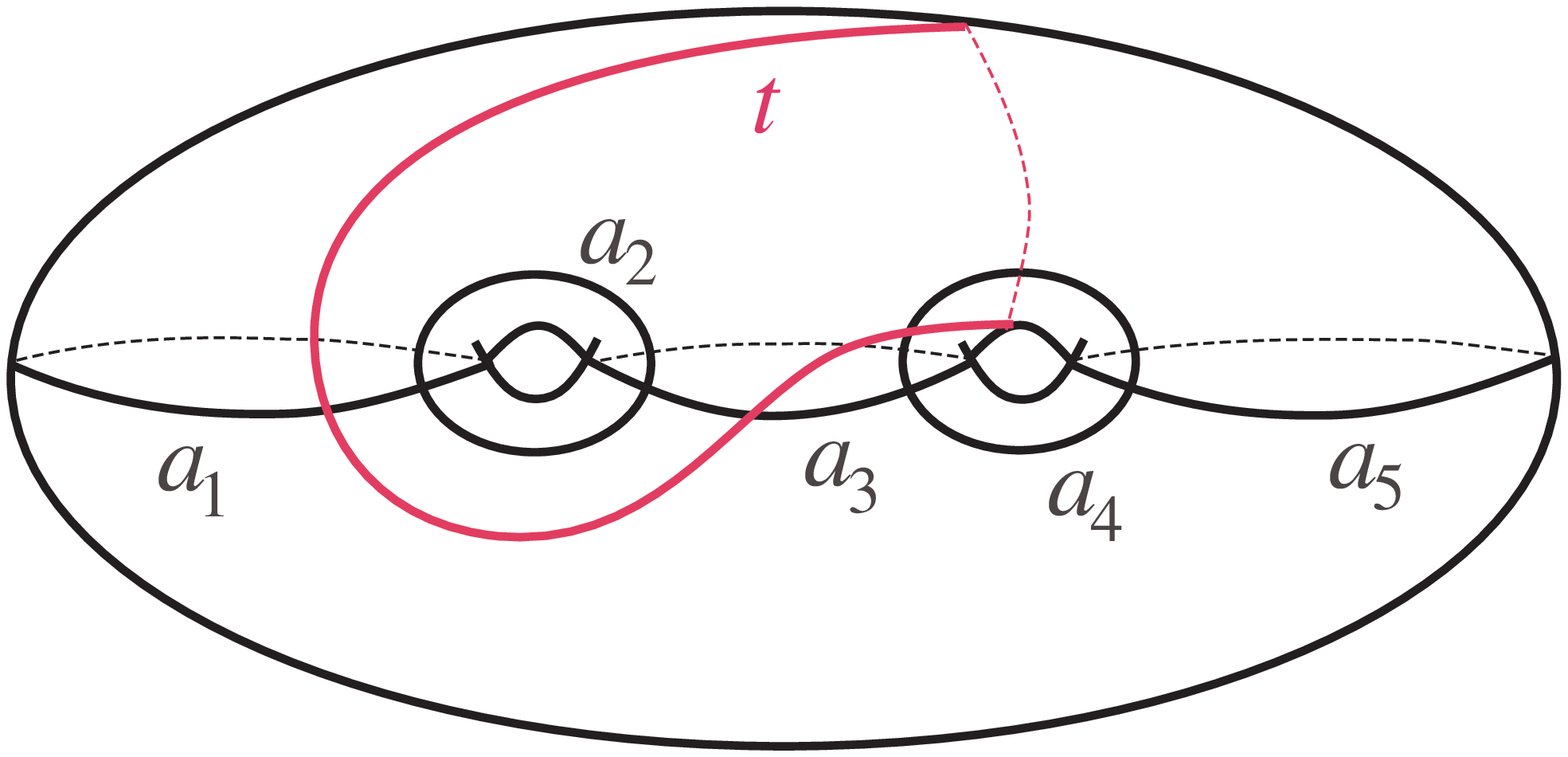} \hspace{0.2cm} \epsfxsize=2.5in \epsfbox{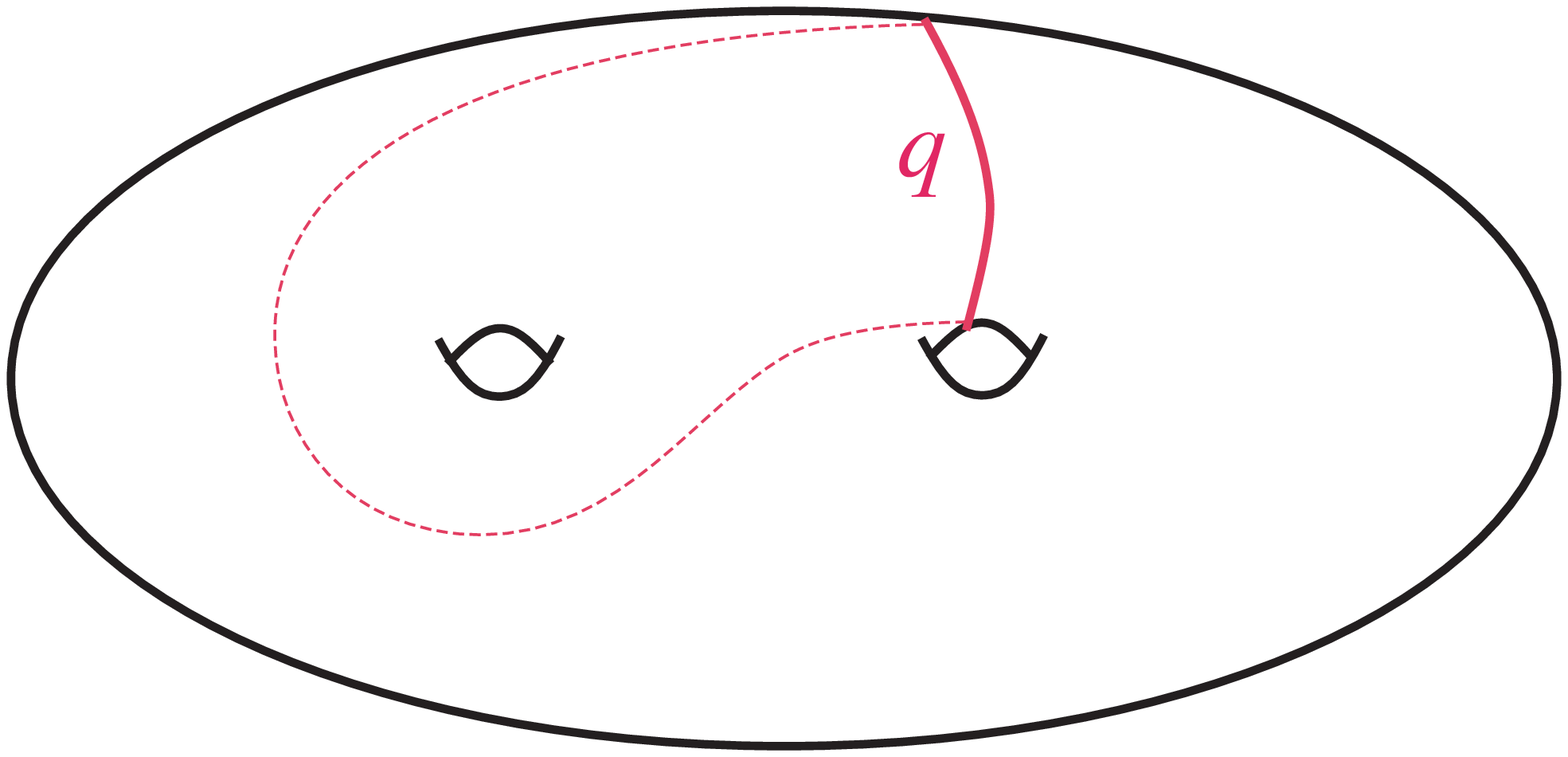} 

\hspace{0cm} (i) \hspace{5.7cm} (ii)
   
\caption{Curves in $\mathcal{C}_3$} \label{fig5-b}
\end{center}
\end{figure}
  
Let $\mathcal{C}_3 = \{p_1, p_2, \cdots, p_{g}, t_1, t_2, \cdots, t_{n}, y_0, y_1, \cdots, y_{n}, w_1, w_2, \cdots, w_{g-1}\}$ when $n \geq 1$, 
$\mathcal{C}_3 = \{p_1, p_2, \cdots, p_{g-1}, w_1, w_2, \cdots, w_{g-1}\}$ when $R$ is closed and $(g, n) \neq (2, 0)$, and let  
$\mathcal{C}_3 = \{t\}$ when $(g, n) = (2, 0)$ where the curves are as shown in Figure \ref{fig5} and Figure \ref{fig5-b}. We 
let $\mathcal{C}_2 = \emptyset$ when $R$ is closed.

\begin{lemma} \label{curves-III} Suppose $g \geq 2$ and $n \geq 0$. There exists a 
homeomorphism $h: R \rightarrow R$ such that $h([x]) = \lambda([x])$ $\forall \ x \in \mathcal{C}_1 \cup \mathcal{C}_2 \cup \mathcal{C}_3$.\end{lemma}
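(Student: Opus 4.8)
The plan is to bootstrap from Lemma~\ref{curves-II}. By that lemma we may fix a homeomorphism $h\colon R\to R$ with $h([x])=\lambda([x])$ for every $x\in\mathcal{C}_1\cup\mathcal{C}_2$, and the whole task reduces to checking that this same $h$ — after possibly composing with a suitable involution — also satisfies $h([x])=\lambda([x])$ for each $x\in\mathcal{C}_3$. The mechanism is exactly the one already used for $\mathcal{C}_2$: each curve of $\mathcal{C}_3$ will be pinned down, up to isotopy, by a list of topological conditions relative to curves that have already been matched — disjointness from them, geometric intersection one with them, bounding a pair of pants with them, or forming a peripheral pair with them — together with finitely many non-isotopy constraints that rule out the remaining competitors. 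Since $\lambda$ preserves each of these conditions (disjointness and intersection one by Lemma~\ref{int-one}, bounding a pair of pants by Lemma~\ref{embedded}, peripheral pairs by Lemma~\ref{peripheral}, adjacency by edge-preservation), and since $h$ already agrees with $\lambda$ on the matched curves, the image $\lambda([x])$ must satisfy the identical characterization relative to $h(\text{matched curves})=\lambda(\text{matched curves})$; uniqueness of the characterizing curve then forces $\lambda([x])=h([x])$.

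First I would order the curves of $\mathcal{C}_3$ so that each is characterized only in terms of $\mathcal{C}_1\cup\mathcal{C}_2$ and the curves of $\mathcal{C}_3$ already handled, exactly as the $s_i,v_i,r_i,u_i$ were treated sequentially in Lemma~\ref{curves-II}. Concretely, in the case $n\ge 1$ I would match the $w_i$ and $p_i$ first — these are the dual/handle curves, pinned by their intersection-one and pair-of-pants relations to the $a_j$ and $b_j$ of $\mathcal{C}_1$ — and then the $y_i$ and $t_i$, pinned by disjointness and peripheral-pair relations to the curves near the boundary that have already been fixed (see Figures~\ref{fig5} and \ref{fig5-b}). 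At each step the preservation lemmas supply the required relations for $\lambda([x])$, and a short surface-topology argument of the fill/cut type used repeatedly above shows the characterizing curve is unique up to isotopy, whence $\lambda([x])=h([x])$.

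The main obstacle I expect is the same one that forced the detour with $r_1$, $w$ and the order-two homeomorphism $\phi$ in Lemma~\ref{curves-II}: for certain curves of $\mathcal{C}_3$ the topological conditions will not determine the curve uniquely but only up to a two-element ambiguity exchanged by a symmetry of the configuration. To resolve this I would exhibit, as there, an order-two homeomorphism $\phi$ of $R$ whose induced action fixes every already-matched isotopy class while swapping the two candidates, separately argue — via a curve (of the type of $p_{n-1}$) intersecting one candidate but not the other — that $\lambda$ does not collapse the ambiguous pair, and then replace $h$ by $h\circ\phi$ if necessary. The delicate point is to verify that such a $\phi$ genuinely exists and fixes \emph{all} of $\mathcal{C}_1\cup\mathcal{C}_2$ together with the earlier curves of $\mathcal{C}_3$, so that post-composing by it disturbs none of the agreements already secured.

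Finally I would dispose of the two special cases. When $R$ is closed and $(g,n)\neq(2,0)$, one has $\mathcal{C}_3=\{p_1,\dots,p_{g-1},w_1,\dots,w_{g-1}\}$, handled by the identical sequential argument with the boundary-related curves omitted and the peripheral-pair conditions replaced by the corresponding interior pair-of-pants conditions of Lemma~\ref{embedded}. When $(g,n)=(2,0)$ one has $\mathcal{C}_3=\{t\}$, and it suffices to characterize the single curve $t$ up to isotopy by its intersection and pair-of-pants pattern with the chain $\mathcal{C}_1=\{a_1,\dots,a_5\}$ (via Lemmas~\ref{int-one} and \ref{embedded}); the residual two-fold ambiguity is removed by the hyperelliptic-type involution of the genus-two surface, which fixes every class of $\mathcal{C}_1$, so that after the possible adjustment of $h$ we obtain $\lambda([t])=h([t])$.
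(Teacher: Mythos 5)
Your overall strategy is the paper's: start from the $h$ supplied by Lemma~\ref{curves-II}, pin down each curve of $\mathcal{C}_3$ by a finite list of relations (disjointness, geometric intersection one, bounding a pair of pants, peripherality) to curves already matched, transport those relations through $\lambda$ via Lemmas~\ref{embedded}, \ref{int-one} and \ref{peripheral}, and break any residual two-fold ambiguity by composing $h$ with an order-two homeomorphism that fixes the already-matched classes. Two points, however, need repair before this becomes a proof.

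First, in the $(g,n)=(2,0)$ case the involution you invoke cannot be the hyperelliptic involution (nor anything ``hyperelliptic-type'' in the usual sense): on a closed genus-two surface that element is central and acts trivially on the isotopy class of \emph{every} simple closed curve, so it fixes $[t]$ and $[q]$ separately and cannot exchange the two candidates. As written, that step fails. The paper instead uses the reflection $\sigma$ through the plane of the page (Figure~\ref{fig5-b}), which fixes each $[a_i]$ while swapping $[t]$ and $[q]$; with $\sigma$ in place of your involution the argument closes. Second, your proposed order of matching --- the $p_i,w_i$ before the $y_i,t_i$ --- is the reverse of the dependency structure the paper exploits: $p_g$ is characterized using $y_n$ and the $r_i$, and $p_{g-1}$ using $y_0$, $t_n$ and $w$, so the $t_i$ and $y_i$ must come first (or you must supply new characterizations of the $p_i,w_i$ directly from $\mathcal{C}_1\cup\mathcal{C}_2$, which you do not give). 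Related to this, for $n\ge 1$ no new involution is needed at all: the only genuine two-fold ambiguity (the pair $r_1$, $w$) was already resolved inside Lemma~\ref{curves-II}, and with the paper's ordering every curve of $\mathcal{C}_3$ is \emph{uniquely} characterized; the involution trick is needed again only in the closed case, for the pair $p_{g-1}$, $w_{g-1}$.
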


\begin{proof} We will first give the proof when $R$ has nonempty boundary. We will consider all the curves in 
Figure \ref{fig5}. By Lemma \ref{curves-II}  there exists a 
homeomorphism $h: R \rightarrow R$ such that $h([x]) = \lambda([x])$ $\forall \ x \in \mathcal{C}_1 \cup \mathcal{C}_2$.  

The curve $t_1$ is the unique nontrivial simple closed curve up to isotopy that is disjoint from each of $\mathcal{C}_1 \setminus \{a_{2}, b_4, b_6, \cdots, b_{2g}\}$, intersects $a_{2g}$ once and nonisotopic to $a_1$. Since we
know that $h([x]) = \lambda([x])$ for all these curves and $\lambda$ preserves these properties we see that $h([t_1]) = \lambda([t_1])$. The curve $t_2$ is the unique nontrivial simple closed curve  up to isotopy that is disjoint from each of $(\mathcal{C}_1 \cup \{t_1\}) \setminus \{a_{2}, b_4, b_6, \cdots, b_{2g}, d_{n-1}\}$, intersects $a_{2g}$ once and nonisotopic to $t_1$. Since we
know that $h([x]) = \lambda([x])$ for all these curves and $\lambda$ preserves these properties we see that $h([t_2]) = \lambda([t_2])$. Similarly, we get  $h([t_i]) = \lambda([t_i])$ $ \forall i= 3, 4, \cdots, n$.

The curve $y_{n}$ is the unique nontrivial simple closed curve  up to isotopy that is disjoint from each of $a_{2}, a_{3}, \cdots, a_{2g}, r_{1}, r_2, \cdots,
r_n$, intersects each of $a_ 1, b_4, b_6, \cdots, b_{2g}, c_{4}, c_6, \cdots, c_{2g},$ $ d_1, d_2, \cdots,$ $ d_{n-1}$ once. 
Since $h([x]) = \lambda([x])$ for all these curves and these properties are preserved by $\lambda$ we have $h([y_n]) = \lambda([y_n])$. The curve $y_{n-1}$ is the unique nontrivial simple closed curve  up to isotopy that is disjoint from each of $a_{2}, a_{3}, \cdots, a_{2g}, r_2,  r_3, \cdots,
r_n, y_n$, intersects each of $a_1, b_{4}, b_{6}, \cdots, b_{2g}, c_{4}, c_6, \cdots, c_{2g}, d_1, d_2,$ $ \cdots,$ $ d_{n-1}$ once, and nonisotopic to $y_n$. 
Since $h([x]) = \lambda([x])$ for all these curves and these properties are preserved by $\lambda$ we have $h([y_{n-1}]) = \lambda([y_{n-1}])$. Similarly, we get  $h([y_i]) = \lambda([y_i])$ $ \forall i= 0, 1, 2, \cdots, n-2$. 

The curve $p_{g}$ is the unique 
nontrivial simple closed curve up to isotopy that is disjoint from each of $b_{2g-2}, a_{2g-2}, c_{2g-2}, y_{n}, a_{2g}, r_1, r_2, \cdots, r_n$, 
intersects each of $a_{2g-1}, b_{2g}, c_{2g}, d_1, d_2,$ $ \cdots, d_{n-1}$ once and nonisotopic to $a_{2g}$. Since $h([x]) = \lambda([x])$ for all these curves and these properties are preserved by $\lambda$, we have $h([p_{g}]) = \lambda([p_{g}])$. When $g=2$, we have $p_{g-1}= y_0$, so $h([p_{g-1}]) = \lambda([p_{g-1}])$. 
When 
$g \neq 2$, the curve $p_{g-1}$ is the unique nontrivial simple closed curve up to isotopy that is disjoint from each of $b_{2g-4}, a_{2g-4}, 
c_{2g-4}, y_{0}, t_n, a_{2g-2}, a_{2g-1}, a_{2g}, w$, and intersects each of $a_{2g-3}, b_{2g-2}, b_{2g}, c_{2g-2}, c_{2g}, d_1, d_2,$ $ \cdots, d_{n-1}$ once. Since we
know that $h([x]) = \lambda([x])$ for all these curves and $\lambda$ preserves these properties we see that $h([p_{g-1}]) = \lambda([p_{g-1}])$. Now, substituting $p_{g-1}$ for $w$ and considering the above intersection information, 
it is easy to see that $h([w_{g-1}]) = \lambda([w_{g-1}])$.
The curve $p_{g-2}$ is the unique nontrivial simple closed curve up to isotopy that is disjoint from each of $b_{2g-6}, a_{2g-6}, c_{2g-6},$ $ b_{2g}, a_{2g}, c_{2g}, a_{2g-4}, a_{2g-3}, a_{2g-2}, w_{g-1}$, and intersects each of $a_{2g-5}, b_{2g-4}, b_{2g-2},$ $ a_{2g-1}, c_{2g-2},$ $ c_{2g-4}$ once. Since we
know that $h([x]) = \lambda([x])$ for all these curves and $\lambda$ preserves these properties we see that
$h([p_{g-2}]) = \lambda([p_{g-2}])$. Similarly, we get $h([p_i]) = \lambda([p_i])$ $ \forall i= 1, 2, \cdots, g-3$, and $h([w_i]) = \lambda([w_i])$ $ \forall i= 1, 2, \cdots, g-2$. 
Hence, there exists a homeomorphism $h: R \rightarrow R$ such that $h([x]) = \lambda([x])$ $\forall \ x \in \mathcal{C}_1 \cup \mathcal{C}_2 \cup \mathcal{C}_3$.

Suppose that $R$ is closed and $(g, n) \neq (2, 0)$. Then $\mathcal{C}_2$ is empty and $\mathcal{C}_3 =  \{p_1, p_2, \cdots,$ $ p_{g-1}, w_1, w_2, \cdots, w_{g-1}\}$. By Lemma \ref{curves} we know that there exists a homeomorphism $h$ of $R$ such that $h([x]) = \lambda([x])$ $\forall \ x \in \mathcal{C}_1$. There exists a homeomorphism $\phi : R \rightarrow R$ of order two such that the map $\phi_{*}$ induced by $\phi$ on $\mathcal{N}(R)$ sends the isotopy class of each curve in $\mathcal{C}_1$ to itself and switches 
$[p_{g-1}]$ and $[w_{g-1}]$.
There are only two nontrivial simple closed curves, namely $p_{g-1}$ and $w_{g-1}$, up to isotopy that are disjoint from each of $b_{2g-4}, a_{2g-4}, c_{2g-4}, a_{2g-2}, a_{2g-1}, a_{2g}$, bounds a pair of pants with $a_{2g}$ and $a_{2g-2}$, and intersects each of $a_{2g-3}, b_{2g-2}, b_{2g}, c_{2g}$ once. Since $h([x]) = \lambda([x])$ for all these curves and $\lambda$ preserves these properties by Lemma \ref{embedded} and Lemma \ref{int-one}, 
by replacing $h$ with $h \circ \phi$ if necessary, we can assume that we have $h([p_{g-1}]) = \lambda([p_{g-1}])$ and $h([w_{g-1}]) = \lambda([w_{g-1}])$. To get the proof of the lemma, it is enough to prove the result for
this $h$. Now we continue as follows: The curve $p_{g-2}$ is the unique nontrivial simple closed curve up to isotopy that is disjoint from each of $b_{2g-6}, a_{2g-6}, c_{2g-6}, b_{2g}, a_{2g},$ $ c_{2g}, a_{2g-4}, a_{2g-3}, a_{2g-2}, w_{g-1}$, 
bounds a pair of pants with $a_{2g-2}$ and $a_{2g-4}$, and intersects each of $a_{2g-5}, b_{2g-4}, b_{2g-2},$ $ a_{2g-1}, c_{2g-2}, c_{2g-4}$ once. Since we
know that $h([x]) = \lambda([x])$ for all these curves and $\lambda$ preserves these properties we see that
$h([p_{g-2}]) = \lambda([p_{g-2}])$. Now it is easy to see that $h([w_{g-2}]) = \lambda([w_{g-2}])$. Similarly, we get $h([p_i]) = \lambda([p_i])$ $ \forall i= 1, 2, \cdots, g-3$, and $h([w_i]) = \lambda([w_i])$ $ \forall i= 1, 2, \cdots, g-2$. Hence, there exists a homeomorphism $h: R \rightarrow R$ such that $h([x]) = \lambda([x])$ $\forall \ x \in \mathcal{C}_1 \cup \mathcal{C}_3$.
 
Now suppose that $R$ is closed and $(g, n) = (2, 0)$. Then $\mathcal{C}_2$ is empty and $\mathcal{C}_3 = \{t\}$. 
Let $\sigma$ be the homeomorphism of $R$ of order two defined by the symmetry with respect to the plane of the paper, see Figure \ref{fig5-b}.  
We have $\sigma(x) = x$ for all $x \in \{ a_1, a_2, \cdots, a_{5}\}$ and $\sigma(t) = q$. 
The curves $t$ and $q$ are the only nontrivial simple closed curves up to isotopy that intersect each of $a_1, a_3, a_4$ once and bound a pair of pants with $a_2, a_5$. Since $h([x]) = \lambda([x])$ for all these curves and these properties are preserved by $\lambda$, by composing $h$ with $\sigma$ if necessary we can assume that 
we have $h([t]) = \lambda([t])$. So, we have $\lambda([x])= h([x])$ $\forall \ x \in \{a_1, a_2, a_3, a_4, a_5, t\}$. Hence, there exists a homeomorphism $h: R \rightarrow R$ such that $h([x]) = \lambda([x])$ $\forall \ x \in \mathcal{C}_1 \cup \mathcal{C}_3$ in this case as well.\end{proof}\\
 
\begin{figure}[htb]
\begin{center}
\hspace{1.5cm} \epsfxsize=3in \epsfbox{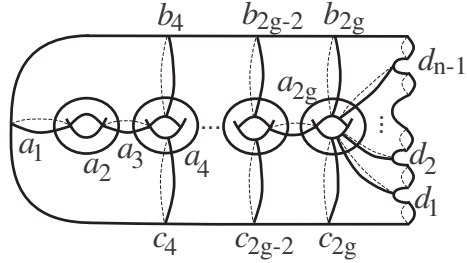}
\caption{The homeomorphism $\sigma$} \label{fig1-e}
\end{center}
\end{figure} 

\begin{lemma} \label{abcd} Suppose that $g \geq 2$ and $n \geq 0$ and $(g, n) \neq (2, 0)$. 
The set $\mathcal{C}_1 \cup \{p_1\}$ has trivial pointwise stabilizer.
\end{lemma}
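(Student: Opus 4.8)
The plan is to realize $\mathcal{C}_1$ as a filling system of curves and to apply the Alexander method, using the single extra curve $p_1$ to rule out the one orientation-reversing symmetry that survives. Throughout, by ``fixing'' a curve I mean fixing its isotopy class. So suppose $f : R \to R$ is a homeomorphism fixing every curve of $\mathcal{C}_1 \cup \{p_1\}$; the goal is to show that $f$ is isotopic to the identity.

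First I would check that the curves of $\mathcal{C}_1$ fill $R$: placing them in pairwise minimal position (as in Figure \ref{fig10} (i)-(ii)) and cutting $R$ along them leaves only disks together with annular neighborhoods of the boundary components of $R$. Indeed, the chain $a_1, a_2, \dots, a_{2g}$ together with the curves $b_{2i}$ and $c_{2i}$ cuts the genus into disks, while the curves $d_1, \dots, d_{n-1}$ separate off the peripheral annuli. These curves are pairwise non-isotopic and can be arranged with no triple intersection points, so the Alexander method (see \cite{FM}) applies and shows that any orientation-preserving homeomorphism fixing each curve of $\mathcal{C}_1$ is isotopic to the identity. It is here that the hypothesis $(g,n) \neq (2,0)$ is used: it excludes the closed genus-two surface, whose hyperelliptic involution is orientation-preserving and fixes every isotopy class of simple closed curve, and so would give a nontrivial element of the pointwise stabilizer even for a filling system. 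In all remaining cases the orientation-preserving pointwise stabilizer of the filling system $\mathcal{C}_1$ is trivial.

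With this in hand, I would finish by a reflection argument. Let $\sigma$ be the orientation-reversing involution obtained by reflecting $R$ through the plane of the paper, as in Figure \ref{fig1-e}; because every curve of $\mathcal{C}_1$ is drawn symmetrically, $\sigma$ fixes each of them. If $f$ is orientation-preserving, then the previous paragraph already gives $f \simeq \mathrm{id}$. If instead $f$ is orientation-reversing, then $\sigma f$ is orientation-preserving and fixes every curve of $\mathcal{C}_1$, so $\sigma f \simeq \mathrm{id}$ and hence $f \simeq \sigma$. But $\sigma([p_1]) \neq [p_1]$, since $p_1$ is not symmetric with respect to $\sigma$ (it is not isotopic to its mirror image across the plane of the paper), whereas $f$ fixes $[p_1]$; this is a contradiction. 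Hence $f$ is orientation-preserving and $f \simeq \mathrm{id}$, so $\mathcal{C}_1 \cup \{p_1\}$ has trivial pointwise stabilizer.

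The main obstacle is the filling verification, which has to be carried out separately in the closed and bounded cases (and checked in the low-genus configurations that the paper handles by hand): one must confirm that cutting along $\mathcal{C}_1$ produces only disks and boundary-parallel annuli, and that the representatives meet the minimal-position and no-triple-point hypotheses of the Alexander method. The second delicate point is the inequality $\sigma([p_1]) \neq [p_1]$, which requires reading off from Figure \ref{fig5} that $p_1$ is genuinely chiral with respect to the reflection $\sigma$. Once these two geometric facts are established, the reduction above is immediate.
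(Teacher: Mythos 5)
Your argument is correct and follows essentially the same route as the paper's: reduce, via the Alexander method, to a homeomorphism fixing the filling system $\mathcal{C}_1$ setwise, observe that the only surviving nontrivial mapping class is the reflection $\sigma$ through the plane of the paper, and use $\sigma([p_1]) \neq [p_1]$ to eliminate it. The paper carries out the Alexander-method step by an explicit analysis of the $2g-2$ disks and $n$ annuli in the complement of $\mathcal{C}_1$ (citing Assertion 1 in the proof of Lemma 3.6 of \cite{IrP2} and Proposition 2.8 of \cite{FM}), and your identification of the hyperelliptic involution as the reason for excluding $(g,n)=(2,0)$ matches the paper's separate treatment of that case in Lemma \ref{abcde}.
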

 
\begin{proof} Let $\sigma$ be the homeomorphism of $R$ of order two defined by the symmetry with respect to the plane of the paper, see Figure \ref{fig1-e}.
We have $\sigma(x) = x$ for all $x \in \{ a_1, a_2, \cdots, a_{2g}, b_4, b_6, \cdots,$ $b_{2g}, c_4, c_6, \cdots, c_{2g}, d_1, d_2, \cdots, d_{n-1} \}$.
Let $h$ be a homeomorphism of $R$ such that $[h(x)] = [x]$ for every $x \in \mathcal{C}_1 \cup \{p_1\}$.
By Assertion 1 given in the proof of Lemma 3.6 in \cite{IrP2} (see also Proposition 2.8 in \cite{FM}), $h$ is isotopic to a homeomorphism which fixes every curve $x \in \{a_1, a_2, \cdots, a_{2g}, b_4, b_6, \cdots, b_{2g}, c_4,$ $ c_6, \cdots, c_{2g}, d_1, d_2, \cdots, d_{n-1} \}$, so we can assume that $h(x) = x$ for all $x \in \{a_1, a_2, \cdots, a_{2g},$ $ b_4, b_6, \cdots, b_{2g}, c_4,$ $ c_6, \cdots, c_{2g}, d_1, d_2, \cdots, d_{n-1} \}$. 
Let $K = a_1 \cup \cdots \cup a_{2g} \cup b_1 \cup \cdots \cup b_{2g} \cup c_1 \cup \cdots \cup c_{2g} \cup d_1 \cup \cdots \cup d_{n-1}$. We have $h(K) = K$.
Cutting $R$ along $K$ we get $2g-2$ disks and $n$ annuli. 
The homeomorphism $h$ sends each of these pieces onto itself.
It is easy to see that either the restriction of $h$ to each piece preserves the orientation for each piece or reverses the orientation for each piece.
Suppose that the restriction of $h$ to each piece preserves the orientation for each piece.
Then $h$ also preserves the orientation of each $x \in \{a_1, \cdots, a_{2g}, b_4, b_6, \cdots, b_{2g}, c_4, c_6, \cdots, c_{2g}, d_1, \cdots, d_{n-1}\}$, hence we can assume that the restriction of $h$ to $K = a_1 \cup \cdots \cup a_{2g} \cup b_1 \cup \cdots \cup b_{2g} \cup c_1 \cup \cdots \cup c_{2g} \cup d_1 \cup \cdots \cup d_{n-1}$ is the identity.
Then the restriction of $h$ to each disk is isotopic to the identity with an isotopy that fixes the boundary pointwise, and the restriction of $h$ to each annuli is isotopic to the identity with an isotopy that pointwise fixes the boundary of the annuli which is not a boundary component of $R$. This shows that $h$ is isotopic to the identity on $R$. If the restriction of $h$ to each piece reverses the orientation of each piece, then $h \sigma$ preserves the orientation of each piece, so $h \sigma$ is isotopic to the identity on $R$. 
Hence, either $h$ is isotopic to the identity or $\sigma$ on $R$. We have $[h(p_1)] = [p_1]$, but $[\sigma (p_1)] \neq [p_1]$, so $h$ cannot be isotopic to $\sigma$. Hence, $h$ is isotopic to identity on $R$.\end{proof}\\
 
When $(g, n) = (2, 0)$, we have $\mathcal{C}_1 \cup \{t\} = \{a_1, \cdots, a_5, t\}$ where the curves are as shown in  
Figure \ref{fig5-b} (i).

\begin{lemma} \label{abcde} Suppose that $(g, n) = (2, 0)$. 
The pointwise stabilizer of $\mathcal{C}_1 \cup \{t\}$ is the center of $Mod_R^*$.
\end{lemma}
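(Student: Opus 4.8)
The plan is to prove the two inclusions separately. First I would record the standard structural fact that for a closed surface of genus two the center $Z(Mod_R^*)$ is the order two group $\langle \iota \rangle$ generated by the hyperelliptic involution $\iota$, and that $\iota$ fixes the isotopy class of every simple closed curve on $R$ (it acts trivially on the curve graph). In particular $\iota$ fixes each of $[a_1], \ldots, [a_5], [t]$, so $Z(Mod_R^*)$ is contained in the pointwise stabilizer of $\mathcal{C}_1 \cup \{t\}$; the real content of the lemma is the reverse inclusion.

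For the reverse inclusion I would take a homeomorphism $h$ with $h([x]) = [x]$ for all $x \in \{a_1, \ldots, a_5, t\}$ and argue as in Lemma \ref{abcd}. Since the chain $a_1, \ldots, a_5$ fills $R$, I would first use Assertion 1 in the proof of Lemma 3.6 in \cite{IrP2} (equivalently Proposition 2.8 in \cite{FM}) to isotope $h$ so that it fixes the $1$-complex $K = a_1 \cup \cdots \cup a_5$ setwise, and then cut $R$ along $K$; because the $a_i$ fill, the result is a disjoint union of disks that $h$ permutes. As $R$ is connected and orientable, $h$ is either orientation preserving or orientation reversing, and at each transverse intersection point the preservation of the surface orientation forces $h$ either to preserve the orientations of both strands or to reverse both; since $K$ is connected this choice propagates coherently to all of $a_1, \ldots, a_5$. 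This is the step I expect to be the main obstacle: organizing the local orientation data along $K$ into a global dichotomy and matching each outcome with one of the four symmetries $1, \iota, \sigma, \iota\sigma$, where $\sigma$ is the reflection of Figure \ref{fig5-b}.

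Carrying this out, if $h$ preserves the surface orientation and the orientation of every $a_i$, then (exactly as in Lemma \ref{abcd}) one can take $h$ to be the identity on $K$, whence the restriction of $h$ to each complementary disk is isotopic rel boundary to the identity and $h$ is isotopic to the identity. If $h$ preserves the surface orientation but reverses every $a_i$, then $h\iota$ falls into the previous case, since $\iota$ also preserves the surface orientation and reverses each $a_i$; hence $h$ is isotopic to $\iota$. If instead $h$ reverses the surface orientation, then $h\sigma$ preserves it while still fixing each $[a_i]$ (recall $\sigma(a_i)=a_i$), so by the two cases just treated $h\sigma$ is isotopic to $1$ or $\iota$, and therefore $h$ is isotopic to $\sigma$ or $\iota\sigma$. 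Thus any homeomorphism fixing $[a_1], \ldots, [a_5]$ is isotopic to one of $1, \iota, \sigma, \iota\sigma$.

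Finally I would invoke the curve $t$ to eliminate the orientation reversing possibilities. By construction $\sigma([t]) = [q]$ with $[q] \neq [t]$, and since $\iota$ fixes every isotopy class we have $\iota([q]) = [q]$; hence $\sigma([t]) = [q] \neq [t]$ and $\iota\sigma([t]) = \iota([q]) = [q] \neq [t]$, so neither $\sigma$ nor $\iota\sigma$ fixes $[t]$. Combined with the hypothesis $h([t]) = [t]$, this rules out the last two cases and forces $h$ to be isotopic to $1$ or $\iota$. Therefore the pointwise stabilizer of $\mathcal{C}_1 \cup \{t\}$ equals $\langle \iota \rangle = Z(Mod_R^*)$, which is the assertion of the lemma.
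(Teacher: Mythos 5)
Your proof is correct and follows essentially the same route as the paper: after using Assertion 1 to make $h$ fix the filling chain $K=a_1\cup\cdots\cup a_5$, both arguments cut along $K$, apply Alexander's lemma on the two complementary disks after normalizing by $\sigma$ and the hyperelliptic involution $\iota$, and then use the curve $t$ (via $\sigma(t)=q$ with $[q]\neq[t]$) to rule out the orientation-reversing possibilities. The only differences are organizational: the paper splits cases by whether $h$ preserves or swaps the two complementary disks $D_1,D_2$, whereas you split by the orientation behavior of $h$ on $R$ and on the curves $a_i$ (and you also spell out the easy inclusion $Z(Mod_R^*)$ into the stabilizer, which the paper leaves implicit); both analyses arrive at the same list $1,\iota,\sigma,\iota\sigma$ and the same conclusion.
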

 
\begin{proof} Let $\sigma$ be the homeomorphism of $R$ of order two defined by the symmetry with respect to the plane of the paper, see Figure \ref{fig5-b} (i). We see that $\sigma(x) = x$ for all $x \in \{ a_1, a_2, \cdots, a_5\}$. 
Let $h$ be a homeomorphism of $R$ such that $[h(x)] = [x]$ for every $x \in \mathcal{C}_1 \cup \{t\}$.
By Assertion 1 given in the proof of Lemma 3.6 in \cite{IrP2} (see also Proposition 2.8 in \cite{FM}), $h$ is isotopic to a homeomorphism which fixes every curve $x \in \{a_1, a_2, \cdots, a_5\}$, so we can assume that $h(x) = x$ for all $x \in \{a_1, a_2, \cdots, a_5\}$. Let $K = a_1 \cup \cdots \cup a_5$. We have $h(K) = K$. Cutting $R$ along $K$ we get $2$ disks $D_1, D_2$. Suppose $h$ sends each of these pieces onto itself. Then $h$ is either orientation preserving on both $D_1$ and $D_2$ or orientation reversing on both $D_1$ and $D_2$. If $h$ is orientation reversing on both 
of them then $h \circ \sigma$ is orientation preserving on both of them. 
Then $h \circ \sigma$ also preserves the orientation of each $x \in \{a_1, \cdots, a_5\}$, hence we may assume that the restriction of 
$h \circ \sigma$ to $K$ is the identity. 
Then the restriction of $h \circ \sigma$ to each disk is isotopic to the identity with an isotopy that fixes the boundary pointwise. This shows that $h \circ \sigma$ is isotopic to the identity on $R$. This would imply that $h$ is isotopic to $\sigma$, but that gives a contradiction since  $h(t)$ is not isotopic to $\sigma(t)=q$. So, $h$ is orientation preserving on both $D_1$ and $D_2$, and hence $h$ is isotopic to $identity$.
Now suppose that $h(D_1) = D_2$ and $h(D_2)=D_1$. Let $i$ be the hyperelliptic involution which generates the center of 
$Mod_R^* \cong \mathbb{Z}_2$ and swicthes $D_1$ and $D_2$. Then, $h \circ i$ sends each of $D_1, D_2$ onto itself, and we can see as before that $h \circ i$ is isotopic to $\sigma$ or $identity$. But $(h \circ i)(t)$ is isotopic to $t$ but not isotopic to $\sigma(t)=q$.
So,  $h \circ i$ is isotopic to $identity$. Hence $h$ is isotopic to $i$ and the pointwise stabilizer of $\mathcal{C}_1 \cup \{t\}$ is the center of $Mod_R^*$.\end{proof}\\
  
\begin{figure}[htb]
\begin{center}
\hspace{1.7cm} \epsfxsize=3.3in \epsfbox{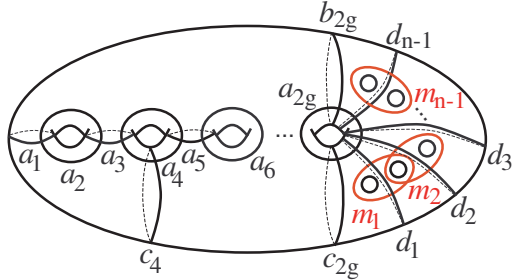}
\caption {Curves for the generating set}
\label{fig6}
\end{center}
\end{figure}

Consider the curves given in Figure \ref{fig6}. Let $t_x$ be the Dehn twist about $x$. Let $\sigma_i$ be the half twist along 
$m_i$. The mapping class group, $Mod_R$, of $R$ is the group of 
isotopy classes of all orientation preserving self-homeomorphisms of $R$.  
If $g \geq 2$ then $Mod_R$ can be generated by $G = \{t_x: x \in \{a_1, a_2, \cdots, a_{2g}, b_{2g}, c_4, c_{2g}, d_1, d_2, \cdots, d_{n-1}\} \} \cup \{\sigma_1,$ $\sigma_2, \cdots, \sigma_{n-1}\}$, see Corollary 4.15 in \cite{FM}.  
Let $h: R \rightarrow R$ be a homeomorphism which satisfies the statement of Lemma \ref{curves-III}. We know $h([x]) = \lambda([x])$ $\forall \ x \in \mathcal{C}_1 \cup \mathcal{C}_2 \cup \mathcal{C}_3$. 
We will follow the techniques given by Irmak-Paris \cite{IrP2} to obtain the homeomorphism we want.

\begin{figure} \begin{center}  
\hspace{-0.7cm} \epsfxsize=2.7in \epsfbox{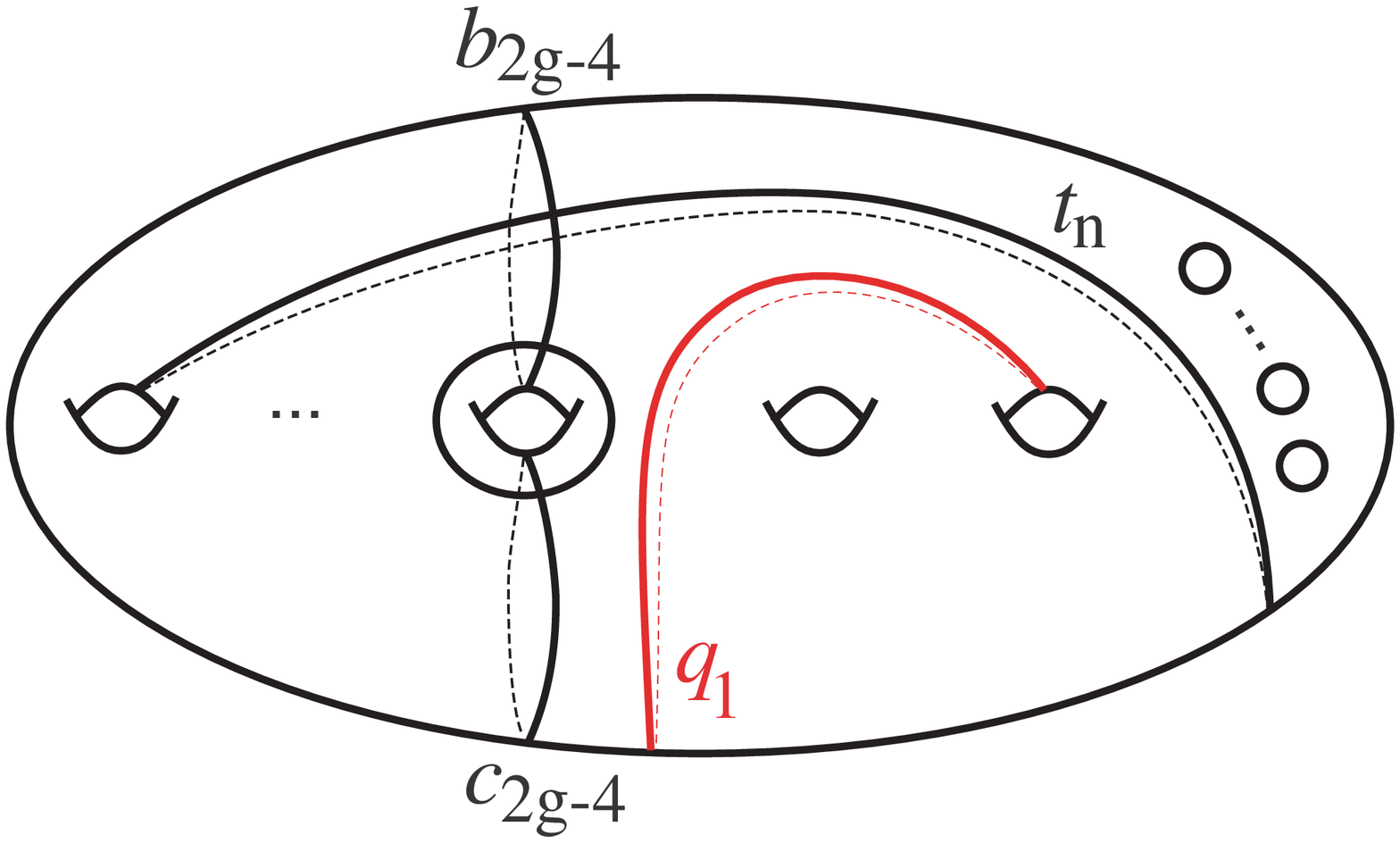} \hspace{0.2cm} \epsfxsize=2.7in \epsfbox{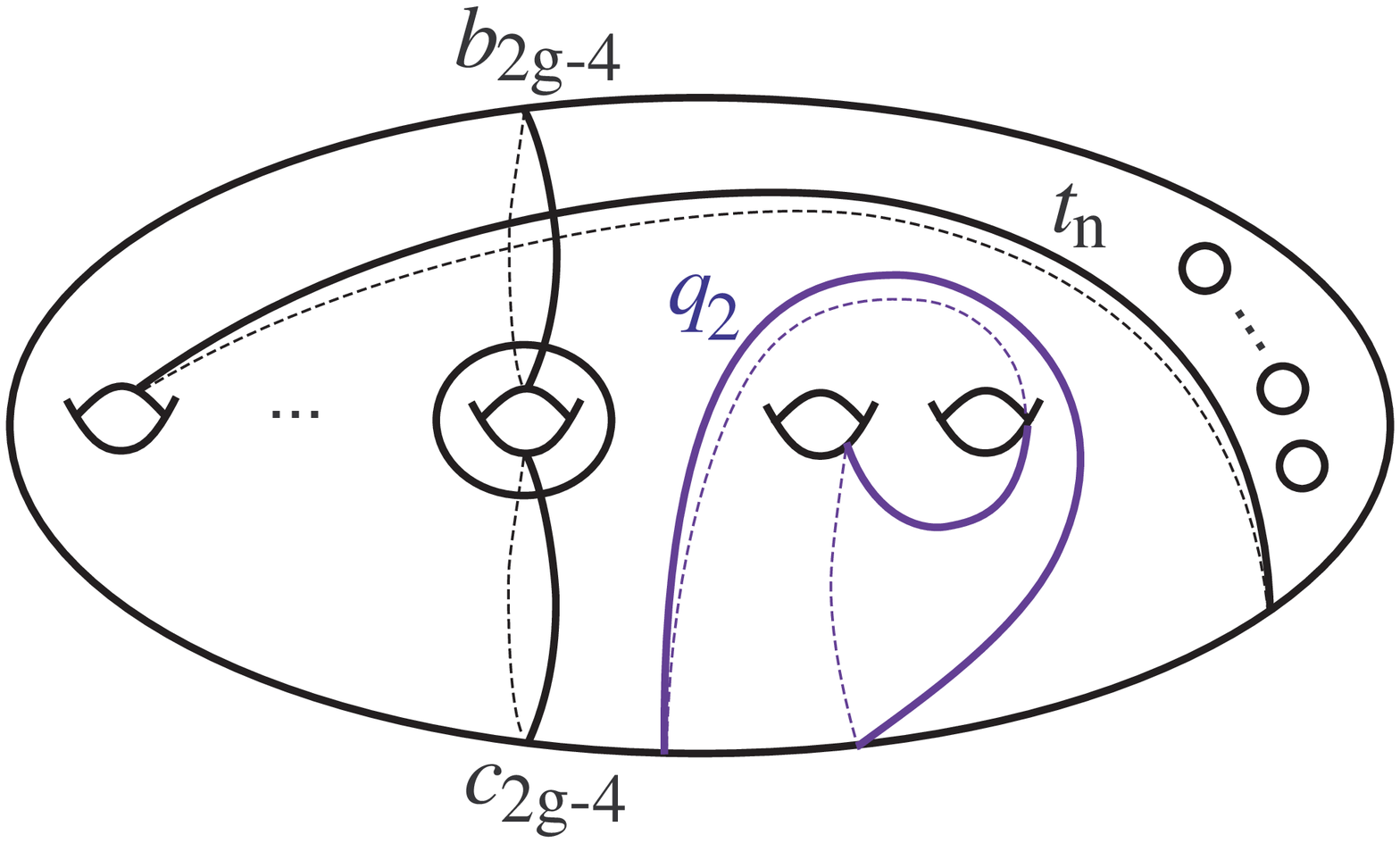}

\hspace{-1cm} (i) \hspace{6.5cm} (ii)

\hspace{-0.7cm}  \epsfxsize=2.7in \epsfbox{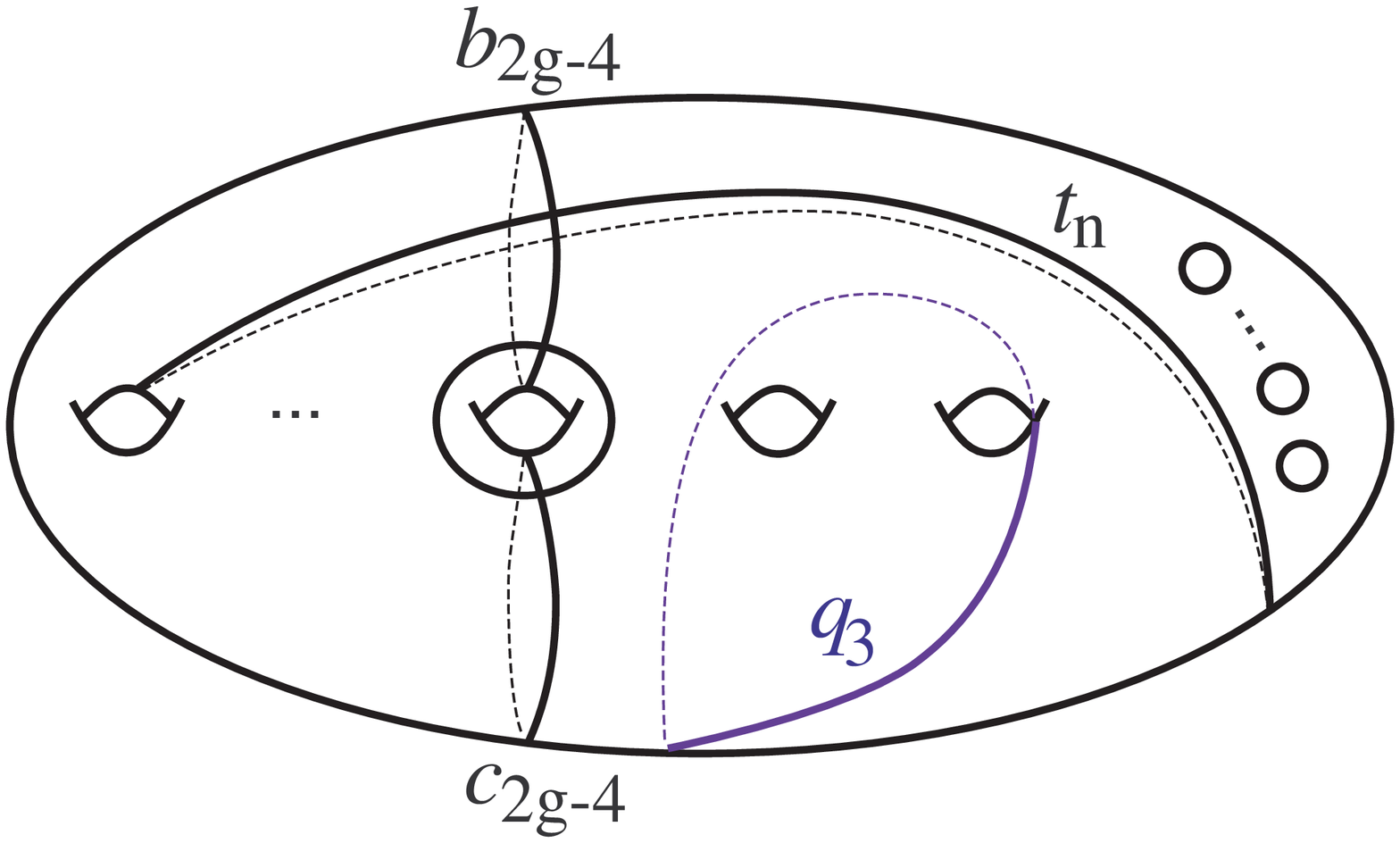} \hspace{0.2cm}  \epsfxsize=2.7in \epsfbox{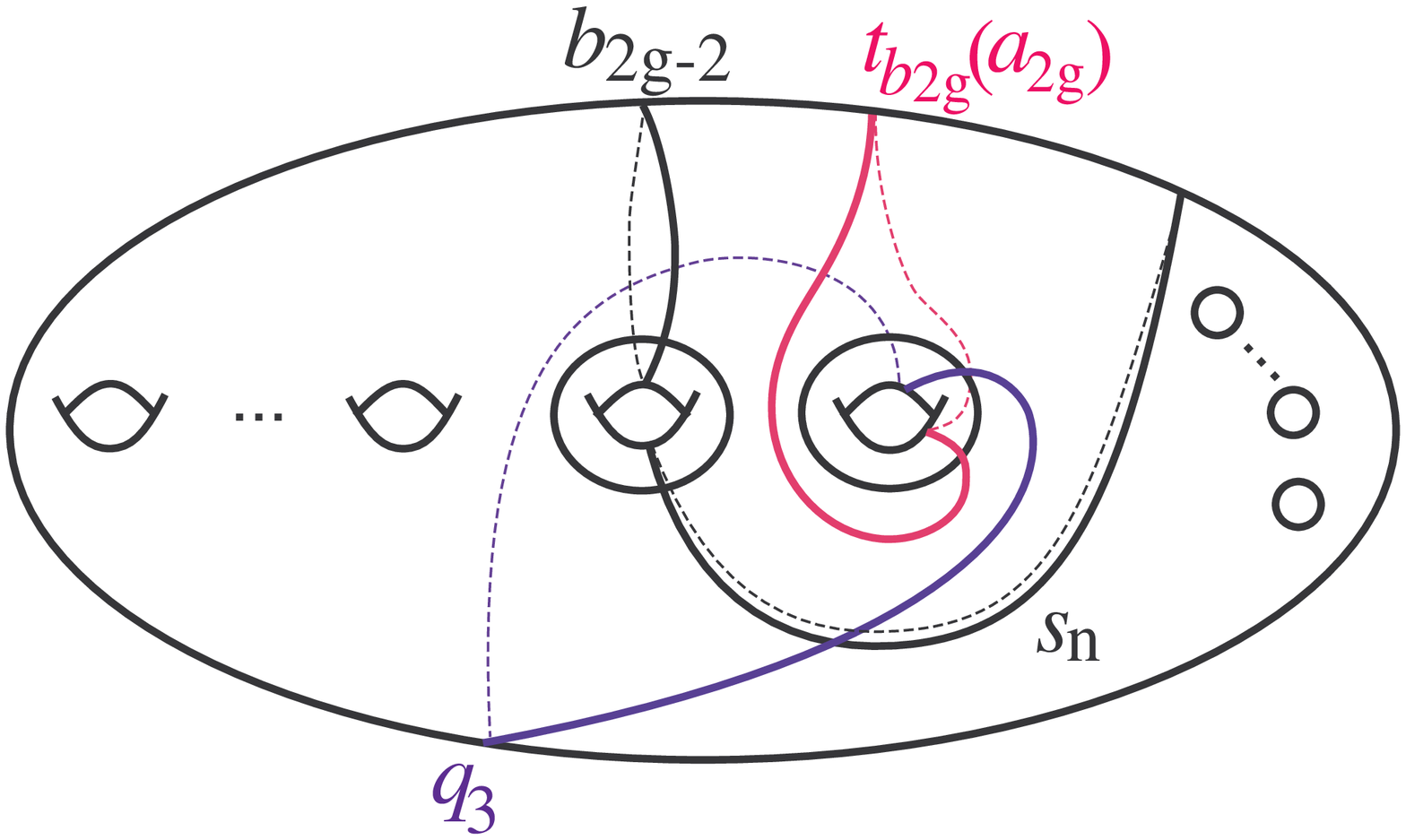} 

\hspace{-1cm} (iii) \hspace{6.4cm} (iv)
  
\hspace{-0.4cm} \epsfxsize=2.7in \epsfbox{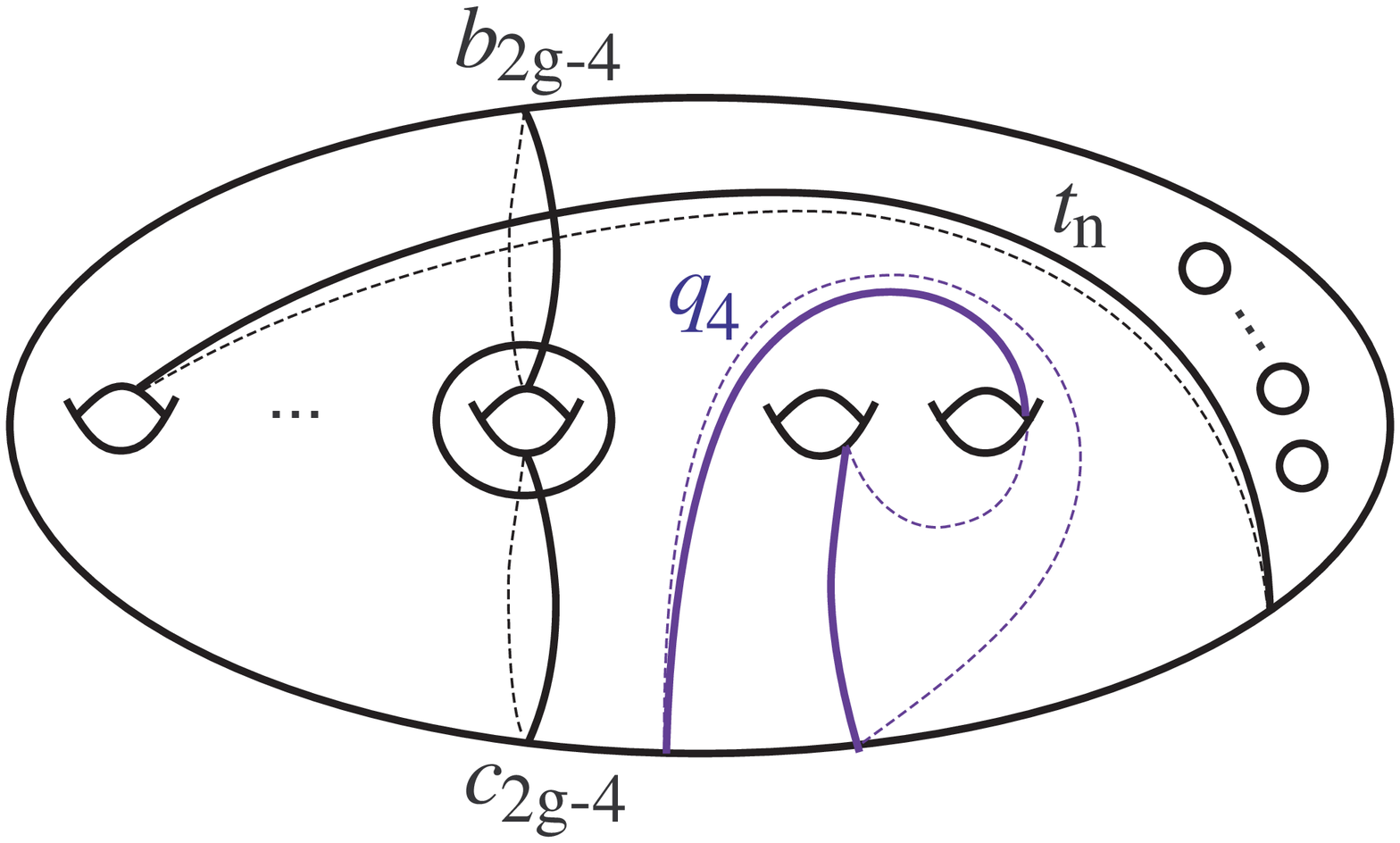} \hspace{0.3cm} \epsfxsize=2.7in \epsfbox{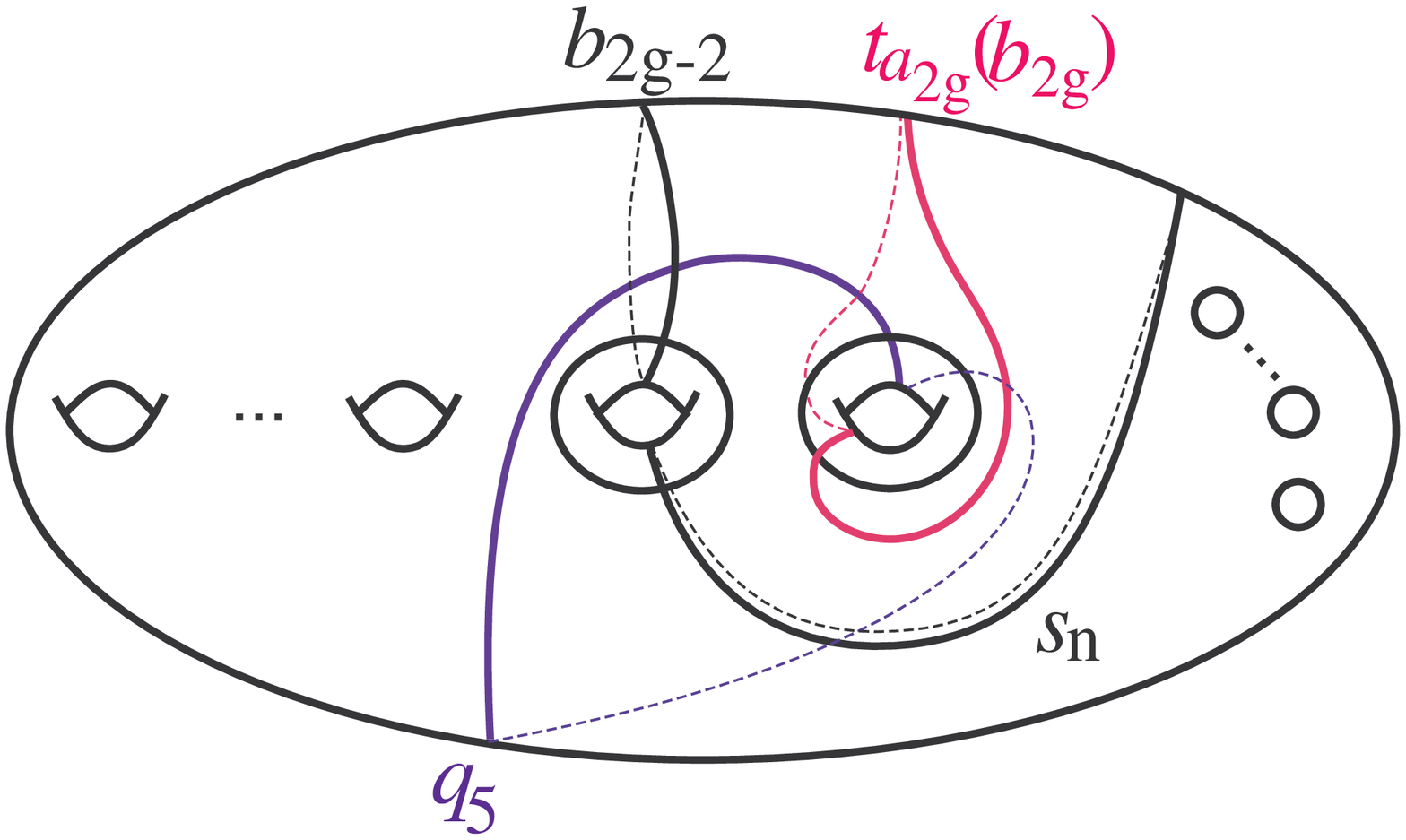} 

\hspace{-1cm} (v) \hspace{6.2cm} (vi)

 \hspace{-0.4cm} \epsfxsize=2.69in \epsfbox{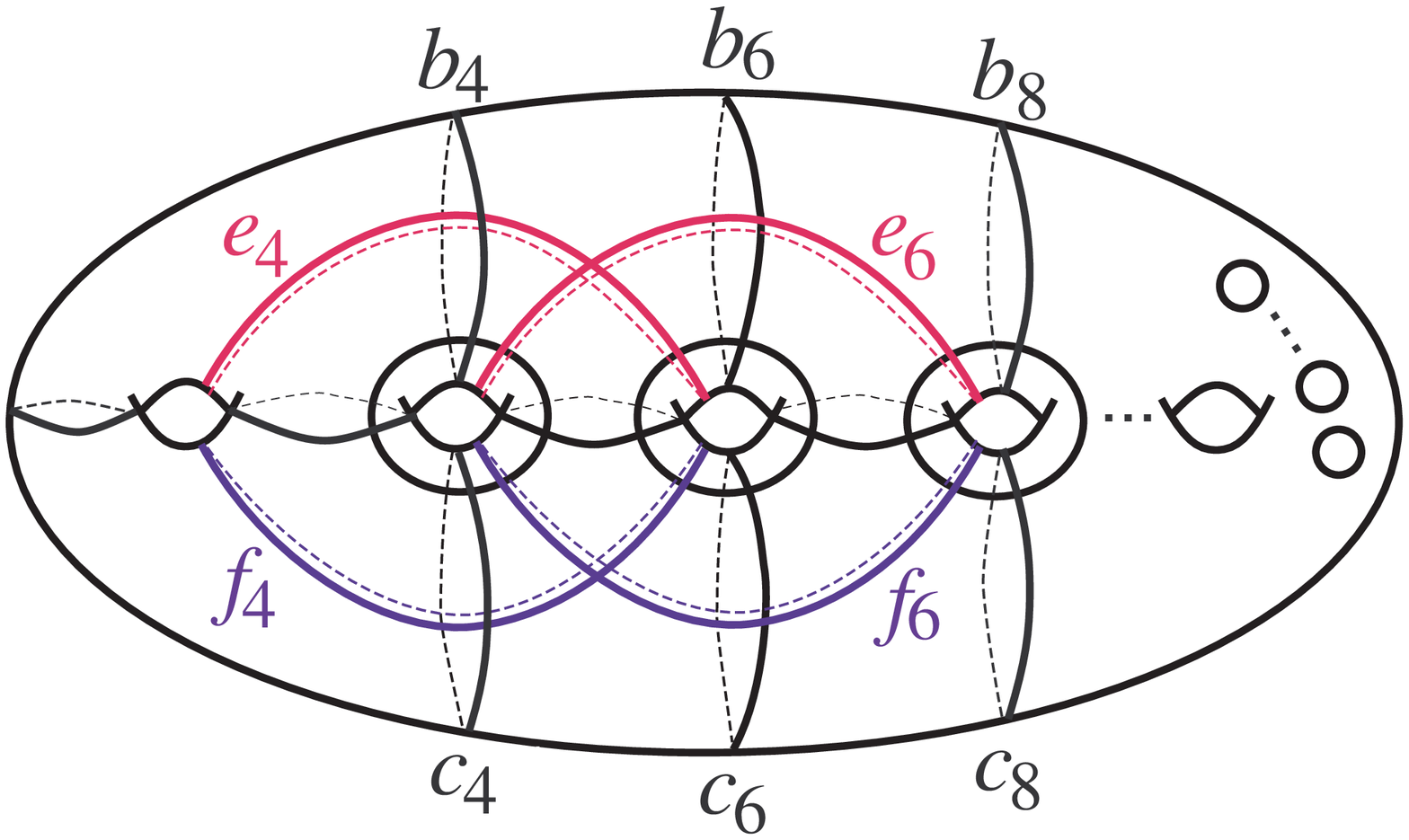} \hspace{0.3cm} \epsfxsize=2.69in \epsfbox{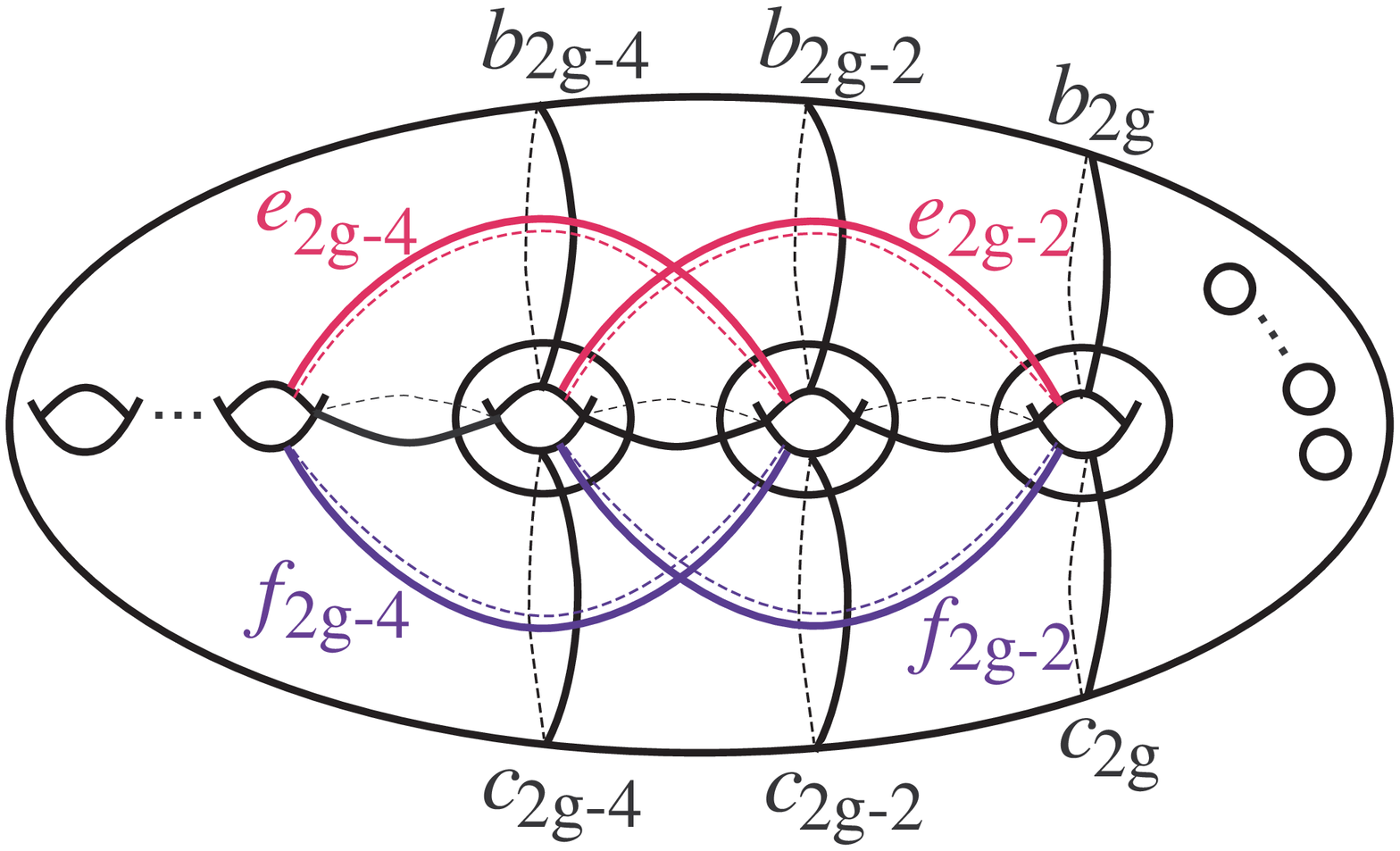} 

\hspace{-0.5cm} (vii) \hspace{6.3cm} (viii)

\caption{Curve configurations for twists} \label{fig7}
\end{center}
\end{figure} 

\begin{lemma} \label{prop1e} Suppose $g \geq 2$, $n \geq 0$. $\forall \ f \in G$, 
$\exists$ a set $L_f \subset \mathcal{N}(R)$
such that $\lambda([x])= h([x])$ $\ \forall \ x \in L_f \cup f(L_f)$. If $(g, n) \neq (2, 0)$, then $L_f$ can be chosen to have trivial  pointwise stabilizer. If $(g, n) = (2, 0)$, then $L_f$ can be chosen so that the pointwise stabilizer of $L_f$ is the center of $Mod_R^*$.\end{lemma}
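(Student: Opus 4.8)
The plan is to fix once and for all the homeomorphism $h$ produced by Lemma \ref{curves-III}, so that $\lambda([x]) = h([x])$ holds on the finite set $X := \mathcal{C}_1 \cup \mathcal{C}_2 \cup \mathcal{C}_3$, and then, for each generator $f \in G$, to build $L_f$ \emph{inside} the region controlled by $X$. The first reduction concerns the stabilizer condition: since a superset of a set with trivial pointwise stabilizer again has trivial pointwise stabilizer, it is enough to arrange $\mathcal{C}_1 \cup \{p_1\} \subseteq L_f$ when $(g,n) \neq (2,0)$ and invoke Lemma \ref{abcd}; likewise, because the hyperelliptic involution fixes every isotopy class of simple closed curve on the closed genus-two surface, arranging $\mathcal{C}_1 \cup \{t\} \subseteq L_f$ pins the pointwise stabilizer of $L_f$ between the center and the stabilizer of $\mathcal{C}_1 \cup \{t\}$, which equals the center by Lemma \ref{abcde}. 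Thus the stabilizer requirement becomes automatic once $L_f$ contains the appropriate base configuration, and all the real work is in establishing the equality $\lambda = h$ on $L_f \cup f(L_f)$.

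Next I would take, as a first approximation, $L_f = \mathcal{C}_1 \cup \{p_1\}$ (respectively $\mathcal{C}_1 \cup \{t\}$), which lies in $X$, so that $\lambda = h$ on $L_f$ is immediate from Lemma \ref{curves-III}. Each generator $f \in G$ is a Dehn twist about one of the curves in Figure \ref{fig6}, or a half-twist $\sigma_i$ exchanging two boundary components; in either case $f$ is supported in a small subsurface and therefore fixes every curve of $L_f$ that is disjoint from its support. For such a curve $x$ we have $f(x) = x \in X$, hence $\lambda(f(x)) = \lambda(x) = h(x) = h(f(x))$ with nothing further to check. The only curves of $L_f$ that $f$ genuinely moves are the finitely many meeting the support of $f$, and these are precisely the chain- and handle-curves drawn, generator by generator, in Figure \ref{fig7}.

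For each moved curve I would establish $\lambda(f(x)) = h(f(x))$ by the uniqueness technique already used in Lemmas \ref{curves-II} and \ref{curves-III}: the image $f(x)$ can be singled out as the unique isotopy class of nonseparating simple closed curve having a prescribed pattern of geometric-intersection-one and disjointness relations with curves of $X$ on which $\lambda = h$ is known (and, where needed, a prescribed pair-of-pants or peripheral-pair relation). Since $\lambda$ preserves disjointness (it is edge-preserving), preserves geometric intersection one (Lemmas \ref{2} and \ref{int-one}), and preserves embedded pairs of pants and peripheral pairs (Lemmas \ref{embedded} and \ref{peripheral}), while $h$, being a homeomorphism, preserves all of these relations exactly, the classes $\lambda(f(x))$ and $h(f(x))$ satisfy identical characterizing relations and must coincide. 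The characterizing data are read off from the relevant panel of Figure \ref{fig7}; whenever an image $f(x)$ is already one of the auxiliary curves in $\mathcal{C}_2 \cup \mathcal{C}_3$ (which is exactly how those sets were chosen) the equality is immediate and no characterization is needed. Should moving some $x$ carry $p_1$ (respectively $t$) out of the controlled region, I would replace it inside $L_f$ by another curve of $X \cap f^{-1}(X)$ that still completes $\mathcal{C}_1$ to a filling, trivial-stabilizer (respectively center-stabilizer) configuration.

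The hard part will be the generator-by-generator bookkeeping in the third step: for each of the twists $t_{a_1}, \dots, t_{a_{2g}}, t_{b_{2g}}, t_{c_4}, t_{c_{2g}}, t_{d_1}, \dots, t_{d_{n-1}}$ and each half-twist $\sigma_i$ one must exhibit a configuration $L_f$ whose $f$-image stays inside the region where $\lambda$ and $h$ are already known to agree, while simultaneously checking that $L_f$ retains a trivial-stabilizer (respectively center-stabilizer) subset. The half-twists $\sigma_i$ are the most delicate, since they permute boundary components and so force genuine use of the peripheral-pair control of Lemma \ref{peripheral}, and the closed genus-two case must be run separately through Lemma \ref{abcde}. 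None of these steps is conceptually new, but the correct choice of auxiliary curves — already encoded in Figure \ref{fig7} and in the sets $\mathcal{C}_2, \mathcal{C}_3$ — is precisely what guarantees that every image falls under one of the relations preserved by $\lambda$.
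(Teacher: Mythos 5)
Your proposal follows essentially the same route as the paper: fix $h$ from Lemma \ref{curves-III}, take $L_f = \mathcal{C}_1 \cup \{p_1\}$ (resp.\ $\mathcal{C}_1 \cup \{t\}$ when $(g,n)=(2,0)$) so that Lemmas \ref{abcd} and \ref{abcde} settle the stabilizer condition, and verify $\lambda = h$ on each moved curve $f(x)$ by characterizing it as the unique class with a prescribed pattern of disjointness, intersection-one, pair-of-pants and peripheral-pair relations to already-controlled curves, all of which are preserved by $\lambda$. This is exactly the paper's argument; the remaining work you defer is the generator-by-generator construction of the auxiliary chains of curves (the $q_i$, $e_i$, $f_i$, etc.\ of Figures \ref{fig7}--\ref{fig88}), which the paper carries out explicitly.
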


\begin{proof} {\bf Case (i):} Suppose that $g \geq 3, n \geq 1$. We have $h([x]) = \lambda([x])$ $\forall \ x \in  \mathcal{C}_1 \cup \mathcal{C}_2 \cup \mathcal{C}_3$ by Lemma \ref{curves-III}. Let $f \in G$.  
For $f=t_{b_{2g}}$, let $L_f = \mathcal{C}_1 \cup \{p_1\}$. The set $\mathcal{C}_1 \cup \{p_1\}$ has trivial pointwise stabilizer by Lemma \ref{abcd}. We know $\lambda([x])= h([x])$
$\forall \ x \in \mathcal{C}_1 \cup \{p_1\}$. We just need to check the equation for $t_{b_{2g}}(a_{2g})$ since the other 
curves in $\mathcal{C}_1 \cup \{p_1\}$ are fixed by $t_{b_{2g}}$. 
Consider the curves given in Figure \ref{fig7} (i)-(vi). 
The curve $q_1$ is the unique nontrivial simple closed curve up to isotopy that is disjoint from all the curves in
$\{c_{2g-4}, a_{2g-4}, b_{2g-4}, t_n,  c_{2g-2}, a_{2g-2}, a_{2g-1}, c_{2g}\}$, intersects $a_{2g}$ once and nonisotopic to $c_{2g}$. Since we know that $h([x]) = \lambda([x])$ for all
these curves and these properties are preserved by $\lambda$, we have $h([q_1]) = \lambda([q_1])$.  
The curve $q_2$ is the unique nontrivial simple closed curve up to isotopy that is disjoint from all the curves in
$\{c_{2g-4}, a_{2g-4}, b_{2g-4}, t_n,  q_1, c_{2g-2}, a_{2g-1}, p_{n-1}\}$, intersects $a_{2g}$ once and nonisotopic to $a_{2g-1}$. Since we know that $h([x]) = \lambda([x])$ for all
these curves and these properties are preserved by $\lambda$, we have $h([q_2]) = \lambda([q_2])$. 
The curve $q_3$ is the unique nontrivial simple closed curve up to isotopy that is disjoint from all the curves in
$\{c_{2g-4}, a_{2g-4}, b_{2g-4}, t_n,  a_{2g-2}, a_{2g-1}, q_{2}, p_{n-2}\}$ and intersects $c_{2g-2}$ once. Since we know that $h([x]) = \lambda([x])$ for all
these curves and these properties are preserved by $\lambda$, we have $h([q_3]) = \lambda([q_3])$.  
The curve $t_{b_{2g}}(a_{2g})$ is the unique nontrivial simple closed curve up to isotopy that is disjoint from all the curves in
$\{b_{2g-2}, a_{2g-2}, s_n, q_3\}$ and intersects each of $a_{2g}$ and $b_{2g}$ once.
Since $h([x]) = \lambda([x])$ for all these curves and these properties are preserved by $\lambda$, we have
$h([t_{b_{2g}}(a_{2g}) ]) = \lambda([t_{b_{2g}}(a_{2g})])$. Hence we get $\lambda([x])= h([x])$ $\ \forall \ x \in L_f \cup f(L_f)$ when $f=t_{b_{2g}}$. 

The curve $q_4$ is the unique nontrivial simple closed curve up to isotopy that is disjoint from all the curves in
$\{c_{2g-4}, a_{2g-4}, b_{2g-4}, t_n,  q_1, c_{2g-2}, a_{2g-1}, w_{n-1}\}$, intersects $a_{2g}$ once and nonisotopic to $a_{2g-1}$. Since we know that $h([x]) = \lambda([x])$ for all
these curves and these properties are preserved by $\lambda$, we have $h([q_4]) = \lambda([q_4])$. 
The curve $q_5$ is the unique nontrivial simple closed curve up to isotopy that is disjoint from all the curves in
$\{c_{2g-4}, a_{2g-4}, b_{2g-4}, t_n,  a_{2g-2}, a_{2g-1}, q_{4},$ $ w_{n-2}\}$ and intersects $c_{2g-2}$ once. Since we know that $h([x]) = \lambda([x])$ for all
these curves and these properties are preserved by $\lambda$, we have $h([q_5]) = \lambda([q_5])$.  
The curve $t_{a_{2g}}(b_{2g})$ is the unique nontrivial simple closed curve up to isotopy that is disjoint from all the curves in
$\{b_{2g-2}, a_{2g-2}, s_n, q_5\}$ and intersects each of $a_{2g}$ and $b_{2g}$ once.
Since $h([x]) = \lambda([x])$ for all these curves and these properties are preserved by $\lambda$, we have
$h([t_{a_{2g}}(b_{2g}) ]) = \lambda([t_{a_{2g}}(b_{2g})])$. 
Now we can control other twists as follows: The curve $t_{d_{n-1}}(a_{2g})$ is the unique nontrivial simple closed curve up to isotopy that is disjoint from each of 
$a_{2g-2}, v_1, s_{n-1}, t_{b_{2g}}(a_{2g})$ and intersects each of $a_{2g}$ and $d_{n-1}$ once.
Since $h([x]) = \lambda([x])$ for all these curves and these properties are preserved by $\lambda$, we have
$h([t_{d_{n-1}}(a_{2g}) ]) = \lambda([t_{d_{n-1}}(a_{2g})])$. The curve $t_{d_{n-2}}(a_{2g})$ is the unique nontrivial simple closed curve up to isotopy that is disjoint from each of 
$a_{2g-2}, v_2, s_{n-2}, t_{d_{n-1}}(a_{2g})$ and intersects each of $a_{2g}$ and $d_{n-2}$ once.
Since $h([x]) = \lambda([x])$ for all these curves and these properties are preserved by $\lambda$, we have
$h([t_{d_{n-2}}(a_{2g}) ]) = \lambda([t_{d_{n-2}}(a_{2g})])$. Similarly, we have $h([t_{d_{i}}(a_{2g}) ]) = \lambda([t_{d_{i}}(a_{2g})])$ for all $i= 1, 2, \cdots, n-3$. 
The curve $t_{a_{2g}}(d_{n-1})$ is the unique nontrivial simple closed curve up to isotopy that is disjoint from 
$a_{2g-2}, v_1, s_{n-1}, t_{a_{2g}}(b_{2g})$ and intersects each of $a_{2g}$ and $d_{n-1}$ once.
Since $h([x]) = \lambda([x])$ for all these curves and these properties are preserved by $\lambda$, we have
$h([t_{a_{2g}}(d_{n-1}) ]) = \lambda([t_{a_{2g}}(d_{n-1})])$. The curve $t_{a_{2g}}(d_{n-2})$ is the unique nontrivial simple closed curve up to isotopy that is disjoint from each of 
$a_{2g-2}, v_2, s_{n-2}, t_{a_{2g}}(d_{n-1})$ and intersects each of $a_{2g}$ and $d_{n-2}$ once.
Since $h([x]) = \lambda([x])$ for all these curves and these properties are preserved by $\lambda$, we have
$h([t_{a_{2g}}(d_{n-2}) ]) = \lambda([t_{a_{2g}}(a_{n-2})])$. Similarly, we have $h([t_{a_{2g}}(d_{i}) ]) = \lambda([t_{a_{2g}}(d_{i})])$ for all $i= 1, 2, \cdots, n-3$. 
The curve $t_{a_{2g-1}}(a_{2g})$ is the unique nontrivial simple closed curve up to isotopy that is disjoint from each of 
$v_n, s_n, t_{b_{2g}}(a_{2g})$ and intersects each of $a_{2g-1}$ and $a_{2g}$ once.
Since $h([x]) = \lambda([x])$ for all these curves and these properties are preserved by $\lambda$, we have
$h([t_{a_{2g-1}}(a_{2g}) ]) = \lambda([t_{a_{2g-1}}(a_{2g})])$. The curve $t_{a_{2g}}(a_{2g-1})$ is the unique nontrivial simple closed curve up to isotopy that is disjoint from each of 
$v_n, s_n, t_{a_{2g}}(b_{2g})$ and intersects each of $a_{2g-1}$ and $a_{2g}$ once.
Since $h([x]) = \lambda([x])$ for all these curves and these properties are preserved by $\lambda$, we have
$h([t_{a_{2g}}(a_{2g-1}) ]) = \lambda([t_{a_{2g}}(a_{2g-1})])$.   
Hence, we have $h ([t_x(y)]) = \lambda ([t_x(y)])$ for all $x,y \in \{a_{2g-1}, a_{2g}, b_{2g}, d_1, \dots, d_{n-1}\}$. 

\begin{figure} \begin{center}   
\hspace{-0.4cm} \epsfxsize=2.69in \epsfbox{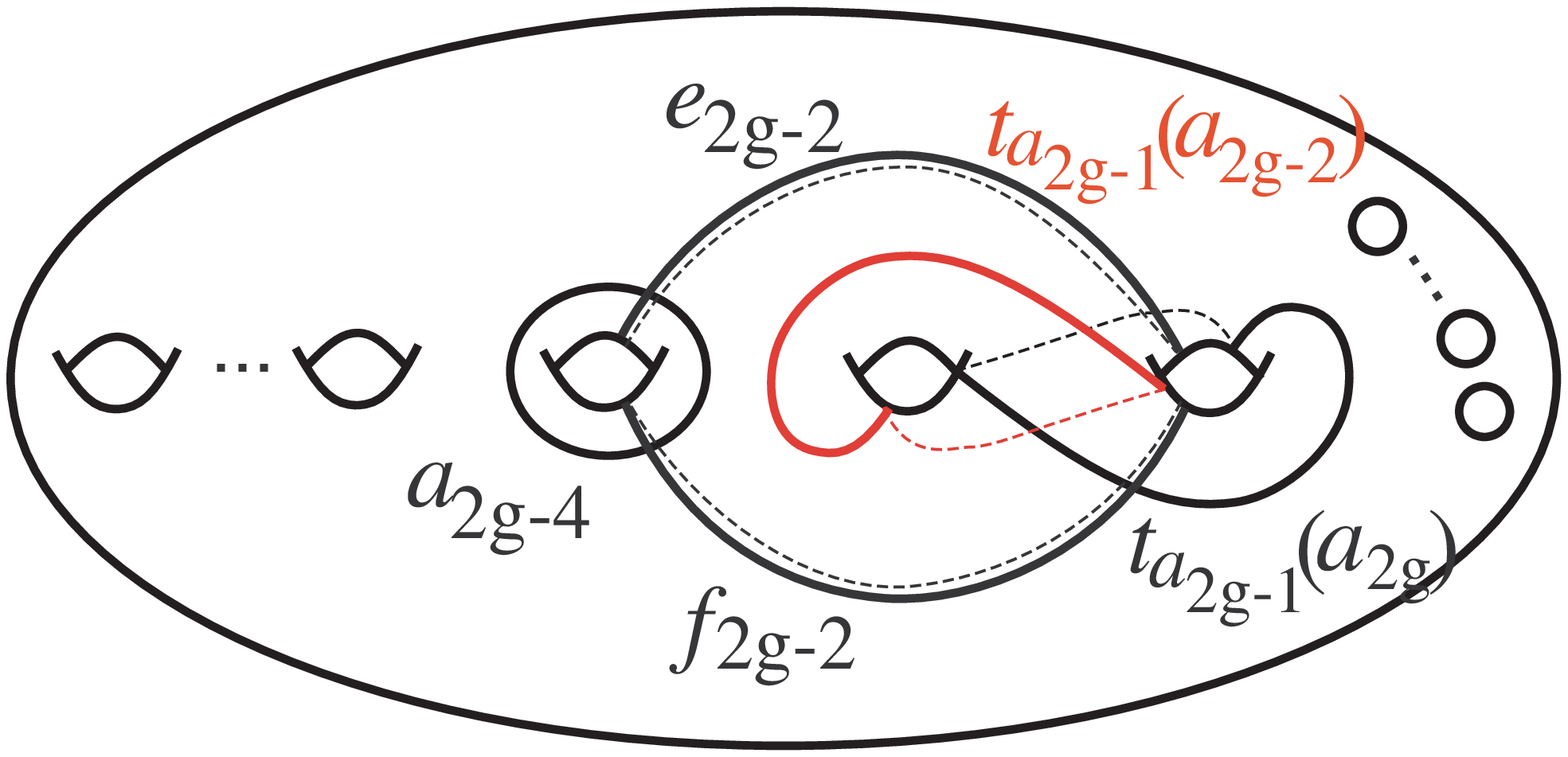} \hspace{0.3cm} \epsfxsize=2.69in \epsfbox{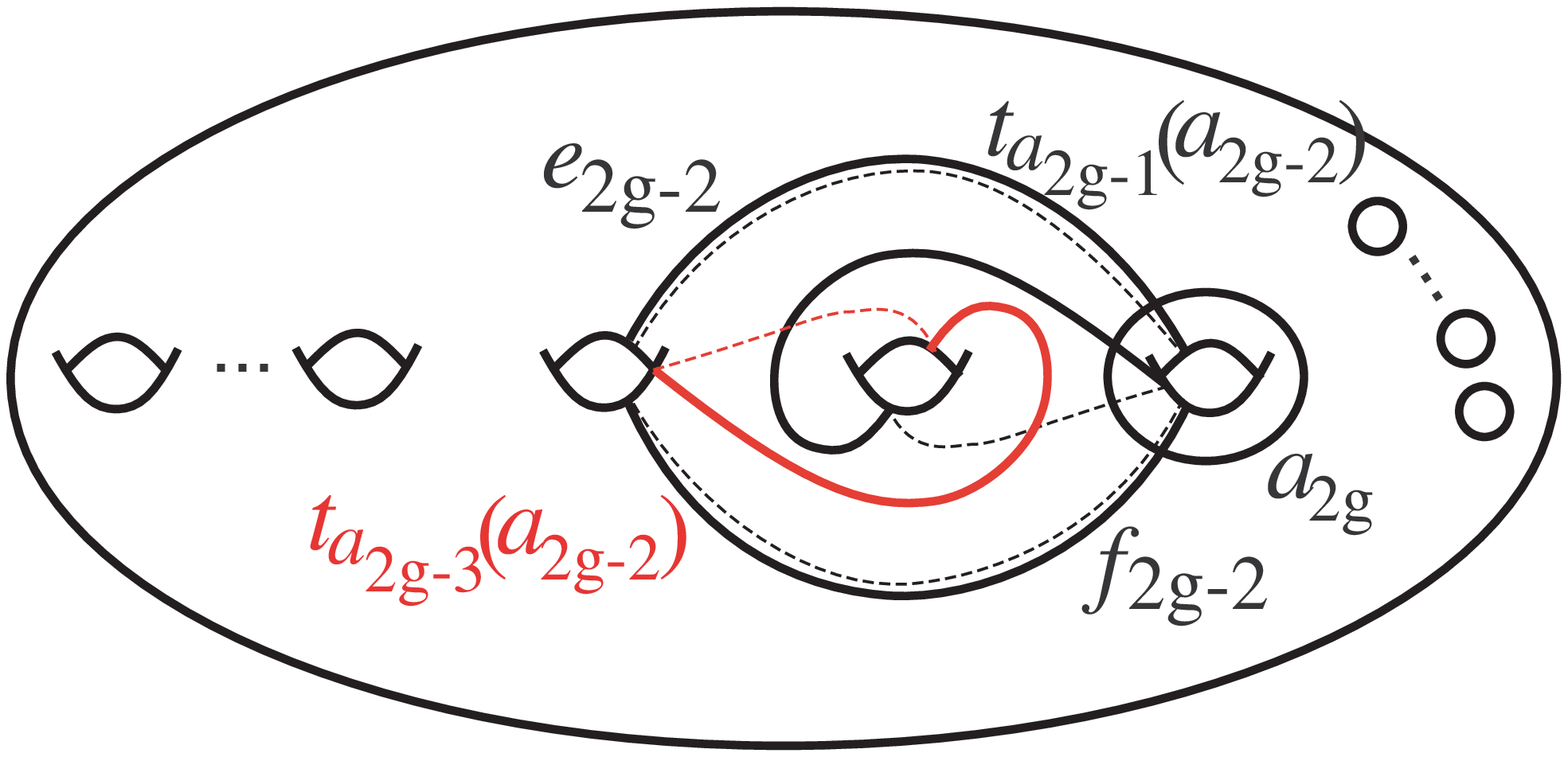} 

\hspace{-0.5cm} (i) \hspace{6.3cm} (ii)

\hspace{-0.4cm} \epsfxsize=2.69in \epsfbox{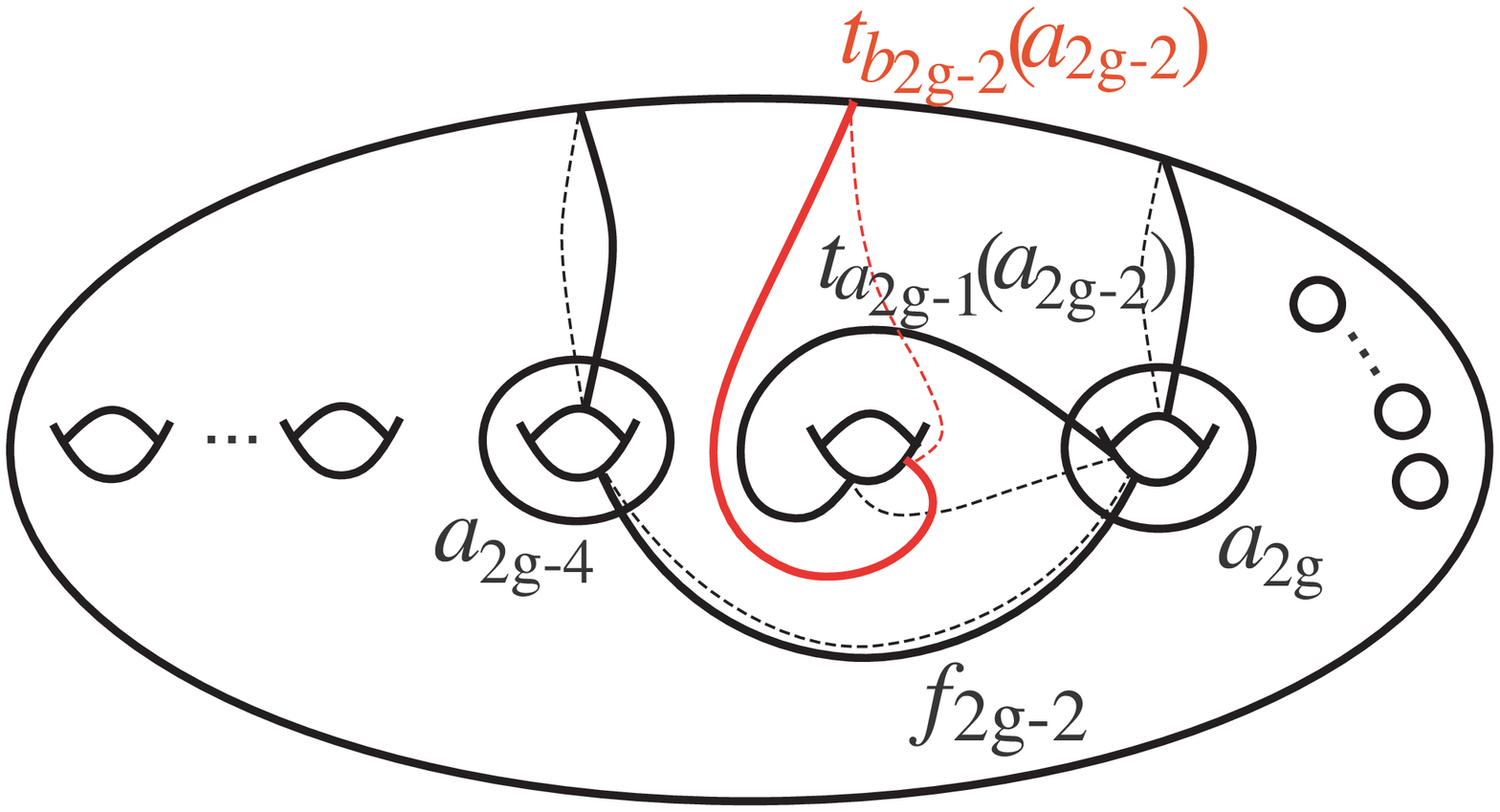} \hspace{0.3cm}   \epsfxsize=2.69in \epsfbox{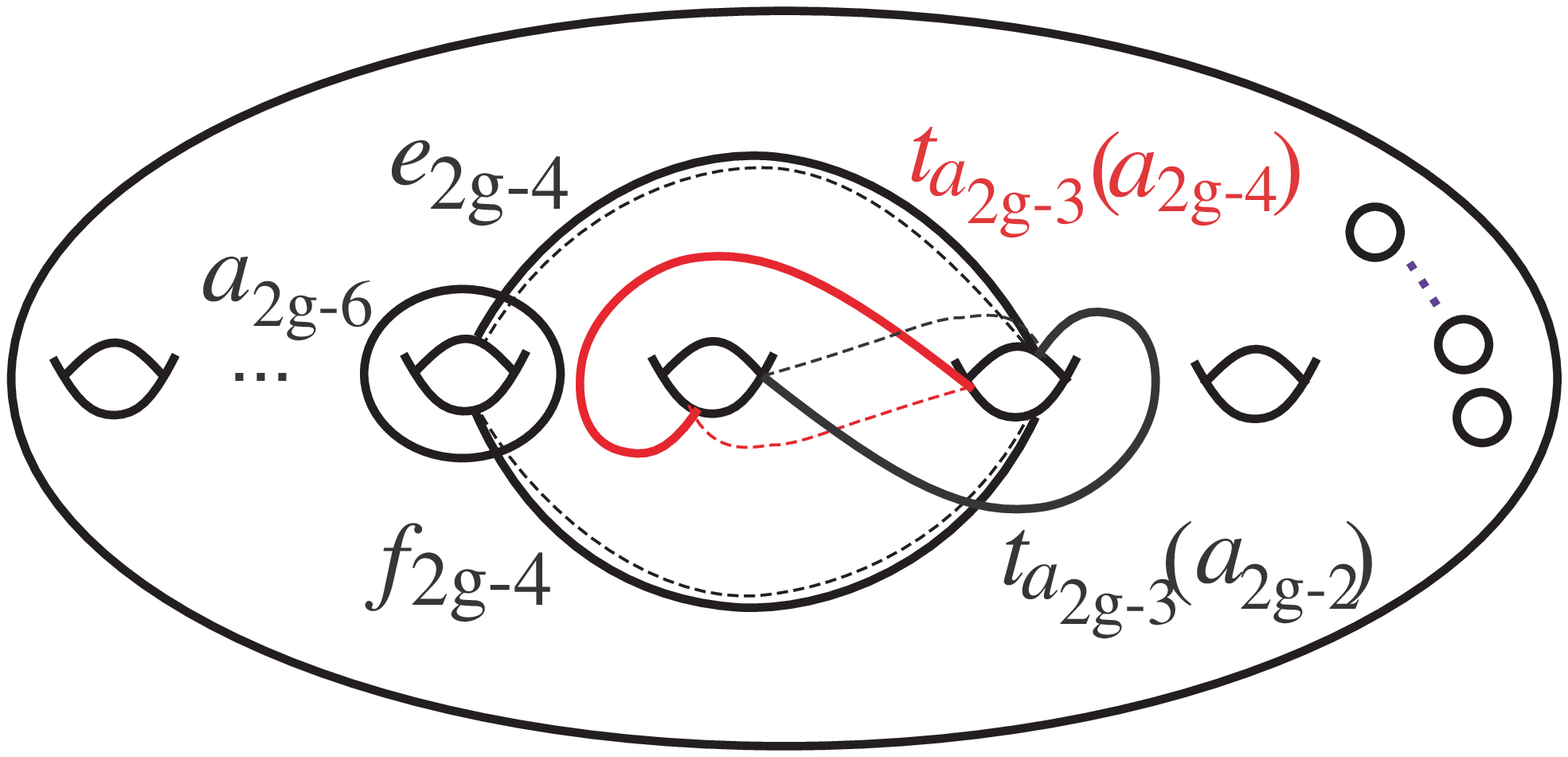}
  
\hspace{-0.5cm} (iii) \hspace{6.1cm} (iv)
 
\hspace{-0.4cm} \epsfxsize=2.7in \epsfbox{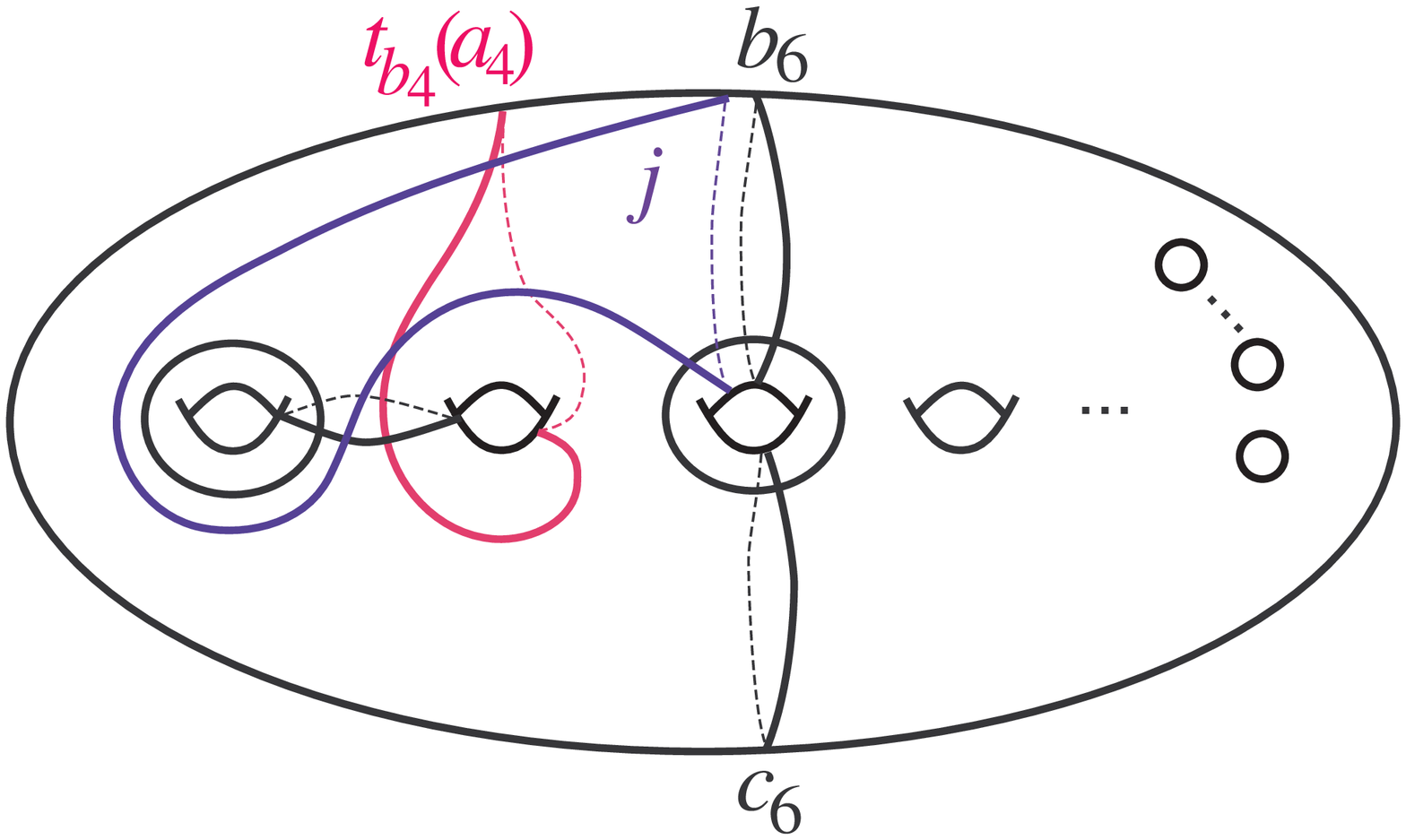} \hspace{0.2cm} \epsfxsize=2.7in \epsfbox{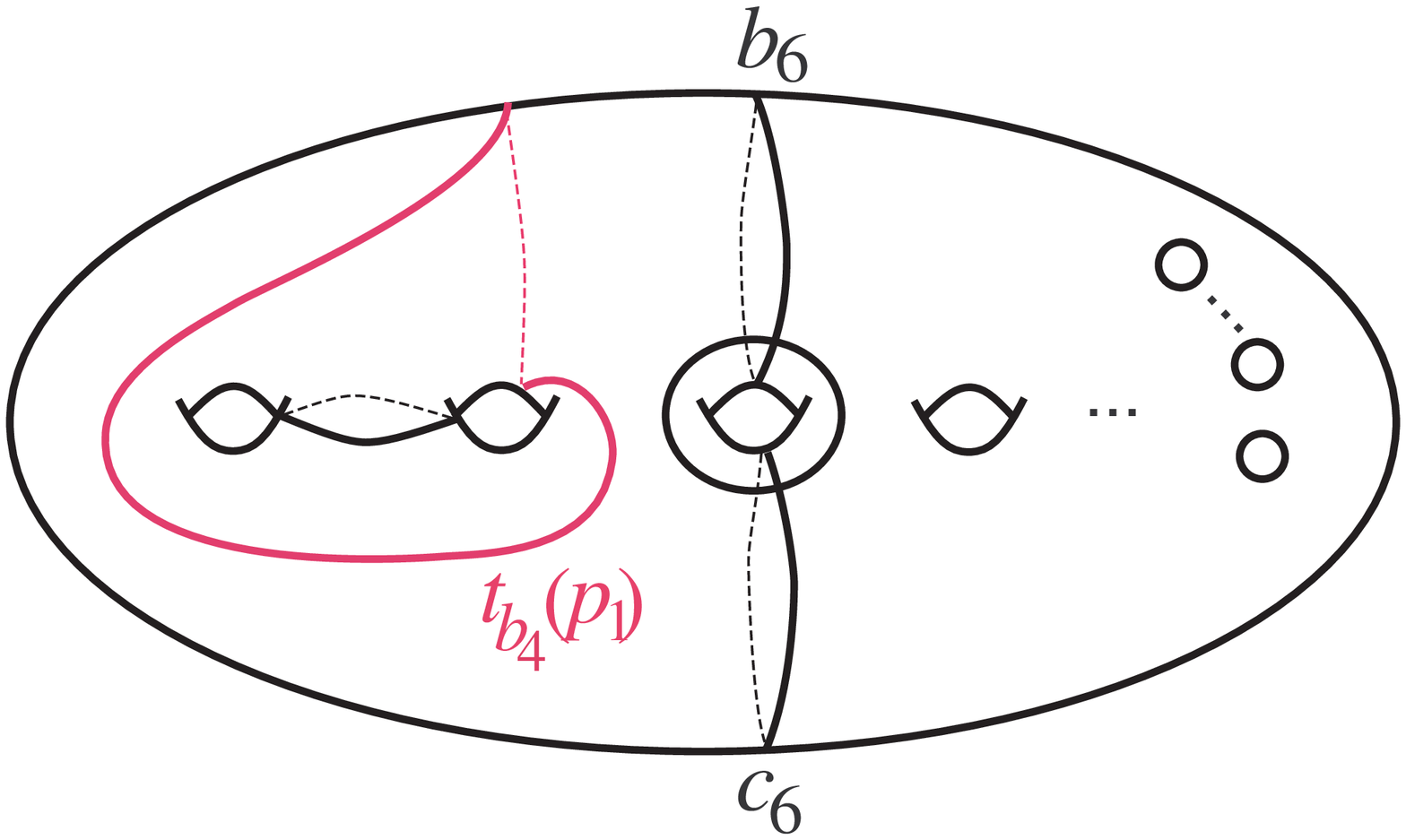} 

\hspace{-0.5cm} (v) \hspace{6.3cm} (vi)

\hspace{-0.9cm} \epsfxsize=2.83in \epsfbox{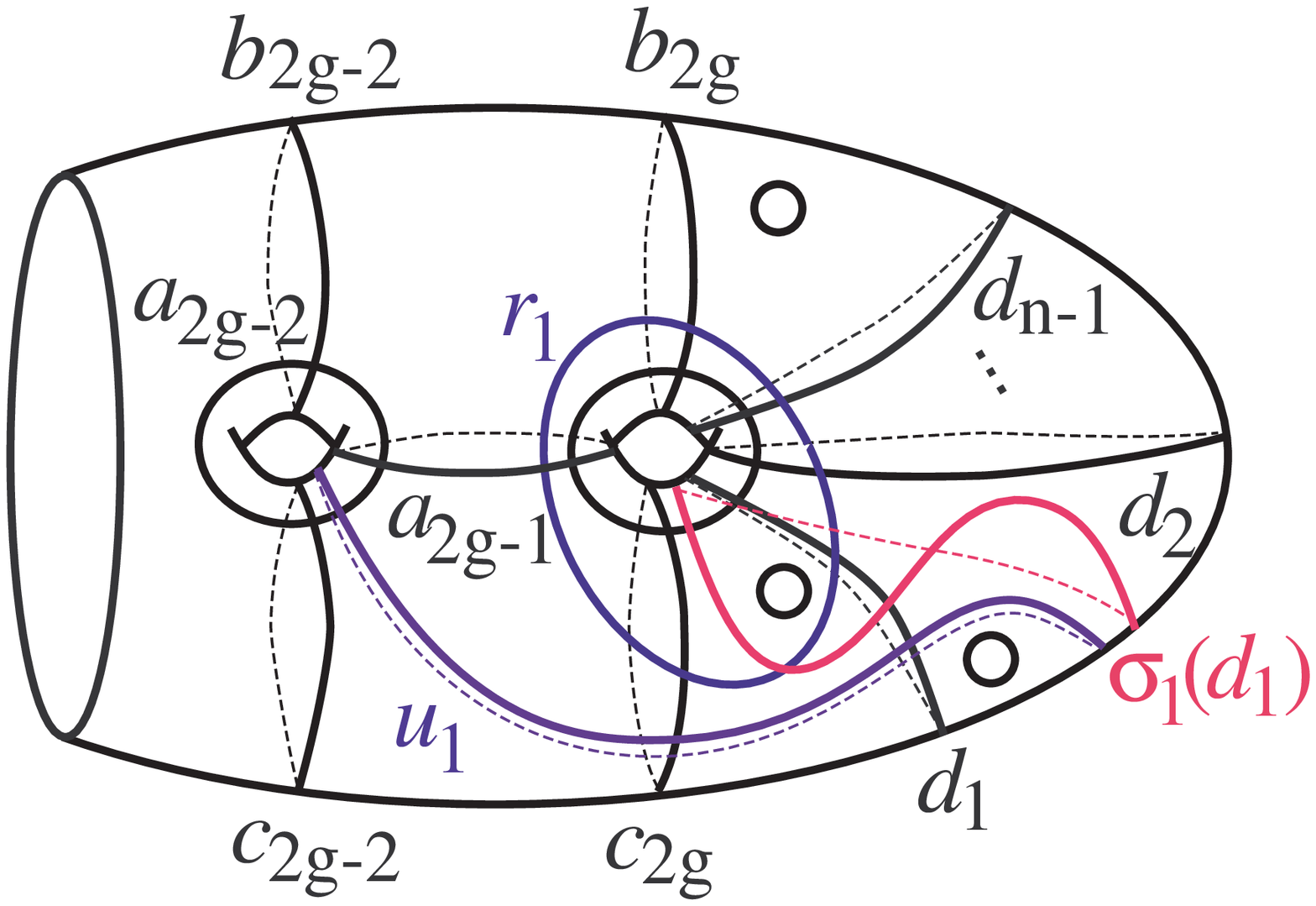} \hspace{-0.3cm}  \epsfxsize=2.83in \epsfbox{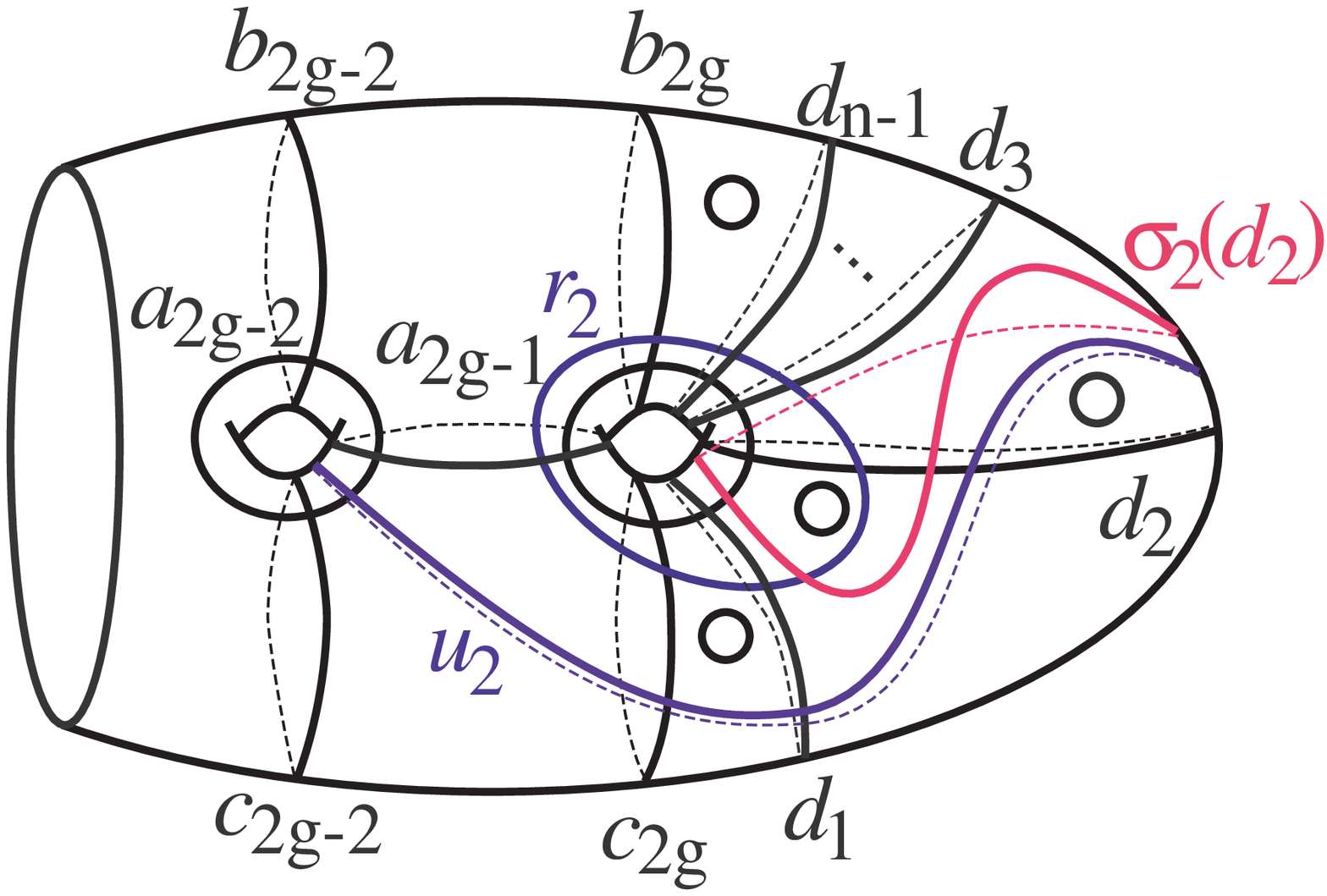}

\hspace{-0.5cm} (vii) \hspace{6cm} (viii)
\caption{Curve configurations for twists and half twists} \label{fig7b}
\end{center}
\end{figure} 

Before we control other twists we will consider the curves $e_i, f_i$ for $i = 4, 6, \cdots, 2g-2$ shown in Figure \ref{fig7} (vii), (viii). The curve 
$e_i$ is the unique nontrivial simple closed curve up to isotopy that is disjoint from each curve in $\mathcal{C}_1 \setminus \{a_{i-2}, b_i, a_{i+2}\}$ that 
intersects each of $a_{i-2}$ and $a_{i+2}$ once.
Since $h([x]) = \lambda([x])$ for all these curves and these properties are preserved by $\lambda$, we have
$h([e_i]) = \lambda([e_i])$ for $i = 4, 6, \cdots, 2g-2$.  The curve 
$f_i$ is the unique nontrivial simple closed curve up to isotopy that is disjoint from each curve in $\mathcal{C}_1 \setminus \{a_{i-2}, c_i, a_{i+2}\}$ that 
intersects each of $a_{i-2}$ and $a_{i+2}$ once.
Since $h([x]) = \lambda([x])$ for all these curves and these properties are preserved by $\lambda$, we have
$h([f_i]) = \lambda([f_i])$ for $i = 4, 6, \cdots, 2g-2$. Consider the curves given in Figure \ref{fig7b} (i), (ii).
The curve $t_{a_{2g-1}}(a_{2g-2})$ is the unique nontrivial simple closed curve up to isotopy that is disjoint from 
$e_{2g-2}, f_{2g-2}, a_{2g-4}, t_{a_{2g-1}}(a_{2g})$ and intersects each of $a_{2g-1}$ and $a_{2g-2}$ once.
Since $h([x]) = \lambda([x])$ for all these curves and these properties are preserved by $\lambda$, we have
$h([t_{a_{2g-1}}(a_{2g-2}) ]) = \lambda([t_{a_{2g-1}}(a_{2g-2})])$. The curve $t_{a_{2g-3}}(a_{2g-2})$ is the unique nontrivial simple closed curve up to isotopy that is disjoint from 
$e_{2g-2}, f_{2g-2}, a_{2g}, t_{a_{2g-1}}(a_{2g-2})$ and intersects each of $a_{2g-3}$ and $a_{2g-2}$ once.
Since $h([x]) = \lambda([x])$ for all these curves and these properties are preserved by $\lambda$, we have
$h([t_{a_{2g-3}}(a_{2g-2}) ]) = \lambda([t_{a_{2g-3}}(a_{2g-2})])$.
Similarly, we get $h ([t_x(y)]) = \lambda ([t_x(y)])$ for all $x,y \in \{ a_1, a_2, \dots, a_{2g}, b_4, b_6, \dots, b_{2g}, c_4,$ $ c_6, \dots, c_{2g}, 
d_1, \dots, d_{n-1}\}$, (see Figure \ref{fig7b} (iii), (iv) for some similar configurations). 

Consider the curves given in Figure \ref{fig7b} (v), (vi). The curve $j$ is the unique nontrivial simple closed curve up to isotopy that is disjoint from 
$c_4, a_2, a_4,$ $ a_5, b_6, w_1$ and intersects each of $a_1, a_3, a_6$ once and bounds a pair of pants together with $a_2, b_6$.
Since $h([x]) = \lambda([x])$ for all these curves and these properties are preserved by $\lambda$, we have
$h([j]) = \lambda([j])$. The curve $t_{b_{4}}(p_{1})$ is the unique nontrivial simple closed curve up to isotopy that is disjoint from 
$a_6, b_6, c_6, j, a_2, a_3, t_{b_{4}}(a_{4})$ and intersects each of $a_1, a_5, b_4, p_1$ once.
Since $h([x]) = \lambda([x])$ for all these curves and these properties are preserved by $\lambda$, we have
$h([t_{b_{4}}(a_{4})]) = \lambda([t_{b_{4}}(a_{4})])$. Similarly, we get $h ([t_x(y)]) = \lambda ([t_x(y)])$ 
for all $x,y \in \{ a_1, a_5, c_4, p_1 \}$. Hence, we get $h ([t_x(y)]) = \lambda ([t_x(y)])$ for all $x,y \in \{ a_1, a_2, \dots, a_{2g}, b_4, b_6,$ $ \dots, b_{2g}, c_4,$ $ c_6, \dots, c_{2g}, 
d_1, \dots, d_{n-1}, p_1\}$. 
 
For $f=t_x$, where $x \in \{a_1, a_2, \dots, a_{2g}, c_4, c_{2g}, d_1, \dots, d_{n-1} \}$, we let $L_f = \mathcal{C}_1 \cup \{p_1\}$.  
By the above, we know that $\lambda ([x]) = h([x])$ for all $x \in L_f \cup f(L_f)$.

For $f = \sigma_i$, where $i \in \{1, \dots, n-1\}$, we let $L_f = \mathcal{C}_1 \cup \{p_1\}$.
We know that $\lambda ([x]) = h ([x])$ for all $x \in L_f$, and we have $\sigma_i (x) = x$ for all $x \in \mathcal{C}_1 \setminus \{ d_i \}$. 
So, we just need to check $h ([ \sigma_i (d_i) ]) = \lambda ([ \sigma_i (d_i) ])$ for each $i$. For $i=1$, we use the curve $u_1$ shown in Figure \ref{fig7b} (vii).
The curve $u_1 \in \mathcal{C}_2$, so we know $h([u_1]) = \lambda ([u_1])$.
The curve $\sigma_1 (d_1)$, shown in Figure \ref{fig7b} (vii), is the unique nontrivial simple closed curve up to isotopy disjoint from $a_{2g-1}, b_{2g}, c_{2g}, d_2, d_3, \cdots, d_{n-1}, u_1$ which intersects $a_{2g}$ once and nonisotopic to $d_2$.
Since we know that $h([x]) = \lambda([x])$ for all these curves and $\lambda$ preserves these properties, we see that 
$h([\sigma_1 (d_1)]) = \lambda ([\sigma_1 (d_1)])$. Similarly we get $h ([ \sigma_i (d_i) ]) = \lambda ([ \sigma_i (d_i) ])$ for each $i$, see Figure \ref{fig7b} (viii). This finishes the proof for $g \geq 3, n \geq 1$. The proof for $g \geq 3, n=0$ is similar.

\begin{figure}
\begin{center} 
\hspace{-0.4cm} \epsfxsize=2.7in \epsfbox{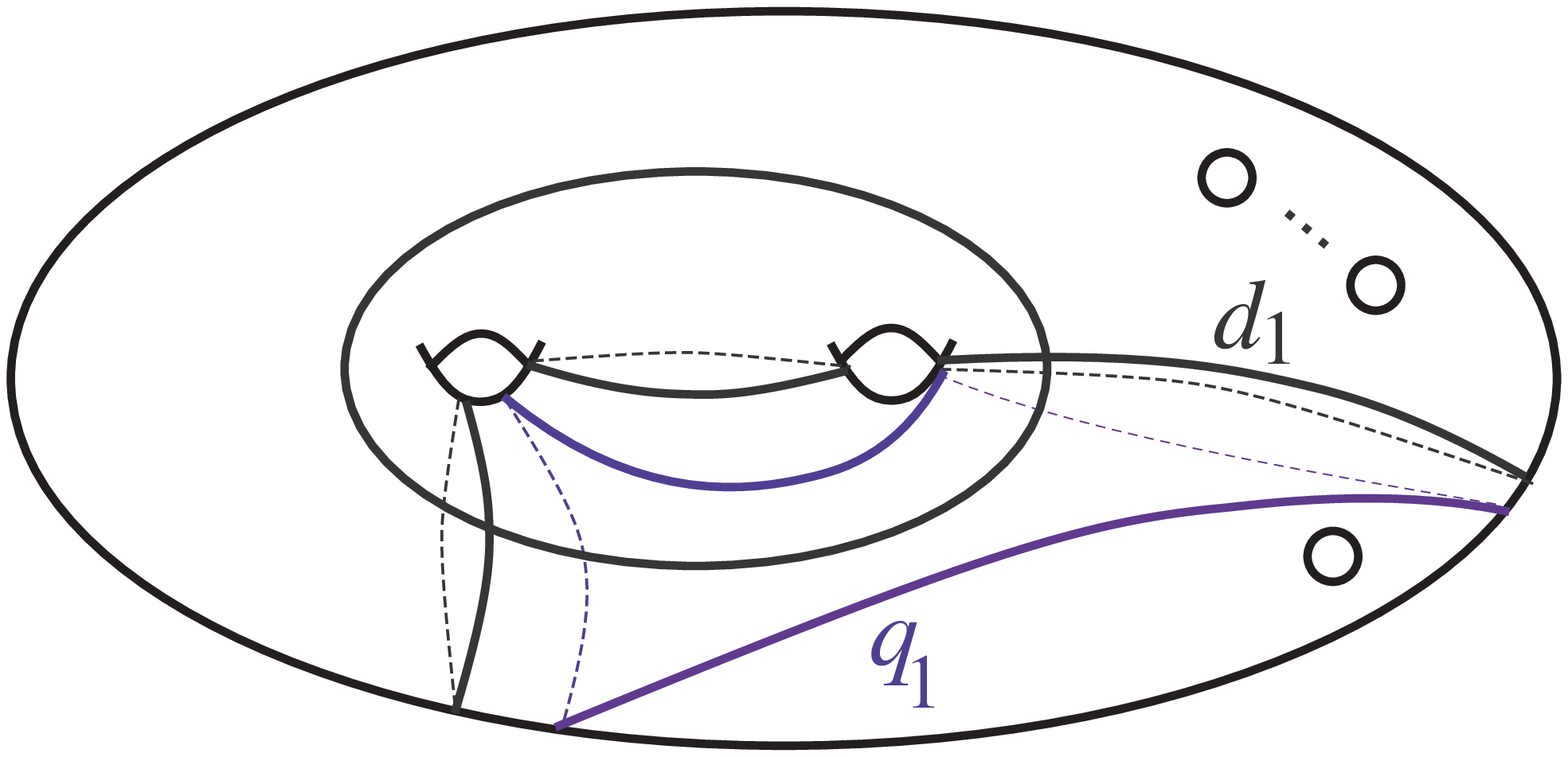} \hspace{0.2cm} \epsfxsize=2.7in \epsfbox{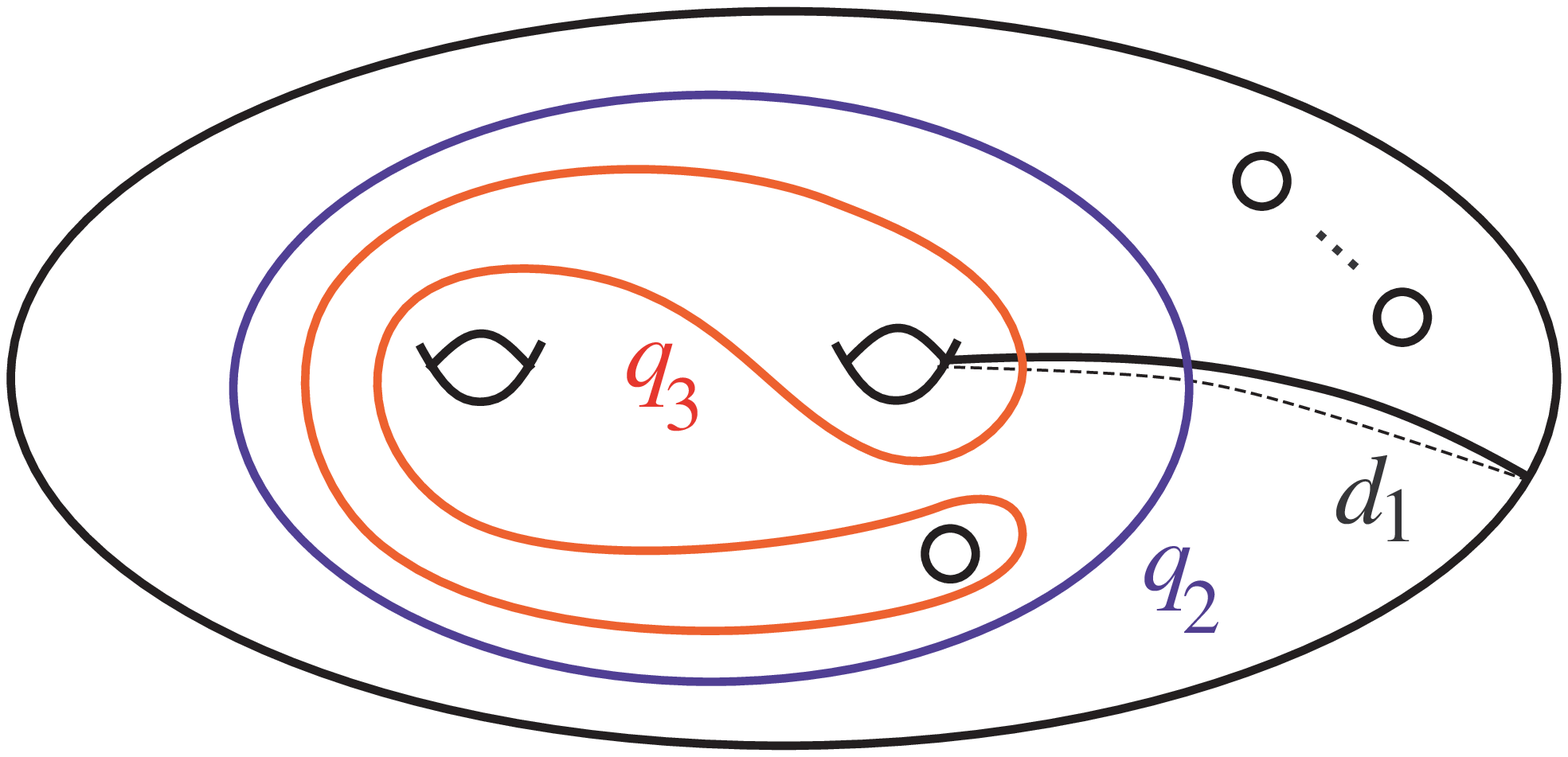} 

\hspace{-1cm} (i) \hspace{6.5cm} (ii)

\hspace{-0.4cm} \epsfxsize=2.7in \epsfbox{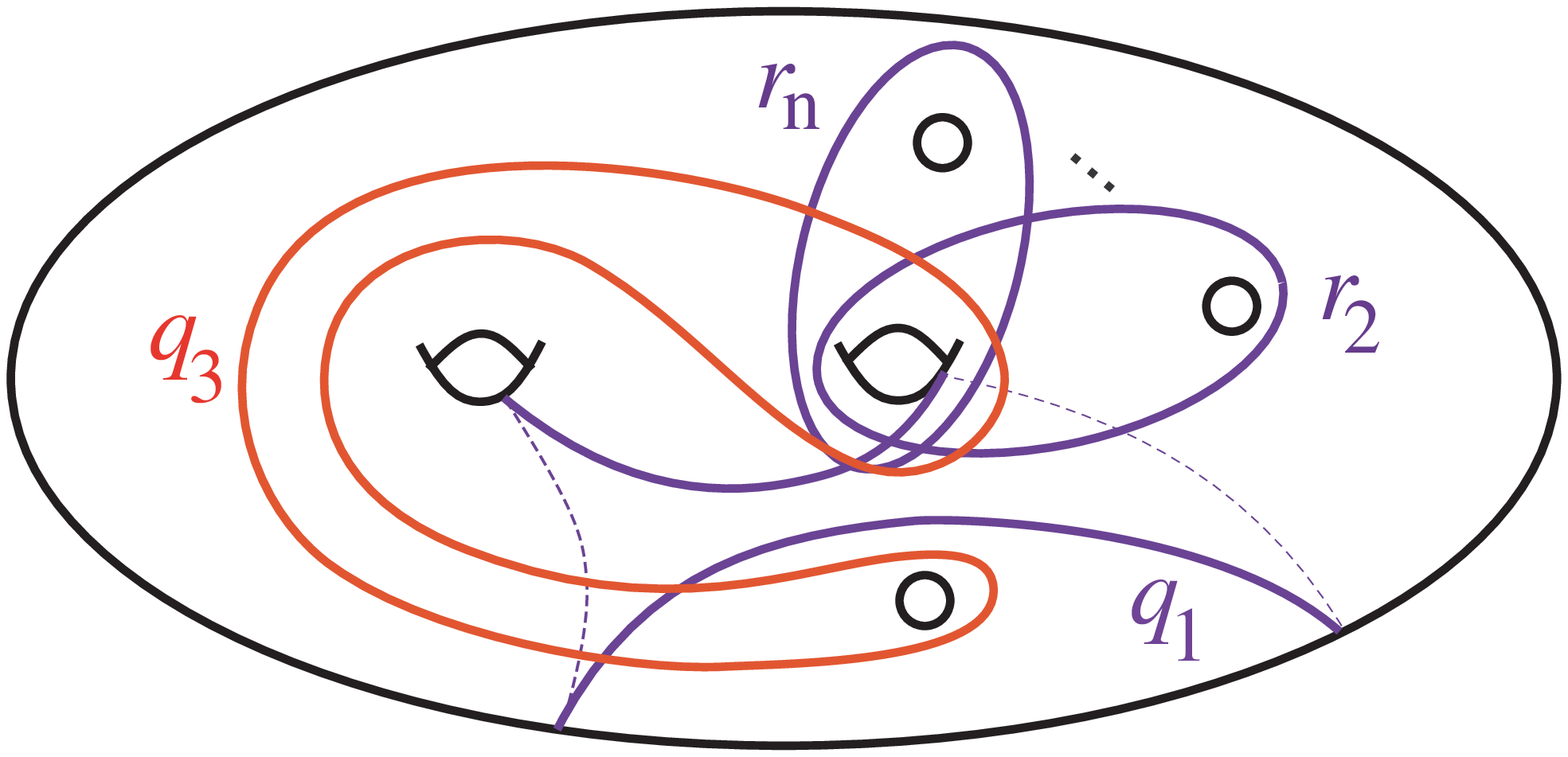} \hspace{0.2cm} \epsfxsize=2.7in \epsfbox{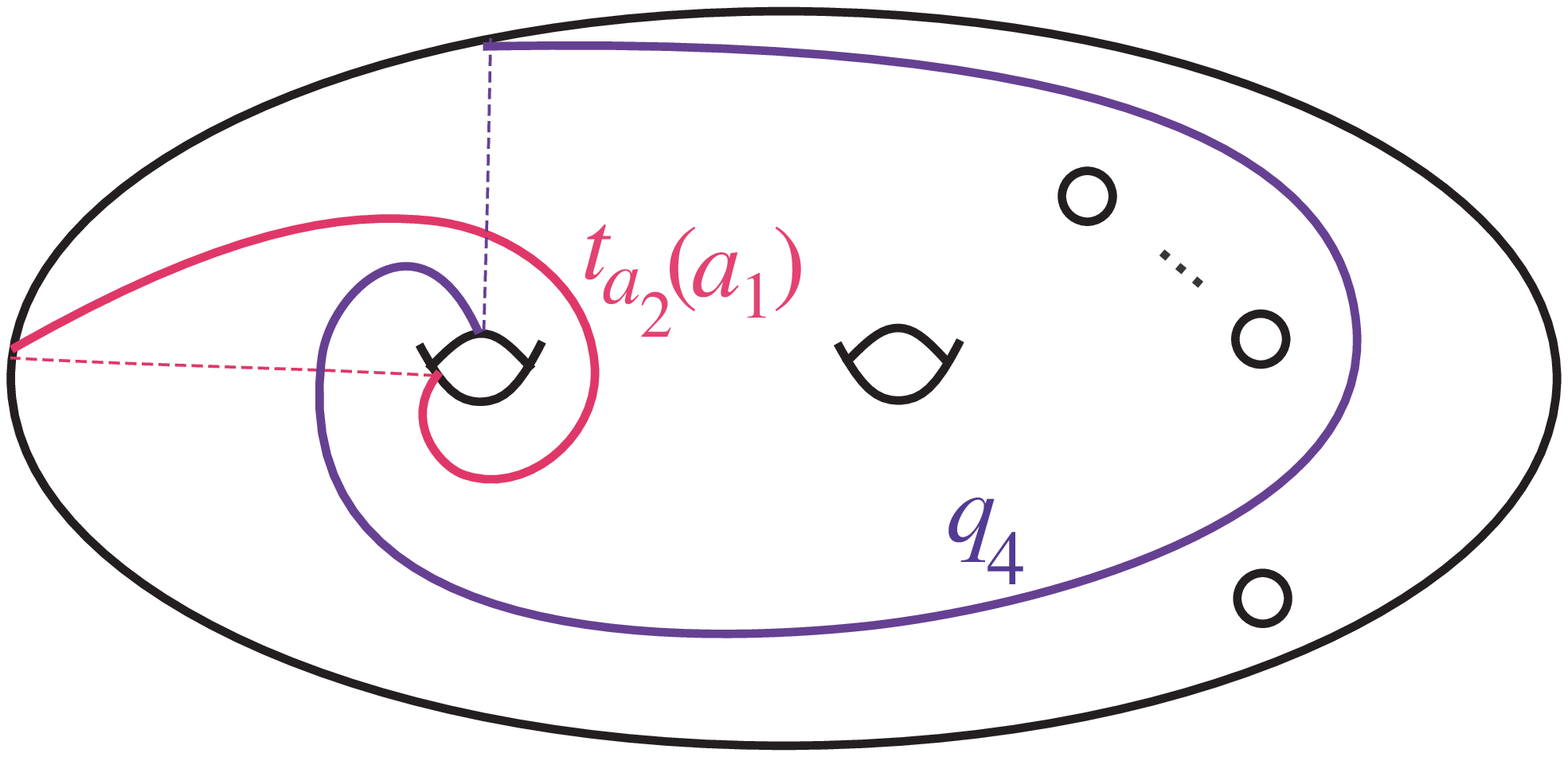} 

\hspace{-0.9cm} (iii) \hspace{6.3cm} (iv)

\hspace{-0.4cm} \epsfxsize=2.7in \epsfbox{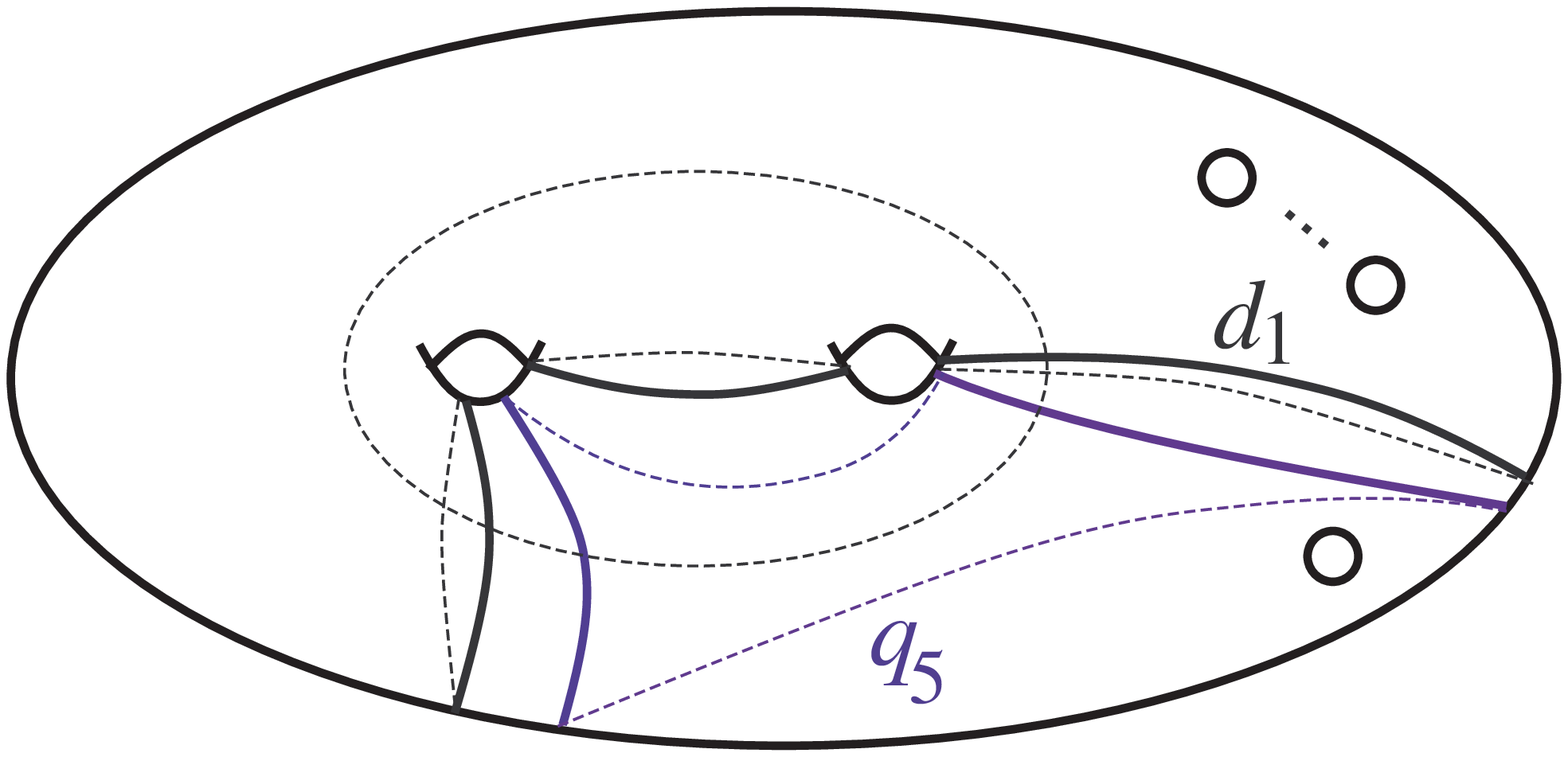} \hspace{0.2cm} \epsfxsize=2.7in \epsfbox{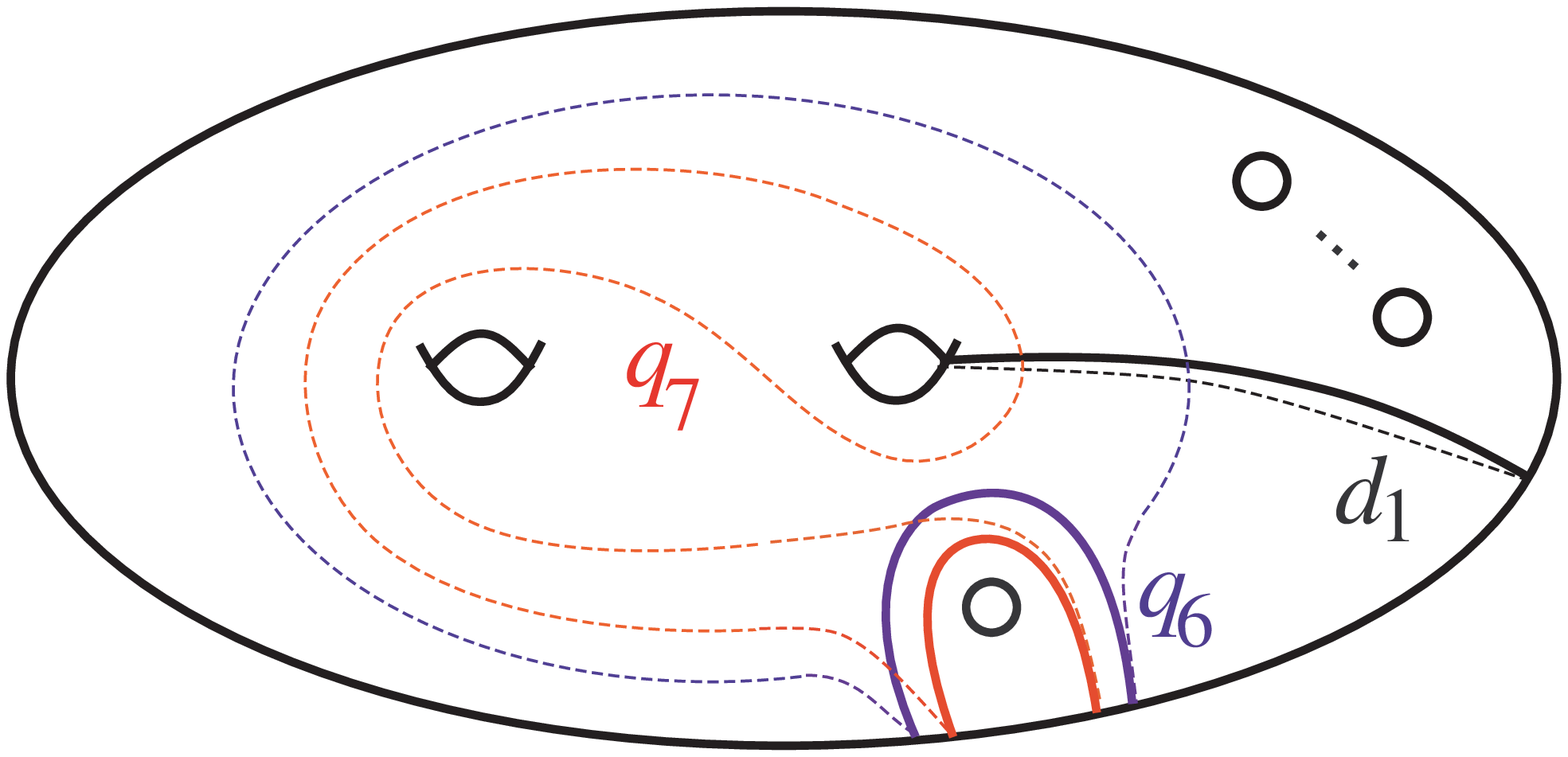} 

\hspace{-1cm} (v) \hspace{6.5cm} (vi)

\hspace{-0.4cm} \epsfxsize=2.7in \epsfbox{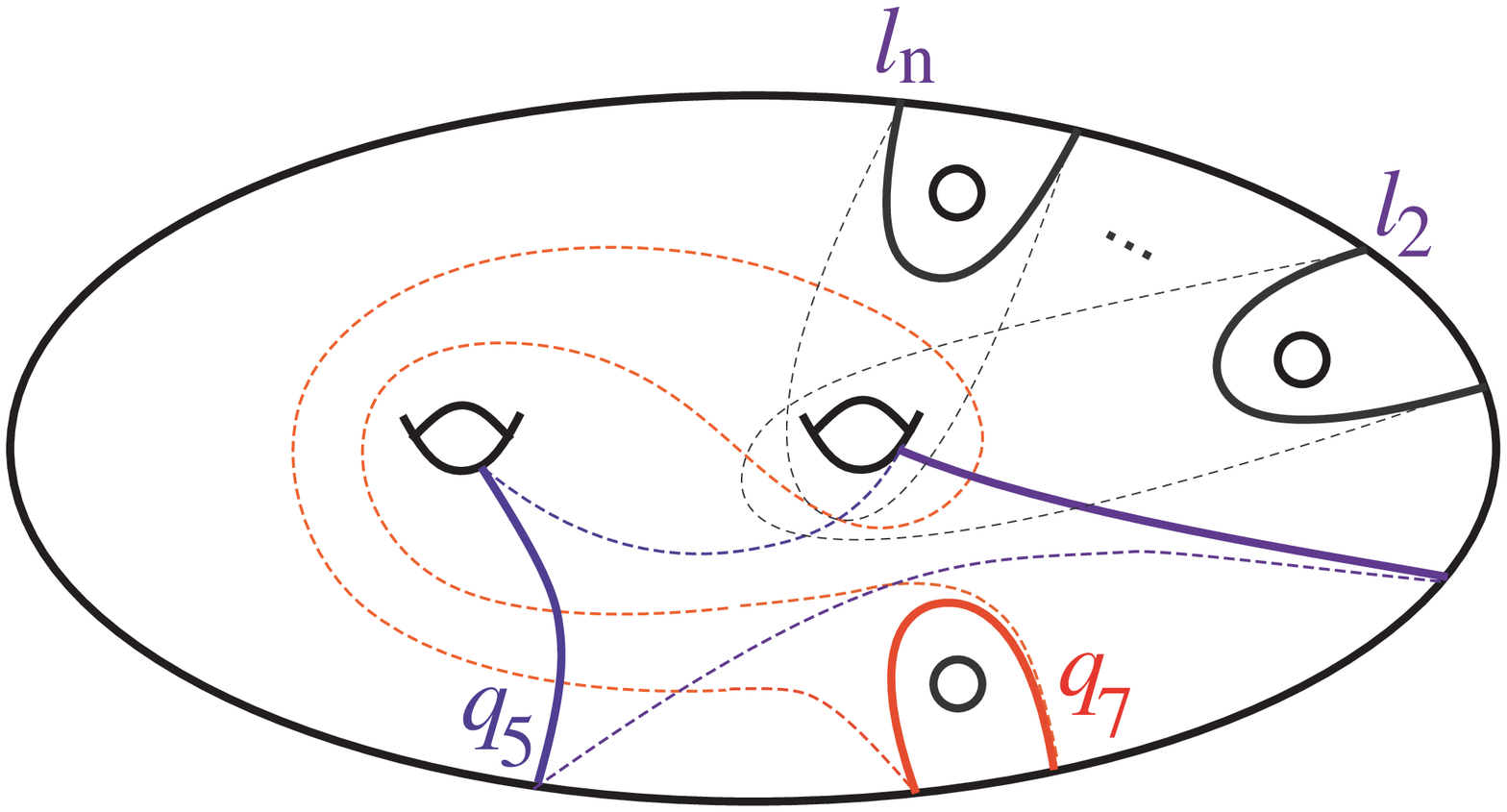} \hspace{0.2cm} \epsfxsize=2.7in \epsfbox{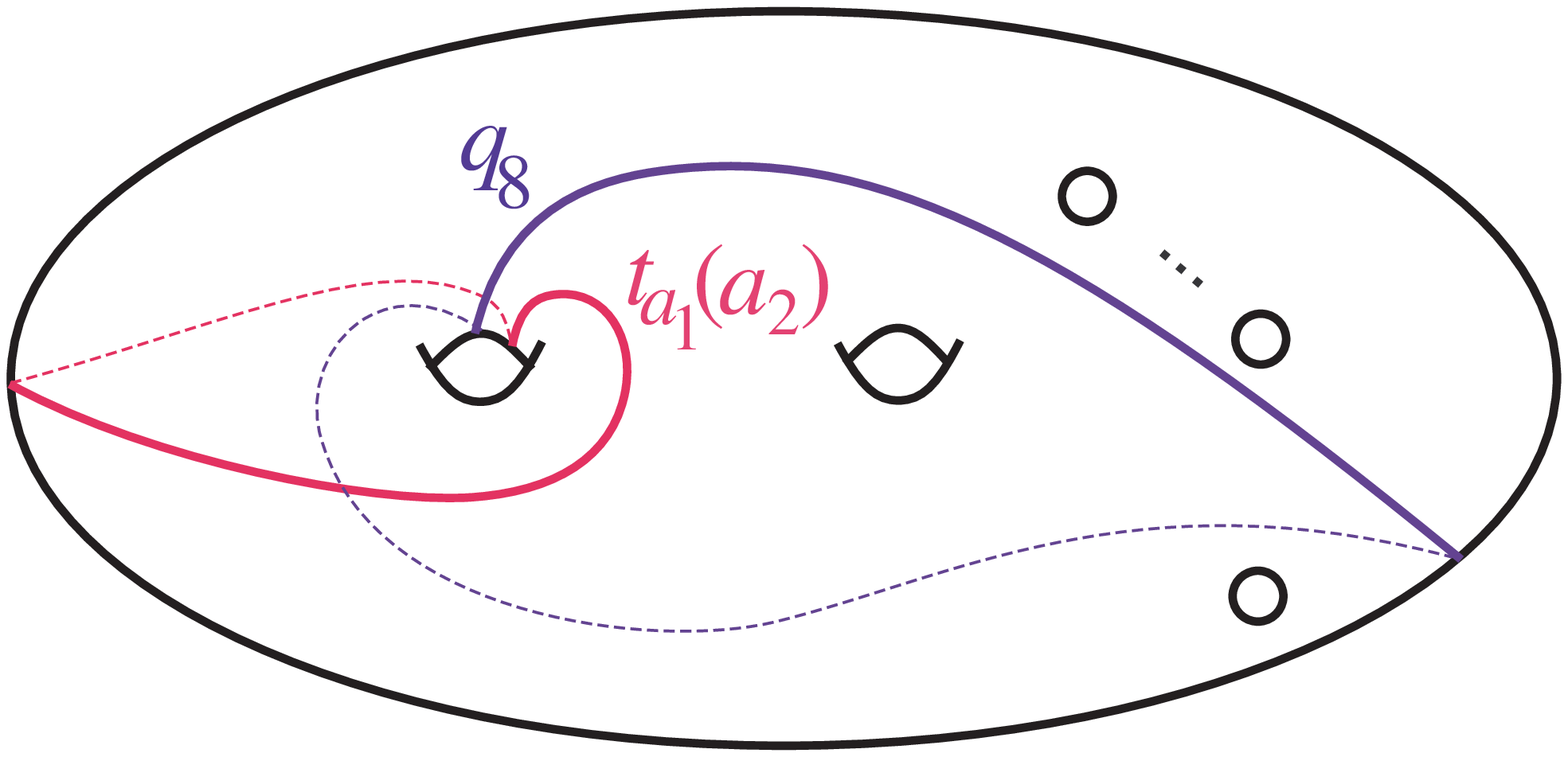} 

\hspace{-0.9cm} (vii) \hspace{6.3cm} (viii)

\caption{Curve configurations for twists} \label{fig9}
\end{center}
\end{figure}

\begin{figure} \begin{center}    
\hspace{1.5cm} \epsfxsize=2.7in \epsfbox{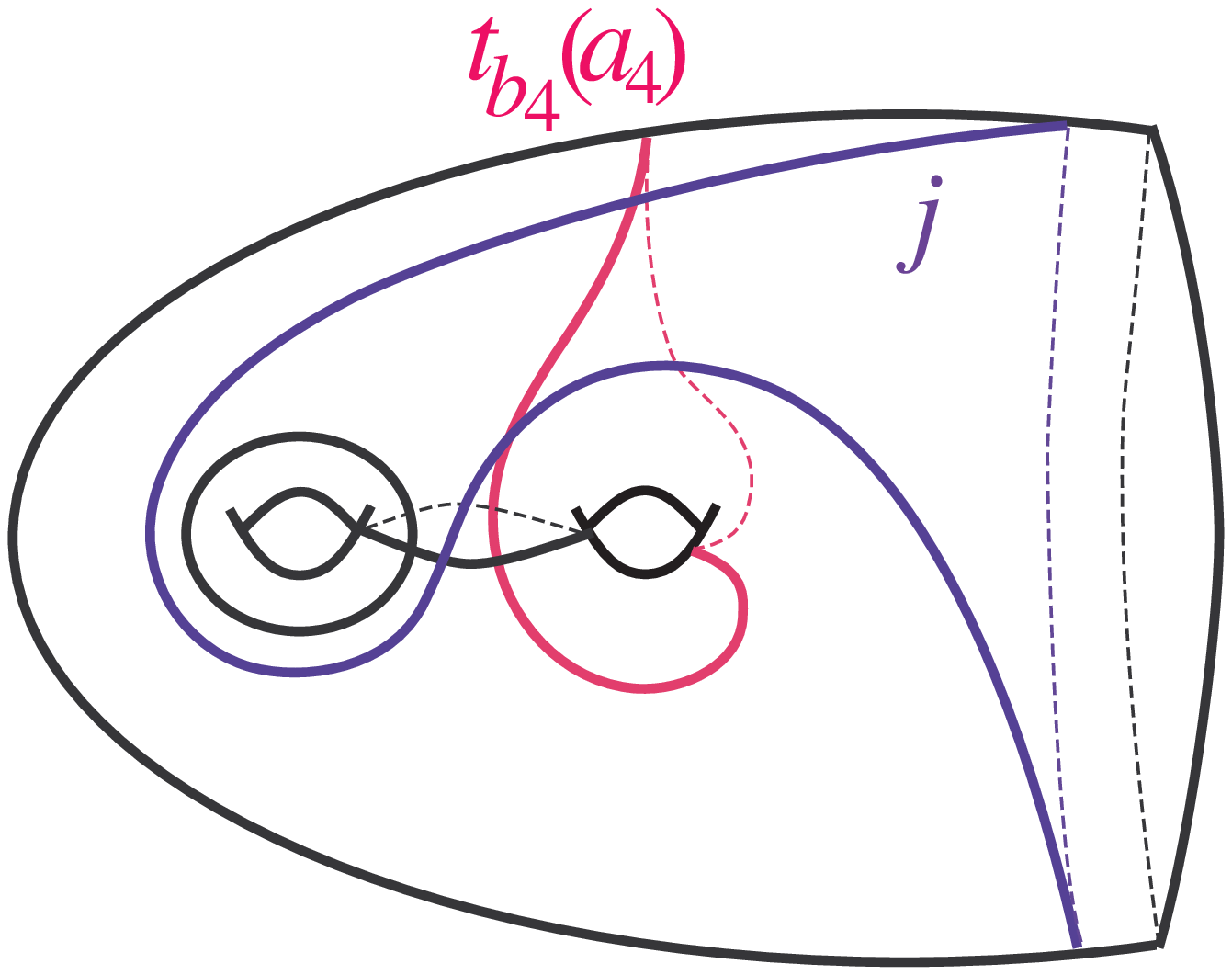} \hspace{-1cm} \epsfxsize=2.7in \epsfbox{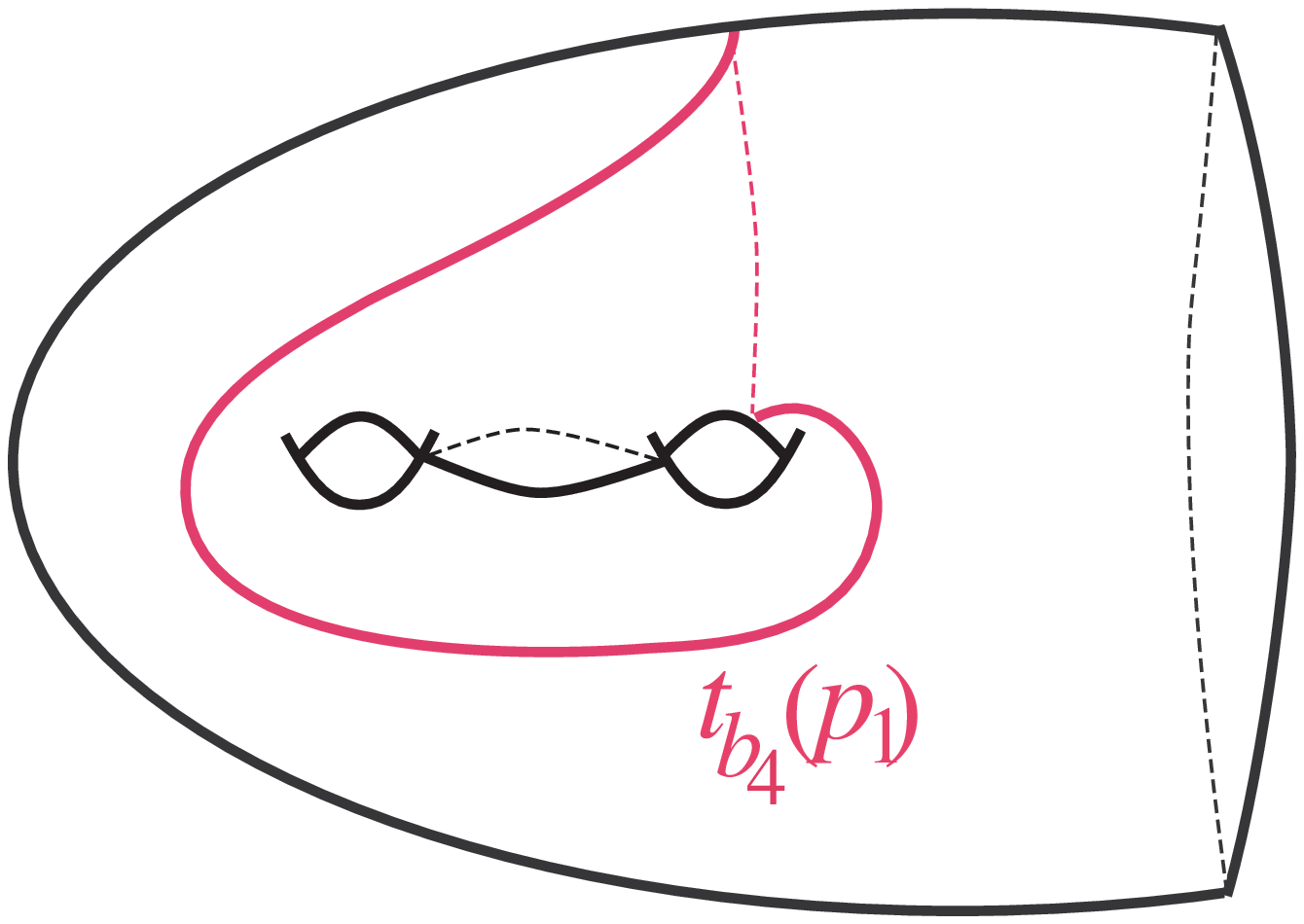} 

\hspace{-0.1cm} (i) \hspace{6cm} (ii)
 
\caption{Curve configurations for twists} \label{fig9b}
\end{center}
\end{figure} 

{\bf Case (ii):} Suppose $g=2$, $n \geq 1$. We have $h([x]) = \lambda([x])$ $\forall \ x \in  \mathcal{C}_1 \cup \mathcal{C}_2 \cup \mathcal{C}_3$ by Lemma \ref{curves-III}. Let $f \in G$.  
For $f=t_x$ when $x \in \{a_1, a_2, a_3, a_4, b_4, c_4, d_1, d_2, \cdots,$ $d_{n-1}\}$, let $L_f = \mathcal{C}_1 \cup \{p_1\}$. The 
set $\mathcal{C}_1 \cup \{p_1\}$ has trivial pointwise stabilizer by Lemma \ref{abcd}. 
We will first give the proof for $f=t_{a_{2}}$. We know $\lambda([x])= h([x])$
$\forall \ x \in \mathcal{C}_1 \cup \{p_1\}$. We will check the equation for $t_{a_2}(a_1)$. 
Consider the curves given in Figure \ref{fig9}. 
The curve $q_1$ is the unique nontrivial simple closed curve up to isotopy that is disjoint from all the curves in
$\{a_1, a_3, d_1, \cdots, d_{n-1}, b_4, p_1\}$, intersects each of $a_2, a_4$ once.  
Since we know that $h([x]) = \lambda([x])$ for all
these curves and $\lambda$ preserves these properties, we see that $h([q_1]) = \lambda([q_1])$. 
We also have $h([q_2]) = \lambda([q_2])$ (the proof is similar to the proof we gave for $h([y_1]) = \lambda([y_1])$ in Lemma \ref{curves-III}). The curve $q_3$ is the unique nontrivial simple closed curve up to isotopy that is disjoint from all the curves in
$\{s_1, \cdots, s_n, a_2, a_4, q_2\}$, intersects each $d_i$ once, and nonisotopic to $a_4$. Since $h([x]) = \lambda([x])$ for all
these curves and $\lambda$ preserves these properties, we have $h([q_3]) = \lambda([q_3])$. 
The curve $q_4$ is the unique nontrivial simple closed curve up to isotopy that is disjoint from all the curves in  
$\{a_3, a_4, r_2, r_3, \cdots, r_n, q_1, q_3\}$ and intersects each of $a_1$ and $a_2$ once. Since $h([x]) = \lambda([x])$ for all
these curves and $\lambda$ preserves these properties, we have $h([q_4]) = \lambda([q_4])$. 
The curve $t_{a_2}(a_1)$ is the unique nontrivial simple closed curve up to isotopy that is disjoint from all the curves in $\{a_4, b_4, c_4, q_4\}$ 
and intersects $a_1, a_2$ once.
Since $h([x]) = \lambda([x])$ for all these curves and $\lambda$ preserves these properties, we have
$h([t_{a_2}(a_1) ]) = \lambda([t_{a_2}(a_1)])$. Consider the curves $q_5, q_6, q_7, q_8$ given in Figure \ref{fig9}. 
By considering the model in Figure \ref{fig1-e} and taking the reflections of the curves $q_1, q_2, q_3, q_4$ through the plane of the paper we get the curves, 
$q_5, q_6, q_7, q_8$. We can make similar arguments (using the curve $w_1$ in the role of $p_1$) to see that $h([x]) = \lambda([x])$ for all the curves $q_5, q_6, q_7, q_8$, and hence 
$h([t_{a_1}(a_2) ]) = \lambda([t_{a_1}(a_2)])$ as in the previous case. Now using $h([t_{a_2}(a_1) ]) = \lambda([t_{a_2}(a_1)])$, $h([t_{a_1}(a_2) ]) = \lambda([t_{a_1}(a_2)])$ and similar curve configurations as in case (i)  
we obtain $h ([t_x(y)]) = \lambda ([t_x(y)])$ for all $x,y \in \{ a_1, a_2, a_3, a_4, b_4, c_4, d_1, \dots, d_{n-1}, p_1 \}$ (see Figure \ref{fig9b}). Hence, we have
$\lambda ([x]) = h ([x])$ for all $x \in L_f \cup f(L_f)$ for every twist $f$ in $G$.
For $f = \sigma_i$, where $i \in \{1, \dots, n-1\}$, we let $L_f = \mathcal{C}_1 \cup \{p_1\}$. The proof is similar to case (i). 
  
\begin{figure}
\begin{center} 
\hspace{-0.4cm} \epsfxsize=2.7in \epsfbox{figy-10-new.eps} \hspace{0.2cm} \epsfxsize=2.7in \epsfbox{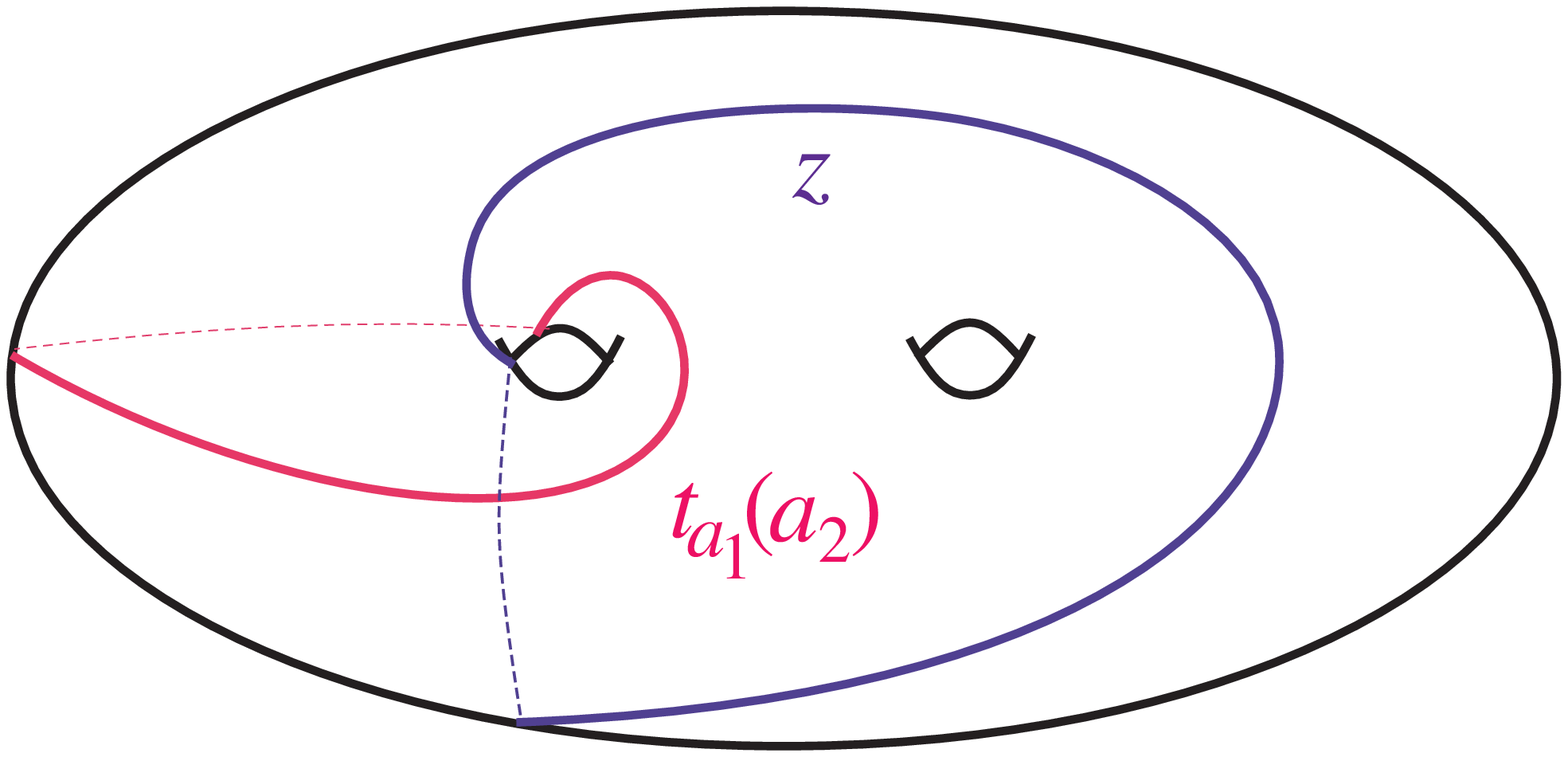} 

\hspace{-0.4cm} (i) \hspace{6.5cm} (ii)

\hspace{-0.4cm} \epsfxsize=2.7in \epsfbox{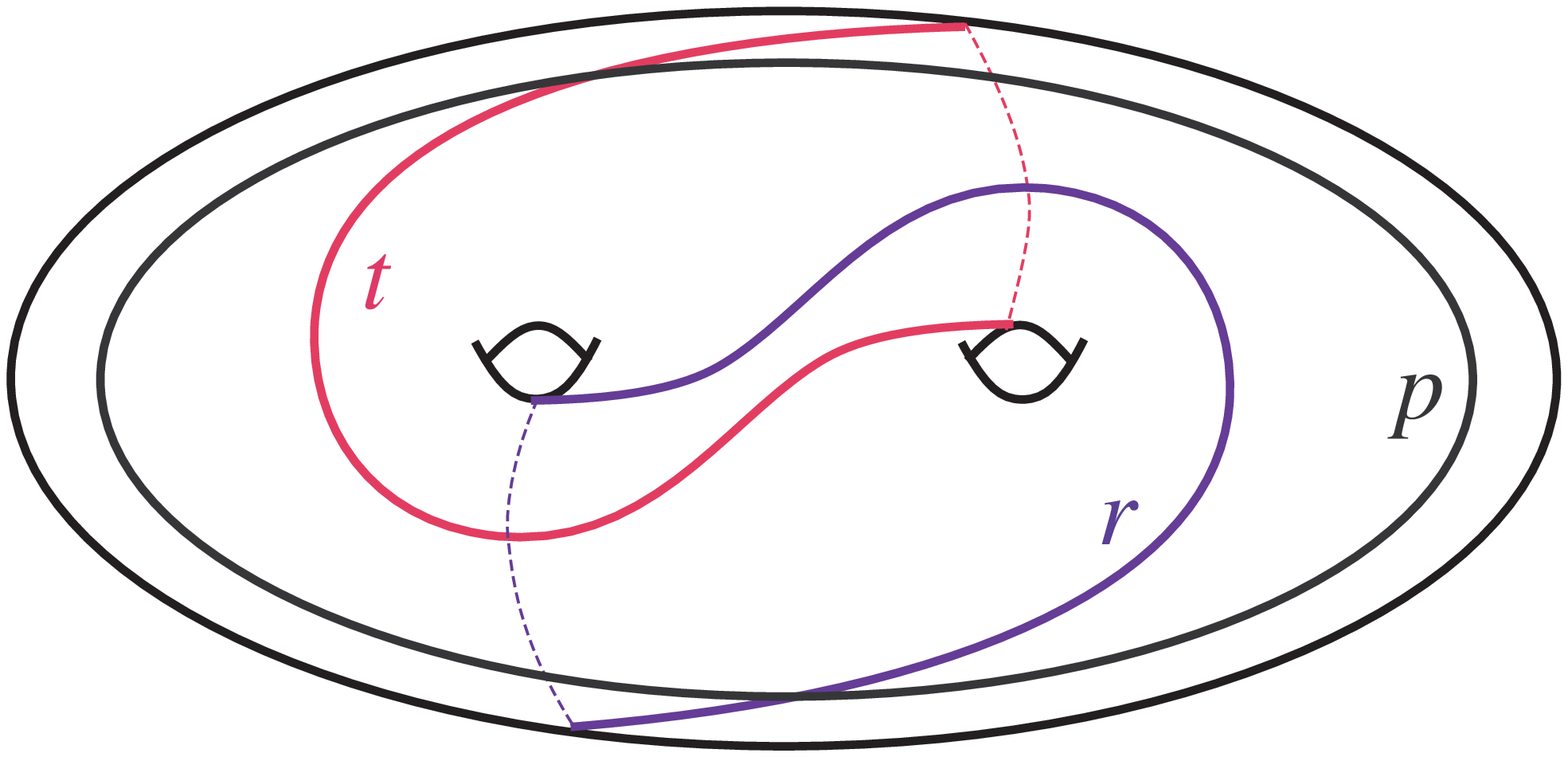} \hspace{0.2cm} \epsfxsize=2.7in \epsfbox{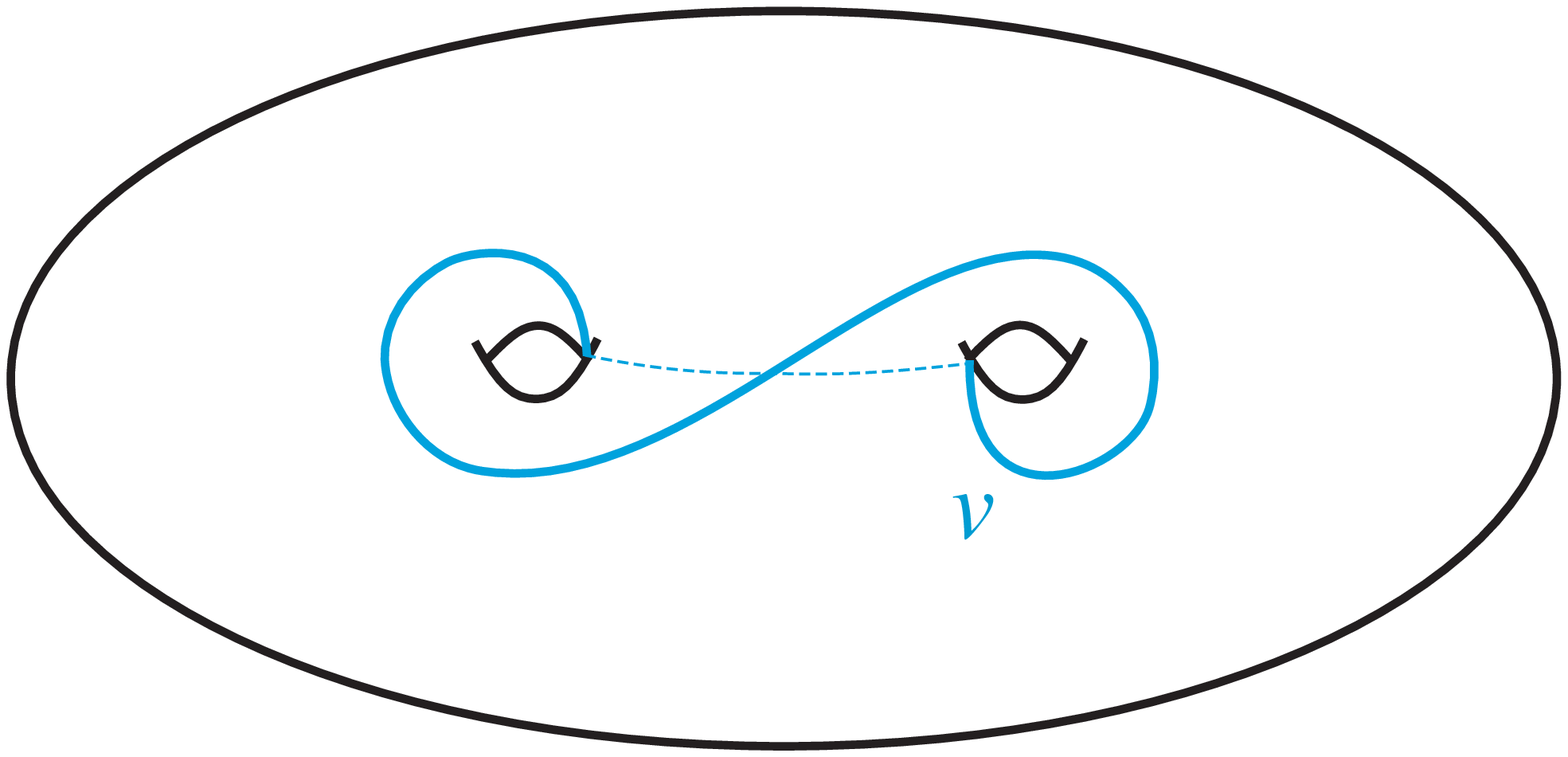} 

\hspace{-0.4cm} (iii) \hspace{6.5cm} (iv)

\hspace{-0.4cm} \epsfxsize=2.7in \epsfbox{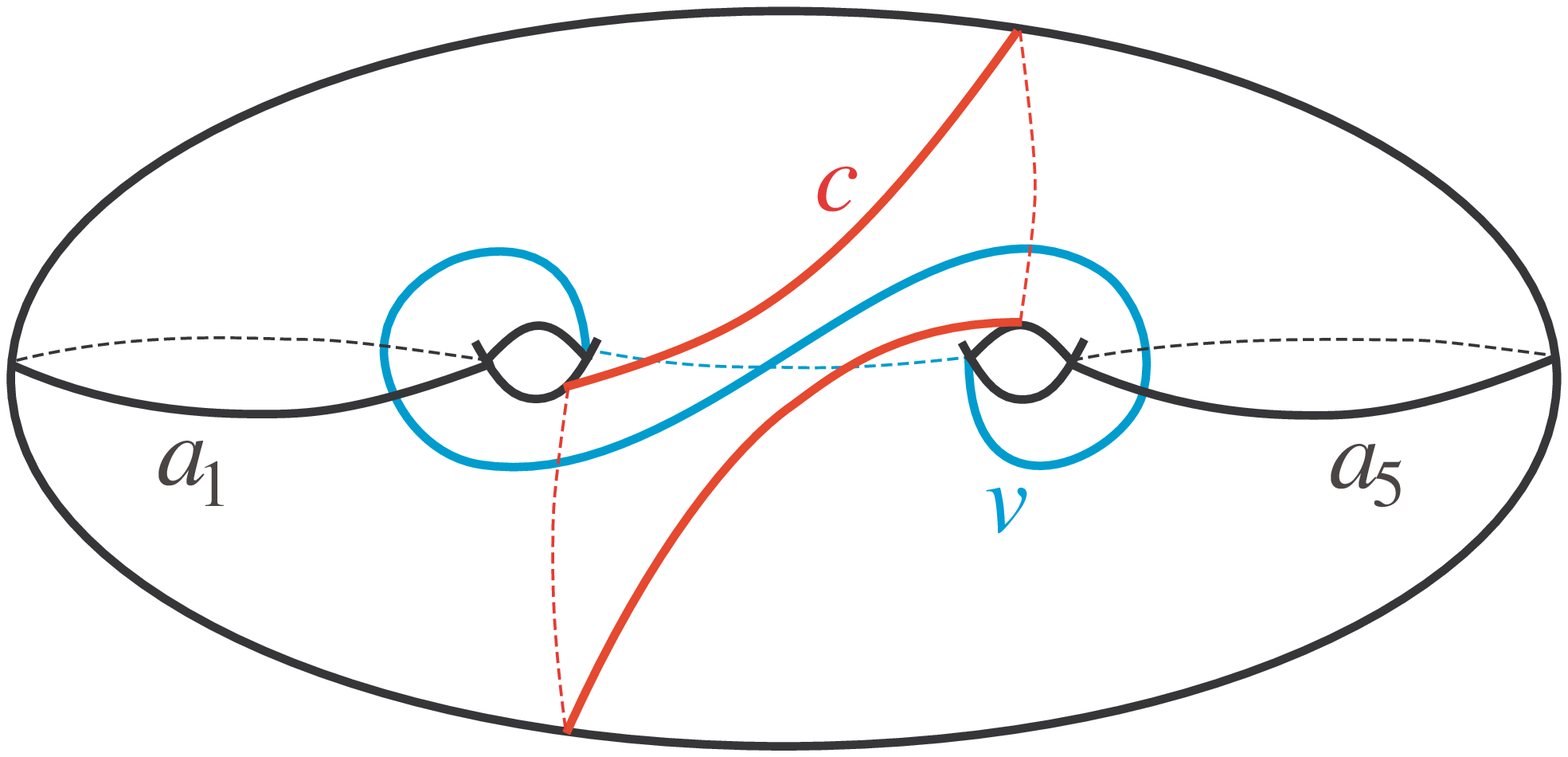} \hspace{0.2cm} \epsfxsize=2.7in \epsfbox{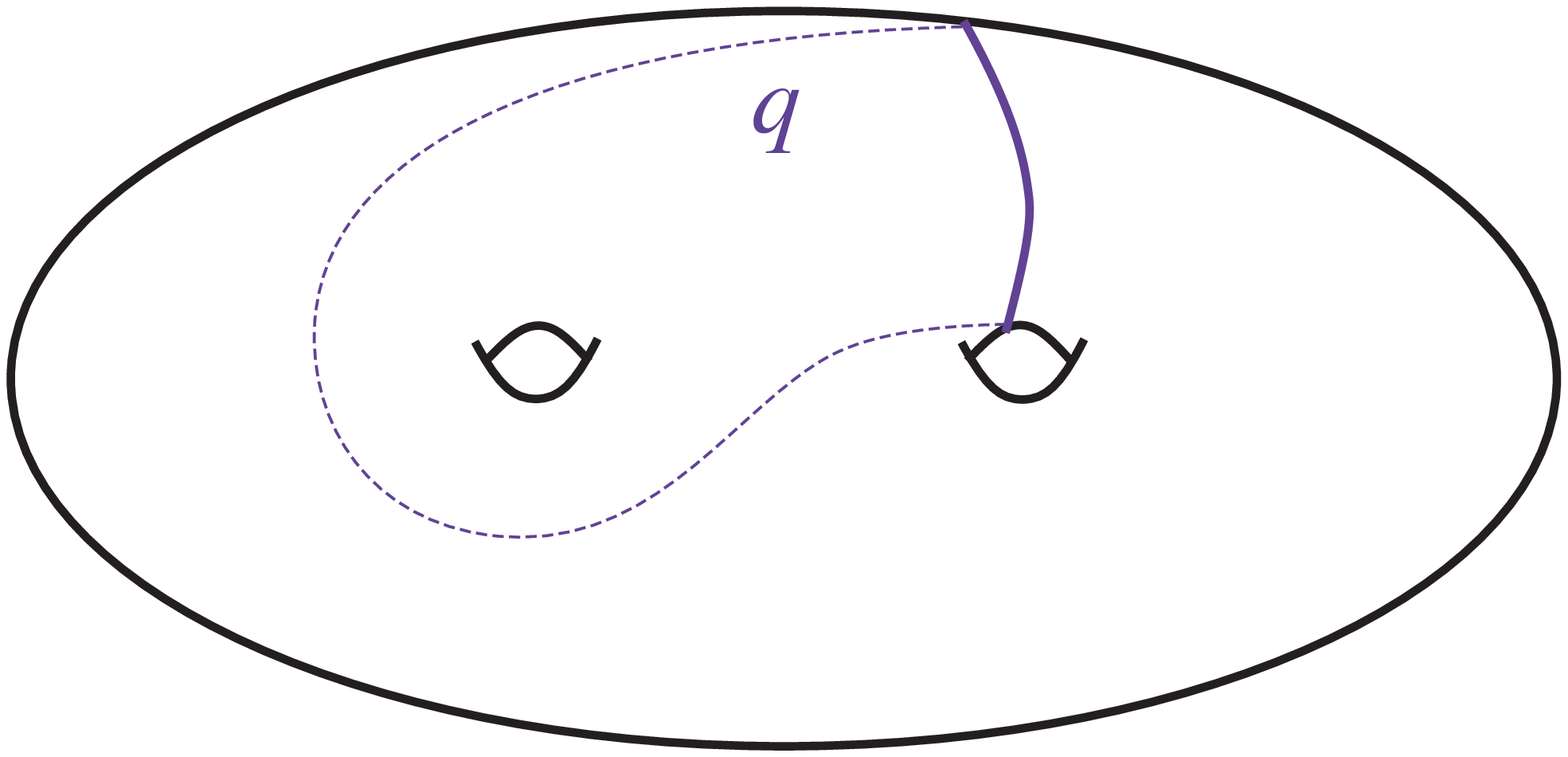} 

\hspace{-0.4cm} (v) \hspace{6.5cm} (vi)

\hspace{-0.4cm} \epsfxsize=2.7in \epsfbox{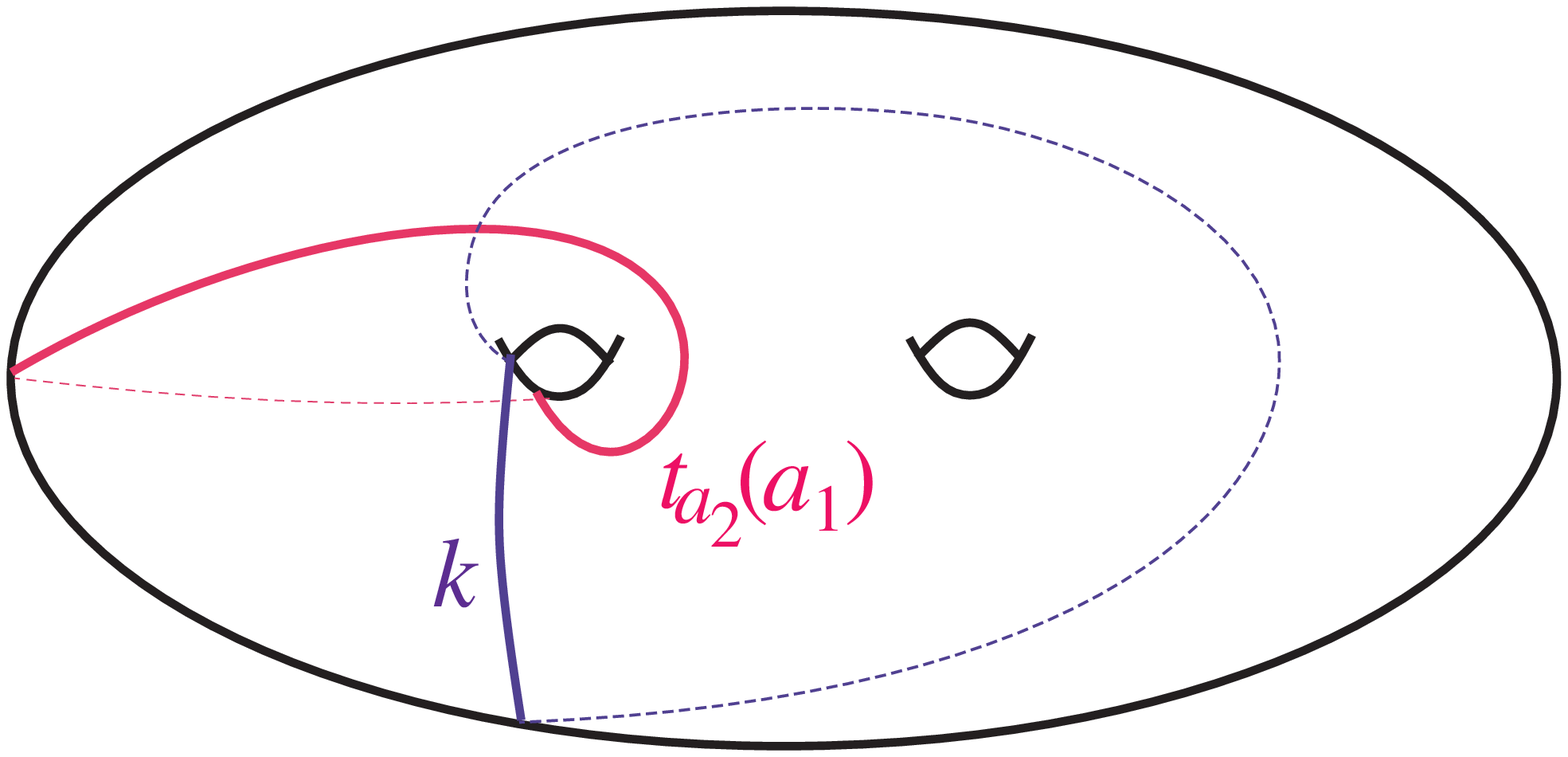} \hspace{0.2cm} \epsfxsize=2.7in \epsfbox{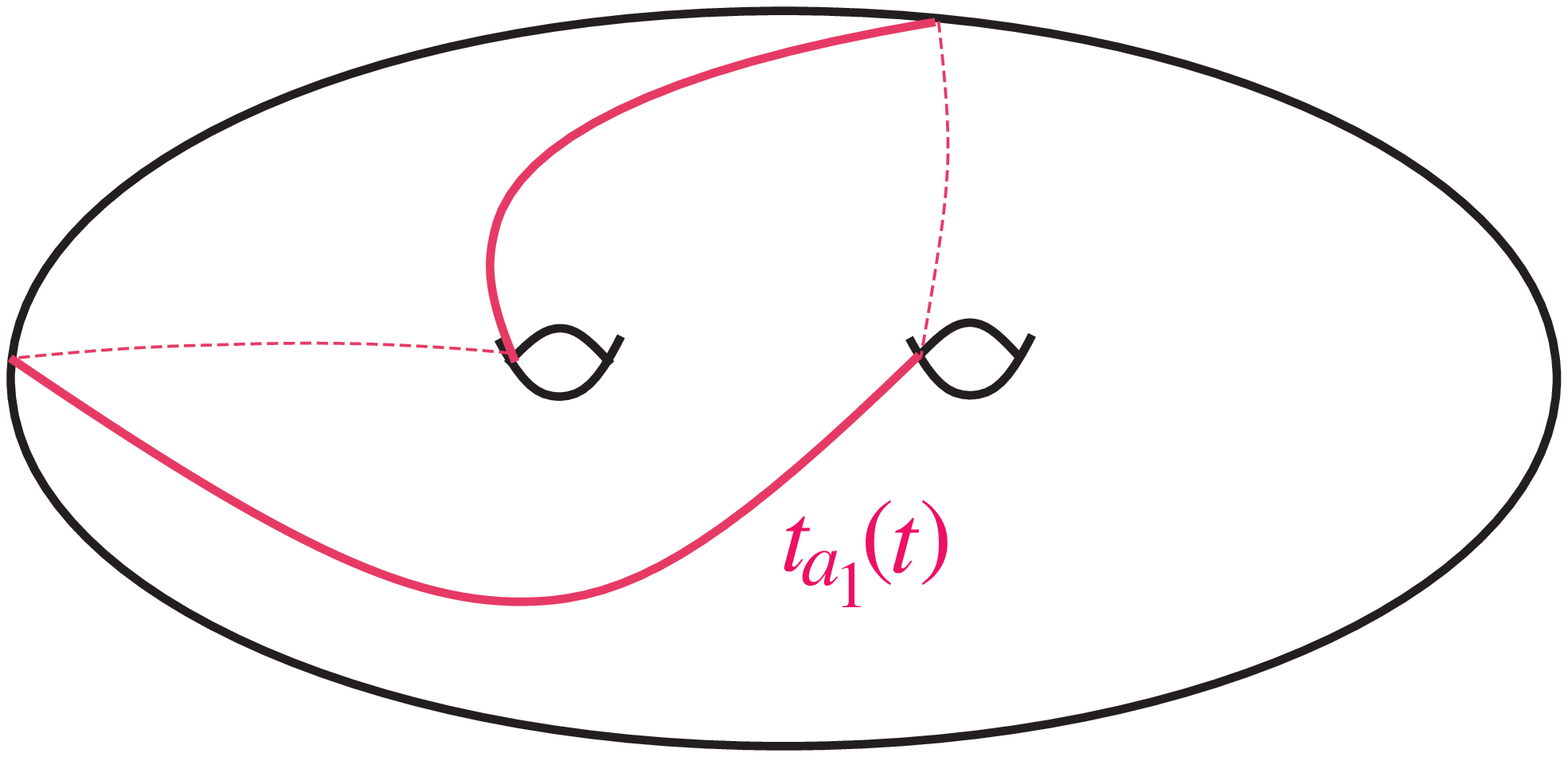} 

\hspace{-0.3cm} (vii) \hspace{6.5cm} (viii)
\caption{Curve configurations for twists} \label{fig88}
\end{center}
\end{figure}

{\bf Case (iii):} Suppose $g=2$, $n=0$. Consider the curves given in Figure \ref{fig88} (i), (ii). 
For $f=t_x$ when $x \in \{a_1, a_2, \cdots, a_{5} \}$, let $L_f = \{a_1, a_2, a_3, a_4, a_5, t\}$. By Lemma \ref{abcde}, the 
pointwise stabilizer of $L_f$ is $Z(Mod^*_R) \cong \mathbb{Z}_2$.
We know $\lambda([x])= h([x])$ $\forall \ x \in \{a_1, a_2, a_3, a_4, a_5, t\}$. 
Let $f=t_{a_1}$. We will check the equation for $t_{a_{1}}(a_{2})$. The curve $z$ is the unique nontrivial simple closed curve up to isotopy that is disjoint from all the curves in
$\{t, a_3, a_4\}$. Since $h([x]) = \lambda([x])$ for all
these curves and $\lambda$ preserves this property, we have $h([z]) = \lambda([z])$. 
The curve $t_{a_{1}}(a_{2})$ is the unique nontrivial simple closed curve up to isotopy that is disjoint from 
$a_4, a_5, z$ and intersects $a_1, a_2$ once. Since $h([x]) = \lambda([x])$ for all
these curves and $\lambda$ preserves these properties, we have $h([t_{a_{1}}(a_{2})]) = \lambda([t_{a_{1}}(a_{2})])$.

Let $p, r, t, v, c$ be as shown in Figure \ref{fig88} (iii)-(v). The curve $r$ is the unique nontrivial simple closed curve up to isotopy that is disjoint from all the curves in
$\{t, a_1, a_4\}$. Since $h([x]) = \lambda([x])$ for all
these curves and $\lambda$ preserves this property, we have $h([r]) = \lambda([r])$. The curve $p$ is the unique nontrivial simple closed curve up to isotopy that is disjoint from all the curves in
$\{a_2, a_3, a_4\}$. Since $h([x]) = \lambda([x])$ for all
these curves and $\lambda$ preserves this property, we have $h([p]) = \lambda([p])$. The curve $v$ is the unique nontrivial simple closed curve up to isotopy that is disjoint from all the curves in
$\{p, r, t\}$. Since $h([x]) = \lambda([x])$ for all
these curves and $\lambda$ preserves this property, we have $h([v]) = \lambda([v])$. The curve $c$ is the unique nontrivial simple closed curve up to isotopy that is disjoint from all the curves in
$\{v, a_1, a_5\}$. Since $h([x]) = \lambda([x])$ for all
these curves and $\lambda$ preserves this property, we have $h([c]) = \lambda([c])$. Since $h([a_3]) = \lambda([a_3]), h([c]) = \lambda([c])$ and $i(h([c]), h([a_3]))=2$, we have $i(\lambda([c]), \lambda([a_3]))=2$. 
There exists a homeomorphism sending the pair 
$(a_3, c)$ to the pair $(t, q)$ where $q$ is the curve shown in Figure \ref{fig88} (vi). Hence, we have $i(\lambda([t]), \lambda([q]))=2$. 

The curves $t, q$ are the only nontrivial simple closed curves up to isotopy that are disjoint from the curves in
$\{a_2, a_5\}$ and intersect each of $a_1, a_3, a_4$ once. Since these properties are preserved by $\lambda$, $h([t]) = \lambda([t])$ and $i(\lambda([t]), \lambda([q]))=2$, and so $\lambda([t]) \neq \lambda([q])$, we see that 
$h([q]) = \lambda([q])$. Consider the curves shown in Figure \ref{fig88} (vii). The curve $k$ is the unique nontrivial simple closed curve up to isotopy that is disjoint from all the curves in
$\{q, a_3, a_4\}$. Since $h([x]) = \lambda([x])$ for all
these curves and $\lambda$ preserves this property, we have $h([k]) = \lambda([k])$. 
The curve $t_{a_{2}}(a_{1})$ is the unique nontrivial simple closed curve up to isotopy that is disjoint from 
$a_4, a_5, k$ and intersects $a_1, a_2$ once. Since $h([x]) = \lambda([x])$ for all
these curves and $\lambda$ preserves these properties, we have $h([t_{a_{2}}(a_{1})]) = \lambda([t_{a_{2}}(a_{1})])$.
Now with similar arguments given above we can see that $h ([t_x(y)]) = \lambda ([t_x(y)])$ for all $x, y \in \{ a_1, a_2, \dots, a_{5}\}$.

The curve $t_{a_{1}}(t)$ is the unique nontrivial simple closed curve up to isotopy that is disjoint from each of 
$t_{a_{1}}(a_2), r, a_5$, see Figure \ref{fig88} (viii). Since $h([x]) = \lambda([x])$ for all
these curves and $\lambda$ preserves this property, we have $h([t_{a_{1}}(t)]) = \lambda([t_{a_{1}}(t)])$.
The curve $t_{a_{3}}(t) = t_{a_{1}}(a_2)$, so $h([t_{a_3} (t)]) = \lambda([t_{a_3}(t)])$.
The curve $t_{a_{4}}(t)$ is the unique nontrivial simple closed curve up to isotopy that is disjoint from each of 
$t_{a_{4}}(a_5), r, a_2$. Since $h([x]) = \lambda([x])$ for all
these curves and $\lambda$ preserves this property, we have $h([t_{a_{4}}(t)]) = \lambda([t_{a_{4}}(t)])$. We get 
$h([t_x(y)]) = \lambda ([t_x(y)])$ for all $x \in \{ a_1, a_2, \dots, a_{5}\}$ and for all $y \in L_f$. 
Hence, we have $\lambda ([x]) = h ([x])$ for all $x \in L_f \cup f(L_f)$.\end{proof}
   
\begin{theorem} \label{A} Let $g \geq 2$ and $n \geq 0$. There exists a homeomorphism $h : R \rightarrow R$ such that $[h]_*(\alpha) = \lambda(\alpha)$
for every vertex $\alpha$ in $\mathcal{N}(R)$ and this homeomorphism is unique up to isotopy when $(g, n) \neq (2, 0)$.\end{theorem}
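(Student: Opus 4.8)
The plan is to transfer the agreement between $\lambda$ and the homeomorphism produced in Lemma~\ref{curves-III} from the finite configuration $\mathcal{C}_1\cup\mathcal{C}_2\cup\mathcal{C}_3$ to the whole vertex set by exploiting the action of $Mod_R$. First I would fix the homeomorphism $h$ of Lemma~\ref{curves-III}, so that $\lambda([x])=[h]_*([x])$ for every $x\in\mathcal{C}_1\cup\mathcal{C}_2\cup\mathcal{C}_3$, and set $\mu=[h]_*^{-1}\circ\lambda$. Since $[h]_*^{-1}$ is an automorphism and $\lambda$ is edge-preserving, preserves geometric intersection one (Lemma~\ref{int-one}) and preserves the relation of bounding a pair of pants (Lemma~\ref{embedded}), the composite $\mu$ also preserves disjointness, geometric intersection one, and pairs of pants, and it fixes every vertex of $\mathcal{C}_1\cup\mathcal{C}_2\cup\mathcal{C}_3$. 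Proving the theorem is then equivalent to proving that $\mu$ is the identity on $\mathcal{N}(R)$.

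To reach every vertex I would use that $Mod_R$ acts transitively on isotopy classes of nonseparating simple closed curves, so that each vertex of $\mathcal{N}(R)$ has the form $[\phi(a_1)]$ for some $\phi\in Mod_R$ and the fixed curve $a_1\in\mathcal{C}_1$, and that $Mod_R$ is generated by the set $G$ introduced before Lemma~\ref{prop1e}. The heart of the argument is the assertion that $\lambda=[h]_*$ on the translated configuration $\phi(\mathcal{C}_1\cup\{p_1\})$ for every $\phi\in Mod_R$, which I would prove by induction on the word length of $\phi$ in $G$. The base case $\phi=\mathrm{id}$ is Lemma~\ref{curves-III} together with Lemma~\ref{prop1e}. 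For the inductive step, writing $\phi=\phi' f$ with $f\in G$ and $\phi'$ shorter, I would apply the homeomorphism $\phi'$ to each of the \emph{uniqueness characterizations} used in the proofs of Lemmas~\ref{curves-II}, \ref{curves-III} and \ref{prop1e}: there every curve of $\mathcal{C}_2\cup\mathcal{C}_3$, of $L_f\cup f(L_f)$, and every auxiliary curve $q_i$ is singled out as the \emph{unique} nonseparating class having prescribed disjointness, intersection-one and pair-of-pants relations to curves on which $\lambda$ already agrees with $[h]_*$. Translating all curves in such a characterization by $\phi'$ produces the same relations relative to $\phi'(\mathcal{C}_1\cup\{p_1\})$, on which $\lambda=[h]_*$ by the inductive hypothesis; since $\lambda$ preserves exactly these three relations, uniqueness forces $\lambda$ and $[h]_*$ to agree on $\phi' f(\mathcal{C}_1\cup\{p_1\})$. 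Running this over all vertices $[\phi(a_1)]$ then yields $\lambda=[h]_*$ on all of $\mathcal{N}(R)$.

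I expect the inductive step to be the main obstacle, for two reasons. First, one must check that the chains of uniqueness characterizations in Lemmas~\ref{curves-II}--\ref{prop1e} remain valid after translation by an arbitrary $\phi'$, i.e.\ that they rely only on the three relations that $\mu$ preserves and not on the particular position of the curves; the intermediate curves $q_i$ and the classes of $\mathcal{C}_2\cup\mathcal{C}_3$ must be recovered one at a time in the translated picture, in the same order. Second, at the places where a curve is determined only up to an order-two symmetry (the $r_1$ versus $w$ alternative in Lemma~\ref{curves-II}, and the $p_{g-1}$ versus $w_{g-1}$ alternative in Lemma~\ref{curves-III}), one must verify that $\lambda$ separates the two candidates so that the translated characterization still pins down a single class. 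For $(g,n)\neq(2,0)$ this is exactly where the trivial pointwise stabilizer of $\mathcal{C}_1\cup\{p_1\}$ from Lemma~\ref{abcd} is used, and it also gives uniqueness of $h$: if $h$ and $h'$ both induce $\lambda$ then $[h^{-1}h']_*$ fixes $\mathcal{C}_1\cup\{p_1\}$ pointwise, hence $h^{-1}h'$ is isotopic to the identity. When $(g,n)=(2,0)$ the same scheme applies with the set $\{a_1,\dots,a_5,t\}$, but Lemma~\ref{abcde} shows its pointwise stabilizer is the center $\langle i\rangle$ of $Mod_R^*$; the hyperelliptic involution $i$ acts trivially on every curve, so $h$ and $h\circ i$ induce the same $\lambda$ and uniqueness genuinely fails in this case.
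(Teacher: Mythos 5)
Your proposal is correct and follows essentially the same route as the paper: propagate the agreement $\lambda=[h]_*$ from the finite configuration of Lemmas \ref{curves-III} and \ref{prop1e} over all of $\mathcal{N}(R)$ by inducting along the generating set $G$, using the sets $L_f\cup f(L_f)$ with trivial pointwise stabilizer (Lemma \ref{abcd}), respectively stabilizer equal to the center of $Mod_R^*$ when $(g,n)=(2,0)$ (Lemma \ref{abcde}), to force each newly produced homeomorphism to coincide with $h$, and deriving uniqueness from the same stabilizer computation. The only organizational difference is that the paper runs the induction over the nested sets $\mathcal{X}_k=\mathcal{X}_{k-1}\cup\bigcup_{f\in G}\bigl(f(\mathcal{X}_{k-1})\cup f^{-1}(\mathcal{X}_{k-1})\bigr)$ and phrases the inductive step as: a homeomorphism $h_f$ agreeing with $\lambda$ on $f(\mathcal{X}_{k-1})$ exists, and $h_f=h$ because the two agree on the rigid set $f(L_f)$ --- which is precisely the mechanism needed to discharge the order-two ambiguities ($r_1$ versus $w$, $p_{g-1}$ versus $w_{g-1}$) that you correctly flag as not resolvable by a literal translation of the uniqueness characterizations alone.
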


\begin{proof} Let $f \in G$. There exists $L_f \subset \mathcal{N}(R)$ which satisfies the statement of Lemma \ref{prop1e}. Let $\mathcal{X}= \mathcal{C}_1 \cup \mathcal{C}_2 \cup \mathcal{C}_3 \cup \big( \bigcup _{f \in G} (L_f \cup f(L_f) \big ))$. For each vertex $x$ in the nonseperating curve graph, there 
exists $r \in Mod_R$ and a vertex $y$ in the set $\mathcal{X}$ such that $r(y)=x$. By following the construction given in \cite{IrP1}, we let
$\mathcal{X}_1 = \mathcal{X}$ and
$\mathcal{X}_k = \mathcal{X}_{k-1} \cup (\bigcup _{f \in G} (f(\mathcal{X}_{k-1}) \cup f^{-1}(\mathcal{X}_{k-1})))$ when $k \geq 2$. We observe that 
$\bigcup _{k=1} ^{\infty} \mathcal{X}_k$ is the set of all vertices in  $\mathcal{N}(R)$. We will prove that $h([x]) = \lambda([x])$ for all $x \in \mathcal{X}_{k}$ for each $k \geq 1$. We will give the proof by induction on $k$. By using Lemma \ref{curves-III} and Lemma \ref{prop1e}, we see that $h([x]) = \lambda([x])$ for each
$x \in \mathcal{X}_1$. Assume that $h([x]) = \lambda([x])$ for all $x \in \mathcal{X}_{k-1}$ for some $k \geq 2$. Let $f \in G$. There exists a homeomorphism $h_f$ of $R$ such that $h_f([x]) = \lambda([x])$ 
for all $x \in f(\mathcal{X}_{k-1})$. We have $f(L_f) \subset \mathcal{X}_{k-1} \cap f(\mathcal{X}_{k-1})$. This implies that when $(g, n) \neq (2, 0)$ we have $h_f = h$ since $f(L_f)$ has trivial  pointwise stabilizer, and when $(g, n) = (2, 0)$ we have $h_f = h$ or 
$h_f = h \circ i$ where $i$ is the generator for the center of $Mod_R^*$ since the pointwise stabilizer of $f(L_f)$ is the center of $Mod_R^*$. Then we have   
$h_f([x]) = h([x])$ for each vertex $[x] \in \mathcal{N}(R)$ since  
$i([x])=[x]$ for each vertex $[x] \in \mathcal{N}(R)$. Hence, we have  
$h_f([x]) = h([x]) = \lambda([x])$ for each $x \in \mathcal{X}_{k-1} \cup f(\mathcal{X}_{k-1})$. Similarly, there exists a homeomorphism 
$h'_f$ of $R$ such that $h'_f([x]) = \lambda([x])$ for all $x \in f^{-1}(\mathcal{X}_{k-1})$. We have $L_f \subset \mathcal{X}_{k-1} \cap f^{-1}(\mathcal{X}_{k-1})$. This implies that when $(g, n) \neq (2, 0)$ we have $h'_f = h$ since $L_f$ has trivial 
pointwise stabilizer, and when $(g, n) = (2, 0)$ we have $h'_f = h$ or $h'_f = h \circ i$ since the pointwise stabilizer of $L_f$ is the center of $Mod_R^*$. Then we have  $h'_f([x]) = h([x])$ for each vertex $[x] \in \mathcal{N}(R)$. Hence, we have  
$h'_f([x]) = h([x]) = \lambda([x])$ for each $x \in \mathcal{X}_{k-1} \cup f^{-1}(\mathcal{X}_{k-1})$. So, we get $h([x]) = \lambda([x])$ for each $x \in \mathcal{X}_{k-1} \cup f(\mathcal{X}_{k-1}) \cup f^{-1}(\mathcal{X}_{k-1})$ for any $f \in G$. This gives us $h([x]) = \lambda([x])$ for each $x \in \mathcal{X}_k$. Hence, by induction 
$h([x]) = \lambda([x])$ for each $x \in \mathcal{X}_k$ for all $k \geq 1$. Since 
$\bigcup _{k=1} ^{\infty} \mathcal{X}_k$ is the set of all vertices in  $\mathcal{N}(R)$, we have 
$h([x]) = \lambda([x])$ for every vertex $[x] \in \mathcal{N}(R)$. We see that this homeomorphism is unique up to isotopy when $(g, n) \neq (2, 0)$. When $(g, n) = (2, 0)$, we also have $(h \circ i) ([x]) =\lambda([x])$ for every vertex $[x]$ in 
$\mathcal{N}(R)$.\end{proof}
 
\section{Edge-preserving Maps of the Curve Graphs}

In this section we prove that an edge-preserving map $\theta : \mathcal{C}(R) \rightarrow \mathcal{C}(R)$ restricts to an 
edge-preserving map $\theta|_{\mathcal{N}(R)} : \mathcal{N}(R) \rightarrow \mathcal{N}(R)$. By Theorem \ref{A}, we know that 
there is a homeomorphism $h$ that induces $\theta|_{\mathcal{N}(R)}$. We will prove that $\theta$ is induced by $h$ as well.  
 
\begin{figure}[htb]
\begin{center}
\hspace{-0.4cm} \epsfxsize=1.6in \epsfbox{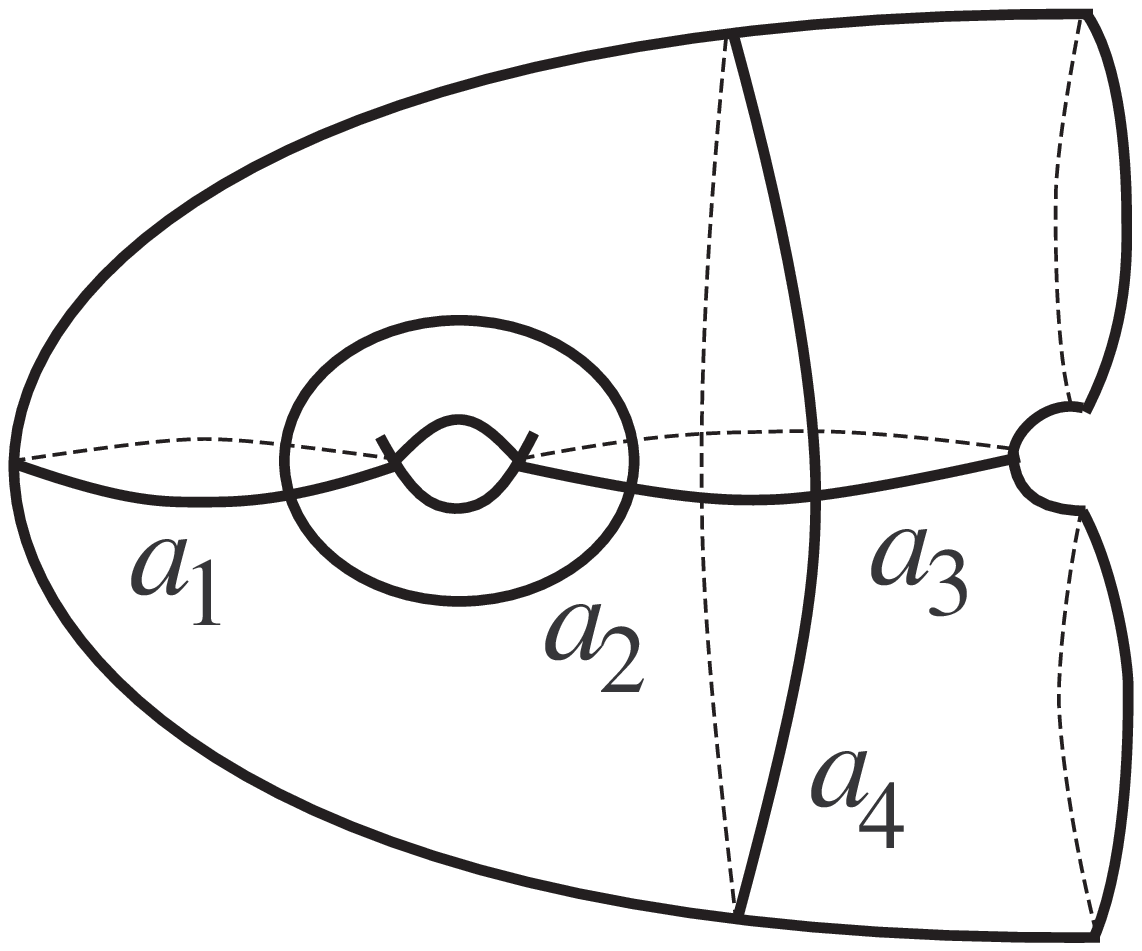}  
   
\caption{Curves $a_1, a_2$ intersecting once} \label{figure-90}
\end{center}
\end{figure}

The proof of the following lemma was given in \cite{H2} when $g \geq 3$. The same proof works for $g=2$ case as well. 

\begin{lemma}
\label{22} Suppose $g \geq 2, n \geq 0$. Let $\alpha_1, \alpha_2$ be two vertices of $\mathcal{C}(R)$ such that $i(\alpha_1, \alpha_2) = 1$. Then $i(\theta(\alpha_1), \theta(\alpha_2)) \neq 0$.\end{lemma}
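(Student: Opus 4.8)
The plan is to argue by contradiction, mimicking the structure the author already built for the nonseparating graph in Section~2 (Lemmas \ref{inj}, \ref{pd-inj}, \ref{0}, \ref{1}, \ref{2}), but carried out inside $\mathcal{C}(R)$. First I would record the reduction that both $\alpha_1$ and $\alpha_2$ are nonseparating: a separating curve is null-homologous, so its geometric intersection number with any curve is even, and $i(\alpha_1,\alpha_2)=1$ is odd. Next I would fix representatives $a_1,a_2$ in minimal position as in Figure~\ref{figure-90} and complete them to an adapted picture: a regular neighborhood of $a_1\cup a_2$ is a one-holed torus $T$, and I would choose a pants decomposition $P$ of $R$ containing $a_1$ and $\partial T$ together with a pants decomposition of $R\smallsetminus T$, arranged so that $a_2$ lies in $T$ and meets $P$ \emph{only} at $a_1$, with $i(\alpha_2,\alpha_1)=1$ and $i(\alpha_2,\gamma)=0$ for every other $\gamma\in P$.

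The engine is that an edge-preserving map sends pants decompositions to pants decompositions: a maximal complete subgraph of $\mathcal{C}(R)$ is a pants decomposition with exactly $3g-3+n$ vertices, $\theta$ is injective on it (two adjacent vertices cannot map to a single vertex), and the image is again a complete subgraph of that size, hence a pants decomposition $P':=\theta(P)$ (this is the $\mathcal{C}(R)$-analogue of Corollary~\ref{pd-inj}). Now suppose toward a contradiction that $i(\theta(\alpha_1),\theta(\alpha_2))=0$. Since $\theta$ is edge-preserving and $\alpha_2$ is disjoint from every $\gamma\in P\smallsetminus\{\alpha_1\}$, the class $\theta(\alpha_2)$ is disjoint from every curve of $P'$; under the contradiction hypothesis it is also disjoint from $\theta(\alpha_1)$, hence disjoint from all of $P'$. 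As any vertex disjoint from a full pants decomposition is isotopic to one of its curves, $\theta(\alpha_2)$ must coincide with some $\theta(\gamma)$, $\gamma\in P$. I would then eliminate this collapse with auxiliary curves exactly as in the proofs of Lemmas \ref{0} and \ref{2}: a curve meeting $\alpha_1$ once but disjoint from $\alpha_2$ (and vice versa), together with the chain/adjacency configurations of Lemma~\ref{1}, forces an intersection pattern on the images that is incompatible with $\theta(\alpha_2)$ being one of the pants curves.

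The step I expect to be the main obstacle is \emph{precisely the place where the Section~2 arguments used that images are nonseparating}. Several of the contradictions there (e.g.\ Case~(ii) of Lemma~\ref{0} and the filling/arc count in Lemma~\ref{2}) conclude by observing that some image curve ``would have to be separating,'' which is absurd for a map $\lambda:\mathcal{N}(R)\to\mathcal{N}(R)$. For $\theta:\mathcal{C}(R)\to\mathcal{C}(R)$ the image pants decomposition $P'$ may genuinely contain separating curves, so those contradictions no longer close on their own. The resolution, following Hern\'andez \cite{H2}, is to replace every such ``it is separating'' step by an argument that refers only to geometric intersection numbers and to the combinatorics of the pants decomposition $P'$ (how arcs of a minimal-position representative of $\theta(\alpha_2)$ can run through the pants adjacent to $\theta(\alpha_1)$), reasoning that is insensitive to whether the relevant curves separate. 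Since all the configurations I use in Figure~\ref{figure-90} and in the adapted $P$ already exist on a genus-$2$ surface, the same intersection-number-only argument that \cite{H2} gives for $g\ge 3$ applies verbatim when $g=2$, which is the only point where our statement goes beyond \cite{H2}.
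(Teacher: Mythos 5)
Your proposal is correct and follows essentially the same route as the paper's proof: complete $a_1$ to a pants decomposition that $a_2$ meets only in $a_1$, use that $\theta$ sends pants decompositions to pants decompositions to obtain the dichotomy ``$\theta(\alpha_1)$ and $\theta(\alpha_2)$ either intersect or coincide,'' and then rule out the coincidence using an auxiliary curve disjoint from $a_1$ but meeting $a_2$ once. The only remark worth adding is that the worry in your last paragraph does not materialize for this particular lemma: the elimination of $\theta(\alpha_1)=\theta(\alpha_2)$ is simply a second application of the same dichotomy (to the auxiliary curve paired with $a_2$) combined with edge-preservation, and at no point does one need the image curves to be nonseparating, so none of the separating-versus-nonseparating bookkeeping from the proofs of Lemma \ref{0} and Lemma \ref{2} (or from \cite{H2}) has to be imported.
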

 
\begin{proof} Let $a_1, a_2$ be representatives in minimal position of $\alpha_1, \alpha_2$ respectively. We complete $a_1, a_2$ to a curve configuration $\{a_1, a_2, a_3, a_4\}$ as shown in Figure \ref{figure-90}. Then we complete $\{a_1, a_4\}$ to a pants decomposition $P$ on $R$ such that all the elements of $P \setminus \{a_4\}$ are disjoint from $a_3$. Let $P'$ be a set of pairwise disjoint representatives of $\lambda([P])$. The set $P'$ is a pants decomposition on $R$. We see that $i([a_2], [x]) = 0$ for all $x \in P \setminus \{a_1\}$ and there is an edge between $[a_2]$ and $[x]$ for all $x \in P \setminus \{a_1\}$. Since 
$\theta$ is edge-preserving we have $i(\theta([a_2]), \theta([x])) = 0$ for all $x \in P \setminus \{a_1\}$ and there is an edge between 
$\theta([a_2])$ and $\theta([x])$ for all $x \in P \setminus \{a_1\}$. This implies that either $i(\theta([a_1]), \theta([a_2])) \neq 0$ or 
$\theta([a_1]) = \theta([a_2])$. With a similar argument we can see that either $i(\theta([a_3]), \theta([a_2])) \neq 0$ or 
$\theta([a_3]) = \theta([a_2])$. If $\theta([a_1]) = \theta([a_2])$ then we couldn't have $i(\theta([a_3]), \theta([a_2])) \neq 0$ and
$\theta([a_3]) = \theta([a_2])$ since $\theta$ is edge-preserving. Hence, $i(\theta(\alpha_1), \theta(\alpha_2)) \neq 0$.\end{proof}\\

By using Lemma \ref{22}, we can obtain the proofs of the following lemmas by following the proofs of Lemma \ref{adj}, Lemma \ref{embedded}, Lemma \ref{int-one}.  

\begin{lemma}
\label{adj-2} Suppose $g \geq 2$ and $n \geq 0$. Let $P = \{a_1, a_3, \cdots, a_{2g+1}, b_4, b_6, \cdots, b_{2g-2}, c_4,$ 
$c_6, \cdots, c_{2g-2}\}$ when $R$ is closed, and  
$P = \{a_1, a_3, \cdots, a_{2g-1}, b_4, b_6, \cdots, b_{2g}, c_4, c_6,$ 
$\cdots, c_{2g},$ $d_1, d_2,$ $ \cdots, d_{n-1}\}$ when $R$ has boundary
where the curves are as shown in Figure \ref{fig1} (i)-(ii). Let $P'$ be a pair of pants
decomposition of $R$ such that $\theta([P]) = [P']$. Let $a'_1, a'_3, b'_4$ be the representatives of $\theta([a_1]), \theta([a_3]),
\theta([b_4])$ in $P'$ respectively. Then any two of $a'_1, a'_3, b'_4$ are adjacent to each other with respect to $P'$.\end{lemma}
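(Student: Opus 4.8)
The plan is to reproduce the proof of Lemma \ref{adj} essentially verbatim, replacing the map $\lambda$ throughout by $\theta$ and every appeal to Lemma \ref{2} by the corresponding appeal to Lemma \ref{22}. The only structural prerequisite is the analog of Corollary \ref{pd-inj} for $\theta$: since $\theta$ is edge-preserving it is injective on the vertex set of any complete subgraph (the argument of Lemma \ref{inj} uses only edge-preservation), and any collection of $3g-3+n$ pairwise disjoint, pairwise nonisotopic nontrivial simple closed curves is a maximal such collection, hence a pants decomposition. Consequently a set of pairwise disjoint representatives of $\theta([P])$ is again a pants decomposition $P'$, which is exactly the hypothesis recorded in the statement, and the same reasoning shows $\theta([Q])$ is a pants decomposition for each auxiliary decomposition $Q$ used below.

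First I would establish that $a_1'$ and $a_3'$ are adjacent with respect to $P'$ using the curve $x$ of Figure \ref{fig1} (ii), which satisfies $i([x],[a_1]) = i([x],[a_3]) = 1$ and $i([x],[e]) = 0$ for every other $e \in P$. By Lemma \ref{22} we get $i(\theta([x]), \theta([a_1])) \neq 0$ and $i(\theta([x]), \theta([a_3])) \neq 0$, while edge-preservation gives $i(\theta([x]), \theta([e])) = 0$ for all remaining $e \in P$. If $a_1'$ and $a_3'$ were not adjacent, then in order to meet both $\theta([a_1])$ and $\theta([a_3])$ essentially the curve $\theta([x])$ would be forced to intersect some third element of the pants decomposition $P'$ essentially, contradicting the vanishing just obtained; hence $a_1'$ and $a_3'$ are adjacent.

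Next I would obtain the two remaining adjacencies, $a_3'$ with $b_4'$ and $a_1'$ with $b_4'$, via the $Q_1,Q_2,Q_3,Q_4$ device of Lemma \ref{adj}. For $a_3'$ with $b_4'$ I introduce the curves $y$ and $a_4$ of Figure \ref{fig1} (iii) together with $Q_1 = (P \setminus \{a_3\}) \cup \{u\}$. Because $i([y],[e]) = 0$ for all $e \in Q_1 \setminus \{b_4\}$, the class $\theta([y])$ is disjoint from every element of the pants decomposition $\theta([Q_1])$ except possibly $\theta([b_4])$; using $i([a_4],[b_4])=1$, $i([a_4],[y])=0$, and Lemma \ref{22} one separates $\theta([y])$ from $\theta([b_4])$, so by maximality of $\theta([Q_1])$ we must have $i(\theta([y]), \theta([b_4])) \neq 0$. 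The symmetric computation with $Q_2 = (P \setminus \{b_4\}) \cup \{v\}$ yields $i(\theta([y]), \theta([a_3])) \neq 0$, and since $\theta([y])$ is disjoint from all other classes of $\theta([P])$ the same two-intersection principle as above gives adjacency of $a_3'$ and $b_4'$. The adjacency of $a_1'$ and $b_4'$ is identical, using $z$, $Q_3 = (P\setminus\{a_1\})\cup\{r\}$, and $Q_4 = (P\setminus\{b_4\})\cup\{t\}$.

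The cases $g \geq 3,\ n = 0$ and $g = 2$ go through exactly as in Lemma \ref{adj}, with $(g,n)=(2,0)$ immediate since $P'$ then has only three elements. I expect the only point that needs genuine care to be confirming that each intermediate image-set is a bona fide pants decomposition, so that the maximality arguments apply; this is where separating curves could in principle intrude in the curve-graph setting. This worry is dispelled by the observation that every curve on which we impose a geometric-intersection-one condition is automatically nonseparating (a separating curve meets any curve in an even number of points), so Lemma \ref{22} applies to each such class and all the adjacency deductions are unaffected by whether a few images of the $P$-classes happen to be separating.
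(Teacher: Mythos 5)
Your proposal is correct and is essentially the paper's own argument: the paper gives no separate proof of this lemma, stating only that it follows from Lemma \ref{22} by repeating the proof of Lemma \ref{adj} with $\theta$ in place of $\lambda$, which is exactly what you carry out. Your closing observation --- that every curve subjected to an intersection-one condition is automatically nonseparating, so Lemma \ref{22} applies and possible separating images of the $P$-classes do not disturb the maximality arguments --- is a point the paper leaves implicit, and it is handled correctly.
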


\begin{lemma}
\label{embedded-2} Suppose $g \geq 2$ and $n \geq 0$. If $\alpha, \beta, \gamma$ are distinct vertices in $\mathcal{C}(R)$ having pairwise disjoint nonseparating representatives which bound a pair of pants on $R$, then $\theta(\alpha), \theta(\beta), \theta(\gamma)$ 
are distinct vertices in $\mathcal{C}(R)$ having pairwise disjoint representatives which bound a pair of pants on $R$.\end{lemma}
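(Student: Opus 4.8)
The plan is to reproduce the argument of Lemma \ref{embedded} almost verbatim, replacing the map $\lambda$ by $\theta$ and substituting each appeal to Lemma \ref{2} (preservation of essential intersection for curves meeting once) by the corresponding appeal to Lemma \ref{22}, and each appeal to Lemma \ref{adj} by Lemma \ref{adj-2}. First I would dispose of the trivial case where $R$ is closed of genus two, in which the statement is immediate. In all remaining cases, let $a=a_1$, $b=a_3$, $c=b_4$ be pairwise disjoint nonseparating representatives of $\alpha,\beta,\gamma$ cobounding a pair of pants, and complete them to the nonseparating pants decomposition $P$ of Lemma \ref{adj-2}, together with the auxiliary curves $a_2,a_4$ of Figure \ref{fig3-a}; recall that $i([a_1],[a_2])=i([a_2],[a_3])=1$, that $a_2$ is disjoint from every other curve of $P$, and that $a_4$ is disjoint from $a_1$ and $a_2$ with $i([a_4],[b_4])=1$.

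A key preliminary observation is that the proof of Lemma \ref{inj} uses only that the map is edge-preserving together with the fact that the two endpoints of an edge in a simple graph are distinct; hence the same argument shows $\theta$ is injective on the clique $[P]$, so $\theta([P])$ is a clique of size $3g-3+n$, which is maximal and therefore a pants decomposition $[P']$ on $R$. Let $a_1',a_3',b_4',c_4'\in P'$ be the representatives of $\theta([a_1]),\theta([a_3]),\theta([b_4]),\theta([c_4])$, and let $a_2',a_4'$ be representatives of $\theta([a_2]),\theta([a_4])$ meeting $P'$ minimally. Lemma \ref{adj-2} then gives that any two of $a_1',a_3',b_4'$ are pairwise adjacent with respect to $P'$ (and likewise any two of $a_1',a_3',c_4'$). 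I would now run the same combinatorial chase as in Lemma \ref{embedded}: choose the pair of pants $Q_1\subset P'$ whose boundary contains $a_1',a_3'$; if its third boundary curve is $b_4'$ we are done, so assume not. Adjacency of $a_1'$ and $b_4'$ produces a pair of pants $Q_2$ containing them; if the third boundary curve of $Q_2$ is $a_3'$ we are done, so assume not, and adjacency of $a_3'$ and $b_4'$ produces a third pair of pants $Q_3$.

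The main obstacle is to rule out this last configuration, in which $a_1',a_3',b_4'$ are pairwise adjacent but do not cobound a single pair of pants; pairwise adjacency alone does not force a common pair of pants. Here the auxiliary curves are decisive: by Lemma \ref{22} the curve $a_2'$ meets both $a_1'$ and $a_3'$ essentially while staying disjoint from all other curves of $P'$, so it must contain an essential arc $w$ in $Q_2$ that starts and ends on $a_1'$ and avoids $b_4'$ and the third boundary of $Q_2$. But $a_4'$ is forced to be disjoint from $a_1'$ and $a_2'$ (disjointness is preserved since $\theta$ is edge-preserving) while meeting $b_4'$ essentially (so $i([a_4'],[b_4'])\neq 0$ by Lemma \ref{22}), and this is incompatible with the existence of such an arc $w$. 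This contradiction shows the assumed configuration cannot occur, so $a_1',a_3',b_4'$ cobound a pair of pants; since $\theta$ is injective on cliques these three images are distinct, which completes the proof. I expect verifying that the forced arc $w$ genuinely obstructs $a_4'$ to be the one point demanding care, exactly as in the proof of Lemma \ref{embedded}.
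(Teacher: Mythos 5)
Your proposal is correct and follows essentially the same route as the paper, which itself proves Lemma \ref{embedded-2} by remarking that one can repeat the proof of Lemma \ref{embedded} verbatim with $\theta$ in place of $\lambda$, Lemma \ref{22} in place of Lemma \ref{2}, and Lemma \ref{adj-2} in place of Lemma \ref{adj}. Your additional observation that injectivity on cliques (and hence the pants-decomposition image) carries over because it only uses edge-preservation is exactly the point that makes this substitution legitimate.
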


\begin{lemma} \label{int-one-2} Suppose $g \geq 2$ and $n \geq 0$.  
Let $\alpha_1, \alpha_2$ be two vertices of $\mathcal{C}(R)$. If $i(\alpha_1, \alpha_2) = 1$, 
then $i(\theta(\alpha_1), \theta(\alpha_2) )= 1$.\end{lemma}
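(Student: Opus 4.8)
The plan is to mirror the proof of Lemma \ref{int-one}, substituting the already-established curve-graph analogues (Lemma \ref{22} and Lemma \ref{embedded-2}) for their nonseparating-graph counterparts (Lemma \ref{2} and Lemma \ref{embedded}), and to finish by invoking the characterization of geometric intersection one from Lemma \ref{Irmak-lemma}. The first observation I would record is that the hypothesis $i(\alpha_1, \alpha_2) = 1$ already forces both $\alpha_1$ and $\alpha_2$ to be nonseparating: an odd geometric intersection number gives a nonzero mod-two algebraic intersection, so each of $\alpha_1, \alpha_2$ has nontrivial class in $H_1(R; \mathbb{Z}/2)$ and hence is nonseparating. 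Thus $\alpha_1, \alpha_2 \in \mathcal{N}(R)$ and the configuration of Figure \ref{fig-int-one} is available.

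Next I would complete $\{a_1, a_2\}$ to a curve configuration $\{a_1, a_2, \ldots, a_7\}$ exactly as in Figure \ref{fig-int-one}, with $\alpha_i = [a_i]$, so that all seven curves are nonseparating and $i(\alpha_i, \alpha_j) = 0$ precisely when $a_i$ and $a_j$ are disjoint. Using Lemma \ref{22} in place of Lemma \ref{2} (so that a geometric-intersection-one pair is sent to a pair with nonzero intersection) together with the fact that $\theta$ is edge-preserving, I would verify condition (i) of Lemma \ref{Irmak-lemma} for the images, namely $i(\theta(\alpha_i), \theta(\alpha_j)) = 0$ if and only if $a_i$ and $a_j$ are disjoint. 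Then, applying Lemma \ref{embedded-2} in place of Lemma \ref{embedded} to the pairs of pants appearing in the configuration, I would produce pairwise disjoint representatives $a_1', a_3', a_5', a_6'$ of $\theta(\alpha_1), \theta(\alpha_3), \theta(\alpha_5), \theta(\alpha_6)$ for which $a_5' \cup a_6'$ divides $R$ into two pieces, one a torus with two holes $T$ containing representatives of $\theta(\alpha_1), \theta(\alpha_2)$, with $a_1', a_3', a_5'$ bounding a pair of pants in $T$ and $a_1', a_3', a_6'$ bounding a pair of pants in $T$. This is precisely condition (ii).

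With both conditions in hand, the forward (existence-implies-intersection-one) direction of Lemma \ref{Irmak-lemma} yields $i(\theta(\alpha_1), \theta(\alpha_2)) = 1$. The one point I would treat carefully is that Lemma \ref{Irmak-lemma} is stated for vertices of $\mathcal{N}(R)$, so I must confirm that $\theta(\alpha_1)$ and $\theta(\alpha_2)$ are themselves nonseparating before quoting it; this is automatic from the reconstructed torus with two holes $T$, since inside $T$ each of $\theta(\alpha_1)$ and $\theta(\alpha_2)$ bounds a pair of pants with two of the curves $a_i'$ and is therefore nonseparating in $R$. The main obstacle is thus not a new idea but the bookkeeping of transporting the Lemma \ref{Irmak-lemma} configuration across $\theta$ --- checking that the intersection pattern and every pair-of-pants relation survive, and that the image curves remain nonseparating so that the characterization applies --- since the combinatorial skeleton of the argument is identical to that of Lemma \ref{int-one}.
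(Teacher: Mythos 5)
Your proposal matches the paper's own treatment: the paper gives no separate argument for Lemma \ref{int-one-2}, remarking only that it follows by repeating the proof of Lemma \ref{int-one} with Lemma \ref{22} and Lemma \ref{embedded-2} substituted for Lemma \ref{2} and Lemma \ref{embedded} and then invoking Lemma \ref{Irmak-lemma}, which is precisely your plan. Your two additional observations --- that $i(\alpha_1, \alpha_2) = 1$ already forces $\alpha_1, \alpha_2$ to be nonseparating, and that one must check the image classes are nonseparating before quoting Lemma \ref{Irmak-lemma} (which is stated for vertices of $\mathcal{N}(R)$) --- are exactly the points of care that the paper leaves implicit in passing from $\mathcal{N}(R)$ to $\mathcal{C}(R)$.
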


\begin{figure}[htb]
	\begin{center}
		\hspace{-0.4cm} \epsfxsize=2.67in \epsfbox{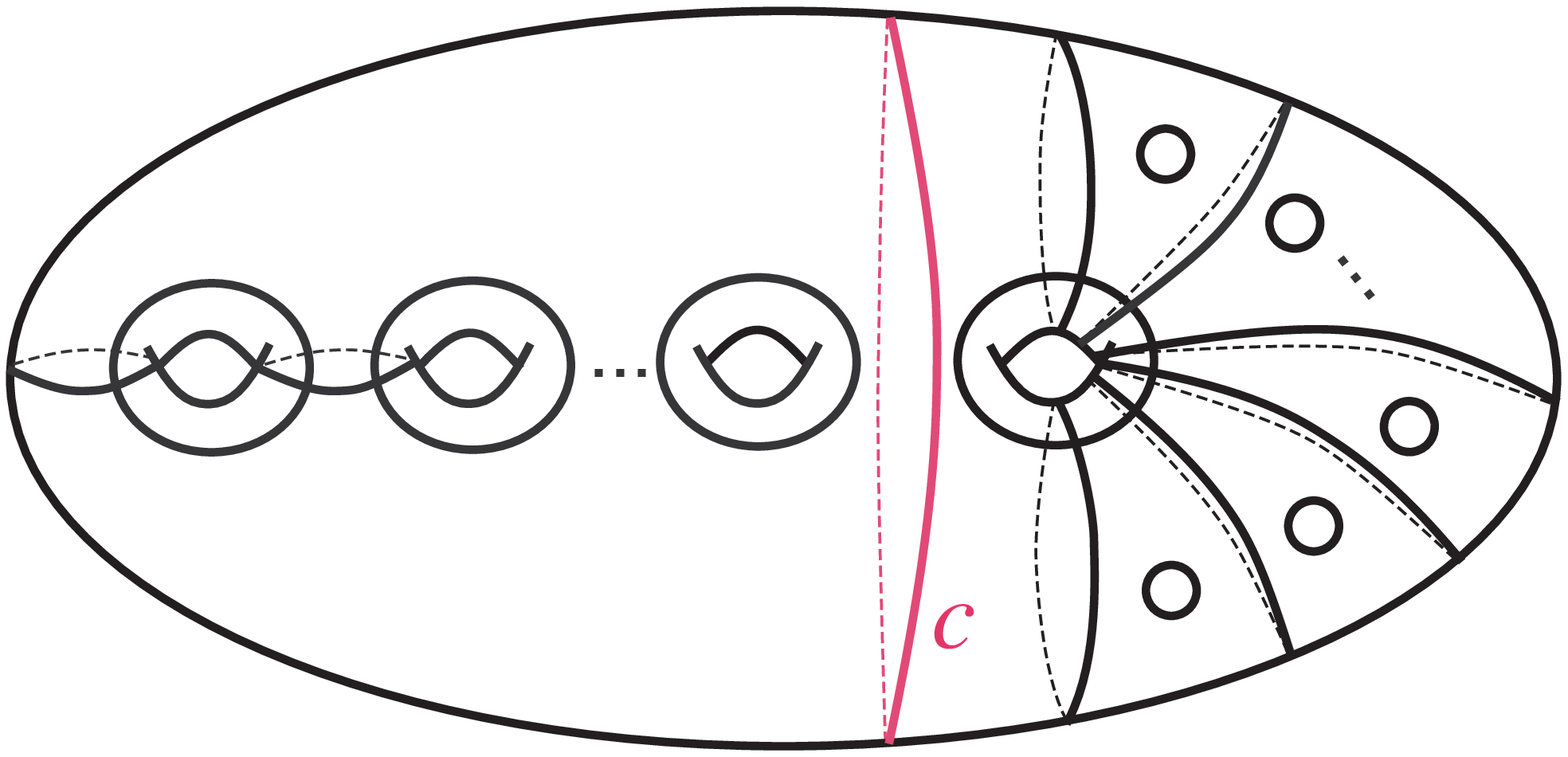} \hspace{0.2cm} \epsfxsize=2.67in \epsfbox{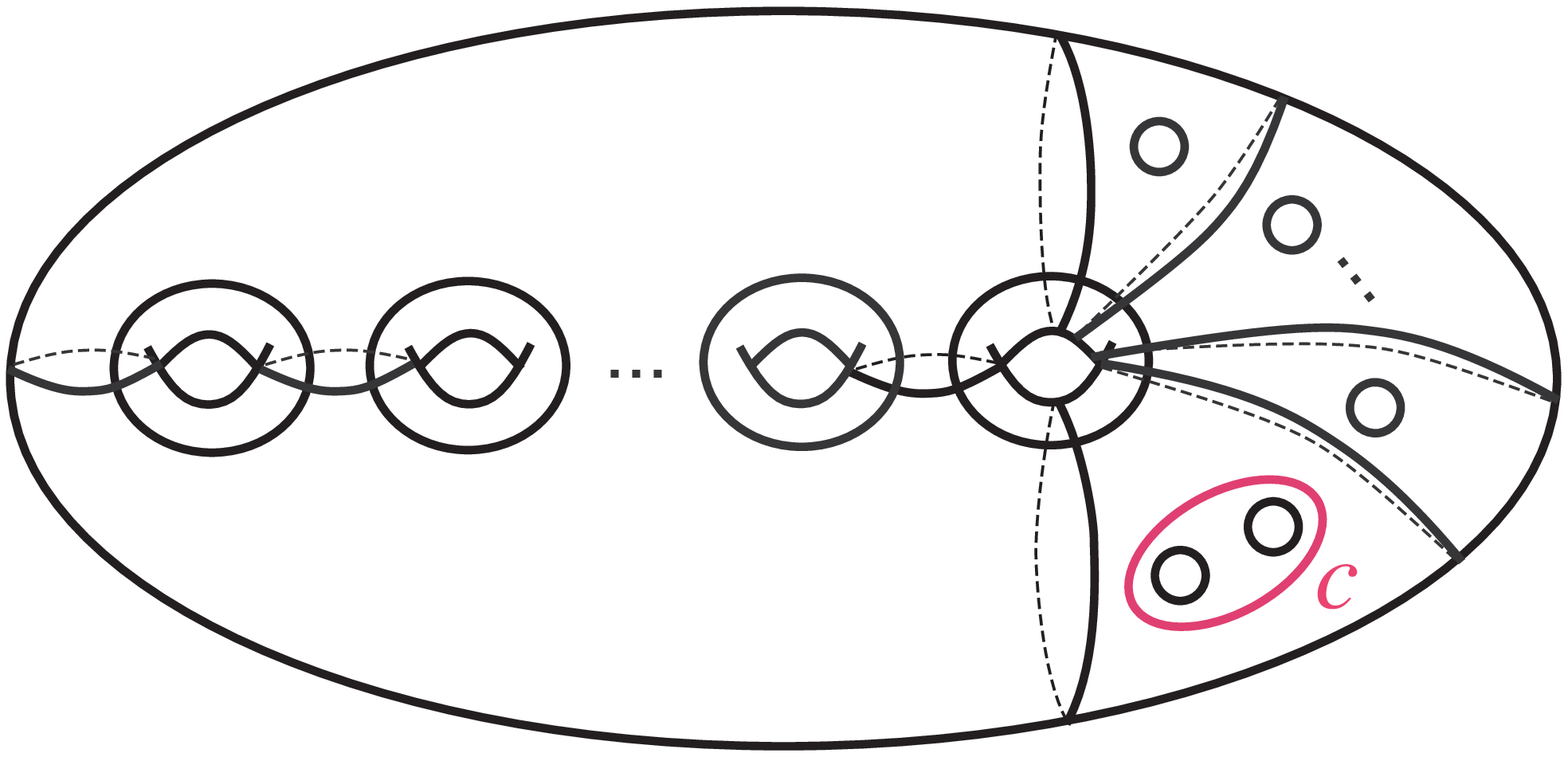} 
		
		\hspace{-1cm} (i) \hspace{6.7cm} (ii)
		
		\hspace{-0.4cm} \epsfxsize=2.67in \epsfbox{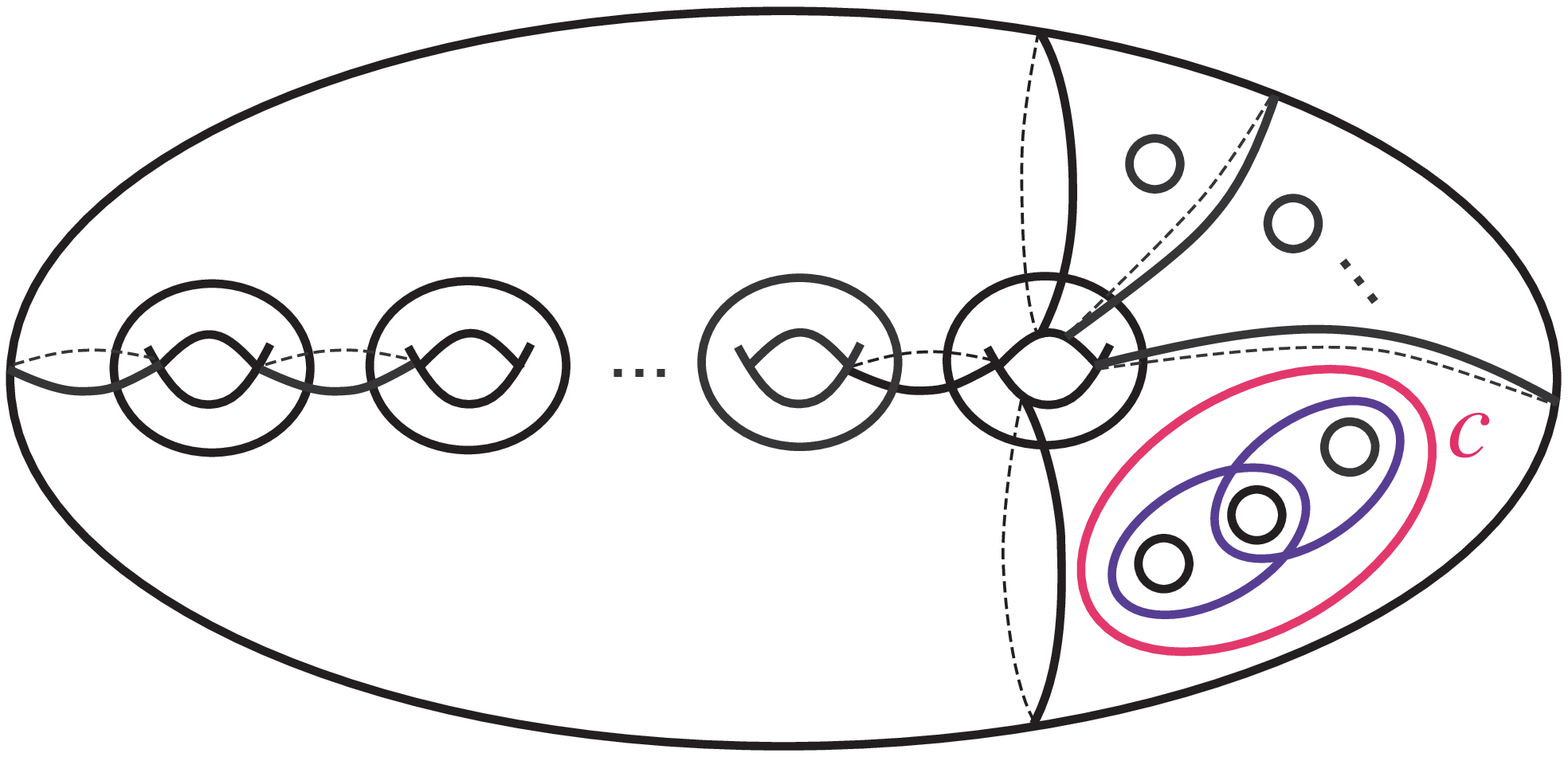} \hspace{0.2cm} \epsfxsize=2.67in \epsfbox{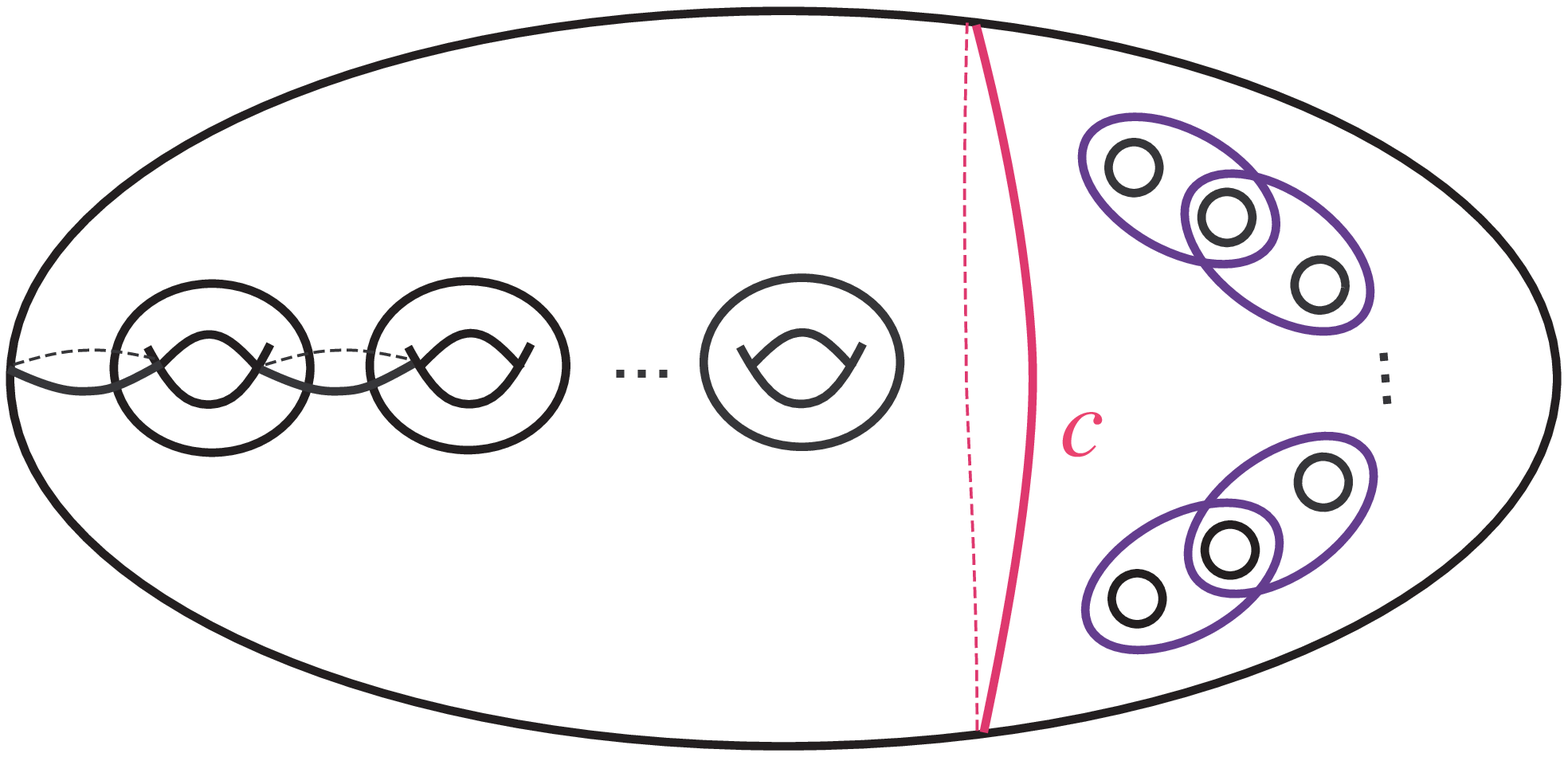} 
		
		\hspace{-0.8cm} (iii) \hspace{6.3cm} (iv)
		
		\caption{Curve configurations for separating curves} \label{fig100}
	\end{center}
\end{figure}

\begin{theorem} Let $g \geq 2$ and $n \geq 0$. If $\theta :\mathcal{C}(R) \rightarrow \mathcal{C}(R)$ is an edge-preserving map, then there exists a homeomorphism $h : R \rightarrow R$ such that $[h]_*(\alpha) = \theta(\alpha)$ for every 
vertex $\alpha$ in $\mathcal{C}(R)$ and this homeomorphism is unique 
up to isotopy when $(g, n) \neq (2, 0)$.\end{theorem}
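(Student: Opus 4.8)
The plan is to reduce the statement for $\mathcal{C}(R)$ to the statement for $\mathcal{N}(R)$ that is already proved in Theorem \ref{A}, and then to upgrade the resulting homeomorphism from nonseparating classes to all classes. First I would verify that $\theta$ carries nonseparating classes to nonseparating classes, so that it restricts to $\mathcal{N}(R)$. If $\alpha$ is nonseparating, choose $\beta$ with $i(\alpha,\beta)=1$; by Lemma \ref{int-one-2} we get $i(\theta(\alpha),\theta(\beta))=1$, and since the geometric intersection of any class with a separating class is even, $\theta(\alpha)$ must be nonseparating. As $\theta$ is edge-preserving on all of $\mathcal{C}(R)$, its restriction $\theta|_{\mathcal{N}(R)}$ is an edge-preserving self-map of $\mathcal{N}(R)$, so by Theorem \ref{A} there is a homeomorphism $h$ inducing it. Replacing $\theta$ by $[h]_*^{-1}\circ\theta$, I may assume $\theta$ fixes every nonseparating class, and it then suffices to show $\theta$ fixes every separating class $\gamma$.

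The mechanism I would use is this: for every nonseparating class $c$ with $i(c,\gamma)=0$ we have $\theta(c)=c$, so $\theta(\gamma)$ is disjoint from every fixed nonseparating class that is disjoint from $\gamma$; moreover, since $c$ and $\gamma$ are adjacent, edge-preservation forbids $\theta(\gamma)=\theta(c)=c$. To turn this into a pinning argument I would complete $\gamma$ to a full pants decomposition $P$ of $R$ in which every curve of $P\setminus\{\gamma\}$ is known to be fixed by $\theta$. Cutting $R$ along $P\setminus\{\gamma\}$ leaves the two pants adjacent to $\gamma$ joined along $\gamma$ into a single four–holed sphere, so the only essential nonperipheral classes disjoint from $P\setminus\{\gamma\}$ are $\gamma$ and the two other essential classes of that four–holed sphere. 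Hence $\theta(\gamma)$, being disjoint from the fixed set $P\setminus\{\gamma\}$ and distinct from each of its members, is $\gamma$ or one of at most two explicit wrong candidates, each of which meets $\gamma$.

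To eliminate the wrong candidates I would add guard classes: nonseparating classes disjoint from $\gamma$ (hence fixed by $\theta$) that intersect each wrong candidate essentially; since $\theta(\gamma)$ must be disjoint from each guard, it cannot equal a candidate that meets a guard, forcing $\theta(\gamma)=\gamma$. Constructing $P$ and the guards in every topological type of $\gamma$ is the main obstacle, and this is exactly what the configurations of Figure \ref{fig100} are designed to supply. When both complementary pieces of $\gamma$ have positive genus, each piece can be filled by classes that are nonseparating in $R$, so all of $P\setminus\{\gamma\}$ is fixed outright. The delicate case is when one side of $\gamma$ is planar: a planar piece contains no class nonseparating in $R$, so the interior curves of $P$ lying there are separating. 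I would handle this by induction on the number of boundary components of the planar side, since those interior curves cut off strictly simpler planar pieces and are therefore fixed by the inductive hypothesis; the base case is $\gamma$ cobounding a pair of pants with two boundary components of $R$, treated directly by the guard–curve device.

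Finally, for uniqueness I would argue as in Theorem \ref{A}. A homeomorphism inducing the identity on all of $\mathcal{C}(R)$ fixes in particular every nonseparating class, so by the uniqueness clause of Theorem \ref{A} it is isotopic to the identity when $(g,n)\neq(2,0)$. For $(g,n)=(2,0)$ the hyperelliptic involution generating $Z(Mod^*_R)\cong\mathbb{Z}_2$ fixes every isotopy class of curves, which accounts exactly for the stated ambiguity in that case.
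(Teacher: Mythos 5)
Your overall strategy coincides with the paper's: restrict $\theta$ to $\mathcal{N}(R)$ using Lemma \ref{int-one-2} together with the parity of intersection numbers with separating classes, invoke Theorem \ref{A} to obtain $h$, and then determine each separating class $\gamma$ by exhibiting a configuration of already-determined classes whose disjointness constraints force $\theta(\gamma)=h(\gamma)$. Your two-layer treatment of the planar-sided case (base case: $\gamma$ cobounding a pair of pants with two boundary components of $R$; then the general planar case using such curves) is exactly the structure of the paper's Figure \ref{fig100} (ii)--(iv), and your uniqueness discussion matches the paper's.

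The one step that fails as written is the counting claim in your pinning argument. After cutting $R$ along $P\setminus\{\gamma\}$, the component containing $\gamma$ is a four-holed sphere, and a four-holed sphere carries \emph{infinitely} many isotopy classes of essential non-peripheral simple closed curves (three types according to how they partition the four boundary components, but infinitely many classes of each type, related by Dehn twists about one another). So $\theta(\gamma)$ is not confined to ``$\gamma$ or at most two explicit wrong candidates,'' and adding one guard per wrong candidate cannot terminate. The repair is routine and is exactly what the paper's configurations $\mathcal{B}$ accomplish: choose the fixed classes (the curves of $P\setminus\{\gamma\}$ together with the guards) so that $\gamma$ is the \emph{unique} nontrivial class disjoint from all of them. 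Concretely, it suffices to include a guard that meets each of the two pairs of pants adjacent to $\gamma$ in an essential arc separating $\gamma$ from the other two boundary components of the four-holed sphere, since every essential curve of that four-holed sphere other than $\gamma$ must cross such an arc. With that modification your argument closes the gap and reproduces the paper's proof.
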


\begin{proof} Let $a$ be a nonseparating simple closed curve on $R$. We choose another nonseparating simple closed curve $b$ such that $i([a], [b]) =1$. 
By Lemma \ref{int-one-2} we have $i(\theta([a]), \theta([b])) =1$. This implies that $\theta([a])$ is the class of a nonseparating curve. So, $\theta$ restricts to an edge-preserving map $\theta|_{\mathcal{N}(R)} : \mathcal{N}(R) \rightarrow \mathcal{N}(R)$.  By Theorem \ref{A}, there is a homeomorphism $h : R \rightarrow R$ that induces $\theta|_{\mathcal{N}(R)}$, and this homeomorphism is unique 
up to isotopy when $(g, n) \neq (2, 0)$. We will prove that $\theta$ is induced by $h$ as well. Let $c$ be a separating simple closed curve on $R$. If both of the connected components of the complement of $c$ have genus at least one, then we can find a curve configuration $\mathcal{B}$ consisting of only nonseparating simple closed curves on $R$ such that $c$ is the unique nontrivial simple closed curve up to isotopy disjoint from all the curves in $\mathcal{B}$, see Figure \ref{fig100} (i). Since we know that $h([x]) = \theta([x])$ for all the curves in $\mathcal{B}$ and $\theta$ is edge-preserving we have $h([c]) = \theta([c])$. If $c$ and two boundary components of $R$ bound a pair of pants as shown in Figure \ref{fig100} (ii), then we can find a curve configuration $\mathcal{B}$ consisting of only nonseparating simple closed curves on $R$ such that $c$ is the unique nontrivial simple closed curve up to isotopy disjoint from all the curves in $\mathcal{B}$ as shown in Figure \ref{fig100} (ii). Since we know that $h([x]) = \theta([x])$ for all the curves in $\mathcal{B}$ and $\theta$ is edge-preserving we have $h([c]) = \theta([c])$. If $c$ cuts a sphere with at least four boundary components, then we can find a curve configuration $\mathcal{B}$ consisting of only nonseparating simple closed curves and separating simple closed curves that 
cut a pair of pants on $R$ such that $c$ is the unique nontrivial simple closed curve up to isotopy disjoint from all the curves in $\mathcal{B}$, see Figure \ref{fig100} (iii) and (iv). Since we know that $h([x]) = \theta([x])$ for all the curves in $\mathcal{B}$ and $\theta$ is edge-preserving we have $h([c]) = \theta([c])$. So, we see that $h([x]) = \theta([x])$ for every  
vertex $[x]$ in $\mathcal{C}(R)$. Hence, $\theta$ is induced by $h$.\end{proof}
 
\section{Rectangle Preserving Maps of the Hatcher-Thurston Graphs}

We remind that if $C= \{c_1, c_2, \ldots, c_g\}$ is a set of pairwise disjoint nonseparating simple
closed curves on $R$ such that $R_C$ is a sphere with
$2g+n$ boundary components, then the set $\{ [c_1], [c_2], \cdots,
[c_g] \}$ is called a cut system and denoted by $\langle [c_1], [c_2],
\cdots, [c_g] \rangle$. Let $v$ and $w$ be two cut systems. Suppose that there are $[c] \in
v$ and $[d] \in w$ such that $i([c], [d])=1$ and $v \setminus\{ [c] \}= w \setminus \{ [d] \}$.
We will say that $w$ is obtained from $v$ by an elementary move and  
write $v \leftrightarrow w$.
If $\langle [c_1], [c_2], \cdots, [c_i], \cdots, [c_g] \rangle
\leftrightarrow \langle [c_1], [c_2], \cdots, [c'_i], \cdots, [c_g] \rangle$
is an elementary move, then we will drop the unchanged curves from the
notation and write $\langle [c_i] \rangle \leftrightarrow \langle
[c'_i] \rangle$.

Let $\mathcal{HT}(R)$ be the Hatcher-Thurston graph which has the cut systems
as vertices, and two vertices $v$ and $w$ span an edge if $w$ is obtained from $v$ by an elementary move. A sequence of cut systems $(v_1, \ldots, v_n)$ forms a path in
$\mathcal{HT}(R)$ if every consecutive pair in the sequence is connected
by an edge in $\mathcal{HT}(R)$. There
are three types of distinguished closed paths: triangles, rectangles and pentagons.
If three vertices have $g-1$ common elements and if the remaining classes
$[c], [c'], [c'']$ satisfy $i([c], [c'])=i([c], [c''])=i([c'], [c''])=1$, then they form a {\it triangle} 
$\langle c \rangle \leftrightarrow \langle c' \rangle
\leftrightarrow \langle c'' \rangle \leftrightarrow \langle c
\rangle$ (see Figure \ref{fig8} (i)). If four vertices have $g-2$ common elements and if the remaining classes
$[c_1], [c_2], [d_1], [d_2]$ have representatives $c_1, c_2, d_1, d_2$ given as in
Figure \ref{fig8} (ii), then they form a {\it rectangle} $\langle [c_1], [d_1]
\rangle \leftrightarrow \langle [c_1], [d_2] \rangle \leftrightarrow
\langle [c_2], [d_2] \rangle \leftrightarrow \langle [c_2], [d_1] \rangle
\leftrightarrow \langle [c_1], [d_1] \rangle $. If five vertices have $g-2$ common elements and if the remaining classes
$[c_1], [c_2], [c_3], [c_4], [c_5]$ have representatives $c_1, c_2, c_3, c_4, c_5$
intersecting each other as in Figure \ref{fig8} (iii), then they form a {\it pentagon} $\langle c_1,c_4 \rangle \leftrightarrow \langle
c_2,c_4 \rangle \leftrightarrow\langle c_2,c_5 \rangle
\leftrightarrow \langle c_3,c_5 \rangle \leftrightarrow\langle
c_1,c_3 \rangle  \leftrightarrow \langle c_1,c_4 \rangle$. The graph $\mathcal{HT}(R)$
is the $1$-skeleton of the Hatcher-Thurston complex which is a
two-dimensional CW-complex obtained from $\mathcal{HT}(R)$ by attaching
a $2$-cell along each triangle, rectangle and pentagon.

We say that a map $\tau: \mathcal{HT}(R) \rightarrow \mathcal{HT}(R)$ is rectangle preserving if it sends rectangles to rectangles. 
A map $\tau: \mathcal{HT}(R) \rightarrow \mathcal{HT}(R)$ is called alternating if the restriction to the star of any vertex maps cut systems that differ in exactly two curves to cut systems that differ in exactly two curves. In \cite{H3}, Hern\'andez proved that if $\tau : \mathcal{HT}(R) \rightarrow \mathcal{HT}(R)$ is an alternating, edge-preserving map then $\tau$ is induced by a homeomorphism when $g \geq 3$, $n =0$. Our Theorem \ref{B} proves this statement when $g \geq 2$ and $n = 0$ since being edge-preserving and alternating is equivalent to sending rectangles to rectangles in $\mathcal{HT}(R)$. 

\begin{figure}[htb]
\begin{center}
\hspace{0.3cm} \epsfxsize=1.45in \epsfbox{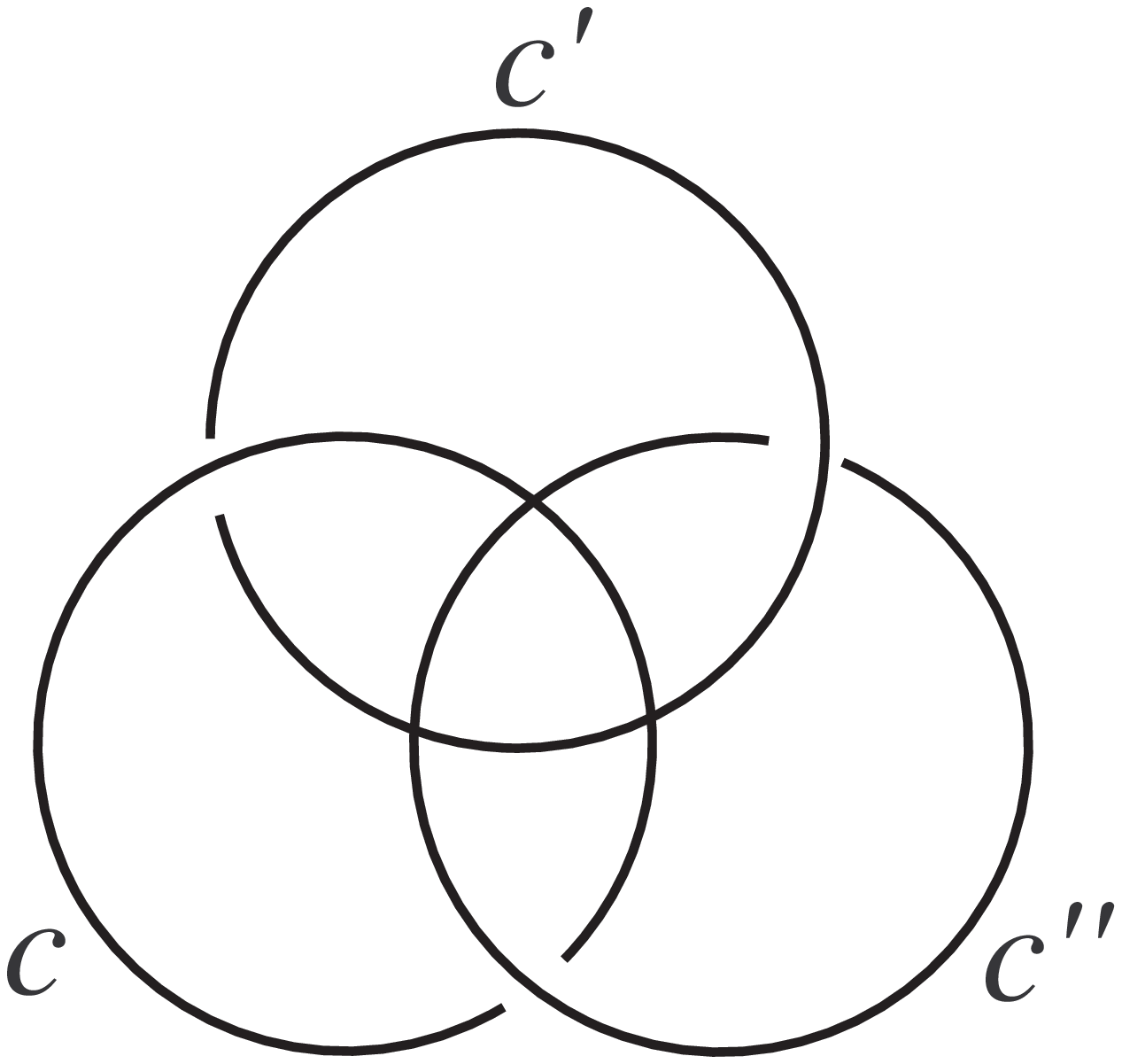}  \hspace{0.3cm} \epsfxsize=1.85in \epsfbox{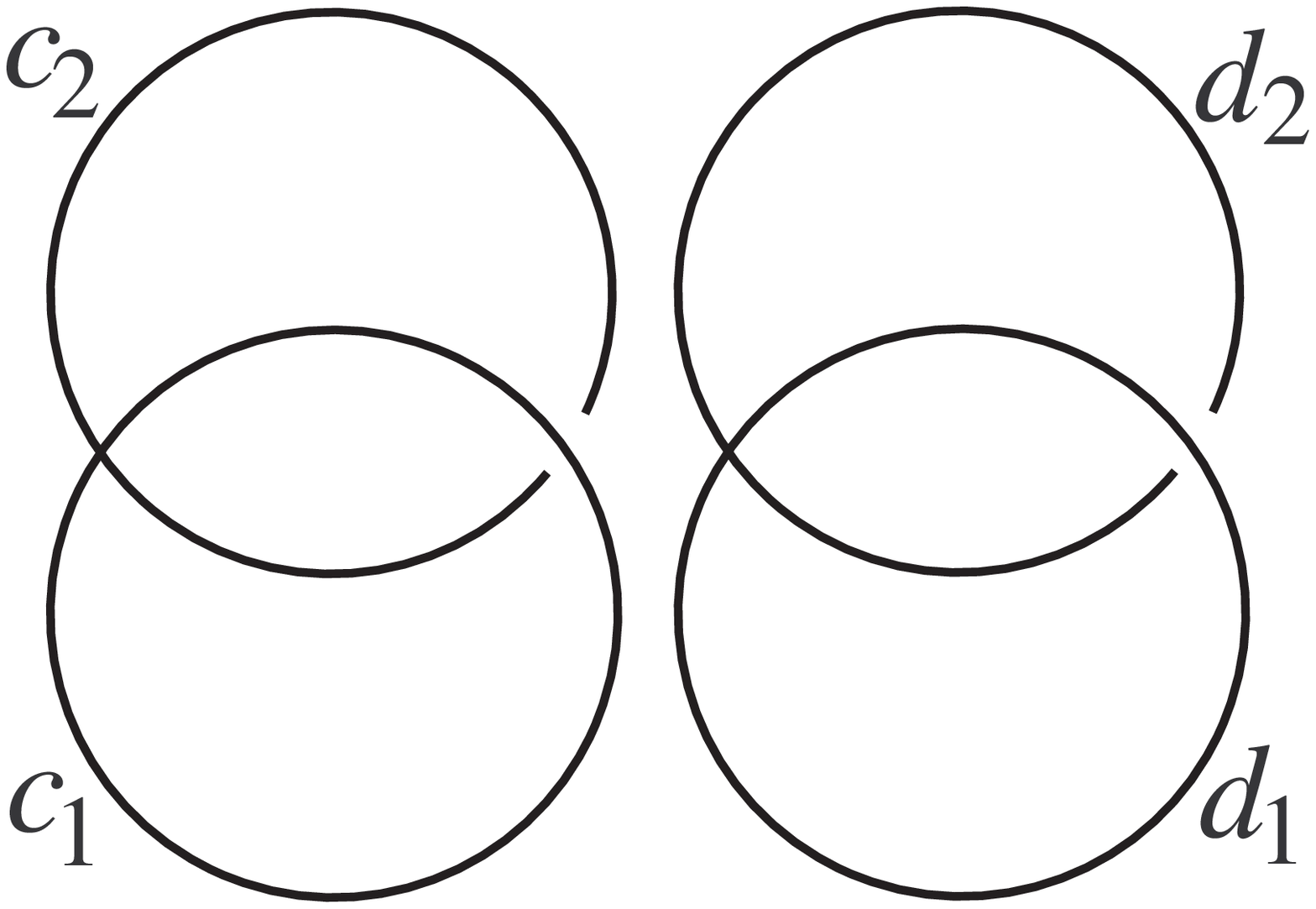} \hspace{0.1cm}  \epsfxsize=1.95in \epsfbox{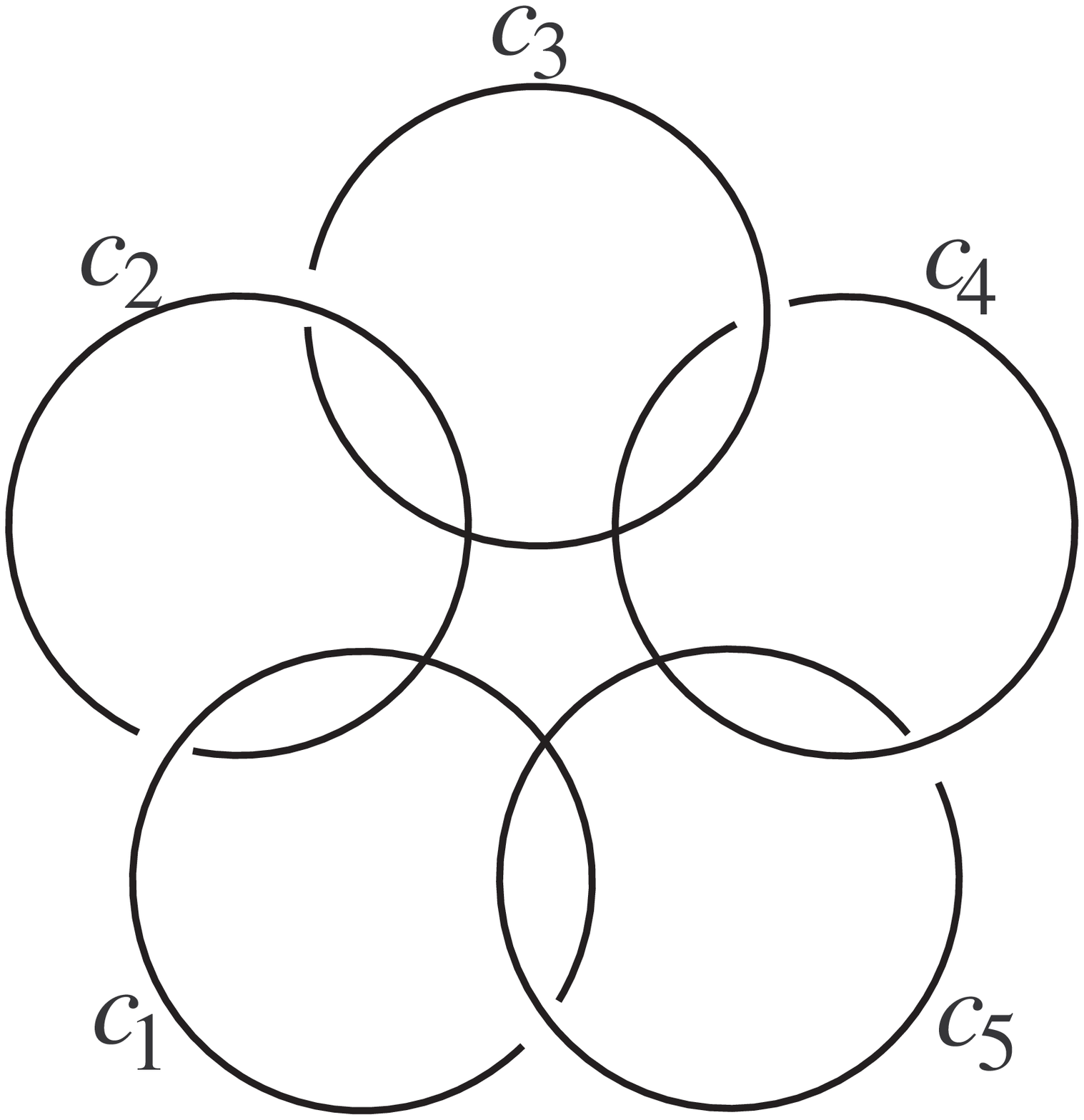}

\hspace{-0.4cm} (i) \hspace{4cm} (ii) \hspace{4.2cm} (iii)
    
\caption{A triangle, a rectangle and a pentagon}\label{fig8}
\end{center}
\end{figure}

Let $\tau: \mathcal{HT}(R) \rightarrow\mathcal{HT}(R)$ be a rectangle preserving map. Since every edge in $\mathcal{HT}(R)$ can be completed to a rectangle, $\tau$ is edge-preserving. It is easy to see that $\tau$ sends triangles to triangles. We will define $\tilde{\tau}: \mathcal{N}(R) \rightarrow\mathcal{N}(R)$ as follows:  Let $a$ be a nonseparating simple closed curve on $R$. We choose pairwise disjoint nonseparating simple closed curves $a_2, a_3, \cdots a_g$ on $R$ such that 
$v = \langle [a], [a_2], \cdots, [a_g] \rangle$ is a cut system. We choose another curve $b$ on $R$ such that $a$ and $b$ are dual (i.e. they have tranverse intersection one) and $b$ does not intersect any of $a_i$. Then $w = \langle [b], [a_2], \cdots, [a_g] \rangle$ is also a cut system and the vertices $v$ and $w$ are connected by an edge in $\mathcal{HT}(R)$. Since $\tau$ is edge-preserving, the vertices $\tau(v)$ and $\tau(w)$ are connected by an edge as well. So $\tau(v) \setminus \tau(w)$ contains only one element. We define $\tilde{\tau}([a])$ to be this unique class. Well-definedness of $\tilde{\tau}([a])$, the independence from all the choices made were shown in \cite{IrK} when $\tau$ is an automorphism  of the Hatcher-Thurston complex. Since $\tau$ is edge-preserving, triangle preserving and rectangle preserving on the Hatcher-Thurston graph, the same construction works for $\tau$ as well. So, $\tilde{\tau} : \mathcal{N}(R) \rightarrow \mathcal{N}(R)$ is a well-defined map. 

\begin{lemma} \label{lemma-1}: $\tilde{\tau}$ preserves
geometric intersection one property.
\end{lemma}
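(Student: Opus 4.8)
The plan is to show that if $i([a], [b]) = 1$ for two vertices $[a]$ and $[b]$ of $\mathcal{N}(R)$, then $i(\tilde{\tau}([a]), \tilde{\tau}([b])) = 1$. The key idea is to evaluate both $\tilde{\tau}([a])$ and $\tilde{\tau}([b])$ using a single, carefully chosen pair of cut systems, and then to read off the intersection number directly from the fact that their $\tau$-images are joined by an elementary move.

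First I would exploit the hypothesis $i([a], [b]) = 1$ to produce a common completion. Since $a$ and $b$ meet once, a regular neighborhood of $a \cup b$ is a one-holed torus $T$, and by the change of coordinates principle (see \cite{FM}) the pair $(a, b)$ sits inside $R$ as a standard dual pair. Hence I can choose pairwise disjoint nonseparating curves $a_2, \ldots, a_g$, all disjoint from both $a$ and $b$, so that $v = \langle [a], [a_2], \ldots, [a_g] \rangle$ and $w = \langle [b], [a_2], \ldots, [a_g] \rangle$ are both cut systems. Because $i([a], [b]) = 1$ and $v \setminus \{[a]\} = w \setminus \{[b]\}$, the vertices $v$ and $w$ are joined by the elementary move $\langle [a] \rangle \leftrightarrow \langle [b] \rangle$, i.e. by an edge of $\mathcal{HT}(R)$.

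Next I would invoke the well-definedness of $\tilde{\tau}$ established above. Reading the defining recipe for $\tilde{\tau}([a])$ with the cut system $v$ and dual partner $w$ shows that $\tilde{\tau}([a])$ is the unique class in $\tau(v) \setminus \tau(w)$; reading the same recipe for $\tilde{\tau}([b])$ with the cut system $w$ and dual partner $v$ (here $a$ plays the role of the dual curve, since $i([b], [a]) = 1$ and $a$ is disjoint from $a_2, \ldots, a_g$) shows that $\tilde{\tau}([b])$ is the unique class in $\tau(w) \setminus \tau(v)$. Because $\tau$ is edge-preserving, $\tau(v)$ and $\tau(w)$ are cut systems joined by an edge, hence by an elementary move: they agree in exactly $g-1$ classes, and the remaining two classes, namely $\tilde{\tau}([a])$ and $\tilde{\tau}([b])$, have geometric intersection one. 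This yields $i(\tilde{\tau}([a]), \tilde{\tau}([b])) = 1$, as desired.

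The only real obstacle is the common-completion step: I must ensure that the \emph{same} pair $(v, w)$ simultaneously realizes $b$ as a legitimate dual for $a$ in the definition of $\tilde{\tau}([a])$ and $a$ as a legitimate dual for $b$ in the definition of $\tilde{\tau}([b])$. This amounts precisely to requiring that $a_2, \ldots, a_g$ be disjoint from both $a$ and $b$ while $v$ and $w$ remain cut systems, which the one-holed-torus picture supplies. Once this is in place, well-definedness guarantees that these particular evaluations compute the genuine values $\tilde{\tau}([a])$ and $\tilde{\tau}([b])$, and the conclusion follows immediately from the definition of an elementary move.
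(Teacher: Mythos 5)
Your proposal is correct and follows essentially the same route as the paper: complete $a$ to a cut system $v$ so that replacing $a$ by $b$ gives a cut system $w$ joined to $v$ by an elementary move, observe that $\tilde{\tau}([a]) = \tau(v)\setminus\tau(w)$ and $\tilde{\tau}([b]) = \tau(w)\setminus\tau(v)$ by well-definedness, and conclude from the fact that $\tau(v)$ and $\tau(w)$ are joined by an edge. Your explicit attention to the common-completion step is a point the paper leaves implicit, but the argument is the same.
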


\begin{proof} If $a$ and $b$ are two nonseparating simple closed curves on $R$ such that
$i([a], [b]) =1$, then $\{a\}$ can be completed to a cut system $v= \langle [a], [a_2], [a_3], \cdots, [a_g] \rangle$ such that 
$w= \langle [b], [a_2], [a_3], \cdots, [a_g] \rangle$ is also a cut system. We see that $v$ and $w$ are connected by an edge in 
$\mathcal{HT}(R)$. Since $\tau$ is edge-preserving, $\tau(v)$ and $\tau(w)$ are connected by an edge in $\mathcal{HT}(R)$. By the definition of $\tilde{\tau}$
we have $\tilde{\tau}([a]) = \tau(v) \setminus \tau(w)$ and $\tilde{\tau}([b]) = \tau(w) \setminus \tau(v)$. Since $\tau(v)$ and $\tau(w)$ are connected by an edge, we get  $i(\tilde{\tau}([a]), \tilde{\tau}([b]))=1$. 
\end{proof} 

\begin{figure}[t] \begin{center}
		\epsfxsize=2.99in \epsfbox{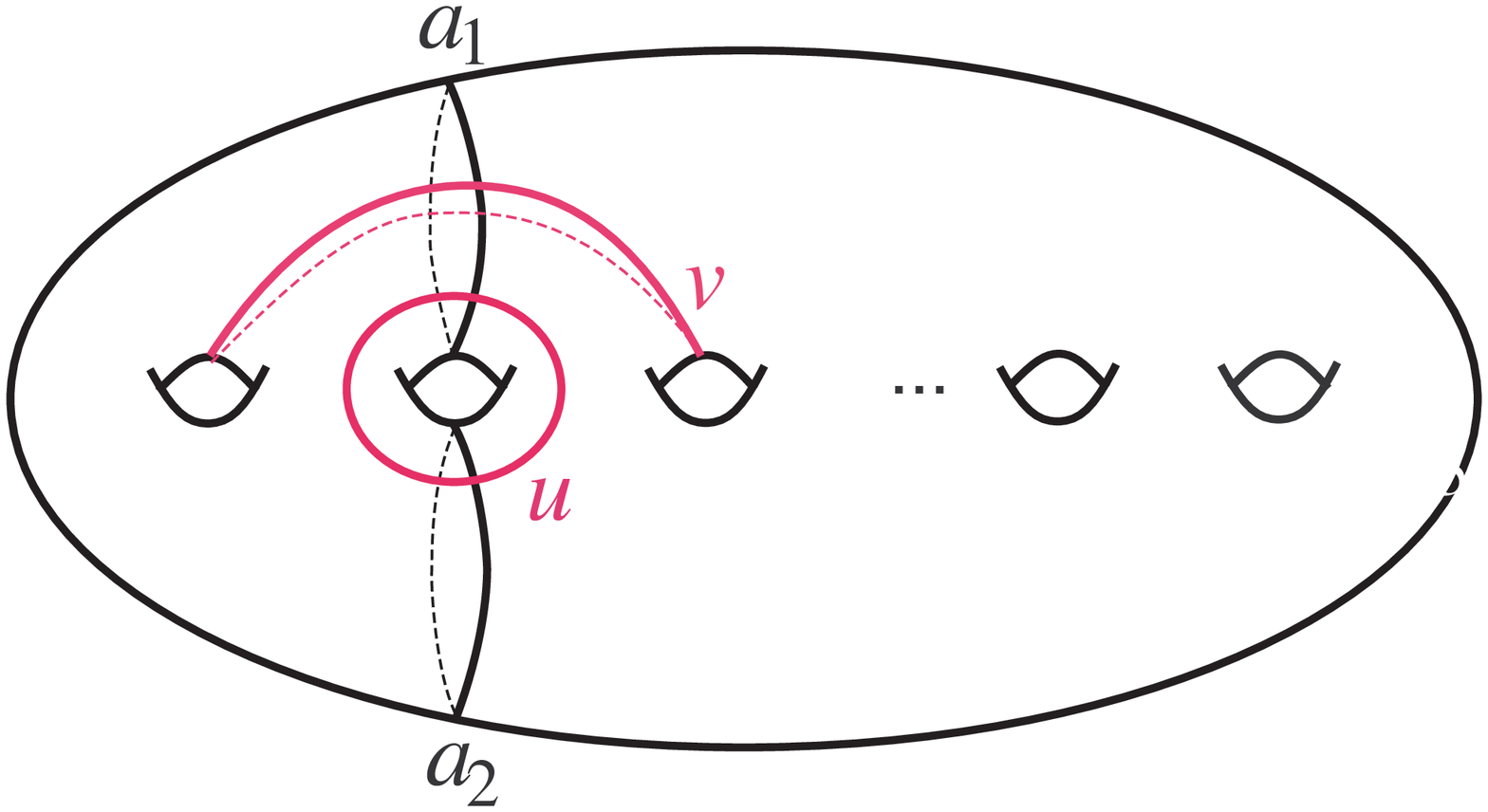} 
		\hspace{0cm} \epsfxsize=2.99in \epsfbox{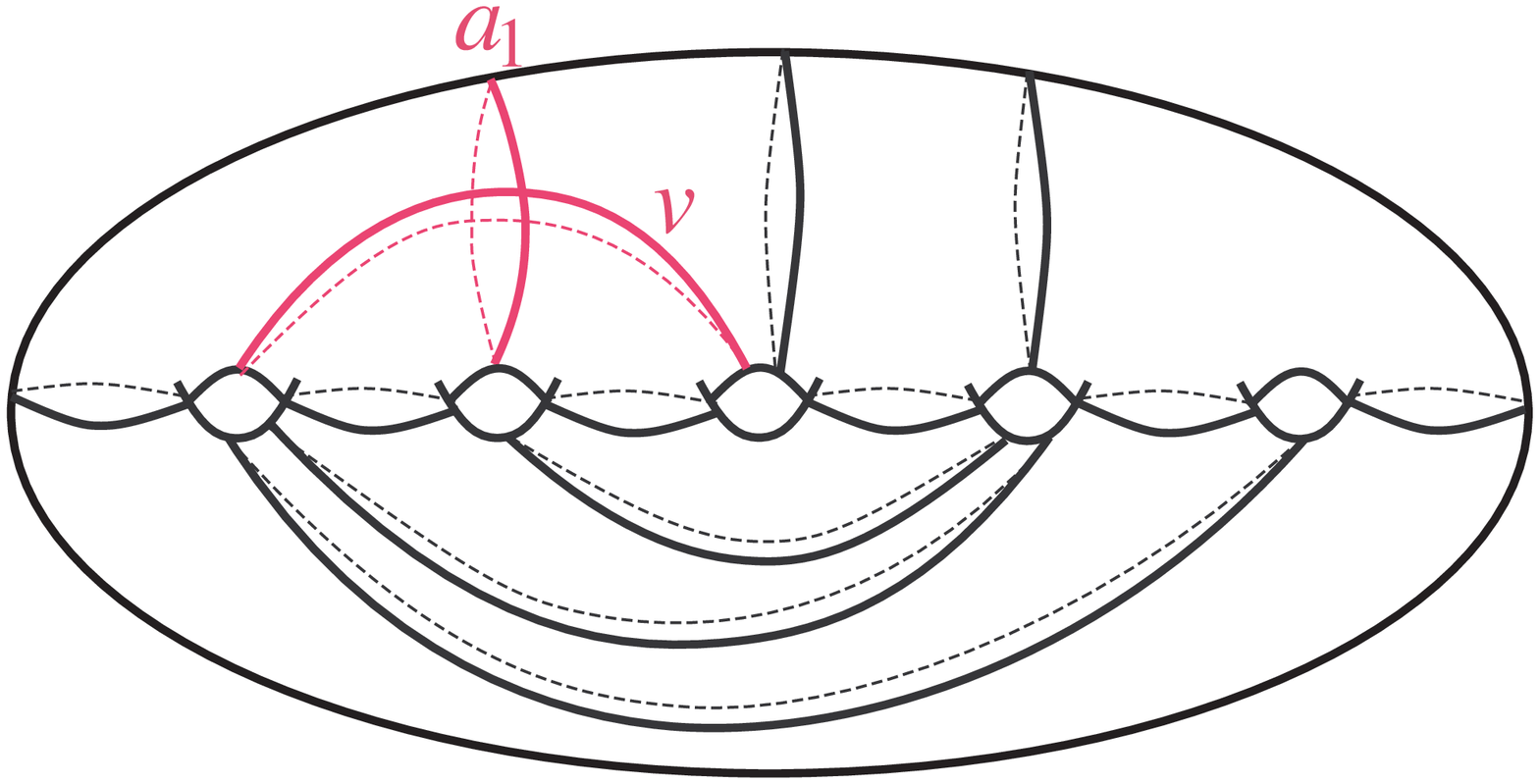} 
		
		(i)  \hspace{7cm} (ii)
		
			\hspace{0cm} \epsfxsize=2.99in \epsfbox{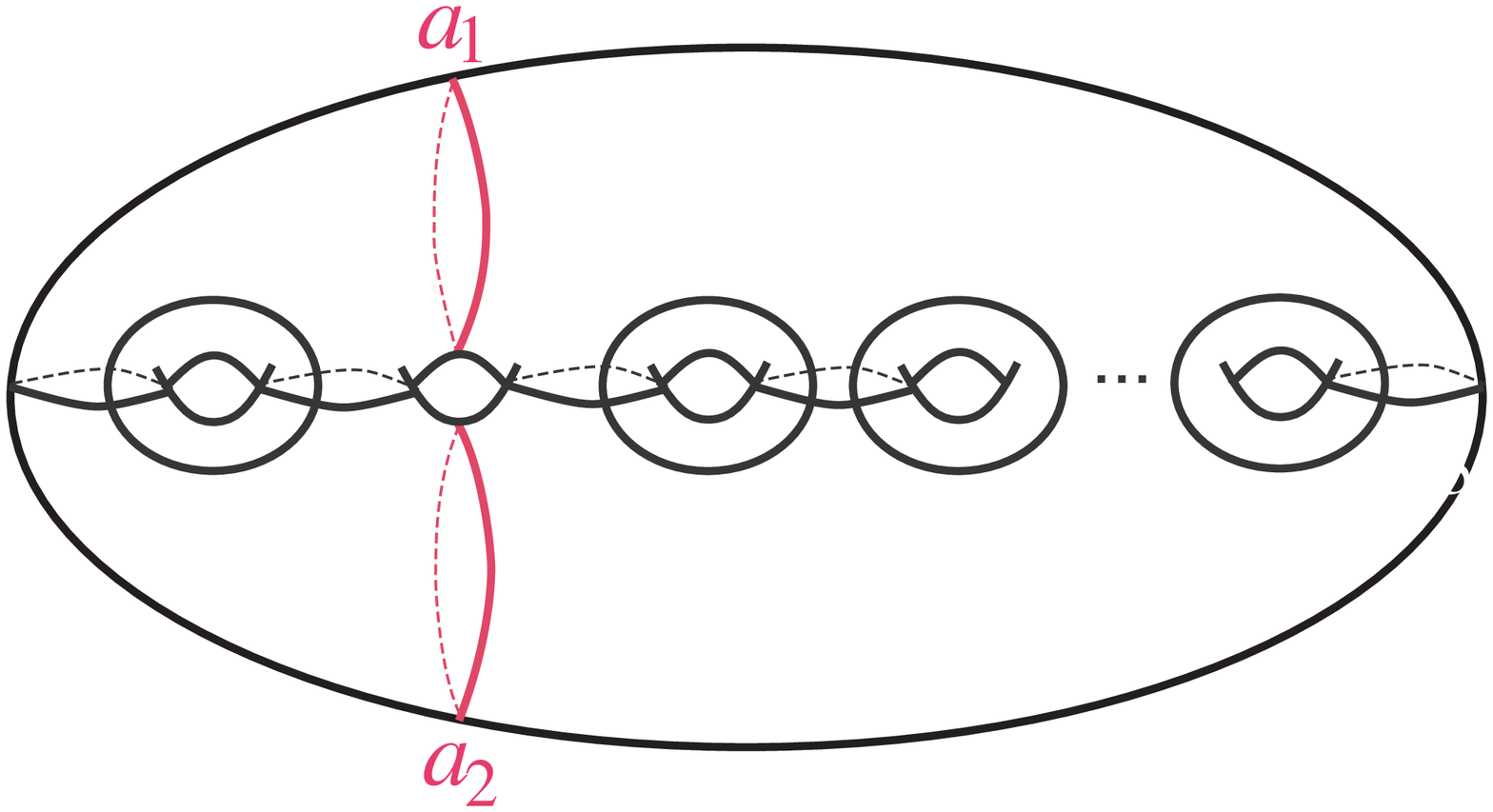} 
	 
	 (iii)
		
		\caption{Curves $a_1, a_2$ separating together} \label{fig-last}
	\end{center}
\end{figure}
 
\begin{lemma} \label{lemma-2}: $\tilde{\tau}$ is edge-preserving. 
\end{lemma}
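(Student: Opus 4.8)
The plan is to prove the statement directly: given two nonseparating classes $\alpha=[a]$ and $\beta=[b]$ with $i(\alpha,\beta)=0$ and $\alpha\neq\beta$, I want to show $\tilde{\tau}(\alpha)\neq\tilde{\tau}(\beta)$ and $i(\tilde{\tau}(\alpha),\tilde{\tau}(\beta))=0$. Since $a,b$ are disjoint and each nonseparating, exactly one of two situations occurs: either $a\cup b$ is nonseparating in $R$, or $a\cup b$ separates $R$ (a bounding pair). I would treat these separately. Note that a bounding pair of non-isotopic curves forces both complementary pieces to have positive genus, so it can occur only when $g\geq 3$; hence for $g=2$ only the nonseparating case arises.

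In the nonseparating case, $\{[a],[b]\}$ extends to a cut system $v=\langle [a],[b],[c_3],\dots,[c_g]\rangle$. I would pick a curve $a'$ dual to $a$ and disjoint from $b,c_3,\dots,c_g$, and a curve $b'$ dual to $b$ disjoint from $a,a',c_3,\dots,c_g$, arranged as in the rectangle configuration of Figure \ref{fig8} (ii). Then $\langle[a],[b]\rangle,\langle[a'],[b]\rangle,\langle[a'],[b']\rangle,\langle[a],[b']\rangle$ (suppressing the common $c_i$) form a rectangle with $v$ as one corner and with neighbours $v_a=\langle[a'],[b]\rangle$ and $v_b=\langle[a],[b']\rangle$. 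By the definition of $\tilde{\tau}$ we have $\tilde{\tau}(\alpha)=\tau(v)\setminus\tau(v_a)$ and $\tilde{\tau}(\beta)=\tau(v)\setminus\tau(v_b)$. Because $\tau$ sends this rectangle to a rectangle, $\tau(v)$ differs from its two image-neighbours $\tau(v_a),\tau(v_b)$ in two \emph{different} slots; thus $\tilde{\tau}(\alpha)$ and $\tilde{\tau}(\beta)$ are two distinct curves of the single cut system $\tau(v)$. Elements of a cut system are pairwise disjoint, so $\tilde{\tau}(\alpha)\neq\tilde{\tau}(\beta)$ and $i(\tilde{\tau}(\alpha),\tilde{\tau}(\beta))=0$. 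This also settles the case $g=2$ completely.

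The bounding-pair case is where the work lies and is the main obstacle: here $a$ and $b$ cannot lie in a common cut system (they are homologous), so the single-rectangle argument is unavailable, and in fact $\tilde{\tau}(\alpha),\tilde{\tau}(\beta)$ should again form a bounding pair, so I must force disjointness without producing a common cut system for the images. My plan is to use the local model of Figure \ref{fig-last}: a curve $e$ dual to both $a$ and $b$ (so $i([e],[a])=i([e],[b])=1$) together with a set $S$ of $g-1$ nonseparating curves disjoint from $a,b,e$ for which $S\cup\{a\}$, $S\cup\{b\}$ and $S\cup\{e\}$ are all cut systems. Writing $v=S\cup\{a\}$, $w=S\cup\{b\}$, $u=S\cup\{e\}$, the elementary moves $e\leftrightarrow a$ and $e\leftrightarrow b$ give $\tilde{\tau}(\alpha)=\tau(v)\setminus\tau(u)$ and $\tilde{\tau}(\beta)=\tau(w)\setminus\tau(u)$, while well-definedness of $\tilde{\tau}$ forces $\tau(u)\setminus\tau(v)=\tau(u)\setminus\tau(w)=\tilde{\tau}([e])$. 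Hence $\tau(v)$ and $\tau(w)$ agree outside the single slot $\tilde{\tau}([e])$, so both $\tilde{\tau}(\alpha)$ and $\tilde{\tau}(\beta)$ are disjoint from the $g-1$ common curves $\tau(u)\setminus\{\tilde{\tau}([e])\}$ and, by Lemma \ref{lemma-1}, each meets $\tilde{\tau}([e])$ exactly once.

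To finish the bounding-pair case I would transport the whole neighbourhood of the pair through $\tilde{\tau}$: applying the nonseparating case repeatedly to the curves that are disjoint from both $a$ and $b$ yet form nonseparating unions with each of them (the curves filling the two positive-genus pieces cut off by $a\cup b$, together with $e$, as drawn in Figure \ref{fig-last}), I obtain images that are simultaneously disjoint from $\tilde{\tau}(\alpha)$ and $\tilde{\tau}(\beta)$ and that confine these two classes, up to isotopy, into the torus-with-holes cut out by $\tau(u)\setminus\{\tilde{\tau}([e])\}$, inside which each crosses $\tilde{\tau}([e])$ once. The hard part is to argue that this accumulated intersection data leaves no room for $\tilde{\tau}(\alpha)$ and $\tilde{\tau}(\beta)$ to cross, i.e. that the transported configuration forces them to be a bounding pair; once that is established, $i(\tilde{\tau}(\alpha),\tilde{\tau}(\beta))=0$ follows and the lemma is proved.
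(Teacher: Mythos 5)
Your nonseparating case is correct and matches the paper's Case (i); in fact your rectangle argument for why $\tilde{\tau}(\alpha)$ and $\tilde{\tau}(\beta)$ occupy two \emph{distinct} slots of $\tau(v)$ is a useful refinement of the paper's terser statement that both classes belong to the cut system $\tau(v)$. The bounding-pair case, however, contains a genuine gap, and you flag it yourself: after transporting the local configuration you only conclude that $\tilde{\tau}(\alpha)$ and $\tilde{\tau}(\beta)$ both lie in the torus-with-holes obtained by cutting along the $g-1$ common curves of $\tau(u)\setminus\{\tilde{\tau}([e])\}$ and that each crosses $\tilde{\tau}([e])$ exactly once. That data does not force disjointness: for instance, the image of either class under a power of the Dehn twist about $\tilde{\tau}([e])$ still meets $\tilde{\tau}([e])$ once and is still disjoint from the other $g-1$ curves, yet intersects the original class in arbitrarily many points. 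So the ``hard part'' you defer is precisely the content of the lemma in this case, and nothing in your setup supplies it.

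The paper closes this gap with two ideas you do not have. First, it establishes $\tilde{\tau}([a_1])\neq\tilde{\tau}([a_2])$ indirectly: it produces an auxiliary curve $v$ (Figure \ref{fig-last}) with $i(\tilde{\tau}([a_1]),\tilde{\tau}([v]))\neq 0$ --- obtained by embedding $a_1$ and $v$ in two pants decompositions of nonseparating curves differing in that one slot, whose images are maximal complete subgraphs of $\mathcal{N}(R)$, together with $\tilde{\tau}([a_1])\neq\tilde{\tau}([v])$ --- while $i(\tilde{\tau}([a_2]),\tilde{\tau}([v]))=0$ by Case (i); these two facts are incompatible with $\tilde{\tau}([a_1])=\tilde{\tau}([a_2])$. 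Second, and crucially, instead of confining the images to the torus-with-holes, the paper takes two chains $C_1, C_2$ filling the two complementary pieces of $a_1\cup a_2$, uses Case (i) and Lemma \ref{lemma-1} to show chains map to chains, and observes that the complement of regular neighborhoods of the image chains is a union of two disjoint \emph{annuli}. Any curve disjoint from both image chains must lie in one of these annuli, so the two non-isotopic classes $\tilde{\tau}([a_1]),\tilde{\tau}([a_2])$ land in different annuli and are therefore disjoint. It is this annulus confinement --- a rigidity statement much stronger than ``each crosses $\tilde{\tau}([e])$ once inside a torus with holes'' --- that your proposal is missing.
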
 
 
\begin{proof} Let $a_1$ and $a_2$ be two disjoint, nonisotopic, nonseparating simple closed curves on $R$. 
	
Case (i): Suppose $R_{a_1 \cup a_2}$ is connected. In this case  $\{a_1, a_2\}$ can be completed to a cut system $w= \langle [a_1], [a_2], \cdots, [a_g] \rangle$. Then $\tilde{\tau}([a_1])$ and $\tilde{\tau}([a_2])$ both belong to $\tau(w)$ by definition. This implies that $\tilde{\tau}([a_1])$ and $\tilde{\tau}([a_2])$ are connected by an edge in $\mathcal{N}(R)$. 
	
Case (ii): Suppose $R_{a_1 \cup a_2}$ is not connected. This case happens only when $g\geq 3$. Assume $a_1, a_2$ separate the surface together as shown in Figure \ref{fig-last} (i). Consider the curves $u, v$ as shown in Figure \ref{fig-last} (i). Since $a_1$ and $u$ intersect once, by Lemma \ref{lemma-1} we know that $i(\tilde{\tau}([a_1]), \tilde{\tau}([u]))=1$. Since $R_{u \cup v}$ is connected and $u$ and $v$ are disjoint curves, by case (i) we know that $i(\tilde{\tau}([u]), \tilde{\tau}([v]))=0$. These imply that $\tilde{\tau}([a_1]) \neq \tilde{\tau}([v])$. 

We can find a set $P$ of pairwise disjoint nonseparating simple closed curves such that $P_1= \{a_1\} \cup P$ and $P_2= \{v\} \cup P$ are two pair of pants decompositions such that in each of them no two curves separate the surface together (see Figure \ref{fig-last} (ii) for genus $g=5$ case). By using case (i) we can see that 
$\tilde{\tau}(P_1)$ and $\tilde{\tau}(P_2)$ correspond to two maximal complete subgraphs in $\mathcal{N}(R)$. Then, since $\tilde{\tau}([a_1]) \neq \tilde{\tau}([v])$, we see that  $i(\tilde{\tau}([a_1]), \tilde{\tau}([v])) \neq 0$. Since $R_{a_2 \cup v}$ is connected and $a_2$ and $v$ are disjoint curves, by case (i) we know that $i(\tilde{\tau}([a_2]), \tilde{\tau}([v]))=0$.     Since  $i(\tilde{\tau}([a_2]), \tilde{\tau}([v]))=0$ and $i(\tilde{\tau}([a_1]), \tilde{\tau}([v])) \neq 0$, we conclude that 
$\tilde{\tau}([a_1]) \neq \tilde{\tau}([a_2])$. Similarly, if $a_1, a_2$ separate the surface together in a different way, we can see that $\tilde{\tau}([a_1]) \neq \tilde{\tau}([a_2])$. 

Now we will see that $\tilde{\tau}([a_1])$ and $\tilde{\tau}([a_2])$ are connected by an edge in $\mathcal{N}(R)$ as follows:
On the complement of $a_1, a_2$ we take two chains $C_1, C_2$ as shown in Figure \ref{fig-last} (iii). We can see that $R_{x \cup y}$ is connected for each pair $x, y \in C_1 \cup C_2$, $R_{x \cup a_1}$ is connected for each $x \in C_1 \cup C_2$ and $R_{x \cup a_2}$ is connected for each $x \in C_1 \cup C_2$. By using case (i) and Lemma \ref{lemma-1}, we can see that chains will go to chains. This means that we can choose represantatives in minimal position of the sets $\{\tilde{\tau}([x]): x \in C_1 \}$ and $\{\tilde{\tau}([x]): x \in C_2 \}$, say $C'_1$ and $C'_2$ respectively such that $C'_1$ and $C'_2$ are two chains. Let $a'_1, a'_2$ be representatives of $\tilde{\tau}([a_1])$ and $\tilde{\tau}([a_2])$ such that they intersect minimally with the elements of $C'_1$ and $C'_2$. The complement of disjoint regular neighborhoods of $C'_1$ and $C'_2$ are two disjoint annuli. It is easy to see that $a'_1, a'_2$ have to be in these two annuli. Since $\tilde{\tau}([a_1]) \neq \tilde{\tau}([a_2])$, we know that $a'_1$ and $a'_2$ are not isotopic to each other. Hence, $a'_1$ and $a'_2$ will be in different annuli, so they will be disjoint (if $a_1$ and $a_2$ together separate the surface in a different way, we can take similar chains to see that $a'_1$ and $a'_2$ will be disjoint). Hence, $\tilde{\tau}([a_1])$ and $\tilde{\tau}([a_2])$ are connected by an edge in $\mathcal{N}(R)$.\end{proof}

\begin{theorem} \label{B} Let $g \geq 2$ and $n = 0$. If $\tau: \mathcal{HT}(R) \rightarrow\mathcal{HT}(R)$ is a rectangle preserving map, then $\tau$ is induced by a homeomorphism of $R$, i.e. there exists a homeomorphism $h$ of $R$ such that $\tau( \langle [a_1], [a_2], \cdots, [a_g] \rangle) = \langle [h(a_1)], [h(a_2)], \cdots, [h(a_g)] \rangle $ for every vertex $ \langle [a_1], [a_2], \cdots, [a_g] \rangle \in \mathcal{HT}(R)$, and this homeomorphism is unique 
up to isotopy when $(g, n) \neq (2, 0)$.\end{theorem}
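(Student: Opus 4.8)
The plan is to transfer the problem from cut systems to nonseparating curves via the auxiliary map $\tilde{\tau}$ already constructed, and then invoke Theorem \ref{A}. By Lemma \ref{lemma-2} the induced map $\tilde{\tau} : \mathcal{N}(R) \to \mathcal{N}(R)$ is edge-preserving, so I would first apply Theorem \ref{A} to obtain a homeomorphism $h : R \to R$ with $\tilde{\tau}([a]) = [h(a)]$ for every vertex $[a]$ of $\mathcal{N}(R)$, unique up to isotopy when $(g,n) \neq (2,0)$ (equivalently $g \geq 3$, since $n = 0$). The remaining task is purely to check that this single $h$ realizes $\tau$ on every cut system.

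Next I would fix an arbitrary vertex $v = \langle [a_1], \ldots, [a_g]\rangle$ of $\mathcal{HT}(R)$ and show $\tau(v) = \langle [h(a_1)], \ldots, [h(a_g)]\rangle$. The key observation is that $\tilde{\tau}([a_i]) \in \tau(v)$ for each $i$: applying the definition of $\tilde{\tau}$ with $v$ itself as the chosen completion of $a_i$, one picks a dual curve $b_i$ to $a_i$ disjoint from the remaining $a_j$, forms the adjacent cut system $w_i$ obtained by the elementary move $\langle [a_i]\rangle \leftrightarrow \langle [b_i]\rangle$, and reads off $\tilde{\tau}([a_i]) = \tau(v) \setminus \tau(w_i)$, which lies in $\tau(v)$. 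Since the $[a_i]$ are pairwise disjoint, they are pairwise adjacent in $\mathcal{N}(R)$; as $\tilde{\tau}$ is edge-preserving and no vertex is adjacent to itself (the argument of Lemma \ref{inj}), the classes $\tilde{\tau}([a_1]), \ldots, \tilde{\tau}([a_g])$ are pairwise distinct. Thus $\tau(v)$ is a cut system, hence has exactly $g$ elements, and it contains the $g$ distinct classes $\tilde{\tau}([a_i]) = [h(a_i)]$, forcing $\tau(v) = \langle [h(a_1)], \ldots, [h(a_g)]\rangle$.

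For uniqueness when $g \geq 3$, I would observe that any homeomorphism $h'$ inducing $\tau$ necessarily induces $\tilde{\tau}$ as well: reading the defining formula $\tilde{\tau}([a]) = \tau(v)\setminus\tau(w)$ through $\tau(v) = \langle [h'(a)], [h'(a_2)], \ldots\rangle$ and $\tau(w) = \langle [h'(b)], [h'(a_2)], \ldots\rangle$ yields $\tilde{\tau}([a]) = [h'(a)]$. Hence $h'$ and $h$ both induce $\tilde{\tau}$, so they are isotopic by the uniqueness clause of Theorem \ref{A}, which holds precisely when $(g,n) \neq (2,0)$.

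The genuinely hard work --- promoting the rectangle-preserving hypothesis on $\mathcal{HT}(R)$ to the edge-preserving property of $\tilde{\tau}$ on $\mathcal{N}(R)$ --- is already carried out in Lemmas \ref{lemma-1} and \ref{lemma-2}, where the separating/nonseparating dichotomy for complements $R_{a_1 \cup a_2}$ (Case (ii), which arises only for $g \geq 3$) is the delicate point. Given those lemmas, the only step in the present proof requiring care is the well-definedness of $\tilde{\tau}$, so that one may legitimately compute $\tilde{\tau}([a_i])$ using the prescribed cut system $v$ rather than an unrelated completion; this independence of choices is exactly what was established before the lemmas, so no new difficulty arises and the counting argument closes the proof.
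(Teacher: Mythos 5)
Your proposal is correct and follows essentially the same route as the paper: invoke Lemma \ref{lemma-2} to get the edge-preserving map $\tilde{\tau}$ on $\mathcal{N}(R)$, apply Theorem \ref{A} to obtain $h$, and then identify $\tau(v)$ with $\langle [h(a_1)],\ldots,[h(a_g)]\rangle$. Your counting argument (each $\tilde{\tau}([a_i])$ lies in $\tau(v)$ by well-definedness, the images are pairwise distinct since $\tilde{\tau}$ is edge-preserving, and $\tau(v)$ has exactly $g$ elements) simply makes explicit the step the paper states without elaboration, and your uniqueness observation matches the paper's appeal to the uniqueness clause of Theorem \ref{A}.
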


\begin{proof} By Lemma \ref{lemma-2}, we know that $\tau$ induces an edge-preserving map $\tilde{\tau} : \mathcal{N}(R) \rightarrow \mathcal{N}(R)$. By Theorem \ref{A}, $\tilde{\tau}$ is induced by a homeomorphism $h : R \rightarrow R$, and this homeomorphism is unique up to isotopy when $(g, n) \neq (2, 0)$. We have $h([x]) =\tilde{\tau}([x])$ for every vertex 
$[x] \in \mathcal{N}(R)$. Let $v = \langle [a_1], [a_2], \cdots, [a_g] \rangle$ be a vertex in $\mathcal{HT}(R)$
where $a_i$'s are pairwise disjoint. Then we see that 
$\tau(v) = \langle \tilde{\tau}([a_1]), \tilde{\tau}([a_2]), \cdots, \tilde{\tau}([a_g]) \rangle $. So,  
$\tau(v) = \langle [h(a_1)], [h(a_2)], \cdots, [h(a_g)] \rangle $. Hence, $\tau$ is induced by $h$.\end{proof}\\

{\bf Acknowledgements}\\

The author thanks Peter Scott for some discussions and his comments about this paper. The author also thanks referees for their comments.  
  
\vspace{0.3cm}

{\bf Elmas Irmak} \vspace{0.2cm}
 
University of Michigan, Department of Mathematics, Ann Arbor, 48105, MI, USA 

e-mail: eirmak@umich.edu\\
\end{document}